\newtheorem{maintheorem}{Theorem}
\newtheorem{theorem}{Theorem}[section]
\newtheorem{lemma}[theorem]{Lemma}
\newtheorem{proposition}[theorem]{Proposition}
\newtheorem{observation}[theorem]{Observation}
\newtheorem{definition}[theorem]{Definition}
\newenvironment{proof}{\noindent{\bf Proof.\,\ }}{\hfill\mbox{$\Box$}\smallskip}
\renewcommand{\P}{\mathbb{P}}
\newcommand{\A}[1]{\mathcal{A}^{(#1)}}
\newcommand{\tk}{\tilde{K}_X}
\newcommand{\fs}{\mathfrak{S}}
\newcommand{\cd}{\mathcal{D}}
\newcommand{\ch}{\mathcal{H}}
\newcommand{\ce}{\mathcal{E}}
\newcommand{\cg}{\mathcal{G}}
\newcommand{\cj}{\mathcal{J}}
\newcommand{\cm}{\mathcal{M}}
\newcommand{\Z}{\mathbb{Z}}
\newcommand{\N}{\mathbb{N}}
\newcommand{\R}{\mathbb{R}}
\newcommand{\cf}{\mathcal{F}}
\begin{document}
\title{Lipschitz embeddings of random sequences}

\author{Riddhipratim Basu and Allan Sly}

\maketitle

\begin{abstract}

We develop a new multi-scale framework flexible enough to solve a number of problems involving embedding random sequences into random sequences.
Grimmett, Liggett and Richthammer~\cite{GLR:10} asked whether there exists an increasing $M$-Lipschitz embedding from one i.i.d. Bernoulli sequences into an independent copy with positive probability.  We give a positive answer for large enough $M$.
A closely related problem is to show that two independent Poisson processes on $\mathbb{R}$ are roughly  isometric (or quasi-isometric).  Our approach also applies in this case answering a conjecture of Szegedy and of Peled~\cite{Peled:10}.  Our theorem also gives a new proof to Winkler's compatible sequences problem.  
\end{abstract}

\section{Introduction}\label{s:intro}
With his compatible sequences and clairvoyant demon scheduling problems Winkler introduced a fascinating class of dependent or ``co-ordinate'' percolation type problems (see e.g.~\cite{BBS:00,CTP:00,Grimmett:09,Winkler:00}).  These, and other problems in this class, can be interpreted either as embedding one sequence into another according to certain rules or as oriented percolation problems in $\Z^2$ where the sites are open or closed according to random variables on the co-ordinate axes.
A natural question of this class posed by Grimmett, Liggett and Richthammer~\cite{GLR:10} asks whether there exists a Lipshitz embedding of one Bernoulli sequence into another.  The following theorem answers the main question of~\cite{GLR:10}.

\begin{maintheorem}\label{t:embed}
Let $\{X_i\}_{i\in\Z}$ and $\{Y_i\}_{i\in\Z}$ be independent sequences of independent identically distributed  $\mbox{Ber}(\frac12)$ random variables.  For sufficiently large $M$ there almost surely there exists a strictly increasing function $\phi:\mathbb{Z}\to\mathbb{Z}$ such that $X_i=Y_{\phi(i)}$ and $1\leq \phi(i)-\phi(i-1)\leq M$ for all $i$.
\end{maintheorem}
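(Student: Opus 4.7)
The strategy is a multi-scale renormalization in which, at each scale $k$, both sequences $X$ and $Y$ are partitioned into blocks of length $L_k$ with $L_k$ growing roughly geometrically in $k$; at the base scale $k=0$ blocks are single bits. A block at scale $k$ is declared \emph{good} or \emph{bad} by a recursive rule: a good block at scale $k+1$ is a run of $L_{k+1}/L_k$ consecutive scale-$k$ sub-blocks in which the bad sub-blocks are both sparse and well-isolated.

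The key property carried through the induction is that any good scale-$k$ block of $X$ admits a strictly increasing map into any good scale-$k$ block of $Y$ that matches bits and (i) satisfies a Lipschitz bound $M_k$ which is controlled so that $\sup_k M_k \le M$, and (ii) possesses a prescribed amount of \emph{slack}: the starting (and ending) positions of the image of the block inside the $Y$-block may be chosen freely within a window whose size is proportional to the block length. This slack is precisely what lets neighbouring good sub-blocks absorb an occasional bad scale-$k$ sub-block: one detours around a bad region by adjusting where each neighbouring good sub-block is placed inside its allotted window, and the slack is what guarantees a valid adjustment exists.

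The probabilistic heart of the argument is a super-exponential recursion of the form $p_{k+1}\le C L_{k+1}^{\alpha} p_k^{\beta}$, with $\beta>1$ chosen large enough, given the growth of $L_k$, to force the bad-block probabilities $p_k$ to decay doubly exponentially. Such a recursion is obtained by arguing that a scale-$(k+1)$ block can be bad only if it contains at least two bad scale-$k$ sub-blocks, or else a bad sub-block in a position that obstructs every admissible embedding; each such configuration is a combination of several independent bad events at the previous scale. Independence of $X$ and $Y$ separates the $X$-bad and $Y$-bad events, and the i.i.d. structure within each sequence supplies the independence between spatially separated sub-blocks.

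The main obstacle will be the bookkeeping that couples the growth $L_k$, the Lipschitz constants $M_k$ and the slack parameters so that the inductive step closes without degrading the overall Lipschitz bound. One must tune these parameters so that the slack inside a good scale-$k$ block is enough to absorb every admissible pattern of bad sub-blocks while still producing a good scale-$(k+1)$ block with its own slack, and so that the cumulative Lipschitz contribution across all scales is bounded by a single constant $M$, which is chosen large at the outset to accommodate this budget. Once such a scheme is in place and the recursion closes, summability of $\sum_k L_k p_k$ combined with a standard Borel--Cantelli argument shows that almost surely every finite region of $\Z$ is contained in good blocks of arbitrarily high scale; the embedding $\phi$ is then constructed as a limit of the embeddings furnished by these nested good blocks, and automatically satisfies the Lipschitz and matching conditions of the theorem.
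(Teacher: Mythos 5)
There is a genuine gap at the heart of your proposal: the super-exponential recursion $p_{k+1}\le C L_{k+1}^{\alpha}p_k^{\beta}$ with $\beta>1$, which you use to force doubly-exponential decay of $p_k$, cannot hold, because a binary good/bad classification misses the essential difficulty of the problem. The obstacle is long runs of identical symbols: a scale-$k$ block of $X$ contains a run of $c\log_2 L_k$ consecutive $1$'s with probability of order $L_k^{1-c}$, a power law in $L_k$. Such a run severely constrains where it can be placed: the image under a map with $1\le\phi(i)-\phi(i-1)\le M$ must avoid every run of $0$'s of length $\ge M$ in $Y$, and $Y$ has such runs with the same power-law frequency. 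No choice of threshold for ``bad'' escapes this. If you declare such sub-blocks bad, $p_k$ has a power-law lower bound in $L_k$, contradicting the claimed doubly-exponential decay. If you declare them good, then the inductive claim that every good $X$-block embeds into every good $Y$-block is false, since a good $Y$-block may contain a long $0$-run positioned so that no Lipschitz map can thread a long $1$-run of $X$ past it. Finally, with geometric growth $L_{k+1}/L_k$ bounded, the slack you invoke gives only a bounded number of candidate positions for a problematic sub-block, and each candidate accepts it with only power-law-small probability; so the inductive step does not close even with the shift freedom.

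The paper resolves this by abandoning the binary classification and instead carrying through the induction the full distribution of the conditional embedding probability $S_j^{\mathbb{X}}(X)=\mathbb{P}(X\hookrightarrow Y\mid X)$, in the form of a power-law tail bound $\mathbb{P}(S_j^{\mathbb{X}}(X)\le p)\le p^{m_j}L_j^{-\beta}$ for all $p\le 1-L_j^{-1}$. This quantity is the right inductive object precisely because of the long-run obstruction, which the authors point out explicitly rules out anything better than a power law. To make this estimate improve from one scale to the next they also need doubly-exponential growth $L_j=L_0^{\alpha^j}$ (so $L_{j+1}/L_j=L_j^{\alpha-1}$ is unbounded, giving polynomially many independent candidate placements for each hard sub-block), along with variable-length blocks that are extended until a run of $2L_j^3$ good sub-blocks is found (which makes higher-level blocks independent and ensures good boundary behaviour). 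Your outline correctly identifies the multi-scale framework, the role of slack, and the Borel--Cantelli endgame, but without a quantitative tail estimate on the embedding probability, or some equivalent surrogate for it, the central recursive step will not close.
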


The original question of~\cite{GLR:10} was slightly different asking for a positive probability on the natural numbers with the condition $\phi(0)=0$, which is implied by our theorem (and is equivalent by ergodic theory considerations).  Among other results they showed that Theorem~\ref{t:embed} fails in the case of $M=2$.  In a series of subsequent works Grimmett, Holroyd and their collaborators~\cite{DDGHS:10,Grimmett:09,GriHol:10a,GriHol:10,GriHol:12,HolMar:12} investigated a range of related problems including when one can embed $\mathbb{Z}^d$ into site percolation in $\mathbb{Z}^D$ and showed that this was possible almost surely for $M=2$ when $D>d$ and the the site percolation parameter was sufficiently large but almost surely impossible for any $M$ when $D \leq d$.  Recently Holroyd and Martin showed that a comb can be embedded in $\mathbb{Z}^2$.  Another important series of work in this area involves embedding words into higher dimensional percolation clusters~\cite{BenKes:95,dLSS:11,KSV:98,KSZ:01}.  Despite this impressive progress the question of embedding  one random sequence into another remained open.  The difficulty lies in the presence of long strings of ones and zeros on all scales in both sequences which must be paired together.

In a  similar vein is the question of a rough, (or  quasi-), isometry of two independent Poisson processes.  Informally, two metric spaces are roughly isometric if their metrics are equivalent up to multiplicative and additive constants.  The formal definition, introduced by Gromov~\cite{Gromov:81} in the case of groups and more generally by Kanai~\cite{Kanai:85}, is as follows.
\begin{definition}
We say two metric spaces $X$ and $Y$ are roughly isometric with parameters $(M,D,C)$ if there exists a mapping $T:X\to Y$ such that for any $x_1, x_2 \in X$,
\[
\frac1M d_X(x_1,x_2)-D \leq d_Y(T(x_1),T(x_2)) \leq M d_X(x_1,x_2) + D,
\]
and for all $y\in Y$ there exists $x\in X$ such that $d_Y(T(x),y)\leq C$.
\end{definition}

Originally Ab\'{e}rt~\cite{Abert:08} asked whether two independent infinite components of bond percolation on a Cayley graph are roughly isometric.  Szegedy asked the problem when these sets are independent Poisson process in $\R$  (see~\cite{Peled:10} for a fuller description of the history of the problem).
The most important progress on this question is by Peled~\cite{Peled:10} who showed that Poisson processes on $[0,n]$ are roughly isometric with parameter $M=\sqrt{\log n}$.  The question of whether two independent Poisson processes on $\R$ are roughly isometric for fixed $(M,D,C)$ was the main open question of~\cite{Peled:10}.  We prove that this is indeed the case.

\begin{maintheorem}\label{t:RIPoisson}
Let $X$ and $Y$ be independent Poisson processes on $\R$ viewed as metric spaces.  There exists $(M,D,C)$ such that almost surely $X$ and $Y$ are $(M,D,C)$-roughly isometric.
\end{maintheorem}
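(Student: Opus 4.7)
The plan is to adapt the multi-scale framework underlying Theorem~\ref{t:embed} to the Poisson setting. The goal is to construct $T$ by matching points of $X$ to points of $Y$ in a way that is approximately measure-preserving on long intervals, and then to correct the map recursively at finer scales while accumulating only bounded multiplicative and additive errors.

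First I would discretize. Partition $\R$ into unit intervals $I_i=[i,i+1)$ and record $n_i^X=|X\cap I_i|$, $n_i^Y=|Y\cap I_i|$, which are sequences of i.i.d.\ Poisson($1$) random variables. Because any bijection between points inside a single unit interval affects distances by at most $O(1)$, which can be absorbed into the additive constants $D$ and $C$, it suffices to build a correspondence between the two integer sequences that looks, at every scale, like a bi-Lipschitz map of the index line, so that the induced point map is roughly isometric.

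Next I would set up the scale hierarchy. Fix a growth parameter $L$ and consider nested blocks of length $L^n$. Call a scale-$n$ block \emph{good} if its total point count is within a small power of $L^n$ of its mean (Poisson concentration) and if the bad scale-$(n-1)$ sub-blocks inside it are both sparse and non-clustered in a quantitative sense. With thresholds chosen so the probability of being bad at scale $n$ decays super-polynomially, one can then match a good scale-$n$ block of $X$ to a good scale-$n$ block of $Y$ by pairing good sub-blocks in order and recursing; when both blocks are good all the way down, the induced rough isometry has distortion bounded independently of $n$, with additive errors accumulating as a geometric series in $L$ and hence uniformly bounded.

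The main obstacle is the presence of bad blocks at every scale; these correspond to the anomalous features of the Poisson process (unusually long gaps or unusually dense clumps) that preclude the naive ``$k$-th point to $k$-th point'' matching and must instead be paired with nearby bad blocks in the other process via a coarser correspondence at the next scale. Here the coordinate-percolation character common to this family of problems reappears: the bad scale-$n$ sub-blocks act as defects inside their containing scale-$(n+1)$ block, and the recursion requires that defect positions in corresponding scale-$(n+1)$ blocks of $X$ and $Y$ be in approximate bijection with controlled local distortion, leaving enough good margin around every defect to absorb the local rearrangement without the error propagating uncontrollably upward. As in Theorem~\ref{t:embed}, the technical heart of the argument is to show that the conditional probability of a defect at scale $n$, given the coarser scale-$(n+1)$ environment, decays fast enough to beat the combinatorial cost of placing defects, and that the joint coupling of the $X$-hierarchy with the $Y$-hierarchy can be maintained through the induction; this is where the framework of the paper is applied, and it is what determines the constants $(M,D,C)$.
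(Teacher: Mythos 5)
Your proposal takes a meaningfully different route and, as sketched, misses the key mechanism. The paper does not discretize into Poisson counts per unit interval. It invokes Propositions 2.1--2.2 of Peled~\cite{Peled:10} to reduce to rough isometry of site percolation on $\Z$, and then encodes the 0--1 sequence as a sequence of \emph{gap lengths}: a tile is a $1$ followed by a run of $0$'s of length $L$, classified dyadically as $C_j$ when $2^{j-1}\leq L<2^j$, with relation $C_k\hookrightarrow C_{k'}$ iff $|k-k'|\leq M_0$ and good set $\{C_j:j\leq M_0\}$. Lemma~\ref{roughisometrylemma} then converts an $R$-embedding of these gap sequences into a rough isometry of the point sets, and the dyadic classification is what turns the doubly-exponential gap-length tail into the polynomial embedding-probability tail~(\ref{tailx}) that the General Theorem requires.

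Your sketch has no analogue of this encoding, and that is where it breaks. The reduction to ``a bi-Lipschitz map of the index line'' is not sufficient on its own: an index map can still fail if it routes an occupied interval of $X$ into a long empty stretch of $Y$, so the map must respect gap structure, not just counts. More seriously, the central obstruction --- long gaps at all scales must be paired with gaps of comparable length in the other process --- is exactly what the relation $|k-k'|\leq M_0$ is for, and a good/bad classification based on point-count concentration does not track it: a scale-$n$ block of $X$ with one very long internal gap can have a typical total count yet has almost no compatible partner, with embedding probability decaying doubly-exponentially in the gap size. Without the dyadic reclassification of gaps you cannot verify the base-case tail condition $\P(S_0^{\mathbb{X}}(X)\leq p)\leq p^{m+1}L_0^{-\beta}$, which the paper checks explicitly (it comes out to $p^{2^{M_0-1}}$ for large $M_0$); with that condition unavailable, the induction on defect densities you describe has no engine to run on.
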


Again the challenge is to find a good matching on all scales, in this case to  the long gaps in the each point processes with ones of proportional length in the other.  The isometries we find are also weakly increasing answering a further question of Peled~\cite{Peled:10}.

Our final result is the compatible sequence problem of Winkler.  Given two independent sequence $\{X_i\}_{i\in\N}$ and $\{Y_i\}_{i\in\N}$ of independent identically distributed  $\mbox{Ber}(q)$ random variables we say they are compatible if after removing some zeros from both sequences, there is no index with a 1 in both sequence.  Equivalently there exist increasing subsequences $i_1,i_2,\ldots,$ (respectively $i_1'\ldots$) such that if $X_j=1$ then $j=i_k$ for some $k$ (resp. if $Y_j=1$ then $j=i_k'$) so that for all $k$, we have $X_{i_k}Y_{i_k}=0$.  We give a new proof of the following result of G\'{a}cs~\cite{Gacs:04}.

\begin{maintheorem}\label{t:compatible}
For sufficiently small $q>0$ two independent $\mbox{Ber}(q)$ sequences $\{X_i\}_{i\in\N}$ and $\{Y_i\}_{i\in\N}$ are compatible with positive probability.
\end{maintheorem}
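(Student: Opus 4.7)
The plan is to recast compatibility as an oriented site percolation problem on $\Z_{\geq 0}^2$: declare site $(i,j)$ open iff $X_i Y_j = 0$, and observe that an increasing compatible matching is exactly an infinite up-right lattice path starting from the origin, consisting of unit steps right or up, all of whose sites are open and which visits every column $i$ with $X_i=1$ and every row $j$ with $Y_j=1$. The difficulty is that sites on the same row or column share randomness, so long blocks of $1$s in either sequence create unavoidable local obstructions, and a naive Peierls bound cannot close. This forces a multi-scale renormalization, which I would carry out within the framework developed in the paper for Theorems~\ref{t:embed} and~\ref{t:RIPoisson}.

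I would fix a sequence of scales $L_0 \ll L_1 \ll L_2 \ll \cdots$ growing at least doubly exponentially, and partition $\Z$ into intervals of length $L_k$ at each scale. At scale $0$ a block is \emph{good} if it contains no $1$; for $q$ small this occurs with probability $1-O(qL_0)$. At scale $k \geq 1$, a length-$L_k$ block in $X$ (resp.~$Y$) is good if at most a small prescribed number of its scale-$(k-1)$ sub-blocks are bad, and these bad sub-blocks are sufficiently well-separated within the block. The target is to show, inductively, that a scale-$k$ block is bad with probability $q_k$ which decays faster than exponentially in $L_k$, so that $\sum_k q_k < \infty$ and a Borel--Cantelli/contour argument places the origin in an infinite good region almost surely on a set of positive probability.

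The key deterministic input, which is the engine of the framework, is a matching/routing lemma at each scale: given a good scale-$k$ block of $X$ paired with a good scale-$k$ block of $Y$, and given any admissible entry and exit coordinates on the boundary of the corresponding $L_k \times L_k$ rectangle lying in a suitable \emph{flexibility window}, one can construct an up-right open path through the rectangle that consumes all the $1$s in both blocks and moreover leaves a definite reserve of flexibility in its own choice of entry/exit. I would prove this inductively: the sub-paths through good scale-$(k-1)$ sub-blocks are obtained from the inductive hypothesis, and the few bad scale-$(k-1)$ sub-blocks are \emph{skirted} by using the accumulated scale-$(k-1)$ flexibility to re-route. The separation condition in the goodness definition guarantees enough room between bad sub-blocks to absorb the required re-routing without exhausting the budget. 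Combined with a coarse stochastic-domination step showing that the pattern of bad scale-$k$ blocks is dominated by a highly subcritical percolation process, this yields the desired infinite open, ``all-$1$s-consuming'' path, hence compatibility.

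The main obstacle, as in Theorems~\ref{t:embed} and~\ref{t:RIPoisson}, is the simultaneous calibration of the scale sequence $L_k$, the allowed defect density at each scale, and the amount of reserved flexibility in the matching lemma, so that the inductive inequality bounding $q_k$ in terms of $q_{k-1}$ strictly improves at every step. The routing argument must spend some flexibility to cross bad sub-blocks, and must nonetheless emerge with enough flexibility to be usable one scale higher; making this budget balance cleanly, while handling the joint dependence on both $X$ and $Y$ introduced by coupling good $X$-blocks with good $Y$-blocks, is where the real work lies. The advantage over G\'acs's original argument is that the same multi-scale induction, once set up for Theorem~\ref{t:embed}, requires only cosmetic modifications in the scale-$0$ estimates (replacing balanced Bernoulli statistics with the low-density $\mathrm{Ber}(q)$ ones) to yield Theorem~\ref{t:compatible}.
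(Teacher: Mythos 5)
Your multi-scale shape (good blocks, inductive routing, Borel--Cantelli on a doubly exponential scale sequence) is the right kind of argument, but the binary good/bad dichotomy you propose is too coarse and the induction would not close. The key missing idea --- and in fact the central technical device of the paper's Theorem~\ref{metatheorem}, which is what actually proves Theorem~\ref{t:compatible} --- is the recursive \emph{power-law tail bound on the embedding probability} $S_j^{\mathbb{X}}(X)=\P(X\hookrightarrow Y\mid X)$, namely $\P(S_j^{\mathbb{X}}(X)\le p)\le p^{m_j}L_j^{-\beta}$. Your good blocks are allowed a few bad sub-blocks, to be ``skirted'' with accumulated flexibility, but the probability that a particular bad $X$-sub-block can be paired with a given $Y$-stretch is precisely $S_j^{\mathbb{X}}$ of that sub-block, and that quantity ranges over many orders of magnitude: a bad sub-block may be barely bad (one stray $1$), or it may contain a run of $1$s of length $\Theta(\log L_j)$, which happens with probability only polynomial in $L_j$ and therefore cannot be excluded from the good class at any scale. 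A binary classification gives you no way to show the routing survives such a block, nor anything resembling the key recursive estimate (Lemma~\ref{l:totalSizeBound}) that controls the \emph{product} of sub-block embedding probabilities across a block. The paper's fix is to demand, inside the good-block definition, that all bad sub-blocks be ``semi-bad'' ($S_j^{\mathbb{X}}\ge 1-1/(20k_0R_{j+1}^{+})$) and that every length-$L_j^{3/2}$ window of sub-blocks be ``strong'' (each semi-bad block embeds into a $1-O(1/k_0R_{j+1}^{+})$ fraction of it), and then to carry the tail bound on $S_j^{\mathbb{X}}$ through the induction; ``well-separated'' is not a substitute for any of this.

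Two smaller notes. Your target of $q_k$ decaying ``faster than exponentially in $L_k$'' is not achievable (again because of $\Theta(\log L_k)$-runs of $1$s occurring at polynomial cost); the paper contents itself with $\P(\text{bad})\le L_j^{-\delta}$, which is enough since $\sum_j L_j^{3-\delta}<\infty$. Also, once Theorem~\ref{metatheorem} is in hand, the paper's proof of Theorem~\ref{t:compatible} really is the cosmetic reduction you anticipate: set $\mathcal{C}^{\mathbb{X}}=\mathcal{C}^{\mathbb{Y}}=\{0,1\}$, take the relation $\{0\hookrightarrow 0,\ 0\hookrightarrow 1,\ 1\hookrightarrow 0\}$ (but not $1\hookrightarrow1$) with good symbols $\{0\}$, check that an $R$-embedding yields a compatible deletion (Lemma~\ref{compatiblelemma}), and verify the base-case hypotheses for $q\le L_0^{-\delta}$ (Lemma~\ref{compatiblelemma2}). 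But that lives in a one-dimensional sequence-embedding formulation, not a two-dimensional oriented-percolation routing; your framing is closer in spirit to G\'acs's ``mazery'' machinery, which the paper expressly contrasts itself with by declining to anatomize \emph{why} particular configurations are hard to embed and working only with the scalar $S_j$ and its tail.
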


Our proof is different and we believe more transparent enabling us to state a concise induction step (see Theorem~\ref{induction}).  Other recent progress was made on this problem by Kesten et. al.~\cite{KLSV:12} constructing sequences which are compatible with a random sequence with positive probability.

Each of these results follows from an abstract theorem in the next section which applies to range of different models.  Like essentially all results in this area, our approach is multi-scale using renormalization.  The novelty of our approach is that, as far as possible, we ignore the anatomy of what makes different configurations difficult to embed and instead consider simply the probability that they can be embedded into a random block proving recursive power-law estimates for these quantities.

{\bf Independent Results}  Two other researchers have also solved some of these problems independently.  Vladas Sidoravicius~\cite{Sidoravicius:12} solved the same set of problems and described his approach to us which is quite different from ours.
His work is based on a different multi-scale approach, proving that for
certain choices of parameters $p_1$ and $p_2$ one can see random binary
sequence sampled with parameter $p_1$ in the scenery determined by another
binary sequence sampled with parameter $p_2$, with positive probability.
This generalizes the main theorem of~\cite{KSV:12}. This result, and a slight modification of it then implies Theorems~\ref{t:embed},~\ref{t:RIPoisson} and~\ref{t:compatible}.

Shortly before first uploading this paper Peter G\'{a}cs sent us a draft of his paper~\cite{Gacs:12} solving Theorem~\ref{t:embed}.  His approach extends his work on the scheduling problem~\cite{Gacs:11}.  The proof is geometric taking a percolation type view and involves a complex multi-scale system of structures called ``mazeries'' involving ``walls'', ``holes'' and ``traps''.

Our work was done completely independently of both.

\subsection{General Theorem}\label{s:abstract}

To apply to a range of problems we need to consider larger alphabets of symbols.  Let $\mathcal{C}^{\mathbb{X}}=\{C_1,C_2,\ldots\}$ and $\mathcal{C}^{\mathbb{Y}}=\{C'_1,C'_2,\ldots\}$ be a pair of countable alphabets and let $\mu^{\mathbb{X}}$ and $\mu^{\mathbb{Y}}$ be probability measures on $\mathcal{C}^{\mathbb{X}}$ and $\mathcal{C}^{\mathbb{Y}}$ respectively.

We will suppose also that we have a relation $\mathcal{R}\subseteq \mathcal{C}^{\mathbb{X}}\times \mathcal{C}^{\mathbb{Y}}$. If $(C_i,C'_k)\in \mathcal{R}$, we denote this by $C_i \hookrightarrow C'_k$. Let $G_0^{\mathbb{X}}\subseteq \mathcal{C}^{\mathbb{X}}$ and $G_0^{\mathbb{Y}}\subseteq \mathcal{C}^{\mathbb{Y}}$ be two given subsets such that $C_i\in G_0^{\mathbb{X}}$ and $C'_k\in G_0^{\mathbb{Y}}$ implies $C_i\hookrightarrow C'_k$. Symbols in $G_0^{\mathbb{X}}$ and $G_0^{\mathbb{Y}}$ will be referred  to as ``good''.

\subsubsection{Definitions}
Now let $\mathbb{X}=(X_1,X_2,\ldots)$ and $\mathbb{Y}=(Y_1,Y_2,\ldots)$ be two sequences of symbols coming from the alphabets $\mathcal{C}^{\mathbb{X}}$ and $\mathcal{C}^{\mathbb{Y}}$ respectively. We will refer to such sequences as an $\mathbb{X}$-sequence and a $\mathbb{Y}$-sequence respectively. For $1\leq i_1 < i_2$, we call the subsequence $(X_{i_1},X_{i_1+1},\ldots , X_{i_2})$ the ``$[i_1,i_2]$-segment" of $\mathbb{X}$ and denote it by $\mathbb{X}^{[i_1,i_2]}$. We call $\mathbb{X}^{[i_1,i_2]}$   a ``good" segment if $X_i\in G_0^{\mathbb{X}}$ for $i_1\leq i\leq i_2$ and similarly for $\mathbb{Y}$.

Let $R$ be a fixed constant. Let $R_0=2R$, $R_0^{-}=1=$, $R_0^{+}=3R^2$.

\begin{definition}
\label{mapstodefinition}
Let $\mathbb{X}$ and $\mathbb{Y}$ be sequences as above. We say that $\mathbb{X}$ $R$-embeds or $R$-maps into $\mathbb{Y}$, denoted $\mathbb{X}\hookrightarrow_R \mathbb{Y}$ if there exists $0=i_0<i_1<i_2<\ldots$ and $0=i'_0<i'_1<i'_2< \ldots$ satisfying the following conditions.

\begin{enumerate}
\item For each $r\geq 0$, either $i_{r+1}-i_{r}=i'_{r+1}-i'_{r}=1$ or $i_{r+1}-i_{r}=R_0$ or $i'_{r+1}-i'_{r}=R_0$.
\item If $i_{r+1}-i_{r}=i'_{r+1}-i'_{r}=1$, then $X_{i_{r}+1}\hookrightarrow Y_{i'_{r}+1}$.
\item If $i_{r+1}-i_{r}=R_0$, then $R_0^{-}\leq i'_{r+1}-i'_{r}\leq R_0^{+}$, and both $\mathbb{X}^{[i_r+1,i_{r+1}]}$ and $\mathbb{Y}^{[i'_{r}+1,i'_{r+1}]}$ are good segments.
\item If $i'_{r+1}-i'_{r}=R_0$, then $R_0^{-}\leq i_{r+1}-i_{r}\leq R_0^{+}$, and both $\mathbb{X}^{[i_r+1,i_{r+1}]}$ and $\mathbb{Y}^{[i'_{r}+1,i'_{r+1}]}$ are good segments.
\end{enumerate}
\end{definition}

Throughout we will use a fixed $R$ defined in Theorem~\ref{metatheorem} and  will simply refer to mappings and write that $\mathbb{X}\hookrightarrow \mathbb{Y}$ except where it is ambiguous. The above definition can be modified in an obvious way to define $X\hookrightarrow Y$ when $X$ and $Y$ are finite $\mathbb{X}$ and $\mathbb{Y}$ subsequences respectively.  A key element in our proof is tail estimates on the probability that we can map a block $X$ into a random block $Y$ and so we make the following definition.

\begin{definition} For $X \in\mathcal{C^{\mathbb{X}}}$, we define the \emph{embedding probability} of $X$ as $S_0^{\mathbb{X}}(X)=\mathbb{P}(X\hookrightarrow Y|X)$ where $Y\sim\mu^{\mathbb{Y}}$.  We define $S_0^{\mathbb{Y}}(Y)$ similarly and suppress the notation $\mathbb{X},\mathbb{Y}$ when the context is clear.
\end{definition}

\subsubsection{General Theorem}
We can now state our general theorem which will imply the main results of the paper as shown in \S~\ref{s:applicaiton}.

\begin{maintheorem}[General Theorem]
\label{metatheorem}
There exist positive constants $\beta$, $\delta$, $m$, $R$ such that for all large enough $L_0$ the following hold.
Let $X\sim \mu^{\mathbb{X}}$ and $Y\sim \mu^{\mathbb{Y}}$ be distributions on alphabets such that for all $k\geq L_0$,
\begin{equation}
\label{mux}
\mu^{\mathbb{X}}(\{C_{k+1},C_{k+2},\ldots \})\leq \frac{1}{k}, \quad \mu^{\mathbb{Y}}(\{C'_{k+1},C'_{k+2},\ldots \})\leq \frac{1}{k}
\end{equation}
Suppose the following conditions are satisfied
\begin{enumerate}
\item
For all $0< p \leq 1-L_0^{-1}$,
\begin{equation}
\label{tailx}
\P(S_0^{\mathbb{X}}(X)\leq p)\leq p^{m+1}L_0^{-\beta},\quad
\P(S_0^{\mathbb{Y}}(Y)\leq p)\leq p^{m+1}L_0^{-\beta}.
\end{equation}
\item Most blocks are good,
\begin{equation}
\label{goodxstatement}
\P(X\in G_0^{\mathbb{X}})\geq 1-L_0^{-\delta}, \quad \P(Y\in G_0^{\mathbb{Y}})\geq 1-L_0^{-\delta}.
\end{equation}
\end{enumerate}
Then for $\mathbb{X}=(X_1,X_2,\ldots)$ and $\mathbb{Y}=(Y_1,Y_2,\ldots)$, two independent sequences with laws $\mu^{\mathbb{X}}$ and $\mu^{\mathbb{Y}}$ respectively, we have
$$\mathbb{P}(\mathbb{X}\hookrightarrow_R \mathbb{Y})>0.$$
\end{maintheorem}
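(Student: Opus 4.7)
The plan is a multi-scale renormalization argument, fitting the problem's self-similar structure. Fix a growth exponent $\alpha>1$ and define scales $L_n = L_0^{\alpha^n}$. At scale $n$, regard each length-$L_n$ block of $\mathbb{X}$ (resp.\ $\mathbb{Y}$) as a single symbol in a new alphabet $\mathcal{C}_n^{\mathbb{X}}$ (resp.\ $\mathcal{C}_n^{\mathbb{Y}}$) with induced measure $\mu_n$, and declare a scale-$n$ $\mathbb{X}$-symbol to map into a scale-$n$ $\mathbb{Y}$-symbol iff the underlying length-$L_n$ strings admit an $R$-embedding at scale $n{-}1$ in the sense of Definition~\ref{mapstodefinition}. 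Good sets $G_n^{\mathbb{X}},G_n^{\mathbb{Y}}$ are defined hierarchically: a scale-$n$ block is good if almost all its scale-$(n{-}1)$ sub-blocks are good with no long bad cluster, and $S_{n-1}$ is rarely too small along the block. The target is to show, by induction on $n$, that the hypotheses of Theorem~\ref{metatheorem} continue to hold at each scale with the \emph{same} constants $\beta,\delta,m,R$, and with $L_0$ replaced by $L_n$.

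The entire argument reduces to a single inductive step (Theorem~\ref{induction}), in which three estimates propagate from scale $n$ to scale $n{+}1$. The alphabet tail \eqref{mux} is essentially free, since a $1/k$ bound is stable under block concatenation. The good-block probability \eqref{goodxstatement} follows from a Chernoff bound on the number of bad scale-$n$ sub-blocks inside a scale-$(n{+}1)$ block, together with a union bound over bad-cluster locations; this gives $\P(X_{n+1}\notin G_{n+1}^{\mathbb{X}}) \leq L_{n+1}^{-\delta}$ provided $\alpha$ is chosen large enough compared with $\delta$.

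The heart of the argument is propagating the tail bound \eqref{tailx} on the embedding probability. To bound $\P(S_{n+1}(X_{n+1})\leq p)$, view the embedding of $X_{n+1}$ into a fresh random $Y_{n+1}$ as a scale-$n$ $R$-embedding problem: each scale-$n$ sub-block of $X_{n+1}$ may be either placed directly (succeeding with probability $\sim S_n$) or skipped by a block-level macro move, which requires $R_0^-$ to $R_0^+$ consecutive good scale-$n$ neighbours on the $\mathbb{Y}$ side and thus costs only a power of $L_n^{-\delta}$ by \eqref{goodxstatement}. The overall success probability then factorizes (up to combinatorial choices of macro-move locations), and the inductive hypothesis on $S_n$ converts the individual tail bounds into the desired tail on $S_{n+1}$. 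The main obstacle is the delicate bookkeeping here: a single scale-$n$ sub-block with exceptionally small $S_n$ can depress $S_{n+1}$ dramatically, so the tail contribution of the worst sub-blocks, the combinatorial entropy of macro-move placements, and the Chernoff loss from bad clusters must all be balanced against the inductive $L_n^{-\beta}$ slack. This is exactly why the hypothesis takes the power-law form $p^{m+1}L_0^{-\beta}$: the exponent $m$ controls the polynomial loss to be absorbed, while the $L_0^{-\beta}$ provides room for union bounds at each scale transition. Tuning $\alpha,\beta,\delta,m$ so that these effects close, one obtains $\P(S_{n+1}(X_{n+1})\leq p) \leq p^{m+1} L_{n+1}^{-\beta}$ with the same exponent.

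Once the induction is in place, $S_n(X_n)$ and $S_n(Y_n)$ concentrate near $1$ at every scale, so a scale-$n$ $\mathbb{X}$-block embeds into an independent scale-$n$ $\mathbb{Y}$-block with probability approaching $1$. A top-down construction — matching good blocks at the highest available scale, and using higher-scale macro moves to absorb the rare bad sub-blocks below — then produces an $R$-embedding of the infinite sequences with positive probability, establishing Theorem~\ref{metatheorem}.
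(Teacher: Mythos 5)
Your high-level picture—multi-scale renormalization with doubly-exponential scales, tracking a tail bound on the embedding probability $S_n$ and a density bound on good blocks—is the same as the paper's, but two concrete design choices in your outline do not close.

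First, fixed-length blocks of length $L_n$ create a boundary problem that the paper goes to some trouble to engineer away. An $R$-embedding (Definition~\ref{mapstodefinition}) is precisely the freedom to advance the $\mathbb{Y}$-index at a different rate from the $\mathbb{X}$-index via macro moves, so the image of an $\mathbb{X}$-segment naturally has variable length. If one insists that a scale-$(n{+}1)$ $\mathbb{X}$-block and the scale-$(n{+}1)$ $\mathbb{Y}$-block it targets each contain exactly $L_n^{\alpha-1}$ sub-blocks, the macro moves must balance exactly, and worse, bad sub-blocks can sit at block boundaries where there is no slack to reroute them. The paper instead constructs random-length blocks that end (and, except for the first block, begin) with $L_j^3$ consecutive good sub-blocks, uses a geometric overshoot so that length and bad-block count are decoupled, and proves a separate length estimate (\eqref{lengthx}--\eqref{lengthy}) plus a compression/expansion property (\eqref{compress}) for good segments. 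These ingredients are what make the recursive construction of admissible generalized mappings (Propositions~\ref{compresspartition}, \ref{mapcase2}, \ref{mapcase3}) possible; your proposal has no substitute for them. Second, your claim that the tail exponent $m+1$ can be carried through unchanged is not what the paper proves: the recursion runs with $m_j = m + 2^{-j}$, a strictly decreasing sequence, and the slack $m_j - m_{j+1} = 2^{-(j+1)}$ is used essentially in the Gamma-tail estimate (\eqref{e:MaxGammaDensity}--\eqref{e:fsGamma2}) inside Lemma~\ref{l:totalSizeBound}. Keeping the exponent constant would make that step fail. More broadly, the phrase ``the overall success probability then factorizes (up to combinatorial choices of macro-move locations), and the inductive hypothesis on $S_n$ converts the individual tail bounds into the desired tail'' names the crux without filling it in: the paper splits this into five cases, uses a stochastic-domination bound (Lemma~\ref{l:badBlockStocDomination}) and a union-over-shifts device (choosing $L_j$ disjoint admissible maps out of $L_j^2$) to gain a factor of $L_j$, and needs the notions of semi-bad blocks and strong subsequences to certify that good blocks really do embed into good blocks. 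Absent these, the inductive step as sketched does not yield a complete argument.
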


\subsection{Proof Outline}
The proof makes use of a number of parameters, $\alpha,\beta,\delta, m, k_0, R$ and $L_0$ which must satisfy a number of relations described in the next subsection.
Our proof is multi-scale and divides the sequences into blocks on a series of doubly exponentially growing length scales $L_j=L_0^{\alpha^j}$ for $j\geq 0$ and at each of these levels we define a notion of a ``good'' block.  Single characters in the base sequences $\mathbb{X}$ and $\mathbb{Y}$ constitute the level 0 blocks.

Suppose that we have constructed the blocks up to level $j$ denoting the sequence as $(X_1^{(j)},X_2^{(j)}\ldots)$.
In \S~\ref{s:prelim} we give a construction of $(j+1)$-level blocks out of $j$-level sub-blocks in such way that the blocks are independent and apart, from the first block, identically distributed and that the first and last $L_j^3$ sub-blocks of each block are good.  For more details see \S~\ref{s:prelim}.

At each level we distinguish a set of blocks to be good.  In particular this will be done in such a way that at each level \emph{any} good block maps into \emph{any} other good block.  Moreover, any segment of $R_j=4^j (2R)$ good $\mathbb{X}$-blocks will map into any segment of $\mathbb{Y}$-blocks of length between $R_j^-=4^j(2-2^{-j})$ and $R_j^+=4^jR^2(2+2^{-j})$ and vice-versa.
This property of certain mappings will allow us to avoid complicated conditioning issues.  Moreover, being able to map good segments into shorter or longer segments will give us the flexibility to find suitable partners for difficult to embed blocks and to achieve improving estimates of the probability of mapping random $j$-level blocks $X \hookrightarrow Y$.  In \S~\ref{s:prelim} we describe how to define good blocks.

The proof then involves a series of recursive estimate at each level given in \S~\ref{s:recursive}.  We ask that at level $j$ the probability that a block is good is at least $1-L_j^{-\delta}$ so that the vast majority of blocks are good.  Furthermore, we show tail bounds on the embedding probabilities showing that for $0<p\leq 1-L_j^{-1}$,
\[
\P(S_j^{\mathbb{X}}(X)\leq p)\leq p^{m+2^{-j}}L_j^{-\beta}
\]
where $S_j^{\mathbb{X}}(X)$ denotes the $j$-level embedding probability $\P[X \hookrightarrow Y|X]$ for $X,Y$ random independent $j$-level blocks.  We show the analogous bound for $\mathbb{Y}$-blocks as well.  This is essentially the best we can hope for -- we cannot expect a better than power-law bound here because of the probability of occurrences of sequences of repeating symbols in the base 0-level sequence of length $C \log (L_j^{\alpha})$ for large $C$.
We also ask that good blocks have the properties described above and that the length of blocks satisfy an exponential tail estimate.
The full inductive step is given in \S~\ref{induction}.  Proving this constitutes the main work of the paper.

The key quantitative estimate in the paper is Lemma~\ref{l:totalSizeBound} which follows directly from the recursive estimates and bounds the chance of a block having an excessive length, many bad sub-blocks or a particularly difficult collection of sub-blocks measured by the product of their embedding probabilities.  In order to achieve the improving embedding probabilities at each level we need to take advantage of the flexibility in mapping a small collection of very bad blocks to a large number of possible partners by mapping the good blocks around them into longer or shorter segments using the inductive assumptions.   To this effect we define families of mappings between partitions to describe such potential mappings.  Because $m$ is large and we take many independent trials  the estimate at the next level improves significantly.  Our analysis is split  into 5 different cases.

To show that good blocks have the required properties we construct them so that they have at most $k_0$ bad sub-blocks all of which are ``semi-bad" (defined in \S~\ref{s:prelim}) in particular with embedding probability at least $(1-\frac{1}{20k_0R_{j+1}^{+}})$.  We also require that each subsequence of sub-blocks is ``strong'' in that every semi-bad block maps into a large proportion of the sub-blocks.  Under these condition we show that for any good blocks $X$ and $Y$ at least one of our families of mappings gives an embedding.  This holds similarly for embeddings of segments of good blocks.

To complete the proof we note that with positive probability $X_1^{(j)}$ and $Y_1^{(j)}$ are good for all $j$ with positive probability.  This gives a sequence of embeddings of increasing segments of $\mathbb{X}$ and $\mathbb{Y}$ and by taking a converging subsequential limit we can construct an $R$-embedding of the infinite sequences completing the proof.

We can also give deterministic constructions using our results.  In Section~\ref{s:deterministic} we construct a deterministic sequence which has an $M$-Lipshitz embedding into a random binary sequence in the sense of Theorem~\ref{t:embed} with positive probability.  Similarly, this approach  gives a binary sequence with a positive density of ones which is compatible sequence with a random $\mbox{Ber}(q)$ sequence in the sense of Theorem~\ref{t:compatible} for small enough $q>0$ with positive probability.

\subsubsection{Parameters}\label{s:parameters}
Our proof involves a collection of parameters $\alpha,\beta,\delta,k_0,m$ and $R$ which must satisfy a system of constraints.  The required constraints are
\[
\alpha>9, \delta>2\alpha \vee 48, \beta>\alpha(\delta+1), m>9\alpha\beta, k_0> 36\alpha\beta, R> 6(m+1).
\]
To fix on a choice we will set
\begin{equation}\label{e:parameters}
\alpha=10, \delta=50, \beta=600, m=60000, k_0=300000, R=400000.
\end{equation}
Given these choices we then take $L_0$ to be sufficiently large.  We did not make a serious attempt to optimize the parameters or constraints and indeed at times did not in order simplify the exposition.

\subsection{Organization of the paper}
In Section~\ref{s:applicaiton} we show how to derive Theorems~\ref{t:embed}, \ref{t:RIPoisson} and~\ref{t:compatible} from our general Theorem~\ref{metatheorem}.  In Section~\ref{s:prelim} we describe our block constructions and formally define good blocks.  In Section~\ref{s:recursive} we state the main recursive theorem and show that it implies Theorem~\ref{metatheorem}.  In Sections~\ref{s:notation} and \ref{s:construction} we construct a collection of generalized mappings of partitions which we will use to describe our mappings between blocks.  In Section~\ref{s:tailestimate} we prove the main recursive tail estimates on the embedding probabilities.  In Section~\ref{s:length} we prove the recursive length estimates on the blocks.  In Section~\ref{s:good} we show that good blocks have the required inductive properties.  Finally in Section~\ref{s:deterministic} we describe how these results yield deterministic sequences with positive probabilities of $M$-Lipshitz embedding or being a compatible sequence.

\section{Applications to Lipschitz Embeddings, Rough Isometries and Compatible Sequences}\label{s:applicaiton}
In this section we show how Theorem~\ref{metatheorem} can be used to derive our three main results.
\subsection{Lipschitz Embeddings}
\subsubsection{Defining the sequences $\mathbb{X}$ and $\mathbb{Y}$ and the alphabets $\mathcal{C}^{\mathbb{X}}$ and $\mathcal{C}^{\mathbb{Y}}$}
Let $X^{*}=\{X_i^{*}\}_{i\geq 1}$ and $Y^{*}=\{Y_i^{*}\}_{i\geq 1}$ be two independent sequences of i.i.d. $Ber(\frac{1}{2})$ variables. Let $\tilde{Y^{*}}=\{\tilde{Y_i^{*}}\}$ be the sequence given by $\tilde{Y_i^{*}}={Y^*}^{[(i-1)M_0+1,iM_0]}$. Now let us divide the $\{0,1\}$ sequences of length $M_0$ in the following 3 classes.
\begin{enumerate}
\item Class $\mathbf{\star}$. Let $Z=(Z_1,Z_2,\cdots ,Z_{M_0})$ be a sequence of $0$'s and $1$'s. A length 2-subsequence $(Z_i,Z_{i+1})$ is called a ``flip" if $Z_i\neq Z_{i+1}$. We say $Z\in \mathbf{*}$ if the number of flips in $Z$ is at least $2R_0^{+}$.

\item Class $\mathbf{0}$. If $Z=(Z_1,Z_2,\cdots ,Z_{M_0})\notin \mathbf{\star}$ and
$Z$ contains more $0$'s than $1$'s, then $Z\in \mathbf{0}$.

\item Class $\mathbf{1}$. If $Z=(Z_1,Z_2,\cdots ,Z_{M_0})\notin \mathbf{\star}$
and $Z$ contains more $1$'s than $0$'s, then $Z\in \mathbf{1}$. For definiteness, let us also define $Z\in \mathbf{1}$, if $Z$ contains equal number of $0$'s and $1$'s and $Z\notin \mathbf{\star}$.
\end{enumerate}

Now set $\mathbb{X}=(X_1,X_2,\ldots)=X^{*}$ and construct $\mathbb{Y}=(Y_1,Y_2,\ldots)$ from $\tilde{Y^{*}}$ as follows. Set $Y_i=\mathbf{0}, \mathbf{1}$ or $\mathbf{\star}$ according as whether $\tilde{Y_i^*}\in \mathbf{0}, \mathbf{1}$ or $\mathbf{\star}$.

It is clear from this definition that $\mathbb{X}=(X_1,X_2,\ldots)$ and $\mathbb{Y}=(Y_1,Y_2,\ldots)$ are two independent sequences of i.i.d. symbols coming from the alphabets $\mathcal{C}^{\mathbb{X}}$ and $\mathcal{C}^{\mathbb{Y}}$ having distributions $\mu^{\mathbb{X}}$ and $\mu^{\mathbb{Y}}$ respectively where
$$\mathcal{C}^{\mathbb{X}}=\{0,1\}, \mathcal{C}^{\mathbb{Y}}=\{\mathbf{0}, \mathbf{1}, \mathbf{\star}\}.$$
We take $\mu^{\mathbb{X}}$ to be the uniform measure on $\{0,1\}$ and $\mu^{\mathbb{Y}}$
to be the natural measure on $\{\mathbf{0},\mathbf{1},\mathbf{\star}\}$ induced by the independent $Ber(\frac{1}{2})$ variables.

We take the relation $\mathcal{R}\subseteq \mathcal{C}^{\mathbb{X}}\times \mathcal{C}^{\mathbb{Y}}$ to be: \{$0\hookrightarrow \mathbf{0}, 0\hookrightarrow \mathbf{\star}$, $1\hookrightarrow \mathbf{1}, 1\hookrightarrow \mathbf{\star}$\} and the good sets $G_0^{\mathbb{X}}=\{0,1\}$ and $G_0^{\mathbb{Y}}=\{\mathbb{\star}\}$.

It is now very easy to verify that $\mathcal{C}^{\mathbb{X}},\mathcal{C}^{\mathbb{Y}},\mu^{\mathbb{X}},\mu^{\mathbb{Y}}, \mathcal{R},G_0^{\mathbb{X}}, G_0^{\mathbb{Y}}$, as defined above satisfies all the conditions described in our abstract framework.


\subsubsection{Constructing the Lipschitz Embedding}
Now we verify that the the sequences $\mathbb{X}$ and $\mathbb{Y}$ constructed from the binary sequences $X^{*}$ and $Y^{*}$ can be used to construct an embedding with positive probability. We start with the following lemma.

\begin{lemma}
\label{embeddinglemma}
Let $X^{*}=\{X_i^{*}\}_{i\geq 1}$ and $Y^{*}=\{Y_i^{*}\}_{i\geq 1}$ be two independent sequences of i.i.d. $Ber(\frac{1}{2})$ variables. Let $\mathbb{X}$ and $\mathbb{Y}$ be the sequences constructed from $X^{*}$ and $Y^{*}$ as above. There is a constant $M$, such that whenever $\mathbb{X}\hookrightarrow \mathbb{Y}$, there exists a strictly increasing map $\phi: \mathbb{N}\rightarrow \mathbb{N}$ such that for all $i,j\in \mathbb{N}$, $X_{i}=Y_{\phi(i)}$ and $|\phi(i)-\phi(j)|\leq M|i-j|$, $\phi(1)\leq M/2$.
\end{lemma}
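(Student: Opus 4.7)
The plan is to translate the block-level $R$-embedding $\mathbb{X}\hookrightarrow\mathbb{Y}$ given by Definition~\ref{mapstodefinition} into a character-level strictly increasing $M$-Lipschitz map $\phi$, with $M$ of order $R_0^+ M_0$. Recalling that $\mathbb{X}=X^*$ and each symbol $Y_j$ encodes the consecutive window $\mathbf{I}_j=\{(j-1)M_0+1,\dots,jM_0\}$ of $Y^*$, I will construct $\phi$ separately on each step $\{i_r+1,\dots,i_{r+1}\}$ of the block-level embedding and arrange for its image to lie in the $Y^*$-window $\bigcup_{j=i'_r+1}^{i'_{r+1}}\mathbf{I}_j$ corresponding to that step.

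The central combinatorial input is a greedy-embedding lemma for star blocks: if $Y_j=\mathbf{\star}$ then $\mathbf{I}_j$ has at least $2R_0^+$ flips, and hence at least $R_0^++1$ maximal runs of each symbol and at least $R_0^+\ge R_0$ copies of each symbol. A left-to-right greedy scan then shows that any binary word of length at most $R_0$ embeds as an increasing subsequence of $\mathbf{I}_j$: the hardest case is a constant word, which only requires $R_0\le R_0^+$ copies of a single symbol, and these are present. With this in hand the three cases of Definition~\ref{mapstodefinition} are immediate. In the one-to-one case the compatibility $X_{i_r+1}\hookrightarrow Y_{i'_r+1}$ forces $\mathbf{I}_{i'_r+1}$ to contain at least one copy of $X^*_{i_r+1}$, and I set $\phi(i_r+1)$ to be its first such occurrence. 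In Case~3 (an $R_0$-segment of X against $k\in[R_0^-,R_0^+]$ star Y-blocks) I embed the entire X-segment inside the single star block $\mathbf{I}_{i'_r+1}$ via the lemma. In Case~4 ($k\in[R_0^-,R_0^+]$ X-characters against $R_0$ star Y-blocks) I split the X-characters into consecutive groups of size at most $R_0$, which fits since $R_0\cdot R_0\ge R_0^+\ge k$, and apply the lemma inside each successive star block.

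The Lipschitz and monotonicity properties then reduce to a window-length calculation. Strict monotonicity is immediate: the greedy procedure inside each step returns strictly increasing positions, and positions from later steps live in later disjoint $Y^*$-windows. The worst case for the gap bound is when $\phi(i)$ is the last image produced in step $r$ and $\phi(i+1)$ is the first in step $r+1$, which yields $\phi(i+1)-\phi(i)\le(i'_{r+2}-i'_r)M_0\le 2R_0^+ M_0$; within a single step the gap is bounded by one window length $R_0^+ M_0$. Taking $M=2R_0^+ M_0$ therefore gives $|\phi(i)-\phi(j)|\le M|i-j|$ by telescoping, while $\phi(1)\le M_0\le M/2$ since in every variant of the first step $\phi(1)$ is placed inside the first block $\mathbf{I}_1$. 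The only non-trivial point is verifying the greedy-embedding lemma for star blocks; everything after that is bookkeeping.
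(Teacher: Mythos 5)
Your overall strategy matches the paper's: both translate the block-level $R$-embedding step by step into a character-level strictly increasing map, drawing on the abundance of flips in each $\star$-window, and then do routine Lipschitz bookkeeping. (The paper even avoids the arithmetic for $M$ by noting that the step sizes $i_{r+1}-i_r$ and $i'_{r+1}-i'_r$ are bounded, so any step-level embedding is automatically $M$-Lipschitz for $M$ large; and in your Case~4 the paper simply places all $\le R_0^+$ $X^*$-characters into the single first $\star$-window, which already has enough alternation, rather than distributing them over the $R_0$ windows.) However, your justification of the central greedy-embedding lemma contains a real flaw: the claim ``the hardest case is a constant word, which only requires $R_0\le R_0^+$ copies of a single symbol'' is not valid reasoning. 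Having many copies of each symbol is not sufficient to embed every word of that length as an increasing subsequence -- for instance $1110$ has no increasing embedding in $00001111$, even though the latter has four copies of each symbol. The correct argument is the one implicit in your earlier observation about runs: the $\star$-window has at least $2R_0^+ + 1$ maximal runs, a left-to-right greedy scan advances by at most two runs per word character, and hence it succeeds for any binary word of length up to $R_0^+$ (in particular up to $R_0$). With that fix in place, the remainder of your proof is sound and coincides with the paper's argument.
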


Before proceeding with the proof, let us make the following notation. We say $X^{*}\hookrightarrow _{*M} Y^{*}$ if a map $\phi$ satisfying the conditions of the lemma exists. Let us also make the following definition for finite subsequences.
\begin{definition}
Let ${X^{*}}^{[i_1,i_2]}$ and ${Y^{*}}^{[i'_1,i'_2]}$ be two segments of $X^{*}$ and $Y^{*}$ respectively. We say that ${X^{*}}^{[i_1,i_2]} \hookrightarrow _{* M} {Y^{*}}^{[i'_1,i'_2]}$ if there exists a strictly increasing $\tilde{\phi}: \{i_1,i_1+1,\ldots ,i_2\}\rightarrow  \{i'_1,i'_1+1,\ldots ,i'_2\}$ such that
\begin{enumerate}
\item[(i)] $X_k=Y_{\tilde{\phi}(k)}$ and $k,l \in \{i_1,i_1+1,\ldots ,i_2\}$ implies $|\phi(k)-\phi(l)|\leq M|k-l|$.
\item[(ii)] $\tilde{\phi}(i_1)-i'_1\leq M/3$ and $i'_2-\tilde{\phi}(i_2) \leq M/3$.
\end{enumerate}
\end{definition}

The following observation is trivial.
\begin{observation}
Let $0=i_0<i_2<\ldots$ and $0=i'_0<i'_2<\ldots$ be two increasing sequences of integers. If ${X^{*}}^{[i_k+1,i_{k+1}]}\hookrightarrow _{*M} {Y^{*}}^{[i'_k+1,i'_{k+1}]}$ for each $k\geq 0$, then $X^{*}\hookrightarrow _{* M} Y^{*}$.
\end{observation}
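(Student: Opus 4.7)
The plan is to define $\phi$ by concatenating the segmentwise maps. Writing $\tilde{\phi}_k$ for the strictly increasing map $\{i_k+1,\ldots,i_{k+1}\} \to \{i'_k+1,\ldots,i'_{k+1}\}$ supplied by the hypothesis ${X^*}^{[i_k+1,i_{k+1}]}\hookrightarrow_{*M} {Y^*}^{[i'_k+1,i'_{k+1}]}$, set $\phi(n) = \tilde{\phi}_k(n)$ whenever $n \in \{i_k+1,\ldots,i_{k+1}\}$. The matching equality $X^*_n = Y^*_{\phi(n)}$ is inherited directly from each $\tilde{\phi}_k$. Strict monotonicity is immediate inside each segment; at each boundary $n=i_{k+1}$ one has $\phi(i_{k+1}) \leq i'_{k+1}$ while $\phi(i_{k+1}+1) \geq i'_{k+1}+1$, since the two images lie in disjoint consecutive intervals. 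The bound $\phi(1) \leq M/2$ is condition (ii) for $\tilde{\phi}_0$ with $i_0 = i'_0 = 0$.

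The real content is the global Lipschitz inequality $\phi(j) - \phi(i) \leq M(j-i)$ for arbitrary $i < j$. When $i$ and $j$ lie in the same segment this is exactly condition (i). When $i$ lies in segment $k_1$ and $j$ in segment $k_2 > k_1$, I would combine (i) and (ii) in three places. First, for each fully-traversed intermediate segment,
\begin{equation*}
i'_{k+1} - i'_k \;\leq\; (\tilde{\phi}_k(i_k+1)-i'_k) + M(i_{k+1}-i_k-1) + (i'_{k+1}-\tilde{\phi}_k(i_{k+1})) \;\leq\; M(i_{k+1}-i_k) - M/3 + 1.
\end{equation*}
Second, within the first and last segments, (i) and (ii) yield the tail bounds $\tilde{\phi}_{k_1}(i) \geq i'_{k_1+1} - M/3 - M(i_{k_1+1}-i)$ and $\tilde{\phi}_{k_2}(j) \leq i'_{k_2} + 1 + M/3 + M(j-i_{k_2}-1)$. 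Telescoping to bound $i'_{k_2} - i'_{k_1+1}$ via the intermediate estimate and subtracting, the $M$-linear parts collapse exactly to $M(j-i) - M$, and one is left with
\begin{equation*}
\phi(j) - \phi(i) \;\leq\; M(j-i) + 1 - M/3 - (k_2-k_1-1)(M/3-1),
\end{equation*}
which is at most $M(j-i)$ for $M \geq 3$.

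I do not anticipate any substantive obstacle: the argument is essentially bookkeeping, after which the observation follows at once by combining strict monotonicity, the matching identity, the Lipschitz bound, and the bound at $\phi(1)$. The design point worth highlighting is that condition (ii) is calibrated precisely so that the boundary slack of $2M/3 + 1$ accrued when a pair $(i,j)$ crosses a segment boundary is absorbed by the extra $M$ that the Lipschitz inequality affords per unit increase in the domain; this is why the specific constant $M/3$, rather than some other fraction of $M$, appears in the definition of $\hookrightarrow_{*M}$ for finite segments.
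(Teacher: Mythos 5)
Your proof is correct and is precisely the bookkeeping argument the paper leaves implicit (the paper simply asserts the observation is trivial and offers no proof). The concatenated map is well-defined, monotone, and satisfies the matching identity for the reasons you give; your telescoping computation is exact, including the additive $+1$'s, which genuinely arise from the offset $i'_k+1$ at the left endpoint of each target segment, and the final bound $\phi(j)-\phi(i)\leq M(j-i)-M/3+1-(k_2-k_1-1)(M/3-1)\leq M(j-i)$ holds for $M\geq 3$. The one point worth stating a bit more carefully is $\phi(1)$: condition (ii) for $\tilde\phi_0$ with $i_0=i'_0=0$ gives $\tilde\phi_0(1)-1\leq M/3$, i.e.\ $\phi(1)\leq 1+M/3$, which is $\leq M/2$ once $M\geq 6$; this is harmless since $M$ is large, but it is a strict inequality of constants rather than an identity.
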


\begin{proof}(of Lemma \ref{embeddinglemma})
Let $X^{*},Y^{*},\mathbb{X},\mathbb{Y}$ be as in the statement of the Lemma.
Let $\mathbb{X}\hookrightarrow \mathbb{Y}$. Let $0=i_0<i_1<i_2<\ldots$ and $0=i'_0<i'_1<i'_2< \ldots$ be the two sequences obtained from the definition of $\mathbb{X}\hookrightarrow \mathbb{Y}$. The previous observation and the construction of $\mathbb{X}$ and $\mathbb{Y}$ then implies that it suffices to prove that there exists $M$ such that for all $h\geq 0$,
$${X^*}^{[i_h+1,i_{h+1}]}\hookrightarrow _{*M} {Y^*}^{[i_hM_0+1,i_{h+1}M_0]}.$$

Notice that since $\{i_{h+1}-i_h\}$ and $\{i'_{h+1}-i'_h\}$ are bounded sequences, if we can find maps $\phi_h: \{i_{h}+1,\ldots ,i_{h+1}\}\rightarrow \{i'_{h}+1,\ldots ,i'_{h+1}\}$ such that $X^*_i =Y^*_{\phi_h(i)}$, then for sufficiently large $M$ and for all $h$ we shall have ${X^*}^{[i_h+1,i_{h+1}]}\hookrightarrow _{*M} {Y^*}^{[i_hM_0+1,i_{h+1}M_0]}$. We shall call such a $\phi_h$ an embedding.

There are three cases to consider.\\

\textbf{Case 1:} $i_{h+1}-i_h=i'_{h+1}-i'_h=1$. By hypothesis, this implies
$X_{i_h+1}\hookrightarrow Y_{i'_h+1}$.
If $X^*_{i_{h}+1}=0$ and ${Y^*}^{[i_hM_0+1,i_{h}M_0+M_0]}\in \{\mathbf{0}, \mathbf{\star}\}$, then ${Y^*}^{[i_hM_0+1,i_{h}M_0+M_0]}$ must contain at least one $0$ and hence an embedding exists. Similarly if
$X^*_{i_{h}+1}=1$ and ${Y^*}^{[i_hM_0+1,i_{h}M_0+M_0]}\in \{\mathbf{1}, \mathbf{\star}\}$ then also an embedding exists.

\textbf{Case 2:} $i_{h+1}-i_h=R_0, R_0^{-}\leq i'_{h+1}-i'_h \leq R_0^{+}$. In this case, $Y^{[i'_h+1,i'_{h+1}]}$ is a ``good" segment, i.e., ${Y^*}{[(i'_h+k)M_0+1, (i'_h+k+1)M_0]}\in \mathbf{\star}$, for $0\leq k \leq {i'_{h+1}-i'_h-1}$. By what we have already observed it now suffices to only consider the case $i'_{h+1}-i'_h=1$. Now by definition of $\mathbf{\star}$, there exist an alternating sequence of $2R_0$ $0$'s and $2R_0$ $1$'s in ${Y^*}^{[i_hM_0+1,(i_{h}+1)M_0]}$. It follows that there is an embedding in this case as well.\\

\textbf{Case 3:} $i'_{h+1}-i'_h=R_0, R_0^{-}\leq i_{h+1}-i_h \leq R_0^{+}$. In this case, $Y^{[i'_h+1,i'_{h+1}]}$ is a ``good" segment, i.e., ${Y^*}{[(i'_h+k)M_0+1, (i'_h+k+1)M_0]}\in \mathbf{\star}$, for $0\leq k \leq {i'_{h+1}-i'_h-1}$. Again it suffices to only consider the case $i_{h+1}-i_h= R_0^{+}$. Now by definition of $\mathbf{\star}$, there exist an alternating sequence of $2R_0^{+}$ $0$'s and $2R_0^{+}$ $1$'s in ${Y^*}^{[i'_hM_0+1,(i'_{h}+1)M_0]}$ and as before an embedding exists in this case as well. This completes the proof of the Lemma.
\end{proof}

\subsubsection{Verifying the Base Case:}
We now complete the proof of Theorem \ref{t:embed} by using Theorem \ref{metatheorem}.

\begin{proof}(of Theorem \ref{t:embed})
Let $\mathcal{C}^{\mathbb{X}}, \mu^{\mathbb{X}},\mathcal{C}^{\mathbb{Y}}, \mu^{\mathbb{Y}}$ be as described above. Let $X\sim \mu^{\mathbb{X}}$, $Y\sim \mu^{\mathbb{Y}}$. (Notice that $\mu^{\mathbb{Y}}$ implicitly depends on the choice of $M_0$). Notice that (\ref{mux}) holds trivially. Let $\beta, \delta, m, R, L_0$ be given by Theorem~\ref{metatheorem}. First we show that  there exists $M_0$ such that (\ref{tailx}), and (\ref{goodxstatement}) holds.

Let $Z=(Z_1,Z_2,\ldots, Z_{M_0})$ be a sequence of i.i.d. $Ber(\frac{1}{2})$ variables. Observe that $$\mathbb{P}(Z\in \mathbf{\star})\geq (1-2^{1-\lfloor\frac{M_0}{2R_0^{+}}\rfloor})^{2R_0^{+}}\rightarrow 1~\text{as}~M_0\rightarrow \infty.$$
Hence we can choose $M_0$ large enough such that
\begin{equation}
\label{embeddingbasecase2}
\mu^{\mathbb{Y}}(\mathbf{\star})\geq \min \{1-L_0^{-\delta}, 1-2^{-(m+1)}L_0^{-\beta}\},
\end{equation}
which implies (\ref{tailx}) and (\ref{goodxstatement}) each hold.

%
%
%

Now let $X^{*}=\{X_i^{*}\}_{i\geq 1}$ and $Y^{*}=\{Y_i^{*}\}_{i\geq 1}$ be two independent sequences of i.i.d. $Ber(\frac{1}{2})$ variables. Choosing $M_0$ as above, construct $\mathbb{X}$, $\mathbb{Y}$ as described in the previous subsection.
Then by Theorem \ref{metatheorem}, we have that $\mathbb{P}(\mathbb{X}\hookrightarrow_R \mathbb{Y})>0$. Using Lemma \ref{embeddinglemma} it now follows that for $M$ sufficiently large, we have $\mathbb{P}(X^{*}\hookrightarrow _{*M} Y^{*})>0$.  This gives an embedding for sequences indexed by the natural numbers which can easily be extended to embeddings indexed by the full integers with positive probability.  To see that this has probability 1 we note that the event that there exists an embedding is shift invariant and i.i.d. sequences are ergodic with respect to shifts and hence it has probability 0 or 1 completing the proof.
\end{proof}

\subsection{Rough Isometry}
Proposition 2.1 and 2.2 of~\cite{Peled:10} showed that to show that there exists $(M,D,C)$ such that two Poisson processes on $\R$ are roughly isomorphic almost surely it is sufficient to show that two independent copies site percolation of $\Z$, viewed as subsets of $\R$, are roughly isomorphic for some $(M',D',C')$ with positive probability.  We will solve the site percolation problem and thus infer Theorem~\ref{t:RIPoisson}.

\subsubsection{Defining the sequences $\mathbb{X}$ and $\mathbb{Y}$ and the alphabets $\mathcal{C}^{\mathbb{X}}$ and $\mathcal{C}^{\mathbb{Y}}$}
Let $X^{*}=\{X_i^{*}\}_{i\geq 0}$ and $Y^{*}=\{Y_i^{*}\}_{i\geq 0}$ be two independent sequences of i.i.d. $Ber(\frac{1}{2})$ variables conditioned that $X_0^*=Y_0^*=1$. Now let us define two sequences $k_0<k_1<k_2<\ldots$ and $k'_0<k'_1<k'_2<\ldots$ as follows. Let $k_0=0$ and $k_{i+1}=\min_{r>k_i}X^{*}_r=1$. Similarly let $k'_0=0$ and $k'_{i+1}=\min_{r>k'_i}Y^{*}_r=1$. Let $\tilde{X_i^{*}}=X^{*[k_i,k_{i+1}-1]}$ and $\tilde{Y_i^{*}}=Y^{*[k'_i,k'_{i+1}-1]}$.
The elements of the sequences $\{\tilde{X_i^{*}}\}$ and  $\{\tilde{Y_i^{*}}\}$ are sequences consisting of a single $1$ followed by a number (possibly none) of $0$'s. We now divide such sequences into the following classes.

Let $Z=(Z_0,Z_1,\ldots Z_L), (L\geq 0)$ be a sequence of $0$'s and $1$'s with $Z_0=1$ and $Z_i=0$ for $0<i\leq L$.  We say that $Z\in C_0$ if $L=0$ and for $j\geq 1$, we say $Z\in C_j$ if $2^{j-1}\leq L< 2^j$.

Now construct $\mathbb{X}=(X_1,X_2,\ldots)=X^{*}$ and $\mathbb{Y}=(Y_1,Y_2,\ldots)$ from $\tilde{Y^{*}}$ as follows. Set $X_i=C_j$ if $\tilde{X_i^{*}}\in C_j$.  Similarly set $Y_i=C_j$ if $\tilde{Y_i^{*}}\in C_j$.

It is clear from this definition that $\mathbb{X}=(X_1,X_2,\ldots)$ and $\mathbb{Y}=(Y_1,Y_2,\ldots)$ are two independent sequences of i.i.d. symbols coming from the alphabets $\mathcal{C}^{\mathbb{X}}$ and $\mathcal{C}^{\mathbb{Y}}$ having distributions $\mu^{\mathbb{X}}$ and $\mu^{\mathbb{Y}}$ respectively where
$$\mathcal{C}^{\mathbb{X}}=\mathcal{C}^{\mathbb{Y}}=\{C_0,C_1,C_2,\ldots\}.$$
and $\mu^{\mathbb{X}}=\mu^{\mathbb{Y}}$ is given by
$$\mu^{\mathbb{X}}(\{C_j\})=\mu^{\mathbb{Y}}(\{C_j\})=\mathbb{P}(Z\in C_j)$$
where $Z={Z^{*}}^{[0,i-1]}$, $Z_0^{*}=0$, $Z_t^*$ are of i.i.d. $Ber(\frac{1}{2})$ variables for $t\geq 1$ and $i=\min\{k>0: Z^*_{k}=1\}$.

We take the relation $\mathcal{R}\subseteq \mathcal{C}^{\mathbb{X}}\times \mathcal{C}^{\mathbb{X}}$ to be: $C_k\hookrightarrow C_{k'}$ if $|k-k'|\leq M_0$.  The ``good'' sets are defined to be $G_0^{\mathbb{X}}=G_0^{\mathbb{Y}}=\{C_j:j\leq M_0\}$.
It is now very easy to verify that $\mathcal{C}^{\mathbb{X}},\mathcal{C}^{\mathbb{Y}},\mu^{\mathbb{X}},\mu^{\mathbb{Y}}, \mathcal{R},G_0^{\mathbb{X}}, G_0^{\mathbb{Y}}$, as defined above satisfy all the conditions described in our abstract framework.


\subsubsection{Existence of the Rough Isometry}
\begin{lemma}
\label{roughisometrylemma}
Let $X^*=\{X_i^{*}\}_{i\geq 0}$ and $Y^*=\{Y_i^{*}\}_{i\geq 0}$ be two independent sequences of i.i.d. $Ber(\frac{1}{2})$ variables conditioned so that $X_0=Y_0=1$. Let $N_{X^{*}}=\{i:X_i^{*}=1\}$ and $N_{Y^{*}}=\{i:Y_i^{*}=1\}$.  Let $\mathbb{X}$ and $\mathbb{Y}$ be the sequences constructed from $X^{*}$ and $Y^{*}$ as above. Then there exist constants $(M,D,C)$, such that whenever $\mathbb{X}\hookrightarrow_R \mathbb{Y}$, there exists $\phi: N_{X^*}\rightarrow N_{Y^{*}}$ such that
\begin{enumerate}
\item[(i)] For all $t,s\in N_{X^{*}}$, $$\frac{1}{M}|t-s|-D\leq |\phi(t)-\phi(s)|\leq M|t-s|+D.$$
\item[(ii)] For all $t\in N_{Y^*}$, $\exists s\in N_{X^*}$ such that $|t-\phi(s)|\leq C$.
\end{enumerate}
\end{lemma}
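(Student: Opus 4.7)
The plan is to use the $R$-embedding to build $\phi$ block by block. Let $0 = i_0 < i_1 < \cdots$ and $0 = i'_0 < i'_1 < \cdots$ be the partitions guaranteed by Definition~\ref{mapstodefinition}. Since each index $j$ of $\mathbb{X}$ corresponds to a single $1$ in $X^*$ (at position $k_j \in N_{X^*}$), and analogously for $\mathbb{Y}$, these partitions induce matching partitions of $N_{X^*}$ and $N_{Y^*}$ into ``super-blocks,'' where the $r$-th super-block in $N_{X^*}$ is $\{k_{i_r+1}, \ldots, k_{i_{r+1}}\}$ and has total length $\ell^X_r := k_{i_{r+1}} - k_{i_r}$, and similarly $\ell^Y_r$ on the $Y$-side. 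The first step would be to show that the ratio $\ell^X_r / \ell^Y_r$ is uniformly bounded by a constant $B = B(R_0^+, M_0)$: in Case 1 of Definition~\ref{mapstodefinition} the relation $X_{i_r+1} \hookrightarrow Y_{i'_r+1}$ says the symbols are $C_k, C_{k'}$ with $|k-k'| \leq M_0$, forcing the ratio of the two single-gap super-blocks to lie in $[2^{-M_0-1}, 2^{M_0+1}]$; in Cases 2 and 3 both super-blocks consist entirely of good symbols (gaps at most $2^{M_0}$) and contain between $1$ and $R_0^+$ ones, so again the ratio lies in $[B^{-1}, B]$ with $B := R_0^+ \cdot 2^{M_0+1}$.

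Next I would define $\phi$ by anchoring $\phi(k_{i_{r+1}}) = k'_{i'_{r+1}}$ for every $r$, and in Cases 2 and 3 distributing the images of the intermediate $1$'s of super-block $r$ monotonically among $\{k'_{i'_r+1}, \ldots, k'_{i'_{r+1}}\}$ in any order-preserving way (well-defined since both super-blocks contain at least $R_0^- = 1$ ones). For property (i), given $t = k_j$ and $s = k_{j'}$ in super-blocks $r \leq r'$, a telescoping computation gives $|t-s| = \sum_{q=r}^{r'-1} \ell^X_q$ up to a within-super-block offset of at most $2B$ (the offset is zero in Case 1 because the super-block contains only one $1$ on each side), and similarly $|\phi(t)-\phi(s)| = \sum_{q=r}^{r'-1} \ell^Y_q$ up to $2B$; since each termwise ratio lies in $[B^{-1}, B]$, so does the ratio of the sums, which would yield rough isometry with $M = B$ and $D = 4B$. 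For property (ii), every $1$ in the $Y^*$-super-block lies within distance $\ell^Y_r \leq B$ of $\phi(k_{i_{r+1}}) = k'_{i'_{r+1}}$, so $C = B$ suffices (and in Case 1 the super-block contains only the single $1$ $\phi(k_{i_{r+1}})$ itself).

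The only delicate point is Case 1, where the individual super-block can be arbitrarily long since the symbols need not be good, so the additive constant $D$ cannot absorb within-super-block slack in that case; this is resolved precisely because a Case 1 super-block contains exactly one $1$ on each side, leaving no within-super-block slack and reducing the estimate to the multiplicative factor $2^{M_0+1}$ alone. The remainder of the argument is routine case-checking and telescoping bookkeeping.
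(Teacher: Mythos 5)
Your proof is correct and takes essentially the same approach as the paper: define $\phi$ super-block by super-block using the induced partition of $N_{X^*}$ and $N_{Y^*}$, observe that Case-2/3 super-blocks have bounded length (at most $R_0^+ 2^{M_0}$ since all symbols are good) while Case-1 super-blocks carry exactly one $1$ on each side so contribute no within-block offset, and then telescope with a termwise ratio bound. The only cosmetic difference is that the paper collapses each super-block in $N_{X^*}$ to the single left endpoint $k'_{i'_{r-1}}$ of the corresponding $Y$-super-block (so the offset bookkeeping on the $Y$-side is trivial), whereas you distribute intermediate $1$'s monotonically, which introduces a $Y$-side within-block offset that you correctly bound by the same constant.
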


\begin{proof}(of Lemma \ref{roughisometrylemma})
Suppose that $\mathbb{X}\hookrightarrow \mathbb{Y}$ and let $0=i_0<i_1<i_2<\ldots$ and $0=i'_0<i'_1<i'_2< \ldots$ be the two sequences satisfying the conditions of Definition \ref{mapstodefinition}. Let $0=k_0<k_1<k_2<\ldots$ and $0=k'_0<k'_1<k'_2<\ldots$ be the sequences described in the previous subsection while defining $\mathbb{X}$ and $\mathbb{Y}$.  For $r\geq 1$, define $X_r^{**}={X^*}^{\left[k_{i_{r-1}}, k_{i_r}-1\right]}$ and  $Y_r^{**}={Y^*}^{\left[k'_{i'_{r-1}}, k'_{i'_r}-1\right]}$, i.e., $X_r^{**}$ is the segment of $X^{*}$ corresponding to $\mathbb{X}^{[i_{r-1}+1,i_r]}$ and $Y_r^{**}$ is the segment of $Y^{*}$ corresponding to $\mathbb{Y}^{[i'_{r-1}+1,i'_r]}$  Define $N_{X,r}=N_{X^{*}}\cap [k_{i_{r-1}}, k_{i_r}-1]$ and $N_{Y,r}=N_{Y^{*}}\cap [k'_{i'_{r-1}}, k'_{i'_r}-1]$. Notice that by construction, for each $r$, $X^{*}_{k_{i_{r-1}}}=1$ and $Y^{*}_{k'_{i'_{r-1}}}=1$, i.e., $k_{i_{r-1}}\in N_{X,r}\subseteq N_{X^{*}}$.

Now let us define $\phi : N_{X^{*}} \rightarrow   N_{Y^{*}}$ as follows.  If $s\in N_{X,r}$,  define $\phi(s)=k'_{i'_{r-1}}$.
We show now that for $M=2^{M_0+2}R_0^{+}, C=2^{M_0+1}R_0^{+}$ and $D=2^{M_0+1}R_0^{+}$, the map defined as above satisfies the conditions in the statement of the lemma.

\textit{Proof of (i)}. First consider the case where $s,t\in N_{X,r}$ for some $r$. If $s\neq t$ then clearly $\mathbb{X}^{[i_{r-1}+1,i_r]}$ is a good segment and hence $|s-t|\leq 2^{M_0}R_0^{+}$. Clearly $|\phi(s)-\phi(t)|=0$. It follows that for the specified choice of $M$ and $D$,
$$\frac{1}{M}|t-s|-D\leq |\phi(t)-\phi(s)|\leq M|t-s|+D.$$

Let us now consider the case $s\in N_{X,r_1}$, $t\in N_{X,r_2}$. Clearly then $k_{i_{r_1-1}}\leq s < k_{i_{r_1}}$ and $k_{i_{r_2-1}}\leq t < k_{i_{r_2}}$. Also notice that by choice of $D$, for any good segment $\mathbb{X}^{[i_h+1,i_{h+1}]}$ we must have $|k_{i_{h+1}}-k_{i_h}|\leq 2^{M_0}R_0^{+} \leq \frac{D}{2}$. Further if a $i_{h+1}=i_h+1$, we must have that $|N_{X,h+1}|=1$. It follows that $s\leq k_{i_{r_1-1}}+D$ and
$t\leq k_{i_{r_2-1}}+D$. It is clear from the definitions that $\phi(s)=k'_{i'_{r_1-1}}$ and $\phi(t)=k'_{i'_{r_2-1}}.$

Then we have,
$$|\phi(t)-\phi(s)|=\sum_{h=r_1}^{r_2-1}|k'_{i'_h}-k'_{i'_{h-1}}|$$
and
$$\sum_{h=r_1}^{r_2-1}|k_{i_h}-k_{i_{h-1}}|-D \leq |t-s|\leq \sum_{h=r_1}^{r_2-1}|k_{i_h}-k_{i_{h-1}}|+D.$$
It now follows from the definitions that for each $h$,
$$\frac{1}{M}|k_{i_h}-k_{i_{h-1}}| \leq |k'_{i'_h}-k'_{i'_{h-1}}|\leq M|k_{i_h}-k_{i_{h-1}}|.$$
Adding this over $h=r_1,\ldots ,r_2-1$, we get that
$$\frac{1}{M}|t-s|-D\leq |\phi(t)-\phi(s)|\leq M|t-s|+D$$
in this case as well, which completes the proof of $(i)$.

\textit{Proof of (ii)}. Let $t\in N_{Y_*}$ and let $r$ be such that $k'_{i'_r}\leq t < k'_{i'_{r+1}}$. Now if $i'_{r+1}-i'_r=1$ we must have $t=k'_{i'_r}$ and hence $t=\phi(s)$ where $s=k_{i_r}\in N_{X}$ and hence (ii) holds for $t$. If $i'_{r+1}-i'_r\neq 1$ we must have that  $\mathbb{Y}^{[i_{r}+1,i_r]}$ is a good segment and hence $k'_{i'_{r+1}}-k'_{i'_r}\leq 2^{M_0}R_0^{+}$. Setting $s=k_{i_r}\in N_{X}$ we see that $\phi(s)=k'_{i'_r}$ and hence $|t-\phi(s)|\leq 2^{M_0}R_0^{+}\leq C$, completing the proof of (ii).
\end{proof}

\subsubsection{Verifying the Base case:}

\begin{proof}(of Theorem \ref{t:RIPoisson})
Let $\mathcal{C}^{\mathbb{X}}, \mu^{\mathbb{X}},\mathcal{C}^{\mathbb{Y}}, \mu^{\mathbb{Y}}$ be as described above. Let $X\sim \mu^{\mathbb{X}}$, $Y\sim \mu^{\mathbb{Y}}$. (Notice that $\mu^{\mathbb{X}}$ and $\mu^{\mathbb{Y}}$ both implicitly depend on the choice of $M_0$).
Notice first that
$$\mu^{\mathbb{X}}(C_0)=\mu^{\mathbb{Y}}(C_0)=\frac{1}{2}~\text{and}~\mu^{\mathbb{X}}(C_j)=\mu^{\mathbb{Y}}(C_j)=(\frac{1}{2})^{2^{j-1}}-(\frac{1}{2})^{2^{j}}$$
for $j\geq 1$, hence (\ref{mux}) is satisfied for $L_0$ sufficiently large. Let $\beta, \delta, m, R, L_0$ be given by Theorem \ref{metatheorem}. We first show that there exists $M_0$ such that  (\ref{tailx}) and (\ref{goodxstatement}) hold.

First observe that everything is symmetric in $\mathbb{X}$ and $\mathbb{Y}$. Clearly, we can take $M_0$ sufficiently large so that $S_0^{\mathbb{X}}(X)\geq 1-L_0^{-\delta}$ for $X=C_k$ for all $k\leq M_0$.

Now suppose $X=C_{k}$, where $k>M_0$.
\begin{eqnarray*}
S_0^{\mathbb{X}}(X)&=&\sum_{k':|k'-k|\leq M_0} \mu^{\mathbb{Y}}(C_k')\\
&=& \sum_{k'=k-M_0}^{k+M_0} (\frac{1}{2})^{2^{k'-1}}-(\frac{1}{2})^{2^{k'}}= (\frac{1}{2})^{2^{k-M_0-1}}-(\frac{1}{2})^{2^{k+M_0}}.
\end{eqnarray*}

Observe that if $k>M_0$ and $X=C_k$, then $S_0^{\mathbb{X}}(X)<\frac{1}{2}$, and hence it suffices to prove (\ref{tailx}) for $p<\frac{1}{2}$.
Let us fix $p$, where $p<\frac{1}{2}$. Then we have

\begin{eqnarray*}
\mathbb{P}(S_0^{\mathbb{X}}(X)\leq p) &\leq & \sum_{k>M_0} \mu^{\mathbb{X}}(C_k)I((\frac{1}{2})^{2^{k-M_0-1}}-(\frac{1}{2})^{2^{k+M_0}}\leq p)\\
&\leq & \sum_{k>M_0} \mu^{\mathbb{X}}(C_k)I((\frac{1}{2})^{2^{k-M_0}}\leq p)\\
&\leq &  \sum_{k\geq M_0+\log _2(-\log _2 p)} (\frac{1}{2})^{2^{k'-1}}-(\frac{1}{2})^{2^{k'}}\\
&\leq &  (\frac{1}{2})^{2^{M_0+\log _2(-\log _2 p)-1}}= (\frac{1}{2})^{2^{M_0-1}(-\log _2 p)}= p^{2^{M_0-1}} \leq p^{m+1}L_0^{-\beta}
\end{eqnarray*}
for $M_0$ sufficiently large.
Also, since $\sum_{k} \mu^{\mathbb{X}}(C_k)=1$, by choosing $M_0$ sufficiently large we can make $\sum_{k\leq M_0} \mu^{\mathbb{X}}(C_k)\geq 1-L_0^{-\delta}$.

Hence there exist some constant $M_0$ for which both (\ref{tailx}) and (\ref{goodxstatement}) hold. This together with Lemma \ref{roughisometrylemma} and Theorem \ref{metatheorem} implies a rough isometry with positive probability for site percolation on $\N$ and hence for $\Z$.  The comments at the beginning of the subsection the show that the conditional results of~\cite{Peled:10} extend this result to Poisson processes on $\R$ proving Theorem \ref{t:RIPoisson}.
\end{proof}

\subsection{Compatible Sequences}
\subsubsection{Defining the alphabets $\mathcal{C}^{\mathbb{X}}$ and $\mathcal{C}^{\mathbb{Y}}$}
Let $\mathbb{X}=\{X_i\}_{i\geq 1}$ and $\mathbb{Y}=\{Y_i\}_{i\geq 1}$ be two independent sequences of i.i.d. $Ber(q)$ variables. Let us take $\mathcal{C}^{\mathbb{X}}=\mathcal{C}^{\mathbb{Y}}=\{0,1\}$. The measures $\mu^{\mathbb{X}}$ and $\mu^{\mathbb{Y}}$ are induced by the distribution of $X_i$'s and $Y_i$'s, i.e., $\mu^{\mathbb{X}}(\{1\})=\mu^{\mathbb{Y}}(\{1\})=q$ and $\mu^{\mathbb{X}}(\{0\})=\mu^{\mathbb{Y}}(\{0\})=1-q$. It is then clear that $\mathbb{X}$ and $\mathbb{Y}$ are two independent sequences of i.i.d. symbols coming from the alphabets $\mathcal{C}^{\mathbb{X}}$ and $\mathcal{C}^{\mathbb{Y}}$ having distributions $\mu^{\mathbb{X}}$ and $\mu^{\mathbb{Y}}$ respectively.

We define the relation $\mathcal{R}\subseteq \mathcal{C}^{\mathbb{X}}\times \mathcal{C}^{\mathbb{Y}}$ by
$$\{0\hookrightarrow 0, 0\hookrightarrow 1, 1\hookrightarrow 0\}.$$
Finally the ``good" symbols are defined by $G_0^{\mathbb{X}}=G_0^{\mathbb{Y}}=\{0\}$. It is clear that all the conditions in the definition of our set-up is satisfied by this structure.

\subsubsection{Existence of the compatible map}

\begin{lemma}
\label{compatiblelemma}
Let $\mathbb{X}=\{X_i\}_{i\geq 1}$ and $\mathbb{Y}=\{Y_i\}_{i\geq 1}$ be two independent sequences of i.i.d. $Ber(q)$ variables. Suppose $\mathbb{X}\hookrightarrow \mathbb{Y}$. Then there exist $D,D'\subseteq \mathbb{N}$ such that,
\begin{enumerate}
\item[(i)] For all $i\in D$, $X_i=0$, for all $i'\in D'$, $Y_{i'}=0$.
\item[(ii)] Let $\mathbb{N}-D={k_1<k_2<\ldots}$ and $\mathbb{N}-D'={k'_1<k'_2<\ldots}$. Then $X_{k_i}\neq Y_{k'_i}$.
\end{enumerate}
\end{lemma}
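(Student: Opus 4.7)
My plan is to read off the sets $D$ and $D'$ directly from the embedding witness $(i_r)_{r\geq 0}$, $(i'_r)_{r\geq 0}$ given by the assumption $\mathbb{X}\hookrightarrow \mathbb{Y}$. For each $r\geq 0$, call the block-pair $(\mathbb{X}^{[i_r+1, i_{r+1}]}, \mathbb{Y}^{[i'_r+1, i'_{r+1}]})$ of \emph{type A} if $i_{r+1}-i_r = i'_{r+1}-i'_r = 1$ (clause (2) of Definition~\ref{mapstodefinition}), and of \emph{type B} otherwise. In type A the defining relation $\mathcal{R}=\{(0,0),(0,1),(1,0)\}$ forces $(X_{i_r+1}, Y_{i'_r+1})\neq (1,1)$; in type B clauses (3)--(4) force both segments to be good, so every entry of $\mathbb{X}$ on $[i_r+1,i_{r+1}]$ and of $\mathbb{Y}$ on $[i'_r+1,i'_{r+1}]$ equals $0$.

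Now define
\[
D \;=\; \bigcup_{r:\,\text{type B}}\{i_r+1,\ldots,i_{r+1}\}\;\cup\;\bigcup_{r:\,\text{type A},\;X_{i_r+1}=Y_{i'_r+1}=0}\{i_r+1\},
\]
and let $D'$ be defined by the analogous formula with primed indices and $Y$ in place of $X$. Condition (i) is then immediate from the classification above: every index placed into $D$ carries the value $0$ in $\mathbb{X}$, and likewise for $D'$.

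For condition (ii) I will argue by induction on $r$ that the number of surviving $\mathbb{X}$-indices in $[1,i_r]$ equals the number of surviving $\mathbb{Y}$-indices in $[1,i'_r]$, and that these survivors appear in matching order. A type B block contributes $0$ kept indices to both sides; a type A block with $(X_{i_r+1},Y_{i'_r+1})=(0,0)$ contributes $0$ to both sides; and a type A block with $(X_{i_r+1},Y_{i'_r+1})\in\{(0,1),(1,0)\}$ contributes exactly one kept index on each side whose two symbols are unequal. Writing $\mathbb{N}\setminus D=\{k_1<k_2<\cdots\}$ and $\mathbb{N}\setminus D'=\{k'_1<k'_2<\cdots\}$, the $i$-th elements $k_i$ and $k'_i$ therefore originate from a common type A block with non-coincident symbols, so that $X_{k_i}\neq Y_{k'_i}$ as required.

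The only real subtlety, which I expect to be the sole obstacle, is that the Lemma asks for the strict inequality $X_{k_i}\neq Y_{k'_i}$ rather than just compatibility $X_{k_i}Y_{k'_i}=0$; hence one must deliberately delete the $(0,0)$ type A blocks as well, not only any troublesome $(1,1)$ pattern. This is legitimate because we are entirely free to place any zero into $D$ or $D'$, and the genuine work is done by the fact that $(1,1)\notin\mathcal{R}$ excludes the one configuration we cannot delete our way around.
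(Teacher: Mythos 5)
Your proof is correct and essentially the same as the paper's: both delete all ``good'' (type B, in your terminology) segments together with the type A pairs where $(X,Y)=(0,0)$, so the surviving indices on each side come from the type A pairs with $(X_{i_r+1},Y_{i'_r+1})\in\{(0,1),(1,0)\}$, which gives $X_{k_i}\neq Y_{k'_i}$.
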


\begin{proof}
The sets $D$ and $D'$ denote the set of sites we will delete.
Let $0=i_0<i_1<i_2<\ldots$ and $0=i'_0<i'_1<i'_2< \ldots$ be the sequences satisfying the properties listed in Definition \ref{mapstodefinition}.
Let $H_1^{*}=\{h:i_{h+1}-i_h=R_0\}$, $H_2^{*}=\{h:i'_{h+1}-i'_{h}=R_0\}$, $H_3^{*}=\{h: i_{h+1}-i_h=i'_{h+1}-i'_h=1, X_{i_h+1}=Y_{i'_h+1}=0\}$. Let $H^{*}=\cup_{i=1}^{3}H_i^{*}$. Now define
\begin{equation}
D=\bigcup_{h\in H^{*}}\{i_h+1,i_{h+1}\},~D'=\bigcup_{h\in H^{*}}\{i'_h+1,i'_{h+1}\}.
\end{equation}
It is clear from Definition \ref{mapstodefinition} that $D,D'$ defined as above satisfies the conditions in the statement of the lemma.
\end{proof}


\subsubsection{Verifying the Base Case}

\begin{lemma}
\label{compatiblelemma2}
Let $\mathcal{C}^{\mathbb{X}}, \mu^{\mathbb{X}},\mathcal{C}^{\mathbb{Y}}, \mu^{\mathbb{Y}}$ be as described above. Let $X\sim \mu^{\mathbb{X}}$, $Y\sim \mu^{\mathbb{Y}}$. (Notice that $\mu^{\mathbb{X}}$, $\mu^{\mathbb{Y}}$ implicitly depend on $q$) Let $\beta, \delta, m, R, L_0$ be given by Theorem \ref{metatheorem}. Then there exists $q_0$ such that  for all $q\leq q_0$,  (\ref{tailx}) and (\ref{goodxstatement})  hold.
\end{lemma}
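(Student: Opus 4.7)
The plan is to use the simplicity of the Bernoulli alphabet to write down the distribution of $S_0^{\mathbb{X}}(X)$ exactly, after which all three abstract conditions reduce to choosing $q$ small. First I would compute $S_0^{\mathbb{X}}$ on each atom: since both $0\hookrightarrow 0$ and $0\hookrightarrow 1$ are in $\mathcal{R}$, one has $S_0^{\mathbb{X}}(0)=1$; since $1\hookrightarrow Y$ only for $Y=0$, one has $S_0^{\mathbb{X}}(1)=\mu^{\mathbb{Y}}(\{0\})=1-q$. Consequently $S_0^{\mathbb{X}}(X)$ is two-valued, taking the value $1-q$ with probability $q$ and the value $1$ with probability $1-q$, and by symmetry the same holds for $S_0^{\mathbb{Y}}(Y)$.

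Next I would verify the conditions one at a time. Condition (\ref{mux}) is immediate because both alphabets contain only two symbols, so $\mu^{\mathbb{X}}(\{C_{k+1},C_{k+2},\ldots\})=\mu^{\mathbb{Y}}(\{C'_{k+1},C'_{k+2},\ldots\})=0$ for every $k\ge 2$. Condition (\ref{goodxstatement}) reads $\P(X\in G_0^{\mathbb{X}})=\P(X=0)=1-q\ge 1-L_0^{-\delta}$, which is equivalent to $q\le L_0^{-\delta}$, and similarly for $Y$. For the tail estimate (\ref{tailx}), the CDF of $S_0^{\mathbb{X}}(X)$ vanishes on $(0,1-q)$ and equals $q$ on $[1-q,1)$; hence it suffices to force every $p$ in the admissible range to lie in $(0,1-q)$, which happens as soon as $q<L_0^{-1}$. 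In that regime, $\P(S_0^{\mathbb{X}}(X)\le p)=0$ for all $p\in(0,1-L_0^{-1}]$, and the desired bound $p^{m+1}L_0^{-\beta}$ is satisfied trivially; the same argument handles $S_0^{\mathbb{Y}}(Y)$.

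Finally I would simply set $q_0=L_0^{-\delta}$. Since $\delta>1$ we have $q_0<L_0^{-1}$, so for every $q\le q_0$ both arguments apply at once and all three conditions are satisfied. There is essentially no obstacle in this lemma: the content is limited to recording that the alphabet has only two symbols and computing $S_0^{\mathbb{X}}$ on them, after which the tail estimate holds vacuously rather than through any quantitative balance among $q$, $p$, $m$ and $\beta$. What one must not forget is that the extremely coarse nature of the base-level alphabet is exactly what makes (\ref{mux}) cost nothing and (\ref{tailx}) degenerate, so that the full strength of Theorem~\ref{metatheorem} can then be invoked via Lemma~\ref{compatiblelemma} to conclude Theorem~\ref{t:compatible}.
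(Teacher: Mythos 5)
Your proof is correct and is essentially the paper's argument: both take $q_0 = L_0^{-\delta}$, observe $S_0^{\mathbb{X}}(X) \geq 1-q > 1-L_0^{-1}$ for all $X$ so that the tail estimate \eqref{tailx} holds vacuously, and note that \eqref{goodxstatement} reduces to $1-q \geq 1-L_0^{-\delta}$. The only difference is that you spell out the exact two-point distribution of $S_0^{\mathbb{X}}(X)$ and additionally verify \eqref{mux} (which the lemma does not claim, but which is indeed trivial here since the alphabet is $\{0,1\}$).
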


\begin{proof}
Take $q_0=L_0^{-\delta}$. Let $q\leq q_0$. Clearly, then, for any $X\in\mathcal{C}^{\mathbb{X}}$ (resp. for any $Y\in\mathcal{C}^{\mathbb{Y}}$) we have $S_0^{\mathbb{X}}(X)\geq 1-q\geq 1-L_0^{-1}$ (resp. $S_0^{\mathbb{Y}}(Y)\geq 1-L_0^{-1}$). Hence, (\ref{tailx}) is trivially satisfied. That (\ref{goodxstatement})  holds follows directly from the definitions.
\end{proof}

Theorem \ref{t:compatible} now follows from Lemma \ref{compatiblelemma} and Lemma \ref{compatiblelemma2}.

\section{Preliminaries}\label{s:prelim}

Let $\mathbb{X}$, $\mathbb{Y}$, $\mathcal{C}^{\mathbb{X}}, \mathcal{C}^{\mathbb{Y}}, G_0^{\mathbb{X}}, G_0^{\mathbb{Y}}$ be as described in the previous section. As we have described in \S~\ref{s:intro} before, our strategy of proof of the Theorem \ref{metatheorem} is to partition the sequences $\mathbb{X}$ and $\mathbb{Y}$ into blocks at each level $j\geq 1$. Because of the symmetry between $\mathbb{X}$ and $\mathbb{Y}$ we only describe the procedure to form the blocks for the sequence $\mathbb{X}$. For each $j\geq 1$, we write $\mathbb{X}=(X_1^{(j)},X_2^{(j)},\ldots)$ where we call each $X_i^{(j)}$ a level $j$ $\mathbb{X}$-block.  Most of the time we would clearly state that something is a level $j$ block and drop the superscript $j$. Each of the level $j$ $\mathbb{X}$-block is a concatenation of a number of level $(j-1)$ $\mathbb{X}$-blocks, where level $0$ blocks are just the characters of the sequence $\mathbb{X}$. At each level, we also have a recursive definition of ``$good$" blocks. Let $G_j^{\mathbb{X}}$ and $G_j^{\mathbb{Y}}$ denote the set of good $\mathbb{X}$-blocks and  good $\mathbb{Y}$-blocks at $j$-th level respectively. Now we are ready to describe the recursive construction of the blocks $X_i^{(j)}$. for $j\geq 1$.

\subsection{Recursive Construction of Blocks}
We only describe the construction for $\mathbb{X}$. Let us suppose we have already constructed the blocks of partition  upto level $j$ for some $j\geq 0$ and we have $X=(X_1^{(j)},X_2^{(j)},\ldots)$. Also assume we have defined the "good" blocks at level $j$, i.e., we know $G_j^{\mathbb{X}}$. We can start off the recursion since both these assumptions hold for $j=0$.
We describe how to partition $\mathbb{X}$ into level $(j+1)$ blocks: $\mathbb{X}=(X_1^{(j+1)},X_2^{(j+2)},\ldots)$.

Suppose the first $k$ blocks $X_1^{(j+1)},\ldots , X_k^{(j+1)}$ at level $(j+1)$ has already been constructed and suppose that the rightmost level $j$-subblock of $X_k^{(j+1)}$ is $X_m^{(j)}$. Then $X_{k+1}^{(j+1)}$ consists of the sub-blocks $X_{m+1}^{(j)},X_{m+2}^{(j)},\ldots ,X_{m+l+L_j^3}^{(j)}$ where $l>L_j^3+L_j^{\alpha-1}$ is selected in the following manner. Let $W_{k+1,j+1}$ be a geometric random variable having $\mbox{Geom}(L_j^{-4})$ distribution and independent of everything else. Then $$l=\min\{s\geq L_j^3+L_j^{\alpha -1}+ W_{k+1,j+1}: X_{m+L_j^3+L_j^{\alpha-1}+W_{k+1,j+1}+s+i}\in G_{j}^{\mathbb{X}} ~\text{for}~1\leq i \leq 2L_j^3 \}.$$
That such an $l$ is finite with probability 1 will follow from our recursive estimates.\\

Put simply, our block construction mechanism at level $(j+1)$ is as follows:\\
\emph{Starting from the right boundary of the previous block, we include $L_j^3$ many sub-blocks, then further $L_j^{\alpha-1}$ many sub-blocks, then a geometric $(1/L_j^4)$ many sub-blocks. Then we wait for the first occurrence of a run of $2L_j^3$ many consecutive good sub-blocks, and end our block at the midpoint of this run.}

This somewhat complex choice of block structure is made for several reasons.  It guarantees stretches of good sub-blocks at both ends of the block thus ensuring these are not problematic when trying to embed one block into another.  The fact that good blocks can be mapped into shorter or longer stretches of good blocks then allows us to line up sub-blocks in a potential embedding in many possible ways which is crucial for the induction.  Our blocks are not of fixed length.  It is potentially problematic to our approach if conditional on a block being long that it contains many bad blocks.  Thus we added the geometric term to the length.  This has the effect that given that the block is long, it is most likely because the geometric random variable is large, not because of the presence of many bad blocks.  Finally, the construction means that block will be independent.

We now record two simple but useful properties of the blocks thus constructed in the following observation. Once again a similar statement holds for $\mathbb{Y}$.

\begin{observation}\label{o:blockStructure}
Let $\mathbb{X}=(X_1^{(j+1)},X_2^{(j+1)},\ldots)=(X_1^{(j)}, X_2^{(j)}, \ldots)$ denote the partition of $\mathbb{X}$ into blocks at levels $(j+1)$ and $j$ respectively. Then the following hold.

\begin{enumerate}
\item Let $X_i^{(j+1)}=(X_{i_1}^{(j)},X_{i_1+1}^{(j)},\ldots X_{i_1+l}^{(j)})$. For $i\geq 1$, $X_{i_1+l+1-k}^{(j)}\in G_j^{\mathbb{X}}$ for each $k$, $1\leq k \leq L_j^3$. Further, if $i>1$, then $X_{i_1+k-1}^{(j)}\in G_j^{\mathbb{X}}$ for each $k$, $1\leq k \leq L_j^3$. That is, all blocks at level $(j+1)$, except possibly the leftmost one ($X_1^{(j+1)}$), are guaranteed to have at least $L_j^3$ ``good" level $j$ sub-blocks at either end. Even $X_1^{(j+1)}$ ends in $L_j^3$ many good sub-blocks.
\item $X_1^{(j+1)},X_2^{(j+1)}, \ldots $ are independently distributed. In fact, $X_2^{(j+1)}, X_3^{(j+1)}, \ldots$ are independently and identically distributed according to some law, say $\mu_{j+1}^{\mathbb{X}}$. Furthermore, conditional on the event $\{X_i^{(j)}\in G_j^{\mathbb{X}} ~\text{for}~ i=1,2,\ldots, L_j^3 \}$, the $(j+1)$-th level blocks $X_1^{(j+1)}, X_2^{(j+1)},\ldots$ are independently and identically distributed according to the law $\mu_{j+1}^{\mathbb{X}}$. Let us denote the corresponding law for the $(j+1)$-th level blocks of $\mathbb{Y}$ by $\mu_{j+1}^{\mathbb{Y}}$.
\end{enumerate}
\end{observation}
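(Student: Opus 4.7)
The plan is to verify both properties directly from the recursive block construction just described, treating them as essentially immediate consequences that should nevertheless be established jointly by induction on $j$.

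First I would handle property (1). By the stopping rule that defines $l$, the block $X_{k+1}^{(j+1)}$ terminates at the midpoint of the first run of $2L_j^3$ consecutive good level-$j$ sub-blocks encountered after an initial padding of length $L_j^3 + L_j^{\alpha-1} + W_{k+1,j+1}$ sub-blocks. Consequently the last $L_j^3$ sub-blocks of $X_{k+1}^{(j+1)}$ are the left half of this $2L_j^3$-run and hence all lie in $G_j^{\mathbb{X}}$. This applies for every $k \geq 0$, so in particular $X_1^{(j+1)}$ also ends in $L_j^3$ good sub-blocks. For $k \geq 1$ the next block $X_{k+2}^{(j+1)}$ begins with the remaining $L_j^3$ sub-blocks of the same good run, giving the initial-goodness claim for every block except possibly $X_1^{(j+1)}$, whose left boundary is fixed at position $1$ and therefore not the midpoint of any good run.

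For property (2), I would let $T_k$ denote the index of the rightmost level-$j$ sub-block of $X_k^{(j+1)}$, with $T_0 = 0$, and argue via a stopping-time / strong Markov type argument with respect to the filtration $\mathcal{F}_n = \sigma(X_1^{(j)}, \ldots, X_n^{(j)}, W_{1,j+1}, \ldots, W_{k,j+1})$. The stopping rule defining $T_{k+1}$ given $T_k$ depends only on the sub-blocks strictly after $T_k$ together with the single fresh geometric $W_{k+1,j+1}$, so $T_{k+1}$ is an $\mathcal{F}$-stopping time in the appropriate sense. Since the level-$j$ sub-block sequence $(X_n^{(j)})_{n \geq 2}$ is i.i.d. (the inductive hypothesis at level $j$, with the base case $j=0$ being immediate) and the $\{W_{k,j+1}\}_k$ are mutually independent and independent of everything else, the post-$T_k$ sub-block sequence together with $W_{k+1,j+1}$ is independent of $\mathcal{F}_{T_k}$. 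As $X_{k+1}^{(j+1)}$ is a measurable function of exactly this information, it is independent of $X_1^{(j+1)}, \ldots, X_k^{(j+1)}$.

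Finally, for identical distribution of $X_2^{(j+1)}, X_3^{(j+1)}, \ldots$ I would invoke property (1) again: for each $k \geq 1$ the block $X_{k+1}^{(j+1)}$ is constructed from an input whose first $L_j^3$ sub-blocks are deterministically good, whose remaining sub-blocks are i.i.d. with the level-$j$ law, and with a geometric $W_{k+1,j+1}$ having a $k$-independent distribution. Applying the same construction rule to identically distributed input produces an identically distributed output $\mu_{j+1}^{\mathbb{X}}$. The conditional i.i.d. statement is then immediate: on $\{X_i^{(j)} \in G_j^{\mathbb{X}}, 1 \leq i \leq L_j^3\}$ the input to the construction of $X_1^{(j+1)}$ is placed in exactly the same distributional situation as that of $X_k^{(j+1)}$ for $k \geq 2$. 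The only delicate point I would flag is that the entire argument relies on the i.i.d. nature of the level-$j$ sub-blocks, which is itself the statement of (2) one level down; this circularity is resolved by proving this observation jointly with the recursive estimates of Sections~\ref{s:recursive}--\ref{s:good}, so that at each inductive step the distributional structure at level $j$ is already in place when one constructs level $j+1$.
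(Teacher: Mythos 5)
The paper does not provide a proof of this observation; it is presented as an immediate consequence of the construction. Your proposal is the natural route, and property~(1) is handled correctly. The argument for property~(2), however, contains a real technical misstep that needs repair.

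You assert that $T_{k+1}$ is ``an $\mathcal{F}$-stopping time in the appropriate sense'' and that ``the post-$T_k$ sub-block sequence together with $W_{k+1,j+1}$ is independent of $\mathcal{F}_{T_k}$.'' Both claims are false as stated. Determining whether $T_k=t$ requires inspecting $X^{(j)}_{t-L_j^3+1},\ldots,X^{(j)}_{t+L_j^3}$ (one must verify the run of $2L_j^3$ good sub-blocks whose midpoint marks the block boundary), so $\{T_k=t\}\notin \sigma(X^{(j)}_1,\ldots,X^{(j)}_t,W_1,\ldots,W_k)$ and $T_k$ is not a stopping time with respect to your filtration. Correspondingly, conditioning on $T_k=t$ forces $X^{(j)}_{t+1},\ldots,X^{(j)}_{t+L_j^3}$ to be good, so the post-$T_k$ sub-block sequence is \emph{not} independent of the information up to time $T_k$. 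This is not a technicality one can wave away: the whole point of Observation~\ref{o:blockRepresentation} is precisely that the input to $X^{(j+1)}_{k+1}$ is a sequence whose first $L_j^3$ entries have law $\mu_{j,G}^{\mathbb{X}}$ rather than $\mu_j^{\mathbb{X}}$, and that fact comes directly from this conditioning.

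The repair is as follows. Either work with $T_k+L_j^3$, which \emph{is} a stopping time, and note that the $L_j^3$ sub-blocks $X^{(j)}_{T_k+1},\ldots,X^{(j)}_{T_k+L_j^3}$ lying between $T_k$ and $T_k+L_j^3$ are the right half of the $2L_j^3$ good run; or, more directly, observe that the event $\{T_k=t\}$ factors as $A_t\cap C_t$ with $A_t\in\sigma(W_1,\ldots,W_k,X^{(j)}_1,\ldots,X^{(j)}_t)$ and $C_t=\{X^{(j)}_{t+1},\ldots,X^{(j)}_{t+L_j^3}\in G_j^{\mathbb{X}}\}$. Using the unconditional independence of the $X^{(j)}_i$ (for $i\geq 2$) and of the $W_i$, one then gets that conditionally on $\{T_k=t\}$ the sequence $(X^{(j)}_{t+1},X^{(j)}_{t+2},\ldots,W_{k+1})$ is conditionally independent of $(X^{(j)}_1,\ldots,X^{(j)}_t,W_1,\ldots,W_k)$ and has exactly the law specified in Observation~\ref{o:blockRepresentation}; summing over $t$ gives unconditional independence of $X^{(j+1)}_{k+1}$ from $(X^{(j+1)}_1,\ldots,X^{(j+1)}_k)$ together with $X^{(j+1)}_{k+1}\sim\mu^{\mathbb{X}}_{j+1}$. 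Finally, in your treatment of the conditional i.i.d.\ claim, note that you implicitly use that conditioning on $\{X^{(j)}_i\in G_j^{\mathbb{X}},\,1\leq i\leq L_j^3\}$ makes $X^{(j)}_1$ distributed as $\mu^{\mathbb{X}}_{j,G}$; since $X^{(j)}_1$ may not have law $\mu_j^{\mathbb{X}}$ unconditionally, this is not automatic, and the paper in fact only uses the nested conditioning $\bigcap_{k<j}\mathcal{T}_k^{\mathbb{X}}$ in the proof of Theorem~\ref{metatheorem}, which sidesteps the issue by ensuring inductively that $X^{(j)}_1\sim\mu_j^{\mathbb{X}}$ on that event.
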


From now on whenever we say ``a (random) $\mathbb{X}$-block at level $j$", we would imply that it has law $\mu_{j+1}^{\mathbb{X}}$, unless explicitly stated otherwise. Similarly ``a (random) $\mathbb{Y}$-block at level $j$" would imply that it has distribution $\mu_{j+1}^{\mathbb{X}}$. Also for convenience, we assume $\mu_{0}^{\mathbb{X}}=\mu^{\mathbb{X}}$ and $\mu_0^{\mathbb{Y}}=\mu^{\mathbb{Y}}$.

Also, for $j\geq 0$, let $\mu_{j,G}^{\mathbb{X}}$ denote the conditional law of an $\mathbb{X}$ block at level $j$, given that it is in $G_j^{\mathbb{X}}$. We define $\mu_{j,G}^{\mathbb{Y}}$ similarly.

We observe that we can construct a block with law $\mu_{j+1}^{\mathbb{X}}$ (resp. $\mu_{j+1}^{\mathbb{Y}}$) in the following alternative manner without referring to the the sequence $\mathbb{X}$ (resp. $\mathbb{Y}$).

\begin{observation}\label{o:blockRepresentation}
Let $X_1,X_2,X_3,\ldots$ be a sequence of independent level $j$ $\mathbb{X}$-blocks such that $X_i\sim \mu_{j,G}^{\mathbb{X}}$ for $1\leq i \leq L_j^3$ and $X_i\sim \mu_{j}^{\mathbb{X}}$ for $i> L_j^3$. Now let $W$ be a $Geom(L_j^{-4})$ variable independent of everything else. Define as before
$$l=\min\{i\geq L_j^3+L_j^{\alpha -1}+W: X_{i+k}\in G_j^{\mathbb{X}} ~\text{for}~1\leq k\leq 2L_j^3\}.$$
Then $X=(X_1,X_2,\ldots ,X_{l+L_j^3})$ has law $\mu_{j+1}^{\mathbb{X}}$.
\end{observation}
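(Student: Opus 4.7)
The plan is to exhibit the alternative construction as a direct realization of the stochastic rule that generates a generic (non-first) level-$(j+1)$ block in the original sequence, after which equality in distribution reduces to the strong Markov property for the i.i.d.\ sequence of level-$j$ sub-blocks.

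First I would set up notation: fix $k \geq 1$, let $\tau_k$ denote the index of the last level-$j$ sub-block appearing in $X_k^{(j+1)}$, and set $\tilde\tau_k := \tau_k + L_j^3$. By the recursive construction, $\tilde\tau_k$ is the right endpoint of the first run of $2L_j^3$ consecutive good sub-blocks occurring after the waiting period initiated from the end of $X_{k-1}^{(j+1)}$; crucially, $\tilde\tau_k$ is a stopping time for the filtration generated by $(X_i^{(j)})_{i\geq 1}$ together with the auxiliary geometric variables $W_{\cdot,j+1}$. Using the second part of Observation~\ref{o:blockStructure}, which guarantees that $X_2^{(j)}, X_3^{(j)}, \ldots$ are i.i.d.\ $\mu_j^{\mathbb{X}}$, the strong Markov property gives that, conditional on $\mathcal{F}_{\tilde\tau_k}$, the sub-blocks $X_{\tilde\tau_k+1}^{(j)}, X_{\tilde\tau_k+2}^{(j)}, \ldots$ are i.i.d.\ $\mu_j^{\mathbb{X}}$ and independent of everything up to $\tilde\tau_k$.

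The key step is to argue that the sub-blocks $X_{\tau_k+1}^{(j)}, \ldots, X_{\tilde\tau_k}^{(j)}$---the first $L_j^3$ entries of $X_{k+1}^{(j+1)}$, forming the second half of the terminating good run---are i.i.d.\ $\mu_{j,G}^{\mathbb{X}}$ and independent of both the post-$\tilde\tau_k$ sub-blocks and of the fresh geometric $W_{k+1,j+1}$. This follows from the i.i.d.\ factorization: the event $\{\tilde\tau_k = t\}$ decomposes as an event on $(X_i^{(j)})_{i\leq t-2L_j^3}$ (namely, absence of any earlier complete good run after the waiting period) intersected with $\{X_{t-2L_j^3+1}^{(j)}, \ldots, X_t^{(j)} \in G_j^{\mathbb{X}}\}$. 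Since these two events involve disjoint coordinates of an i.i.d.\ family, the conditional joint law of $X_{t-2L_j^3+1}^{(j)}, \ldots, X_t^{(j)}$ given $\{\tilde\tau_k=t\}$ is exactly the product $\mu_{j,G}^{\mathbb{X}}$, and since this conditional distribution does not depend on $t$, it coincides with the unconditional law.

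Given these two ingredients, I would close by matching the two constructions. Re-indexing so that $X_{\tau_k+i}^{(j)}$ becomes the $i$-th sub-block of $X_{k+1}^{(j+1)}$: the first $L_j^3$ sub-blocks are i.i.d.\ $\mu_{j,G}^{\mathbb{X}}$, the sub-blocks at positions $>L_j^3$ are i.i.d.\ $\mu_j^{\mathbb{X}}$ and independent of the first $L_j^3$, and the length of the block is determined by exactly the waiting-and-good-run rule in Observation~\ref{o:blockRepresentation} driven by the fresh $W_{k+1,j+1}$. Hence $X_{k+1}^{(j+1)} \sim \mu_{j+1}^{\mathbb{X}}$ has the same law as the variable $X$ from the observation. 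I expect the only delicate point to be the factorization argument of the third paragraph---ensuring that conditioning on the stopping time $\tilde\tau_k$ preserves the product $\mu_{j,G}^{\mathbb{X}}$ structure on the last $2L_j^3$ coordinates of the good run, rather than introducing hidden dependence through the ``no earlier good run'' constraint that partially defines $\tilde\tau_k$.
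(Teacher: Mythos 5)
The paper states this observation without proof, treating it as immediate from the block construction, so you are filling in a gap rather than diverging from an explicit argument; your overall plan---re-index the sub-blocks starting from the midpoint of the terminating good run, use the strong Markov property at $\tilde\tau_k$ for the piece after the run, and argue separately that conditioning on $\{\tilde\tau_k=t\}$ yields i.i.d.\ $\mu_{j,G}^{\mathbb X}$ on the run itself---is the right one and reaches the correct conclusion.

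The one place where the writeup is imprecise is exactly where you flag it. The event ``absence of any earlier complete good run after the waiting period'' is \emph{not} literally measurable with respect to $(X^{(j)}_i)_{i\le t-2L_j^3}$: writing $l_0=t-2L_j^3$, the constraint at time $s=l_0-1$ reads ``not all of $X_{l_0},\dots,X_{l_0+2L_j^3-1}$ are good,'' and in general the windows for $s\in\{l_0-2L_j^3+1,\dots,l_0-1\}$ reach past $l_0$. What is true --- and what your conclusion needs --- is that, on the event $B=\{X_{l_0+1},\dots,X_{l_0+2L_j^3}\in G_j^{\mathbb X}\}$, each such overlapping constraint reduces to ``at least one of $X_{s+1},\dots,X_{l_0}$ is bad'' and hence the whole collection collapses to the single statement $X_{l_0}\notin G_j^{\mathbb X}$ together with the non-overlapping constraints on $(X_i)_{i\le l_0-1}$. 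So $\{\tilde\tau_k=t\}=A'\cap B$ where $A'$ is measurable with respect to $\sigma(W_{k,j+1},X_1,\dots,X_{l_0})$; this is what gives the product structure you invoke, not the raw ``no earlier good run'' event. (A degenerate case worth noting: if $l_0$ equals the end of the waiting period, there is no $A'$-constraint at all, and the factorization is trivial.) With that refinement, your argument is complete: the last $L_j^3$ coordinates of the good run are conditionally i.i.d.\ $\mu_{j,G}^{\mathbb X}$ given $\tilde\tau_k$, the strong Markov property gives i.i.d.\ $\mu_j^{\mathbb X}$ blocks afterwards independent of $\mathcal F_{\tilde\tau_k}$, the geometric $W_{k+1,j+1}$ is fresh, and the stopping rule after re-indexing matches the rule in the observation verbatim.
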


Whenever we have a sequence $X_1,X_2,...$ satisfying the condition in the observation above, we shall call $X$  the (random) level $(j+1)$ block
constructed from $X_1,X_2,....$ and we shall denote the corresponding geometric variable to be $W_X$ and $T_X=l-L_j^3-L_j^{\alpha -1}$.

\subsection{Definitions}
In this subsection we make some definitions that we are going to use throughout our proof.

\begin{definition}
For $j\geq 0$, let $X$ be a block of $\mathbb{X}$ at level $j$ and let $Y$ be a block of $\mathbb{Y}$ at level $j$. We define the embedding probability of $X$ to be  $S_j^{\mathbb{X}}(X)=\mathbb{P}(X\hookrightarrow Y|X)$. Similarly we define $S_j^{\mathbb{Y}}(Y)=\mathbb{P}(X\hookrightarrow Y|Y)$. As noted above the law of $Y$ is $\mu_j^{\mathbb{Y}}$ in the definition of $S_j^{\mathbb{X}}$ and the law of $X$ is $\mu_j^{\mathbb{X}}$ in the definition of $S_j^{\mathbb{Y}}$.
\end{definition}

Notice that  $j=0$ in the above definitions corresponds to the definition we had in Section~\ref{s:abstract}.

\begin{definition}
Let $X$ be an $\mathbb{X}$-block at level $j$. It is called ``semi-bad'' if $X\notin G_j^X$, $S_j^{\mathbb{X}}(X)\geq 1-\frac{1}{20k_0R_{j+1}^{+}}$,$|X|\leq 10L_j$ and $C_k\notin X$ for any $k>L_j^m$. Here $|X|$ denotes the number of $\mathcal{C}^{\mathbb{X}}$ characters  in $X$.
A ``semi-bad'' $\mathbb{Y}$ block at level $j$ is defined similarly.
\end{definition}
We denote the set of all semi-bad $\mathbb{X}$-blocks (resp. $\mathbb{Y}$-blocks) at level $j$ by $SB_j^{\mathbb{X}}$ (resp. $SB_j^{\mathbb{Y}}$).

\begin{definition}
Let $\tilde{Y}=(Y_1,\ldots,Y_{n})$ be a sequence of consecutive $\mathbb{Y}$ blocks at level $j$. $\tilde{Y}$ is said to be a ``strong sequence" if for every $X\in SB_j^{\mathbb{X}}$

$$\# \{1\leq i \leq n: X\hookrightarrow Y_i\} \geq n(1-\frac{1}{10k_0R_{j+1}^{+}}).$$
Similarly a ``strong" $\mathbb{X}$-sequence can also be defined.
\end{definition}

\subsection{Good blocks}
To complete the description, we need now give the definition of ``good'' blocks at level $j$ which we have alluded to above. With the definitions from the preceding section, we are now ready to give the recursive definition of a ``good" block as follows. Suppose we already have definitions of ``good" blocks upto level $j$ (i.e., characterized $G_k^{\mathbb{X}}$ for $k\leq j$). Good blocks at level $(j+1)$ are then defined in the following manner. As usual we only give the definition for $\mathbb{X}$-blocks, the definition for $\mathbb{Y}$ is exactly similar.\\

Let $X^{(j+1)}=(X_1^{(j)},X_2^{(j)},\ldots, X_n^{(j)})$ be a $\mathbb{X}$ block at level $(j+1)$. Notice that we can form blocks at level $(j+1)$ since we have assumed that we already know $G_j^{\mathbb{X}}$. Then we say $X^{(j+1)}\in G_{j+1}^{\mathbb{X}}$ if the following conditions hold.

\begin{enumerate}
\item[(i)] It contains at most $k_0$ bad sub-blocks. $\#\{1\leq i \leq n: X_i\notin G_{j}^{\mathbb{X}}\}\leq k_0$.

\item[(ii)] For each $1\leq i \leq n$ such that $X_i\notin G_{j}^{\mathbb{X}}$, $X_i\in SB_j^{\mathbb{X}}$.

\item[(iii)] Every  sequence of $L_j^{3/2}$ consecutive level $j$ sub-blocks is ``strong''.

\item[(iv)] The length of the block satisfies
$n\leq L_{j}^{\alpha-1}+L_j^5$.
\end{enumerate}

Finally we define ``segments" of a sequence of consecutive $\mathbb{X}$ or $\mathbb{Y}$ blocks at level $j$.

\begin{definition}
Let $\tilde{X}=(X_1,X_2,\ldots)$ be a sequence of consecutive $\mathbb{X}$-blocks. For $i_2 > i_1 \geq 1$, we call the subsequence $(X_{i_1},X_{i_1+1},\ldots , X_{i_2})$ the ``$[i_1,i_2]$-segment" of $\tilde{X}$ denoted by $\tilde{X}^{[i_1,i_2]}$. The  ``$[i_1,i_2]$-segment" of a sequence of $\mathbb{Y}$ blocks is also defined similarly. Also a segment is called a ``good" segment if it consists of all good blocks.
\end{definition}

For the case $j=0$ this reduces to the definition  given in \S~\ref{s:abstract}.

\section{Recursive estimates}\label{s:recursive}

Our proof of the general theorem depends on a collection of recursive estimates, all of which are proved together by induction. In this section we list these estimates for easy reference. The proof of these estimates are provided in the next section. We recall that for all $j>0$ $L_j=L_{j-1}^{\alpha}=L_0^{\alpha ^j}$ and for all $j\geq 0$, $R_j=4^j(2R)$, $R_j^{-}=4^j(2-2^{-j})$ and $R_j^{+}=4^jR^2(2+2^{-j})$. For $j=0$, this definition of $R_j$, $R_j^{+}$ and $R_j^{-}$ agrees with the definition given in \S~\ref{s:abstract}.

\subsection{Tail Estimate}

\begin{enumerate}
\item[I.]
Let $j\geq 0$. Let $X$ be a $\mathbb{X}$-block at level $j$ and  let $m_j=m+2^{-j}$. Then
\begin{equation}
\label{tailx1}
\mathbb{P}(S_j^{\mathbb{X}}(X)\leq p)\leq p^{m_j}L_j^{-\beta}~~\text{for}~~p\leq 1-L_{j}^{-1}.
\end{equation}
Let $Y$ be a $\mathbb{Y}$-block at level $j$. Then
\begin{equation}
\label{taily1}
\mathbb{P}(S_j^{\mathbb{Y}}(Y)\leq p)\leq p^{m_j}L_j^{-\beta}~~\text{for}~~p\leq 1-L_{j}^{-1}.
\end{equation}

\end{enumerate}

%

\subsection{Length Estimate}
\begin{enumerate}
\item[II.]
For $X$ be an $\mathbb{X}$ block at at level $j\geq 0$,
\begin{equation}
\label{lengthx}
\mathbb{E}[\exp (L_{j-1}^{-6}(|X|-(2-2^{-j})L_j))] \leq 1.
\end{equation}

Similarly for a level $j$ $\mathbb{Y}$ block $Y$, we have
\begin{equation}
\label{lengthy}
\mathbb{E}[\exp (L_{j-1}^{-6}(|Y|-(2-2^{-j})L_j))] \leq 1.
\end{equation}
\end{enumerate}
For the case $j=0$ we interpret equations (\ref{lengthx}) and (\ref{lengthy}) by setting $L_{-1}=L_0^{\alpha^{-1}}$.

\subsection{Properties of Good Blocks}
\begin{enumerate}
\item[III.]
``Good" blocks map to good blocks, i.e.,
\begin{equation}
\label{goodvgood}
X\in  G_j^{\mathbb{X}}, Y\in G_j^{\mathbb{Y}} \Rightarrow X\hookrightarrow Y.
\end{equation}

\item[IV.]
Most blocks are ``good".
\begin{equation}
\label{xgood}
\mathbb{P}(X\in G_j^\mathbb{X})\geq 1-L_j^{-\delta}.
\end{equation}
\begin{equation}
\label{ygood}
\mathbb{P}(Y\in G_j^\mathbb{Y})\geq 1-L_j^{-\delta}.
\end{equation}

\item[V.]
Good blocks can be compressed or expanded.

Let $\tilde{X}=(X_1,X_2,\ldots)$ be a sequence of $\mathbb{X}$-blocks at level $j$ and $\tilde{Y}=(Y_1,Y_2,\ldots)$ be a sequence of $\mathbb{Y}$-blocks at level $j$. Further we suppose that $\tilde{X}^{[1,R_j^{+}]}$ and $\tilde{Y}^{[1,R_j^{+}]}$ are ``good segments". Then for every $t$ with $R_j^{-}\leq t \leq R_j^{+}$,

\begin{equation}
\label{compress}
\tilde{X}^{[1,R_j]}\hookrightarrow \tilde{Y}^{[1,t]} ~\text{and}~\tilde{X}^{[1,t]}\hookrightarrow \tilde{Y}^{[1,R_j]}.
\end{equation}
\end{enumerate}

\begin{theorem}[Recursive Theorem]
\label{induction}
There exist positive constants $\alpha$, $\beta$, $\delta$, $m$, $k_0$ and $R$ such that for all large enough $L_0$ the following holds.
If the recursive estimates (\ref{tailx1}), (\ref{taily1}), (\ref{lengthx}), (\ref{lengthy}), (\ref{goodvgood}), (\ref{xgood}), (\ref{ygood})  and (\ref{compress}) hold at level $j$ for some $j\geq 0$ then all the estimates also hold at level $j+1$ as well.
\end{theorem}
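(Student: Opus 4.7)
The proof proceeds by verifying the estimates (I)--(V) at level $j{+}1$ in the order (II), (IV), (III)/(V), (I), with each step using the inductive hypotheses at level $j$. Every level $(j{+}1)$ block $X$ will be analyzed via Observation~\ref{o:blockRepresentation}, so that its sub-blocks are independent level $j$ blocks (with the first $L_j^3$ drawn from $\mu_{j,G}^{\mathbb{X}}$), supplemented by the geometric extension $W$ and the waiting time $T$ for $2L_j^3$ consecutive good sub-blocks.

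\emph{Length (II).} The total length is $|X|=\sum_{i=1}^{N}|X_i|$ with $N=2L_j^3+L_j^{\alpha-1}+W+T$. The inductive (II) bound at level $j$ controls $\sum|X_i|$ at exponential scale $L_{j-1}^{-6}$; independence across sub-blocks and an elementary exponential tail for $W\sim\mathrm{Geom}(L_j^{-4})$ and for $T$ (using $\mathbb{P}(X_i\in G_j^{\mathbb{X}})\ge 1-L_j^{-\delta}$) combine to yield an exponential bound on $|X|$ at scale $L_j^{-6}$. Because $\alpha>9$, the slack $2^{-(j+1)}L_{j+1}$ comfortably absorbs the contributions from $W$, $T$, and the boundary sub-blocks, giving (\ref{lengthx}) at level $j{+}1$.

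\emph{Good block probability (IV) and structural estimates (III), (V).} I write $G_{j+1}^{\mathbb{X}}$ as the intersection of its four defining conditions and bound the failure probability of each. Condition (i) (at most $k_0$ bad sub-blocks) is Markov using inductive (IV). Condition (ii) (every bad sub-block is semi-bad) follows from inductive (I): with overwhelming probability no sub-block has $S_j^{\mathbb{X}}<1-\frac{1}{20k_0 R_{j+1}^+}$ or exceeds length $10L_j$ or contains a heavy alphabet symbol (using (\ref{mux})). Condition (iii) (strong windows of size $L_j^{3/2}$) is a Chernoff bound on good sub-blocks, since by inductive (III) and (I) a random good sub-block fails to embed a fixed semi-bad template with probability at most $L_j^{-\beta}$, and there are polynomially many semi-bad templates after conditioning on length/alphabet. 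Condition (iv) is immediate from (II). For (III) and (V) the argument is deterministic: given good $(j{+}1)$-blocks $X$ and $Y$, match the $\le k_0$ bad sub-blocks of $X$ greedily to partners in $Y$ using the strong property (each window of length $L_j^{3/2}$ contains many admissible partners), and fill the intervening all-good sub-segments using the inductive (V) on stretches of $R_j$ good sub-blocks, which absorbs length mismatches of ratio in $[R_j^-/R_j,R_j^+/R_j]$. The guaranteed $L_j^3$ good sub-blocks at each endpoint supply the slack needed to realize target lengths $t\in[R_{j+1}^{-},R_{j+1}^{+}]$; the growth $R_{j+1}/R_j=4$ is precisely what makes this accounting close.

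\emph{Tail estimate (I) — the main obstacle.} Fix $p\le 1-L_{j+1}^{-1}$ and let $X$ be a level $(j{+}1)$ block. I partition $\{S_{j+1}^{\mathbb{X}}(X)\le p\}$ into five cases: (a) $|X|$ is atypically large; (b) $X$ has more than $k_0$ sub-blocks outside $G_j^{\mathbb{X}}$; (c) some bad sub-block is not semi-bad; (d) some length-$L_j^{3/2}$ window is not strong; (e) none of the above, and yet the product of embedding probabilities of the bad sub-blocks is small. Cases (a)--(d) are already handled by (II) and the inductive (I),(IV). Case (e) is the crux, where Lemma~\ref{l:totalSizeBound} quantitatively bounds the probability that $\prod_i S_j^{\mathbb{X}}(X_i^{\mathrm{bad}})$ falls below a given threshold. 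Given such an $X$, I construct an exponentially large family of candidate embeddings into a random target $Y$: each candidate corresponds to one of the generalized partition mappings of \S\ref{s:construction}, which assigns the bad $X$-sub-blocks to a tuple of $Y$-sub-blocks and fills the intervening good stretches by applying inductive (V). These trials succeed nearly independently, each with probability at least $\prod_i S_j^{\mathbb{X}}(X_i^{\mathrm{bad}})$ times a combinatorial factor, so that a union bound over $K$ trials converts per-trial success $s$ into failure probability $\le e^{-Ks}$. The main challenge is to set up the trials so that (i) the dependence across them is mild, and (ii) the number $K$ of candidates grows fast enough in $L_j$ to convert the power-law tail at level $j$ with exponent $m_j=m+2^{-j}$ into the improved exponent $m_{j+1}=m+2^{-(j+1)}$ at level $j{+}1$, while still beating $L_{j+1}^{-\beta}=L_j^{-\alpha\beta}$. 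This is precisely what the parameter constraints $m>9\alpha\beta$, $R>6(m+1)$, and $k_0>36\alpha\beta$ are designed to ensure, and I expect essentially all the technical work of the paper to lie in making this trade-off rigorous.
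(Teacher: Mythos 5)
Your outlines for estimates (II), (III), (IV) and (V) align with the paper's strategy (Sections~\ref{s:length} and~\ref{s:good}), and you correctly identify Lemma~\ref{l:totalSizeBound} and the multi-trial mapping idea as the engine of the argument. However, your decomposition of the tail estimate (I) has a genuine gap. You partition $\{S_{j+1}(X)\le p\}$ into events (a)--(d) plus a remainder (e), and propose to handle (a)--(d) simply by bounding their probabilities via (II), (I) and (IV). This cannot work: the probabilities of events (a) ($|X|$ atypically long) and (b) ($K_X>k_0$) are fixed small numbers depending on $L_j$ but \emph{not} on $p$, so for small $p$ they cannot be absorbed into the target bound $p^{m_{j+1}}L_{j+1}^{-\beta}$. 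In the paper one must instead prove, in \emph{every} regime of $(T_X,K_X)$, a deterministic lower bound of the form $S_{j+1}(X)\ge f\bigl(\prod_i S_j(X_{\ell_i}),T_X,K_X\bigr)$ and only then invert via Lemma~\ref{l:totalSizeBound}. The paper's five cases $\A{1}$--$\A{5}$ accomplish exactly this: they partition on $T_X$, $K_X$ and $\prod_i S_j(X_{\ell_i})$, and in each case construct either a family of ``class $G^j$'' or ``class $H_1^j$'' mappings (Cases 1, 2, where independent trials over an index set $\ch$ of size $L_j$ improve the exponent) or a single ``class $H_2^j$'' mapping (Cases 3, 4, 5, where the events are rare enough that a single trial suffices). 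The paper then balances the single-trial loss $\prod_i S_j(X_{\ell_i})$ against the rarity factors $L_j^{-\delta k'/4}$ and $\exp(-t'L_j^{-4}/2)$ coming from Lemma~\ref{l:totalSizeBound}.

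Your cases (c) (a bad sub-block not semi-bad) and (d) (a window not strong) are extraneous to the tail estimate: ``semi-bad'' and ``strong'' enter only through the definition of good blocks and are used in proving (IV), (III) and (V) (Section~\ref{s:good}), not in the tail bound of Section~\ref{s:tailestimate}. The embedding probability $S_{j+1}(X)$ is bounded below by the generalized mappings of Sections~\ref{s:notation}--\ref{s:construction}, which require no semi-badness or strong-sequence hypotheses on $X$. So the correct picture is: the tail estimate runs entirely through $(T_X,K_X,\prod S_j)$ and the three mapping classes, while semi-badness/strongness belong to a separate recursion establishing that good blocks have their stated properties.
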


We will choose the parameters as in equation~\eqref{e:parameters}.  Before giving a proof of Theorem \ref{induction} we show how using this theorem we can prove the general theorem.

\begin{proof}(of Theorem~\ref{metatheorem})
Let $\mathbb{X}=(X_1,X_2,\ldots)$, $\mathbb{Y}=(Y_1,Y_2,\ldots)$ be as in the statement of the theorem. Let for $j\geq 0$, $\mathbb{X}=(X_1^{(j)}, X_2^{(j)},\ldots)$ denote the partition of $\mathbb{X}$ into level $j$ blocks as described above. Similarly let $\mathbb{Y}=(Y_1^{(j)}, Y_2^{(j)},\ldots)$ denote the partition of $\mathbb{Y}$ into level $j$ blocks. Let $\beta, \delta , m, R, L_0$ be as in Theorem~\ref{induction}. Recall that the characters are the blocks at level $0$, i.e., $X_i^{(0)}=X_i$ and $Y_i^{(0)}$ for all $i\geq 1$. Hence  the hypotheses of Theorem \ref{metatheorem} implies that (\ref{tailx1}), (\ref{taily1}), (\ref{xgood}), (\ref{ygood}) holds for $j=0$. It follows from definition that (\ref{goodvgood}) and (\ref{compress}) also hold at level $0$. That (\ref{lengthx}) and (\ref{lengthy}) holds is trivial. Hence the estimates $I-V$ hold at level $j$ for $j=0$. Using the recursive theorem, it now follows that (\ref{tailx1}), (\ref{taily1}), (\ref{lengthx}), (\ref{lengthy}), (\ref{goodvgood}), (\ref{xgood}), (\ref{ygood})  and (\ref{compress}) hold for each $j\geq 0$.

Let $\mathcal{T}_j^{\mathbb{X}}=\{X_k^{(j)}\in G_{j}^{\mathbb{X}}, 1\leq k \leq L_j^3\}$ be the event that the first $L_j^3$ blocks at level $j$ are good.  Notice that on the event $\cap_{k=0}^{j-1} \mathcal{T}_k^{\mathbb{X}}$,  $X_1^{(j)}$ has distribution $\mu_j^{\mathbb{X}}$ by Observation~\ref{o:blockStructure} and so $\{X_i^{(j)}\}_{i\geq 1}$ is i.i.d. with distribution $\mu_j^{\mathbb{X}}$. Hence it follows from equation (\ref{xgood}) that $\mathbb{P}(\mathcal{T}_j^{\mathbb{X}}|\cap_{k=0}^{j-1} \mathcal{T}_k^{\mathbb{X}})\geq (1-L_j^{-\delta})^{L_j^3}$. Similarly defining $\mathcal{T}_j^{\mathbb{Y}}=\{Y_k^{(j)}\in G_{j}^{\mathbb{Y}}, 1\leq k \leq L_j^3\}$ we get using equation (\ref{ygood}) that $\mathbb{P}(\mathcal{T}_j^{\mathbb{Y}}|\cap_{k=0}^{j-1} \mathcal{T}_k^{\mathbb{Y}})\geq (1-L_j^{-\delta})^{L_j^3}$.

Let $\mathcal{A}=\cap _{j\geq 0}(\mathcal{T}_j^{\mathbb{X}}\cap \mathcal{T}_j^{\mathbb{Y}})$. It follows from above that $\mathbb{P}(\mathcal{A})>0$ since $\delta>3$. Also, notice that, on $\mathcal{A}$,  $X_1^{(j)}\hookrightarrow Y_1^{(j)}$ for each $j\geq 0$. Since $|X_1^{(j)}|, |Y_1^{(j)}|\rightarrow \infty$ as $j\rightarrow \infty$, it follows that there exists a subsequence $j_n \rightarrow \infty$ such that there exist $R$-embeddings of $X_1^{(j_n)}$ into $Y_1^{(j_n)}$ with associated partitions $(i_0^n,i_1^n,\ldots,i_{\ell_n}^n)$ and $({i'}_0^n,{i'}_1^n,\ldots,{i'}_{\ell_n}^n)$ with $\ell_n\to\infty$ satisfying the
conditions of Definition~\ref{mapstodefinition} and such that for all $r\geq 0$ we have that $i_r^n \to i_r^\star$ and ${i'}_r^n\to {i'}_r^\star$ as $n\to\infty$.  These limiting partitions of $\mathbb{N}$, $(i_0^\star,i_1^\star,\ldots)$ and $({i'}_0^\star,{i'}_1^\star,\ldots)$,   satisfy the conditions of Definition~\ref{mapstodefinition} implying that $\mathbb{X}\hookrightarrow_R \mathbb{Y}$. It follows that $\mathbb{P}(\mathbb{X}\hookrightarrow \mathbb{Y})>0$.
\end{proof}

The remainder of the paper is devoted to the proof of the estimates in the induction. Throughout these sections we assume that the estimates $I-V$ hold for some level $j\geq 0$ and then prove the estimates at level $j+1$.  Combined they complete the proof of Theorem~\ref{induction}.

\section{Notation for maps: Generalised Mappings}\label{s:notation}

Since in our estimates we will need to map segments of sub-blocks to segments of sub-blocks we need a notation for constructing such mappings.
Let $A,A' \subseteq \mathbb{N}$, be two sets of consecutive integers. Let $A=\{n_1+1,\ldots ,n_1+n\}$, $A'=\{n_1'+1,\ldots , n_1'+n'\}$.
Let $$\mathcal{P}_A=\{P:P=\{n_1=i_0<i_1<\ldots <i_z=n_1+n\}\}$$ denote the set of partitions  of $A$. For $P= \{n_1=i_0<i_1<\ldots <i_z=n_1+n\}\in \mathcal{P}_{A}$, let us denote the ``length'' of $P$, by $l(P)=z$. Also let the set of all blocks of $P$, be denoted by
$\mathcal{B}(P)=\{[i_r+1,i_{r+1}]\cap \mathbb{Z}: 0\leq r\leq z-1\}$. We define $P'\in \mathcal{P}_{A'}$, $l(P')$ and $\mathcal{B}(P')$  similarly. \\

\subsection{Generalised Mappings:}
Now let $\Upsilon$ denote a ``generalised mapping" which assigns to the tuple $(A,A')$, a triplet $(P,P',\tau)$, where $P\in \mathcal{P}_{A}$, $P'\in \mathcal{P}_{A'}$, with $l(P)=l(P')$, and $\tau: \mathcal{B}(P) \mapsto \mathcal{B}(P')$ be the unique increasing bijection from the blocks of $P$ to the blocks of $P'$. Let $P=\{n_1=i_0<i_1<\ldots <i_{l(P)}=n_1+n\}$ and $P'=\{n_1'=i_0<i'_1<\ldots <i'_{l(P')}=n_1'+n'\}$. Then by $\tau$ is an ``increasing" bijection we mean that $l(P)=l(P')=z$ (say), and $\tau([i_r+1,i_{r+1}]\cap \mathbb{Z})=[i'_r+1,i'_{r+1}]\cap \mathbb{Z}$.

A generalised mapping $\Upsilon$ of $(A,A')$ (say, $\Upsilon(A,A')=(P,P',\tau)$) is called ``admissible" if the following holds.

\emph{Let $\{x\}\in \mathcal{B}(P')$ is a singleton. Then $\tau(\{x\})=\{y\}$ (say) is also a singleton. In this case we simply denote $\tau(x)=y$. Similarly, if $\{y\}\in \mathcal{B}(P')$ is a singleton, then $\tau^{-1}(\{y\})$ is also a singleton. Note that since we already requite $\tau$ to be a bijection, it makes sense to talk about $\tau^{-1}$ as a function here. Also if $\tau^{-1}(\{y\})={x}$, we write $\tau^{-1}(y)=x$.}

Let $B\subseteq  A$ and $B'\subseteq A'$ be two subsets of $A,A'$ respectively. An admissible generalized mapping  $\Upsilon^j$ of $(A,A,)$ is called of class $G^j$ with respect to $(B,B')$ (we denote this by saying $\Upsilon^j(A,A',B,B')$ is admissible of class $G^j$)  if it satisfies the following conditions:

\begin{enumerate}
\item[(i)] If $x\in B$, then the singleton $\{x\}\in \mathcal{B}(P)$.  Similarly if $y\in B'$, then $\{y\}\in \mathcal{B}(P')$.

\item[(ii)] If $i_{r+1}>i_r+1$ (equivalently, $i'_{r+1}>i'_r+1$), then $(i_{r+1}-i_{r})\wedge (i'_{r+1}-i'_{r})> L_j$ and $\frac{1-2^{-(j+5/4)}}{R}<\frac{i'_{r+1}-i'_{r}}{i_{r+1}-i_{r}}< R(1+2^{-(j+5/4)})$.

\item[(iv)] For all $x\in B$, $\tau(x)\notin B'$, and for all $y\in B'$, $\tau^{-1}(y)\notin B$.
\end{enumerate}

Similarly, an admissible generalised mappings $\Upsilon(A,A')=(P,P',\tau)$ is called of $Class~H_1^j$ with respect to $B$ if it satisfies the following conditions:

\begin{enumerate}
\item[(i)] If $x\in B$, then $\{x\}\in \mathcal{B}(P)$.
\item[(ii)] If $i_{r+1}>i_r+1$ (equivalently, $i'_{r+1}>i'_r+1$), then $(i_{r+1}-i_{r})\wedge (i'_{r+1}-i'_{r})> L_j$ and $\frac{1-2^{-(j+5/4)}}{R}<\frac{i'_{r+1}-i'_{r}}{i_{r+1}-i_{r}}< R(1+2^{-(j+5/4)})$.
\item[(iii)] For all $x\in B$, $n'-L_j^3> \tau(x)>L_j^3$.
\end{enumerate}

Finally, an admissible generalised mapping $\Upsilon^j(A,A')=(P,P',\tau)$ is called of $Class~H_2^j$ with respect to $B$ if it satisfies the following conditions:

\begin{enumerate}
\item[(i)] If $x\in B$, then $\{x\}\in \mathcal{B}(P)$.
\item[(ii)] $L_j^3<\tau(l_i)< n'-L_j^3$ for $1\leq i\leq K_X$.
\item[(iii)] If $[i_{h}+1, i_h]\cap \Z\in \mathcal{B}(P)$ and $i_{h}+1\neq i_{h+1}$ then $i_{h+1}-i_{h}=R_j$ and $R_j^{-}\leq i'_{h+1}-i'_{h}\leq R_{j}^{+}$.
\end{enumerate}
%

\subsection{Generalised mapping induced by a pair of partitions:}

Let $A,A',B,B'$ be as above. By a ``marked partition pair" of $(A,A')$ we mean a triplet $(P_{*},P'_{*},Z)$ where  $P_{*}=\{n_1=i_0<i_1<\ldots <i_{l(P_{*})}=n_1+n\}\in \mathcal{P}(A)$ and $P'_{*}=\{n_1'=i_0<i'_1<\ldots <i'_{l(P'_{*})}=n_1'+n'\}\in \mathcal{P_{A'}}$, $l(P_{*})=l(P'_{*})$ and $Z\subset [l(P)-1]$ is such that $r\in Z\Rightarrow  i_{r+1}-i_r=i'_{r+1}-i'_r$.

It is easy to see that a ``marked partition pair" induces a Generalised mapping $\Upsilon$ of $(A,A',B,B')$ in the following natural way.

Let $P$ be the partition of $A$ whose blocks are given by $$\mathcal{B}(P)= \cup_{r\in Z} \{\{i\}:i\in [i_{r}+1,i_{r+1}]\cap \Z\}\cup_{r \notin Z}\{[i_{r}+1,i_{r+1}]\cap \mathbb{Z}\}.$$
Similarly let $P'$ be the partition of $A'$ whose blocks are given by  $$\mathcal{B}(P')= \cup_{r\in Z} \{\{i'\}:i'\in [i'_{r}+1,i'_{r+1}]\cap \Z\}\cup_{r \notin Z}\{[i'_{r}+1,i'_{r+1}]\cap \mathbb{Z}\}.$$ Clearly, $l(P_{*})=l(P'_{*})$ and the condition in the definition of $Z$ implies that $l(P)=l(P')$. Let $\tau$ denote the increasing bijection from $\mathcal{B}$ to $\mathcal{B'}$. Clearly in this case $\Upsilon(A,A')=(P,P',\tau)$ is a generalised mapping and is called the generalised mapping induced by the marked partition pair ($P_{*},P'_{*},Z$).

The following lemma gives condition under which an induced generalised mapping is admissible.

\begin{lemma}
\label{marked}
Let $A,A',B,B'$ be as above. Let $P_{*}=\{n_1=i_0<i_1<\ldots <i_{l(P_{*})}=n_1+n\}\in \mathcal{P}(A)$ and $P'_{*}=\{n_1'=i_0<i'_1<\ldots <i'_{l(P'_{*})}=n_1'+n'\}\in \mathcal{P'_{*}}$ be partitions of $A$ and $A'$ respectively of equal length. Let $B_{P_*}=\{r: B\cap [i_r+1,i_{r+1}]\neq \emptyset\}$ and $B_{P'_{*}}=\{r: B' \cap [i'_r+1,i_{r+1}]\neq \emptyset\}$. Let us suppose the following conditions hold.

\begin{enumerate}
\item[(i)] $(P_{*}, P'_{*}, B_{P_*}\cup B_{P'_{*}})$ is a marked partition pair.
\item[(ii)] For $r\notin B_{P_*}\cup B_{P'_{*}})$, $(i_{r+1}-i_{r})\wedge (i'_{r+1}-i'_r)>L_j^2$ and $\frac{1-2^{-(j+5/4)}}{R}<\frac{i'_{r+1}-i'_{r}}{i_{r+1}-i_{r}}< R(1+2^{-(j+5/4)})$.
\item[(iii)] $B_{P_*} \cap B_{P'_{*}}=\emptyset$,
\end{enumerate}
then the induced generalised mapping $\Upsilon$ is admissible of $Class~G^j$.
\end{lemma}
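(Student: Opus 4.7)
The plan is to verify admissibility and each of the defining conditions (i), (ii), (iv) of Class $G^j$ in turn for the induced triple $\Upsilon(A,A')=(P,P',\tau)$.

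First I would confirm that $\Upsilon$ is a well-defined generalised mapping, i.e.\ that $l(P)=l(P')$ and that $\tau$ is an increasing bijection between $\mathcal{B}(P)$ and $\mathcal{B}(P')$. By construction, each index $r \notin B_{P_*}\cup B_{P'_*}$ contributes a single block on each side. For $r \in B_{P_*}\cup B_{P'_*}$, the block $[i_r+1,i_{r+1}]$ on the $A$ side is shattered into $i_{r+1}-i_r$ singletons, while the block $[i'_r+1,i'_{r+1}]$ on the $A'$ side is shattered into $i'_{r+1}-i'_r$ singletons. Condition (i) of the lemma (that $(P_*,P'_*,B_{P_*}\cup B_{P'_*})$ is a marked partition pair) guarantees $i_{r+1}-i_r = i'_{r+1}-i'_r$ for such $r$, so the same number of singletons appear on both sides. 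Summing over $r$ gives $l(P)=l(P')$, and $\tau$ is then well-defined as the increasing bijection, taking the $s$-th block of $P$ to the $s$-th block of $P'$.

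Next I would check admissibility. A singleton block $\{x\}\in \mathcal{B}(P)$ arises in one of two ways: either (a) it is an original singleton of $P_*$, meaning $i_{r+1}-i_r=1$ for the relevant $r$, or (b) it is produced by splitting for some $r\in B_{P_*}\cup B_{P'_*}$. In case (a) condition (ii) of the lemma forces $r \in B_{P_*}\cup B_{P'_*}$ (else $i_{r+1}-i_r > L_j^2 > 1$), so both of $[i_r+1,i_{r+1}]$ and $[i'_r+1,i'_{r+1}]$ get shattered and $\tau(\{x\})$ is a singleton in $P'$. In case (b), by the same splitting $\tau(\{x\})$ is by construction a singleton. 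The symmetric argument handles singletons in $P'$. Hence $\Upsilon$ is admissible.

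For the Class $G^j$ conditions, I would argue as follows. For (i), if $x\in B$ then $x$ lies in some block of $P_*$ indexed by $r\in B_{P_*}$, which is shattered into singletons in $P$, so $\{x\}\in \mathcal{B}(P)$; the $y\in B'$ case is symmetric. For (ii), any non-singleton block of $P$ corresponds to some $r\notin B_{P_*}\cup B_{P'_*}$ (otherwise it would have been shattered), so condition (ii) of the lemma provides both the lower bound $(i_{r+1}-i_r)\wedge(i'_{r+1}-i'_r) > L_j^2 > L_j$ and the required ratio control $\tfrac{1-2^{-(j+5/4)}}{R} < \tfrac{i'_{r+1}-i'_r}{i_{r+1}-i_r} < R(1+2^{-(j+5/4)})$. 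For (iv), if $x\in B$ then $x\in [i_r+1,i_{r+1}]$ with $r\in B_{P_*}$; by condition (iii) of the lemma, $r\notin B_{P'_*}$, so $B'\cap [i'_r+1,i'_{r+1}]=\emptyset$. But the shattering of block $r$ sends $x$ to $i'_r + (x-i_r) \in [i'_r+1,i'_{r+1}]$, so $\tau(x)\notin B'$. The converse statement $\tau^{-1}(y)\notin B$ for $y\in B'$ follows by the same argument with the roles of $A$ and $A'$ exchanged.

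This lemma is essentially a bookkeeping statement, so I do not expect a serious obstacle; the only subtle points are (a) matching block counts after shattering, which uses the equal-length condition built into the notion of a marked partition pair, and (b) the disjointness condition (iii), which is precisely what prevents a singleton arising from a point of $B$ from landing on a point of $B'$ under $\tau$.
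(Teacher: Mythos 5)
Your proof is correct and carries out exactly the verification that the paper declares ``Straightforward''\ and leaves to the reader: you check that the shattering step preserves block counts (using the marked-partition-pair identity $i_{r+1}-i_r=i'_{r+1}-i'_r$ on $Z = B_{P_*}\cup B_{P'_*}$), that singletons map to singletons, and that conditions (i), (ii), (iv) of Class $G^j$ hold via conditions (i)--(iii) of the lemma, with (iii) being what rules out a point of $B$ landing on a point of $B'$. Since the paper gives no explicit argument, your write-up is a sound expansion of the omitted verification rather than an alternative route.
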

\begin{proof}
Straightforward.
\end{proof}

The usefulness of making these abstract definitions follow from the next couple of lemmas.

\begin{lemma}
\label{compressrounding}
Let $X=(X_1,X_2,\ldots )$ be a sequence of $\mathbb{X}$ blocks at level $j$ and $Y=(Y_1,Y_2,\ldots )$ be a sequence of $\mathbb{Y}$ blocks at level $j$.
Further suppose that $n,n'$ are such that $X^{[1,n]}$ and $Y^{[1,n']}$ are both ``good" segments, $n>L_j$ and $\frac{1-2^{-(j+5/4)}}{R}\leq \frac{n'}{n}\leq R(1+2^{-(j+5/4)})$. Then $X^{[1,n]}\hookrightarrow Y^{[1,n']}$.
\end{lemma}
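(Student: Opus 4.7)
My approach is to chop both good segments into $K$ matching chunks, each of a size compatible with the level-$j$ compress/expand estimate \eqref{compress}, then apply \eqref{compress} chunkwise and concatenate. By the symmetric roles of $X$ and $Y$ in \eqref{compress} I may assume $n'\ge n$. Set $K=\lfloor n/R_j\rfloor$ and write $n=KR_j+s$ with $0\le s<R_j$. Since $n>L_j$ and $L_j/R_j$ is very large once $L_0$ is large, $K$ itself is very large.

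The partition I plan uses $K-1$ ``$\mathbb{X}$-standard'' chunks (X-length exactly $R_j$, Y-length in $[R_j^-,R_j^+]$) and one terminal ``$\mathbb{Y}$-standard'' chunk whose X-length is $R_j+s\in[R_j,2R_j)\subset[R_j^-,R_j^+]$ and whose Y-length is exactly $R_j$. Choosing integer Y-lengths $b_1,\ldots,b_{K-1}\in[R_j^-,R_j^+]$ summing to $n'-R_j$ is possible because the ratio hypothesis $n'/n\in[(1-2^{-(j+5/4)})/R,\,R(1+2^{-(j+5/4)})]$ is strictly contained in the wider range $[R_j^-/R_j,\,R_j^+/R_j]=[(1-2^{-(j+1)})/R,\,R(1+2^{-(j+1)})]$, with an additive slack of order $R_j\cdot 2^{-j}$ at each end of the interval; multiplied by the large number of chunks $K$, this absorbs both the remainder $s$ and the at most $O(K)$ unit-sized integer adjustments required.

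For each chunk I then invoke \eqref{compress}: the good-segment hypothesis of $R_j^+$ lookahead blocks is immediate from the fact that $X^{[1,n]}$ and $Y^{[1,n']}$ are good segments, except for the handful of chunks near the right endpoint whose lookahead would exceed $n$ or $n'$. For those, I can legally extend $\tilde{X}$ and $\tilde{Y}$ beyond the given segments by any fixed good blocks, since \eqref{compress} produces an embedding that only uses the first $R_j$ (or $t$) blocks of each sequence, and its existence is purely a consequence of the first $R_j^+$ blocks being good. Concatenating the chunkwise embeddings produces partitions $0=i_0<i_1<\cdots<i_K=n$ and $0=i'_0<i'_1<\cdots<i'_K=n'$ verifying the conditions of Definition~\ref{mapstodefinition} for finite segments, establishing $X^{[1,n]}\hookrightarrow Y^{[1,n']}$.

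The only real obstacle is the integer bookkeeping in the second step: one must verify that the multiplicative slack $R(2^{-(j+1)}-2^{-(j+5/4)})>0$ between the input ratio range and the range permitted by \eqref{compress} suffices to accommodate both the remainder $s<R_j$ absorbed by the terminal chunk and the rounding of the $b_i$'s. This is exactly why the lemma requires $n>L_j$: it makes $K\gtrsim L_j/R_j$ large enough that the absolute slack of order $K\cdot R_j\cdot 2^{-j}$ dwarfs the $O(R_j)+O(K)$ corrections needed, at which point the integer allocation is routine.
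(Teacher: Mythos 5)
Your proof is correct and takes essentially the same approach as the paper: chop $X^{[1,n]}$ into roughly $n/R_j$ chunks of $\mathbb{X}$-length $R_j$, choose $\mathbb{Y}$-lengths in $[R_j^-,R_j^+]$ summing to $n'$, apply the level-$j$ compress/expand estimate~\eqref{compress} chunkwise, and concatenate. The only real difference is cosmetic: the paper writes $n = kR_j + r$ with $0\le r < R_j$, uses $k$ chunks with $\mathbb{Y}$-length $s$ or $s+1$ where $s=\lfloor(n'-r)/k\rfloor$, and then matches the remaining $r$ good blocks one-to-one (using that good blocks map to good blocks by~\eqref{goodvgood}), whereas you fold the remainder into a single terminal $\mathbb{Y}$-standard chunk. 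Both allocations are valid; yours is marginally more explicit about the rounding slack and the lookahead issue at the right boundary, which the paper leaves implicit. One small caveat: the claim ``by the symmetric roles of $X$ and $Y$ in \eqref{compress} I may assume $n'\ge n$'' is not actually a valid symmetry --- the relation $\hookrightarrow$ is directional and you still must prove $X^{[1,n]}\hookrightarrow Y^{[1,n']}$ --- but since $[R_j^-,R_j^+]$ straddles $R_j$ on both sides, your chunk allocation works for $n'<n$ just as well, so the reduction is unnecessary rather than harmful.
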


\begin{proof}
Let us write $n=kR_j+r$ where $0\leq r<R_j$ and $k\in \mathbb{N}\cup \{0\}$.
Now let $s=[\frac{n'-r}{k}]$. Define $0=t_0<t_1<t_2<\ldots t_k=n'-r\leq t_{k+1}=n'$ such that for all $i\leq k$, $t_{i}-t_{i-1}=s$ or $s+1$.
Now from $\frac{1-2^{-(j+5/4)}}{R} \leq \frac{n'}{n}\leq R(1+2^{-(j+5/4)})$, it follows that for large enough $L_j$,
$R_j^-+1\leq s \leq R_j^{+}-1$. From the inductive hypothesis on compression, it follows that, $X^{[iR_j+1,(i+1)R_j]}\hookrightarrow Y^{[t_i+1,t_{i+1}]}$ for $0\leq i\leq k-1$. The lemma follows.
\end{proof}

Let $X=(X_1,X_2,X_3,\ldots X_n)$ be an $\mathbb{X}$-block (or a segment of $\mathbb{X}$-blocks) at level $(j+1)$ where the $X_i$ denote the $j$-level sub-blocks constituting it. Similarly, let $Y=(Y_1,Y_2,Y_3,\ldots Y_{n'})$ be an $\mathbb{Y}$-block (or a segment of $\mathbb{Y}$-blocks) at level $(j+1)$. Let $B_X=\{i: X_i\notin G_j^{\mathbb{X}}\}=\{l_1<l_2<\ldots <l_{K_X}\}$ denote the positions of ``bad" $X$-blocks. Similarly let $B_Y=\{i: X_i\notin G_j^{\mathbb{Y}}\}=\{l'_1<l'_2<\ldots <l'_{K_Y}\}$ be the positions of ``bad" $Y$-blocks.

We next state an easy proposition.

\begin{proposition}
\label{admissiblemap}
Suppose there exist a generalised mapping $\Upsilon$ given by $\Upsilon([n],[n'])=(P,P',\tau)$ which is admissible and is of $Class~G^j$ with respect to $(B_X, B_Y)$. Further, suppose, for $1\leq i\leq K_Y$, $X_{l_i}\hookrightarrow Y_{\tau(l_i)}$ and for each $1\leq i\leq K_Y$, $X_{\tau^{-1}(l'_i)}\hookrightarrow Y_{l'_i}$. Then $X\hookrightarrow Y$.
\end{proposition}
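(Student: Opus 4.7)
The plan is to build the level-$(j+1)$ partition required by Definition~\ref{mapstodefinition} block-by-block from the data of $\Upsilon$, and then verify the conditions of the definition on each piece. Concretely, for each block $[i_r+1,i_{r+1}]$ of $P$, with image $[i'_r+1,i'_{r+1}]=\tau([i_r+1,i_{r+1}])$ in $P'$, I would exhibit an embedding of $X^{[i_r+1,i_{r+1}]}$ into $Y^{[i'_r+1,i'_{r+1}]}$, and then concatenate these partial embeddings in $r$. Because $P$ and $P'$ partition $[n]$ and $[n']$ respectively and $\tau$ is the unique increasing bijection between their blocks, this concatenation automatically yields increasing sequences $0=j_0<j_1<\cdots<j_L=n$ and $0=j'_0<j'_1<\cdots<j'_L=n'$ of the shape required by Definition~\ref{mapstodefinition}.

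I would split the construction on a given block of $P$ into two cases according to whether the block is a singleton. If it is a singleton $\{x\}$, admissibility of $\Upsilon$ forces $\tau(\{x\})=\{\tau(x)\}$ to be a singleton in $P'$ as well, so this piece contributes a step of length $1$ on each side, and the only obligation is to verify $X_x\hookrightarrow Y_{\tau(x)}$. This I would handle in three sub-cases. If $x\in B_X$, so $x=l_i$ for some $i$, the conclusion is exactly one of the hypotheses of the proposition. If instead $\tau(x)\in B_Y$, so $\tau(x)=l'_i$, then condition (iv) of class $G^j$ forces $x=\tau^{-1}(l'_i)\notin B_X$, and the complementary hypothesis $X_{\tau^{-1}(l'_i)}\hookrightarrow Y_{l'_i}$ closes this case. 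In the remaining case $X_x\in G_j^{\mathbb{X}}$ and $Y_{\tau(x)}\in G_j^{\mathbb{Y}}$, and the inductive hypothesis~(\ref{goodvgood}) at level $j$ supplies the embedding.

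For a non-singleton block $[i_r+1,i_{r+1}]$ of $P$, I would use condition (i) of class $G^j$ to observe that no element of $B_X$ can lie inside it (otherwise that element would have to form a singleton of $P$, contradicting non-singletonhood), and symmetrically no element of $B_Y$ can lie in $[i'_r+1,i'_{r+1}]$; hence both segments $X^{[i_r+1,i_{r+1}]}$ and $Y^{[i'_r+1,i'_{r+1}]}$ are good at level $j$. Condition (ii) then supplies exactly the lower bound $L_j$ on the smaller of the two lengths and the length-ratio bound in $[(1-2^{-(j+5/4)})/R,\,R(1+2^{-(j+5/4)})]$ hypothesized by Lemma~\ref{compressrounding}, which produces the required embedding of this pair of segments, invoking internally the inductive compression estimate~(\ref{compress}).

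Gluing these partial embeddings across all blocks of $P$ produces increasing partitions of $[n]$ and $[n']$ each of whose consecutive steps satisfies one of conditions~(1)--(4) of Definition~\ref{mapstodefinition}, whence $X\hookrightarrow Y$. There is no real obstacle here: the substantive work has been pushed into the definition of admissible class-$G^j$ mappings (which ensures bad sub-blocks are isolated in singletons and matched compatibly) and into Lemma~\ref{compressrounding} (which converts a good-to-good segment embedding with a controlled length ratio into an explicit $\hookrightarrow$), so the remaining argument is a routine case analysis of the two kinds of blocks in~$P$.
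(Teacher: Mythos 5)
Your proof is correct and follows essentially the same case analysis as the paper: for each block of $P$ you decompose into the singleton and non-singleton cases and handle the singleton case by further splitting on whether the index or its image is bad (using the hypotheses and condition (iv)) or both are good (using~(\ref{goodvgood})), and the non-singleton case by Lemma~\ref{compressrounding}. You spell out a couple of steps the paper leaves implicit — e.g.\ that condition (i) of class $G^j$ guarantees non-singleton blocks of $P$ and $P'$ avoid $B_X$ and $B_Y$ respectively — but the substance and structure are the same.
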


\begin{proof}
Let $P,P'$ be as above with $l(P)=l(P')=z$. Let us fix $0\leq r\leq z-1$. From the definition of an admissible mapping, it follows that there are 3 cases to consider.

\begin{enumerate}
\item[$\bullet$] $i_{r+1}-i_r=i'_{r+1}-i'_{r}=1$ and either $i_{r}+1\in B_X$ or $i'_{r}+1\in B_Y$. In either case it follows from the hypothesis that $X_{i_{r}+1}\hookrightarrow Y_{i'_{r}+1}$.

\item[$\bullet$] $i_{r+1}-i_r=i'_{r+1}-i'_{r}=1$, $i_{r}+1\notin B_X$, $i'_{r}+1\notin B_Y$. In this case $X_{i_{r}+1}\hookrightarrow Y_{i'_{r}+1}$ follows from the inductive hypothesis that good level $j$ $\mathbb{X}$-blocks map into good level $j$ $\mathbb{Y}$-blocks.

\item[$\bullet$] $i_{r+1}-i_r\neq 1$. In this case both $X^{[i_r+1,i_{r+1}]}$ and $Y^{[i'_r+1,i'_{r+1}]}$ are good segments, and it follows from Lemma \ref{compressrounding} that $X^{[i_r+1,i_{r+1}]}\hookrightarrow Y^{[i'_r+1,i'_{r+1}]}$.
\end{enumerate}

Hence for all $r$, $0\leq r\leq z-1$, $X^{[i_{r}+1,i_{r+1}]}\hookrightarrow Y^{[i'_r+1,i'_{r+1}]}$. It follows that $X\hookrightarrow Y$, as claimed.
\end{proof}

In the same vein, we state the following Proposition whose proof is essentially similar and hence omitted.

\begin{proposition}
\label{admissiblemap2}
Suppose there exist a generalised mapping $\Upsilon^j$ given by $\Upsilon([n],[n'])=(P,P',\tau)$ which is admissible and is of $Class~H_1^j$ or $H_2^j$ with respect to $B_X$. Further, suppose, for $1\leq i\leq K_X$, $X_{l_i}\hookrightarrow Y_{\tau(l_i)}$ and for each $i'\in [n'] \setminus \{\tau(l_i): 1\leq i \leq K_X\}$, $Y_{i'}\in G_j^{\mathbb{Y}}$. Then $X\hookrightarrow Y$.
\end{proposition}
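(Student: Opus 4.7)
The plan is to mirror the proof of Proposition \ref{admissiblemap}, working block by block through the common partition structure supplied by $\Upsilon$. Writing $\Upsilon([n],[n'])=(P,P',\tau)$ with $P=\{0=i_0<i_1<\cdots<i_z=n\}$ and $P'=\{0=i'_0<\cdots<i'_z=n'\}$, it suffices to show that $X^{[i_r+1,i_{r+1}]}\hookrightarrow Y^{[i'_r+1,i'_{r+1}]}$ for every $0\le r\le z-1$; concatenating these embeddings then yields $X\hookrightarrow Y$ via the extension of Definition \ref{mapstodefinition} to finite sequences. Admissibility of $\Upsilon$ guarantees that $\tau$ maps singletons to singletons and non-singletons to non-singletons, so the trichotomy used on the $X$-side transfers faithfully to $P'$.

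The case analysis proceeds exactly as in Proposition \ref{admissiblemap}. First, if $[i_r+1,i_{r+1}]=\{l_s\}$ for some $l_s\in B_X$, the image is the singleton $\{\tau(l_s)\}$ and the embedding $X_{l_s}\hookrightarrow Y_{\tau(l_s)}$ is supplied by hypothesis. Second, if $[i_r+1,i_{r+1}]=\{x\}$ with $x\notin B_X$, then $X_x\in G_j^{\mathbb{X}}$; since $\tau$ is a bijection and $x\notin B_X$, the index $\tau(x)$ does not belong to $\{\tau(l_s):1\le s\le K_X\}$, so $Y_{\tau(x)}\in G_j^{\mathbb{Y}}$ by the other hypothesis of the proposition, and estimate III (equation~\eqref{goodvgood}) provides $X_x\hookrightarrow Y_{\tau(x)}$. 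Third, if $[i_r+1,i_{r+1}]$ is not a singleton, condition (i) in the definition of $H_1^j$ or $H_2^j$ forces every $X_{i_r+1},\ldots,X_{i_{r+1}}$ to lie in $G_j^{\mathbb{X}}$, because bad blocks are constrained to be singletons.

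In this third case we also need every $Y_{i'_r+1},\ldots,Y_{i'_{r+1}}$ to lie in $G_j^{\mathbb{Y}}$. The image block is itself non-singleton, whereas each $\tau(l_s)$ is a singleton of $P'$; hence no index in $[i'_r+1,i'_{r+1}]$ coincides with any $\tau(l_s)$, and the goodness hypothesis on $Y$ applies. Both segments being good, we finish using the inductive structure at level $j$: if $\Upsilon$ is of class $H_1^j$, condition (ii) supplies the length bound $(i_{r+1}-i_r)\wedge(i'_{r+1}-i'_r)>L_j$ and the ratio bound required by Lemma \ref{compressrounding}, which produces the segment embedding; if $\Upsilon$ is of class $H_2^j$, condition (iii) gives $i_{r+1}-i_r=R_j$ together with $R_j^-\le i'_{r+1}-i'_r\le R_j^+$, and estimate V (equation~\eqref{compress}) applied to the two good segments yields the embedding.

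The only point genuinely requiring care is the bookkeeping between singletons and non-singletons on the two sides of the partition, which is purely combinatorial and is precisely what the admissibility of $\Upsilon$ was designed to encode. Beyond that, the argument contains no real obstacle over and above Proposition \ref{admissiblemap}; indeed it is strictly simpler, because there are no bad $Y$-blocks to accommodate and consequently no analogue of the class-$G^j$ condition (iv) to invoke.
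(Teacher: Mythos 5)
Your proof is correct and follows the same block-by-block case analysis that the paper uses for Proposition~\ref{admissiblemap} (the paper explicitly describes the omitted proof of Proposition~\ref{admissiblemap2} as ``essentially similar''). Your three cases, the use of admissibility to transfer singleton/non-singleton structure across $\tau$, the bijectivity argument showing that images of non-bad singletons and of non-singleton blocks avoid $\{\tau(l_i)\}$, and the invocation of Lemma~\ref{compressrounding} (for $H_1^j$) versus estimate~V directly (for $H_2^j$) are all exactly the intended adaptation.
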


\section{Constructions}\label{s:construction}
In this section we provide the necessary constructions which we would use in later sections to prove different estimates on probabilities that certain blocks can be mapped to certain other blocks.

\begin{proposition}
\label{compresspartition}
Let  $j\geq 1$ and $n,n'>L_j^{\alpha-1}$  such that
\begin{equation}
\label{lengthratiocp}
\frac{1-2^{-(j+7/4)}}{R}\leq \frac{n'}{n} \leq R(1+2^{-(j+7/4)}).
\end{equation}

Let $B=\{l_1<l_2<\ldots <l_{k_x}\}\subseteq [n]$ and $B'=\{l'_1<l'_2<\ldots l'_{k_y}\} \subseteq [n']$ be such that $l_1,l'_1, (n-l_{k_x}),(n'-l'_{k_y})>L_j^3$, $k_x, k_y \leq k_0R_{j+1}^{+}$. Then there exist a family of admissible generalised mappings $\Upsilon_h$ for $1\leq h \leq L_j^2$, such $\Upsilon_h([n],[n'],B,B')=(P_h,P'_h,\tau_h)$ is of class $G^j$ and such that for all $h$, $1\leq h\leq L_j^2$, $1\leq i\leq k_x$, $1\leq r \leq k_y$, $\tau_h(l_i)=\tau_{1}(l_i)+h-1$ and $\tau_h^{-1}(l'_r)=\tau_1^{-1}(l'_r)-h+1$.
\end{proposition}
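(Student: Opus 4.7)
The plan is to construct $\Upsilon_1$ first and then obtain each $\Upsilon_h$ for $h \geq 2$ by a rigid shift of the chosen auxiliary positions. Each $\Upsilon_h$ will be realized as the generalised mapping induced by a marked partition pair via Lemma~\ref{marked}: the singletons in $P_h$ are the union $B \cup \{\tau_h^{-1}(l'_r)\}_{r=1}^{k_y}$, the singletons in $P'_h$ are $B' \cup \{\tau_h(l_i)\}_{i=1}^{k_x}$, and the remaining non-marked outer blocks are the large gap blocks.

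To build $\Upsilon_1$, set $\rho = n'/n$ and take as initial guesses $t^0_i = \lceil \rho\, l_i \rceil$ and $s^0_r = \lceil l'_r/\rho \rceil$. These are perturbed by a greedy left-to-right procedure to obtain the actual $\tau_1(l_i) \in [n']$ and $\tau_1^{-1}(l'_r) \in [n]$ satisfying: (a) the forward window $\{\tau_1(l_i), \ldots, \tau_1(l_i) + L_j^2 - 1\}$ lies in $(L_j^3, n' - L_j^3)$ and is separated by more than $2L_j^2$ from every $l'_r$ and from every previously placed forward window; (b) symmetrically, the backward window $\{\tau_1^{-1}(l'_r) - L_j^2 + 1, \ldots, \tau_1^{-1}(l'_r)\}$ has analogous separation from $B$ and from the other backward windows, and lies in $(L_j^3, n - L_j^3)$; (c) the cumulative drift $|\tau_1(l_i) - t^0_i|$ and $|\tau_1^{-1}(l'_r) - s^0_r|$ is $O(k_0 R_{j+1}^{+} \cdot L_j^2)$. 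Since the total "claimed" forbidden measure is $O(k_0 R_{j+1}^{+} \cdot L_j^2)$ while $n, n' > L_j^{\alpha - 1} = L_j^9$, the greedy procedure succeeds for $L_0$ sufficiently large, and the hypothesis $l_1, l'_1, n - l_{k_x}, n' - l'_{k_y} > L_j^3$ gives enough room near the endpoints.

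Given $\Upsilon_1$, set $\tau_h(l_i) = \tau_1(l_i) + h - 1$ and $\tau_h^{-1}(l'_r) = \tau_1^{-1}(l'_r) - h + 1$ for every $h \in [1, L_j^2]$. Conditions (a) and (b) guarantee that $\tau_h(l_i) \notin B'$ and $\tau_h^{-1}(l'_r) \notin B$, that all shifted singletons stay pairwise separated by more than $L_j^2$, and that they remain at distance exceeding $L_j^3$ from the endpoints. Thus the resulting marked partition pair satisfies conditions (i) and (iii) of Lemma~\ref{marked}, and every non-marked gap block has length exceeding $L_j^2$. For the ratio condition in Lemma~\ref{marked}(ii), let $(p,q)$ be a non-marked gap in $P^h_*$ with image $(p',q')$ in $P'^h_*$; by construction $q' - p' = \rho(q - p) + O(L_j^2)$, the error absorbing both the initial perturbation and the two endpoint shifts by $\pm(h-1)$. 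Because the minimum gap is of order $L_j^{\alpha - 1}/(k_0 R_{j+1}^{+}) \gg L_j^2$, the additive ratio distortion $(q'-p')/(q-p) - \rho$ is of order $O(k_0 R_{j+1}^{+} \cdot L_j^2 / L_j^{\alpha - 1}) = O(k_0\, 4^j/L_j^{\alpha - 3})$, which for $L_0$ large is smaller than the slack $2^{-(j+5/4)}(1 - 2^{-1/2})/R$ between the hypothesis interval $[(1 - 2^{-(j+7/4)})/R,\, R(1 + 2^{-(j+7/4)})]$ for $\rho$ and the required class $G^j$ interval $[(1 - 2^{-(j+5/4)})/R,\, R(1 + 2^{-(j+5/4)})]$. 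Lemma~\ref{marked} then yields an admissible generalised mapping of class $G^j$, as required.

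The main obstacle is the combinatorial bookkeeping in the greedy construction: one must simultaneously respect the forward/backward forbidden windows around each element of $B \cup B'$, maintain mutual separation among all singletons, stay away from the endpoints, and control the cumulative drift of step~(c). The drift control is crucial because it directly governs the ratio distortion $O(L_j^2/\text{min gap})$ in the verification above, and it is exactly here that the hypothesis $n, n' > L_j^{\alpha - 1}$ pays off, being exponentially larger in $\alpha^j$ than the total forbidden measure $O(k_0 R_{j+1}^{+} \cdot L_j^2)$.
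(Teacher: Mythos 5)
Your overall strategy --- build one admissible generalised mapping $\Upsilon_1$ with the bad points placed as isolated singletons separated from each other and from the endpoints, then shift all image positions rigidly by $h-1$ to get the family --- matches the structure of the paper's proof, which also obtains $\Upsilon_h$ from a single base pair of partitions by shifting the images by $h-1$. The disagreement is in how $\Upsilon_1$ is built, and there your argument has a genuine gap.

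The paper constructs $\Upsilon_1$ in two steps. Lemma~\ref{compresspartition1} introduces a one-parameter family of \emph{global piecewise-linear bijections} $\psi_s\colon[0,n]\to[0,n']$ indexed by a bulk shift $s$, and uses the probabilistic method (pick $s$ uniformly in $[0,L_j^{5/2}]$) to find an $s_0$ with $|\psi_{s_0}(i)-i'|\geq 2L_j^{9/4}$ and $|\psi_{s_0}^{-1}(i')-i|\geq 2L_j^{9/4}$ simultaneously for all $i\in B$, $i'\in B'$. Lemma~\ref{compresspartition2} then carves short rigid windows of length $\approx 2L_j^{17/8}$ around each bad point inside each $[t_{r-1},t_r]$. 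The crucial payoff of working through a bijection $\psi_{s_0}$ is that it endows $[n]$ and $[n']$ with a single consistent ordering: because $\psi_{s_0}$ is increasing, the interleaving pattern of $B$ with $\psi_{s_0}^{-1}(B')$ on the $X$-side agrees automatically with the interleaving of $\psi_{s_0}(B)$ with $B'$ on the $Y$-side, and hence the marked partition pair $(P_*,P'_*,B_{P_*}\cup B_{P'_*})$ is well defined with $B_{P_*}\cap B_{P'_*}=\emptyset$.

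Your greedy left-to-right placement of $\tau_1(l_i)$ and $\tau_1^{-1}(l'_r)$ from the seeds $\lceil \rho l_i\rceil$, $\lceil l'_r/\rho\rceil$ does not address this interleaving consistency, and it is exactly the point at which the argument can fail. A marked partition pair requires a single increasing correspondence between the blocks of $P_*$ and those of $P'_*$; declaring the singletons of $P_1$ to be $B\cup\{\tau_1^{-1}(l'_r)\}$ and those of $P_1'$ to be $B'\cup\{\tau_1(l_i)\}$ only makes sense if the relative order of $l_i$ and $\tau_1^{-1}(l'_r)$ inside $[n]$ is the \emph{same} as the relative order of $\tau_1(l_i)$ and $l'_r$ inside $[n']$, for every pair $(i,r)$. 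When $l'_r$ and $\rho l_i$ are within $O(L_j^2)$ of each other --- which is precisely the situation your forbidden windows are designed to detect --- your rule ``perturb to clear a $2L_j^2$-neighbourhood'' does not specify the direction of the perturbation, and pushing $\tau_1(l_i)$ past $l'_r$ inverts the ordering on the $Y$-side without inverting it on the $X$-side. In that case there is no well-defined marked partition pair, Lemma~\ref{marked} cannot be invoked, and the argument collapses. The drift bound in your step~(c) is orthogonal to this: it controls the magnitude of the perturbations, not their compatibility across the two sides.

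The fix cannot be done pointwise. You need a global ordering rule tying the two sides together before you start clearing windows; the paper's $\psi_s$ serves exactly that role, and the random choice of $s$ packages the ``simultaneously avoid every $l'_r$ and every $l_i$'' requirement into a clean union bound ($\P(E)\geq 1-6Rk_0^2(R_{j+1}^+)^2/L_j^{1/4}>0$). If you want to keep a deterministic flavour, you would have to (a) fix a single monotone interpolation of the map on $B\cup\tau_1^{-1}(B')\to \tau_1(B)\cup B'$ first, (b) prove by a counting or pigeonhole argument that some uniform shift of its bulk clears all the $O(k_0^2 (R_{j+1}^+)^2)$ collision windows at once, and (c) only then perturb locally. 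That is essentially a derandomised version of Lemma~\ref{compresspartition1}, and without it your proof is incomplete.
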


To prove Proposition \ref{compresspartition} we need the following two lemmas.

\begin{lemma}
\label{compresspartition1}
Assume the hypotheses of Proposition \ref{compresspartition}. Then there exists a piecewise linear increasing bijection $\psi: [0,n]\mapsto [0,n']$ and two partitions $Q$ and $Q'$ of $[0,n]$ and $[0,n']$ respectively of equal length ($=q$, say), given by $Q=\{0=t_0<t_1<\ldots t_{q-1}<t_q=n\}$ and $Q=\{0=\psi(t_0)<\psi(t_1)<\ldots \psi(t_{q-1})<\psi(t_q)=n'\}$ satisfying the following properties:
\begin{enumerate}
\item $q\leq 2k_0R_{j+1}^{+}+2$.
\item $t_1,\psi(t_1)> L_j^3/2$, $(n-t_{q-1}),(n'-\psi(t_{q-1})>L_j^3/2$.
\item  None of the intervals $[t_{r-1}, t_r]$ intersect both $B$ and $\psi^{-1}(B')$.
\item None of the intervals $[\psi(t_{r-1}),\psi(t_r)]$ intersect both $B'$, and $\psi(B)$.
\item $\frac{1-2^{-(j+3/2)}}{R}\leq \frac{\psi(t_r)-\psi(t_{r-1}))}{t_r-t_{r-1}} \leq R(1+2^{-(j+3/2)})$ for each $r\leq q$.
\item Suppose $i\in (B \cup \psi^{-1}(B'))\cap [t_{r-1},t_r]$. Then $|i-t_{r-1}|\wedge |t_{r}-i| \geq L_j^{9/4}$. Similarly if $i'\in (B' \cup \psi_(B))\cap [\psi(t_{r-1}),\psi(t_r)]$, then $|i'-\psi(t_{r-1})|\wedge |\psi(t_{r})-i| \geq L_j^{9/4}$.
\end{enumerate}
\end{lemma}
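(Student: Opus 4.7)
The strategy is to start from the naive linear bijection $\psi_0(x)=(n'/n)\,x$, identify ``conflict zones'' where bad points from $B$ and from the pullback $\psi_0^{-1}(B')$ lie too close, and repair each zone by a local piecewise-linear perturbation; the partition $Q$ is then obtained by dropping breakpoints into the resulting safe gaps.

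Form the combined bad set $S=B\cup\psi_0^{-1}(B')\subset[0,n]$, with $|S|\le 2k_0R_{j+1}^{+}$, and label each $a\in S$ as type $X$ (from $B$) or $Y$ (from $\psi_0^{-1}(B')$). The boundary hypothesis on $l_1,l'_1,n-l_{k_x},n'-l'_{k_y}$ combined with the bound on $n'/n$ keeps every $a\in S$ at distance at least $L_j^3/(2R)$ from $\{0,n\}$. Group $S$ into clusters, the maximal runs of successive points whose consecutive gaps are below $L_j^{5/2}$. Between distinct clusters the gap is at least $L_j^{5/2}$, so a clean separating breakpoint may be placed there. For each mixed-type cluster, reserve a disjoint adjustment zone of length $L_j^3$ around it (possible since there are at most $|S|$ clusters while $n>L_j^{\alpha-1}\gg |S|\cdot L_j^3$ for $\alpha\ge 10$), and on that zone replace $\psi_0$ by a piecewise-linear sawtooth agreeing with $\psi_0$ at the endpoints (keeping $\psi$ a bijection $[0,n]\to[0,n']$). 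A sawtooth of amplitude $3L_j^{9/4}$ over support $L_j^{3}$ has slope excursion of order $L_j^{-3/4}$, well inside the slack $R\bigl(2^{-(j+3/2)}-2^{-(j+7/4)}\bigr)$ between condition (5) and the hypothesis on $n'/n$, once $L_0$ is large (since $L_j=L_0^{\alpha^j}$ shrinks doubly-exponentially in $j$ while the slack shrinks only exponentially). By orienting each sawtooth appropriately we displace the $\psi^{-1}(B')$-points inside the cluster so that every adjacent $X$--$Y$ pair is at least $2L_j^{9/4}$ apart.

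With $\psi$ so perturbed, build $Q$ by placing one breakpoint in each inter-cluster gap, one at the midpoint of each newly opened opposite-type gap inside a resolved cluster, and one breakpoint in each outer region (applying a further perturbation there if needed to enlarge the outer safe gap from $L_j^3/(2R)$ up to $L_j^3/2$ while keeping it type-homogeneous). Then $q\le |S|+2\le 2k_0R_{j+1}^{+}+2$, giving (1); every block of $Q$ meets $B\cup\psi^{-1}(B')$ in at most one type, giving (3) and (4); each breakpoint lies in a safe gap of width $\ge 2L_j^{9/4}$, giving (6); and the slope control gives (5). The main obstacle is the coordinated management of the sawtooth perturbations: they must respect the per-piece slope window of (5), preserve $\psi(0)=0$ and $\psi(n)=n'$, have pairwise disjoint supports, and collectively make room near the boundary for condition (2). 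This is handled by making each adjustment zone self-cancelling (equal up- and down-slopes in the sawtooth) and keeping the zones well separated from one another.
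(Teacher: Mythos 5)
Your plan is structurally different from the paper's. The paper defines a one-parameter family of global piecewise-linear maps $\psi_s$ (linear in the middle with the interior translated by $s$, and steepened near the two boundaries to absorb the translation), draws $s$ uniformly from $[0,L_j^{5/2}]$, and uses a union bound over all $\leq (k_0R_{j+1}^+)^2$ pairs $(i,i')\in B\times B'$ to find a single $s$ for which every pair $\psi_s(i)$, $i'$ is separated by $2L_j^{9/4}$; the breakpoints are then just midpoints between same-type runs. This is a one-shot probabilistic argument with no clusters, no local repairs, and a slope perturbation of order $L_j^{5/2}/L_j^3=L_j^{-1/2}$.

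The real problem with your proposal is the amplitude $3L_j^{9/4}$. Because the sawtooth has support $L_j^3$ and the cluster spans at most $\sim k_0R_{j+1}^+L_j^{5/2}=o(L_j^3)$, the induced displacement of the $\psi^{-1}(B')$-points is essentially constant across a cluster (its variation over the cluster is $O(L_j^{-3/4}\cdot k_0R_{j+1}^+L_j^{5/2})=o(L_j^{9/4})$). So you are effectively translating the $Y$-points in a cluster by a single number $\delta$ with $|\delta|\leq 3L_j^{9/4}$. That cannot work in general: take $B$ at positions $0,3L_j^{9/4},6L_j^{9/4},\ldots,3kL_j^{9/4}$ and $\psi_0^{-1}(B')$ at $L_j^{9/4},4L_j^{9/4},\ldots$ (a legal configuration since the span is $3kL_j^{9/4}\ll L_j^{\alpha-1}$ and $k\leq k_0R_{j+1}^+$). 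Then for any $\delta$ with $|\delta|\leq 3kL_j^{9/4}$ some pair is within $2L_j^{9/4}$, because the family of forbidden intervals $\bigl(3mL_j^{9/4}-L_j^{9/4}-2L_j^{9/4},\,3mL_j^{9/4}-L_j^{9/4}+2L_j^{9/4}\bigr)$, $|m|\leq k$, tiles an interval of width $\Omega(kL_j^{9/4})$. The smallest translation that separates all pairs is of order $kL_j^{9/4}$, which can be as large as $k_0R_{j+1}^+L_j^{9/4}\gg 3L_j^{9/4}$.

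The paper's shift range $L_j^{5/2}$ is what makes the count close: the forbidden set for $s$ has total measure $\lesssim L_j^{9/4}(k_0R_{j+1}^+)^2$, and this is $\ll L_j^{5/2}$ precisely because $(k_0R_{j+1}^+)^2\ll L_j^{1/4}$ for large $L_0$ (exponential in $j$ versus doubly exponential). Your local-repair plan could be patched by taking the sawtooth amplitude on the order of $L_j^{9/4}(k_0R_{j+1}^+)^2$ (still well within the slope slack and still $\ll L_j^{5/2}$, so it stays inside the adjustment zone and does not disturb neighbouring clusters), and then choosing the per-cluster displacement by the same measure-counting argument the paper uses globally. As written, though, $3L_j^{9/4}$ is simply too small and the claim ``orienting each sawtooth appropriately we displace the points so that every adjacent $X$--$Y$ pair is at least $2L_j^{9/4}$ apart'' is false. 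The remaining ingredients (cluster separation, boundary handling for property (2), slope arithmetic for property (5)) are plausible but less clean than the paper's single global construction.
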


\begin{proof}
Let us define a family of maps $\psi_s: [0,n]\rightarrow [0,n']$, $0\leq s \leq L_j^{5/2}$ as follows:

\begin{equation}
\psi_s(x)=\left\{
\begin{array}{l l l}
 x \frac{L_j^3/2+s}{L_j^3/2} ~~~&\text{if }& x\leq L_j^3/2\\
L_j^3/2+s+ \frac{n'-L_j^3}{n-L_j^3}(x-L_j^3/2)~~~&\text{if }& L_j^3/2 \leq x \leq n-L_j^3/2\\
n' - (n-x)(\frac{L_j^3/2-s}{L_j^3/2})~~~&\text{if }&  n-L_j^3/2\leq x\leq n'.
\end{array}\right.
\end{equation}
It is easy to see that $\psi_s$ is a piecewise linear bijection for each $s$ with the piecewise linear inverse being given by
\begin{equation}
\psi_s^{-1}(y)=\left\{
\begin{array}{l l l}
 y \frac{L_j^3/2}{L_j^3/2+s} ~~~&\text{if }& y\leq L_j^3/2+s\\
L_j^3/2+ \frac{n-L_j^3}{n'-L_j^3}(y-L_j^3/2-s)~~~&\text{if }& L_j^3/2+s \leq y \leq n'-L_j^3/2+s\\
n - (n'-x)(\frac{L_j^3/2}{L_j^3/2-s})~~~&\text{if }&  n'-L_j^3/2+s\leq y\leq n'.
\end{array}\right.
\end{equation}

Let $S$ be distributed uniformly on $[0,L_j^{5/2}]$, and  consider the random map $\psi _S$.   Let
$$E=\{|\psi_S(i)-i'|\geq 2L_j^{9/4}, |i-\psi_S^{-1}(i')|\geq 2L_j^{9/4} \forall i\in B, i'\in B'\}.$$

Also notice that for $L_0$ sufficiently large, we get from (\ref{lengthratiocp}) that $\frac{1-2^{-(j+13/8)}}{R}\leq \frac{n'-L_j^3}{n-L_j^3} \leq R(1-2^{-(j+13/8)})$. It follows that for $i\in B, i'\in B'$, $\mathbb{P}(|\psi_S(i)-i'|< 2L_j^{9/4})\leq \frac{6RL_j^{9/4}}{L_j^{5/2}}=\frac{6R}{L_j^{1/4}}$. Similarly
$\mathbb{P}(|i-\psi_S^{-1}(i')|< 2L_j^{9/4})\leq \frac{6R}{L_j^{1/4}}$.
It now follows from $k_x,k_y\leq k_0R_{j+1}^{+}$ that
for $L_0$ large enough
$$\mathbb{P}(E)\geq 1- \frac{6Rk_0^2(R_{j+1}^{+})^2}{L_j^{1/4}}>0.$$
So it follows that there exists $s_0\in [0, L_j^{5/2}]$ such that $|\psi_{s_0}(i)-i'|\geq 2L_j^{9/4}, |i-\psi_{s_0}^{-1}(i')|\geq 2L_j^{9/4} \forall i\in B, i'\in B'$.

Setting $\psi=\psi_{s_0}$ it is now easy to see that for sufficiently large $L_0$ there exists $0=t_0<t_1<\ldots <t_q=n\in [0,n]$ satisfying the properties in the statement of the lemma. One way to do this is to choose $t_k$'s at the points $\frac{l_i+\psi^{-1}(l'_{i'})}{2}$ where $i,i'$ are such that there does not exist any point in the set $B\cup \psi ^{-1}(B')$ in between $l_i$ and $\psi^{-1}(l'_{i'})$ with the obvious modifications for $t_1$ and $t_{q-1}$ satisfying condition $(2)$ of the lemma. That such a choice satisfies the properties $(1)-(6)$ listed in the lemma is easy to verify.
\end{proof}

\begin{figure*}[h]\label{f:mapcase1}
\begin{center}
\includegraphics[height=12.35cm,width=17.1cm]{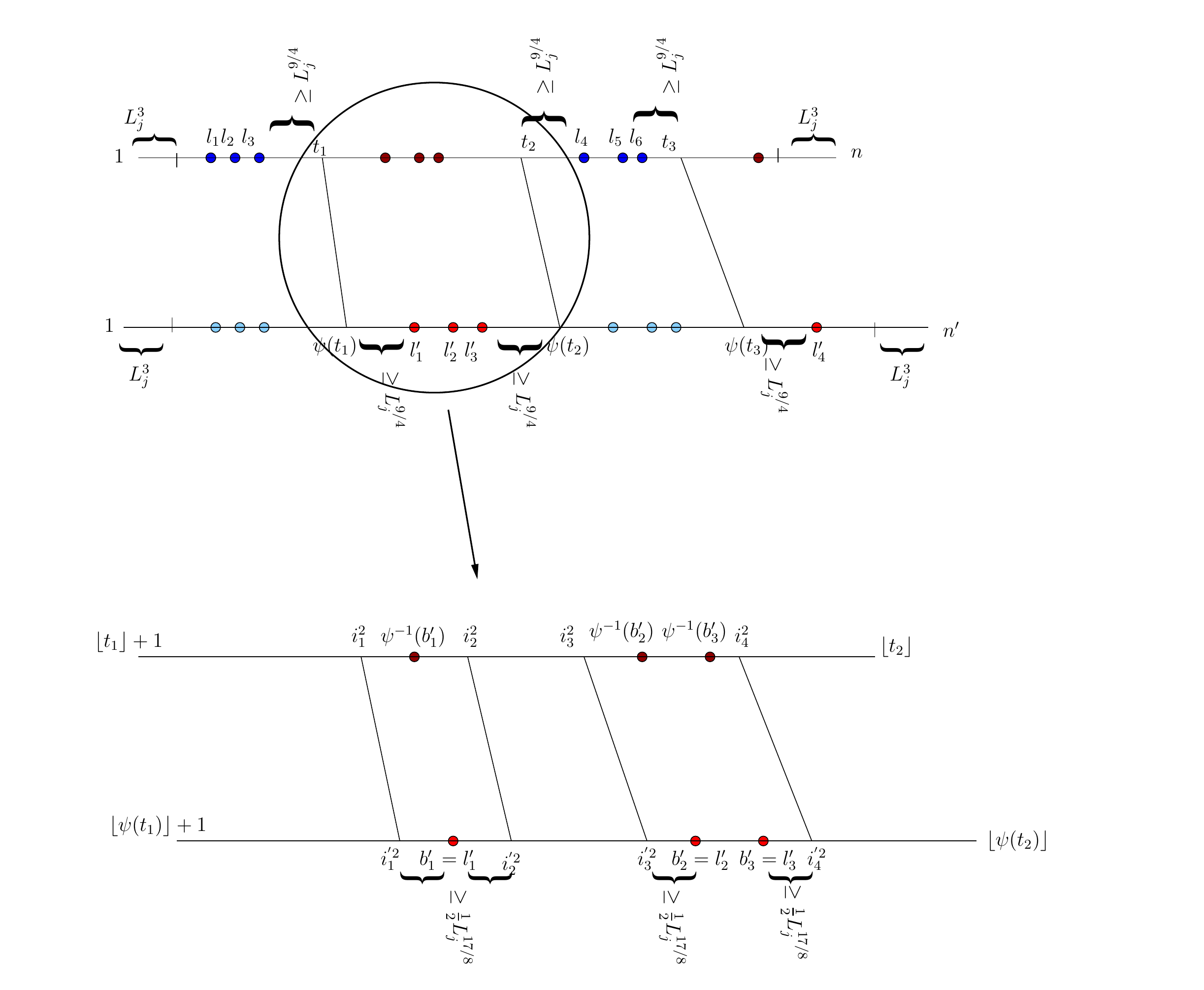}
\caption{The upper figure  illustrates a function $\psi$ and partitions $0=t_0<t_1<\ldots <t_q=n'$ and $0=\psi(t_0)<\psi(t_1)<\ldots <\psi(t_q)=n'$ as described in Lemma \ref{compresspartition1}. The lower figure illustrates  the further sub-division of an  interval $[t_1,t_2]$ as described in Lemma \ref{compresspartition2}. The neighborhood of elements of $b$ are mapped rigidly so above we have $i_2^2-i_1^2=i_2^{'2}-i_1^{'2}$ and $i_4^2-i_3^2=i_4^{'2}-i_3^{'2}$.}
\end{center}
\end{figure*}

\begin{lemma}
\label{compresspartition2}
Assume the hypotheses of Proposition \ref{compresspartition}. Then there exist partitions $P_*$ and $P'_*$ of $[n]$ and $[n']$ of equal length ($=z$, say) given by $P_*=\{0=i_0<i_1<\ldots <i_{z}=n\}$ and $P'_*=\{0=i'_0<i'_1<\ldots <i'_{z}=n'\}$ satisfying the following properties:
\begin{enumerate}
\item $(P_*,P'_*,B_{P_*}\cup B'_{P'_*})$ is a marked partition pair.
\item For $r\notin B_{P_*}\cup B'_{P'_{*}}$, $(i_{r+1}-i_{r})\wedge (i'_{r+1}-i'_r)\geq \lfloor 2L_j^{17/8} \rfloor$ and $\frac{1-2^{-(j+3/2)}}{R} \leq \frac{i_{r}-i_{r-1}}{i'_{r}-i'_{r-1}} \leq R(1+2^{-(j+3/2)})$.
\item $B_{P_*}\cap B'_{P'_*}=\emptyset$.
\item $0, z-1\notin B_{P_{*}} \cup B_{P'_{*}}$.
\item If $l_i\in [i_{r}+1,i_{r+1}]$, then $|l_i-i_{r}|\wedge |l_i-i_{r+1}|>\frac{1}{2}L_j^{17/8}$. Similarly if $l'_i\in [i'_{r}+1,i'_{r+1}]$, then $|l'_i-i'_{r}|\wedge |l'_i-i'_{r+1}|>\frac{1}{2}L_j^{17/8}$.
\end{enumerate}
Here $B_{P_*}=\{r: B\cap [i_r+1,i_{r+1}]\neq \emptyset\}$ and $B'_{P'_*}=\{r: B' \cap [i'_r+1,i'_{r+1}]\neq \emptyset\}$.
\end{lemma}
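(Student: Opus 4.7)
The plan is to refine the coarse partition pair $(Q,\psi(Q))$ produced by Lemma~\ref{compresspartition1} on an interval-by-interval basis: within each $[t_{r-1},t_r]$ of $Q$ I will carve a marked sub-interval of equal length on both sides around each cluster of bad points, and fill the complementary unmarked sub-intervals on the $\mathbb{Y}$ side proportional to their $\mathbb{X}$ counterparts, choosing the common proportion to absorb the cumulative length discrepancy. The key quantitative idea is that Lemma~\ref{compresspartition1}(5) is actually achieved with slopes in the \emph{tighter} range $[\tfrac{1-2^{-(j+13/8)}}{R},R(1+2^{-(j+13/8)})]$ (as the proof of Lemma~\ref{compresspartition1} explicitly shows), while Lemma~\ref{compresspartition2}(2) only requires the looser range with $2^{-(j+3/2)}$. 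The resulting slack, of order $2^{-j}$, will comfortably absorb the $O(R_{j+1}^{+}L_j^{-1/8})$ perturbation introduced by the marked carving.

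Within an interval $[t_{r-1},t_r]$: if it contains no bad points, then by Lemma~\ref{compresspartition1}(5) and $t_r-t_{r-1}\geq L_j^{9/4}\gg 2L_j^{17/8}$, I keep it as a single unmarked pair. Otherwise, by Lemma~\ref{compresspartition1}(3,4), all bad points in the interval are of one type, say $B=\{l_{i_1}<\cdots<l_{i_p}\}$; I greedily cluster them by placing consecutive $l_{i_s},l_{i_{s+1}}$ in the same cluster iff $l_{i_{s+1}}-l_{i_s}<\theta$, where $\theta=4RL_j^{17/8}$. For each cluster $C_k$ spanning $[a_k,b_k]$, the $k$-th marked piece on the $\mathbb{X}$ side is declared to be $[a_k-s,b_k+s]$ with $s=\lceil L_j^{17/8}\rceil$, and its $\mathbb{Y}$ partner to be an interval of \emph{equal} length; the intermediate unmarked pieces on the $\mathbb{Y}$ side are placed with lengths proportional to their $\mathbb{X}$ counterparts, with common ratio
\[
\rho_r=\frac{\sigma_r(t_r-t_{r-1})-M_r}{(t_r-t_{r-1})-M_r},
\]
where $\sigma_r$ is the $\psi$-slope on $[t_{r-1},t_r]$ and $M_r$ the total marked length.

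Verification of the five conclusions will go as follows. Condition~(5) is immediate from $s=\lceil L_j^{17/8}\rceil$. Condition~(1) holds because each marked piece has equal length on the two sides by construction, with $B_{P_*}\cup B_{P'_*}$ indexing exactly these marked positions. Condition~(3) follows since in each $[t_{r-1},t_r]$ the bad points belong to only one side. Condition~(4) follows since the first and last sub-pieces within every $[t_{r-1},t_r]$ are unmarked, together with Lemma~\ref{compresspartition1}(6) giving bad points at distance $\geq L_j^{9/4}$ from the endpoints of the first and last $Q$-intervals. For condition~(2), the length bound on unmarked $\mathbb{X}$-pieces comes from the cluster threshold since $\theta-2s\geq 2L_j^{17/8}$, and on the $\mathbb{Y}$ side from $\rho_r\geq 1/(3R)$; the ratio bound follows from
\[
|\rho_r-\sigma_r|\leq \frac{M_r\cdot|\sigma_r-1|}{(t_r-t_{r-1})-M_r}=O\!\left(\frac{R_{j+1}^{+}L_j^{17/8}\cdot R}{L_j^{9/4}}\right)=O(R_{j+1}^{+}L_j^{-1/8}),
\]
using $M_r\leq \sum_k(b_k-a_k)+2cs\leq k_x\theta+2cs=O(R_{j+1}^{+}L_j^{17/8})$.

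The main obstacle I foresee is uniform control of the perturbation $|\rho_r-\sigma_r|$ across all $r$ and $j$, specifically showing that it is dominated by the slack $2^{-j}(2^{-3/2}-2^{-13/8})$ uniformly. Since $L_j^{-1/8}=L_0^{-\alpha^{j}/8}$ decays doubly exponentially in $j$ while the slack decays only singly exponentially, this reduces to choosing $L_0$ sufficiently large. A secondary bookkeeping issue is the lower bound $t_r-t_{r-1}\geq L_j^{9/4}$ on interior $Q$-intervals, which I extract from the separation $|\psi(i)-i'|\geq 2L_j^{9/4}$ built into the choice of $\psi$ in the proof of Lemma~\ref{compresspartition1}, together with the midpoint description of the $t_r$.
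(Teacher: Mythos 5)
Your refinement strategy is essentially the paper's: both proofs start from the $(Q,\psi(Q))$ pair of Lemma~\ref{compresspartition1}, work interval by interval, carve out marked (rigid, equal-length) sub-pieces of width $\Theta(L_j^{17/8})$ around the bad points, fill the unmarked sub-pieces proportionally, and absorb an $O(k_0 R_{j+1}^{+} L_j^{17/8})$ perturbation using the doubly-exponential decay of $L_j^{-1/8}$ against the singly-exponential slack between the $2^{-(j+13/8)}$ slopes that $\psi$ actually achieves and the $2^{-(j+3/2)}$ tolerance demanded by condition~(2). The differences are presentational: the paper builds the marked/unmarked alternation by a greedy sweep (next breakpoint at $\lfloor L_j^{17/8}\rfloor$ before the next bad point, etc.) rather than your explicit clustering with a threshold $\theta$, and it dumps the entire length discrepancy into the last unmarked piece of each $[t_{r-1},t_r]$ (where it is $o(L_j^{9/4})$) rather than spreading it uniformly via a corrected ratio $\rho_r$. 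Your key observation that Lemma~\ref{compresspartition1} actually produces slopes in the tighter $2^{-(j+13/8)}$ band is exactly what makes the absorption work, and your identification of $t_r - t_{r-1}\gtrsim L_j^{9/4}$ from Lemma~\ref{compresspartition1}(6) as the denominator controlling the perturbation is also the paper's mechanism.

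One small bookkeeping issue worth flagging. With $\theta=4RL_j^{17/8}$, $s=\lceil L_j^{17/8}\rceil$, and the lower bound $\rho_r\geq 1/(3R)$ you quote, the $\mathbb{Y}$-side interior unmarked pieces are only guaranteed length $\geq \rho_r(\theta-2s)\geq \tfrac{4R-3}{3R}L_j^{17/8}\approx\tfrac43 L_j^{17/8}$, which falls short of the $\lfloor 2L_j^{17/8}\rfloor$ asserted in condition~(2). This is harmless for the downstream use — the proof of Proposition~\ref{compresspartition} only needs $> L_j^2$, and indeed the paper's own construction can produce $\mathbb{Y}$-side unmarked pieces as short as $\sim 2L_j^{17/8}/R$ — but if you want condition~(2) exactly as stated on both sides, raise the cluster threshold to roughly $\theta=8RL_j^{17/8}$. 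You should also spell out the integer-rounding step (the paper does this with floors and absorbs the resulting $O(k_0R_{j+1}^+)$ defect in the final piece); it is routine but needs saying.
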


\begin{proof}
Choose the map $\psi$ and the partitions $Q,Q'$ as constructed in the proof of Lemma~\ref{compresspartition1}. For $1\leq r\leq q$, now let us fix an interval $[t_{r-1},t_{r}]$. We need to consider two cases.\\

\textbf{Case 1:} $B_r:=B\cap [t_{r-1},t_{r}]=\{b_1<b_2<\ldots< b_{k_r}\}\neq \emptyset$.\\

Clearly $k_r\leq k_0R_{j+1}^{+}$.  We now define a partition $P^r=\{\lfloor t_{r-1} \rfloor =i_0^r<i_1^r<\ldots <i_{z_r}^r=\lfloor t_{r} \rfloor \}$ of $[\lfloor t_{r-1} \rfloor +1,\lfloor t_{r}\rfloor]$ as follows.

\begin{enumerate}
\item[$\bullet$] $i_1^r=b_1-\lfloor L_j^{17/8}\rfloor$.
\item[$\bullet$] For $h\geq 1$, if $[i_{h-1}^r,i_{h}^r]\cap B_r=\emptyset$, then define, $i_{h+1}^r=\min\{i\geq i_h^r+\lfloor L_j^{17/8}\rfloor: B_r\cap [i-\lfloor L_j^{17/8}\rfloor, i+\lfloor 3L_j^{17/8}\rfloor]=\emptyset\}$.
\item[$\bullet$] For $h\geq 1$, if $[i_{h-1}^r,i_{h}^r]\cap B_r \neq \emptyset$, define $i_{h+1}^r=\min\{i\geq i_h^r+\lfloor 2L_j^{17/8}\rfloor: i+\lfloor L_j^{17/8}\rfloor+1\in B_r\}$ or $\lfloor t_r\rfloor$ if no such $i$ exists which ends the partition.
\end{enumerate}

Notice that the construction implies that $i^r_{z_r-1}=b_{k_r}+L_j^{17/8}$. Also $i^r_{h+1}-i^r_{h}\geq 2L_j^{17/8}$ for all $h$. Also notice that this implies that alternate blocks of this partition intersect $B_r$ and hence $z_r\leq 2k_0R_{j+1}^{+}+2$. It also follows that the total length of the blocks intersecting $B_{r}$ is at most $4k_0R_{j+1}^{+}L_j^{17/8}$.

Now we construct a corresponding partition $P'^{r}=\{\lfloor \psi(t_{r-1}) \rfloor  =i_0^{'r}<i_1^{'r}<\ldots <i_{z_r}^{'r}=\lfloor \psi(t_{r}) \rfloor \}$ of $[\lfloor \psi(t_{r-1}) \rfloor +1,\lfloor \psi(t_{r})\rfloor]$ as follows.

\begin{enumerate}
\item[$\bullet$] $i_1^{'r}=\lfloor \psi(b_1)\rfloor -\lfloor L_j^{17/8}\rfloor$.
\item[$\bullet$] For $1\leq h \leq z_r-2$, $i_{h+1}^{'r}= i_{h}^{'r}+ (i_{h+1}^r-i_{h}^r)$, when $B_r\cap [i_h^r, i_{h+1}^r]\neq \emptyset$, and  $i_{h+1}^{'r}= i_{h}^{'r}+ \lfloor (i_{h+1}^r-i_{h}^r)\frac{[\lfloor \psi(t_{r}) \rfloor-\lfloor \psi(t_{r-1})\rfloor]}{[\lfloor t_{r} \rfloor-\lfloor t_{r-1}\rfloor]}\rfloor$ otherwise.
\end{enumerate}

Notice that condition (5) of Lemma \ref{compresspartition1} implies that
$$ |(i^{'r}_{z_r}-i^{'r}_{z_r-1})-\frac{[\lfloor \psi(t_{r}) \rfloor-\lfloor \psi(t_{r-1})\rfloor]}{[\lfloor t_{r} \rfloor-\lfloor t_{r-1}\rfloor]}(i^r_{z_r}-i^r_{z_r-1})|\leq 2R(4k_0R_{j+1}^{+}L_j^{17/8}+2k_0R_{j+1}^{+}+2),$$
which together with condition (6) of  Lemma \ref{compresspartition1} implies that for $L_0$ sufficiently large this gives a valid partition of $[\lfloor \psi(t_{r-1}) \rfloor +1,\lfloor \psi(t_{r})\rfloor]$. Moreover, since $\frac{2R(4k_0R_{j+1}^{+}L_j^{17/8}+2k_0R_{j+1}^{+}+2)}{L_j^{9/4}}$ is small for $L_0$ sufficiently large it follows that for $L_0$ large enough we have
$$\frac{1+2^{-(j+3/2)}}{R}\leq \frac{i_{h+1}^{'r}-i_{h}^{'r}}{i_{h+1}^r-i_h^{r}}\leq R(1+2^{-(j+3/2)}).$$

\textbf{Case 2:} $B'_r:=B'\cap [\lfloor \psi(t_{r-1}) \rfloor, \lfloor \psi(t_r) \rfloor]=\{b'_1<b'_2<\ldots< b'_{k'_r}\}\neq \emptyset$.\\

Clearly $k'_r\leq k_0R_{j+1}^{+}$. In this case we start with defining a partition $P^{'r}=\{\lfloor \psi (t_{r-1}) \rfloor =i_0^{'r}<i_1^{'r}<\ldots <i_{z_r}^{'r}=\lfloor \psi(t_{r}) \rfloor \}$ of $[\lfloor \psi(t_{r-1}) \rfloor +1,\lfloor \psi(t_{r})\rfloor]$ as follows.

\begin{enumerate}
\item[$\bullet$] $i_1^{'r}=b'_1-\lfloor L_j^{17/8}\rfloor$.
\item[$\bullet$] For $h\geq 1$, if $[i_{h-1}^{'r},i_{h}^{'r}]\cap B'_r=\emptyset$, then define, $i_{h+1}^{'r}=\min\{i\geq i_h^{'r}+\lfloor L_j^{17/8}\rfloor: B'_r\cap [i-L_j^{17/8}, i+\lfloor 3L_j^{17/8}\rfloor]=\emptyset\}$.
\item[$\bullet$] For $h\geq 1$, if $[i_{h-1}^{'r},i_{h}^{'r}]\cap B'_r\neq \emptyset$, define $i_{h+1}^{'r}=\min\{i\geq i_h^{'r}+\lfloor 2L_j^{17/8}\rfloor: i+\lfloor L_j^{17/8}\rfloor+1\in B'_r\}$  or $\lfloor \psi(t_r)\rfloor$ if no such $i$ exists which ends the partition.
\end{enumerate}

%

%



As before, next we construct a corresponding partition $P^{r}=\{\lfloor t_{r-1} \rfloor  =i_0^r<i_1^r<\ldots <i_{z_r}^r=\lfloor t_{r} \rfloor \}$ of $[\lfloor (t_{r-1}) \rfloor + 1,\lfloor t_{r})\rfloor]$ as follows.

\begin{enumerate}
\item[$\bullet$] $i_1^r=\lfloor \psi^{-1}(b'_1)\rfloor -\lfloor L_j^{17/8}\rfloor$.
\item[$\bullet$] For $z_r-2\geq h\geq 1$, $i_{h+1}^r= i_{h}^r+ (i_{h+1}^{'r}-i_{h}^{'r})$, provided $B'_r\cap [i_h^{'r}, i_{h+1}^{'r}]\neq \emptyset$, and  $i_{h+1}^r= i_{h}^r+ \lfloor (i_{h+1}^{'r}-i_{h}^{'r})\frac{[\lfloor t_{r} \rfloor-\lfloor t_{r-1}\rfloor]}{[\lfloor \psi(t_{r}) \rfloor-\lfloor \psi(t_{r-1})\rfloor]}\rfloor$, otherwise.
\end{enumerate}

As before it can be verified that the procedure described above gives a valid partition of $[\lfloor \psi(t_{r-1})+1 \rfloor,\lfloor (t_{r})\rfloor]$ such that for $L_0$ large enough we have for every $h$
$$\frac{1+2^{-(j+3/2)}}{R}\leq \frac{i_{h+1}^{'r}-i_{h}^{'r}}{i_{h+1}^r-i_h^{r}}\leq R(1+2^{-(j+3/2)}).$$

Let us define, $P_*=\cup_{r}P^r$ and $P'_*=\cup_{r}{P'}^{r}$ where $\cup P^r$ simply means the partition containing the points of all $P^r$'s (or alternatively, $\mathcal{B}(\cup P^r)=\cup _r \mathcal{B}(P^r)$). It is easy to check that $(P_*, P'_{*})$ satisfies all the properties listed in the lemma.

\end{proof}

The procedure for constructing $(P_*,P'_{*})$ as described in Lemma \ref{compresspartition1} and \ref{compresspartition2} is illustrated in Figure \ref{f:mapcase1}.

\begin{proof}(of Proposition \ref{compresspartition})
Construct the partitions $P_{*}$ and $P'_{*}$ of $[n]$ and $[n']$ respectively as in Lemma \ref{compresspartition2}. Let
$P_*=\{0=i_0<i_1<\ldots <i_{z-1}<i_z=n\}$ and $P'_{*}=\{0=i'_0<i'_1<\ldots <i'_{z-1}<i'_z=n'\}$. Also notice that it follows from the construction described in Lemma \ref{compresspartition2} that $B_{P_*}\cup B_{P'_{*}}$ does not contain two consecutive integers. For $1\leq h \leq L_j^2$ we let $i_r^h=i_r$ and so $P^h_{*}=P_*$ while we define $i_r^{'h}=i_r+h-1$ for $1\leq r \leq z-1$ so that $P_{*}^{'h}=\{0=i_0^{'h}<i_1^{'h}<\ldots <i_{z-1}^{'h}<i_z^{'h}=n'\}$.

First we observe that the above definition is consistent and gives rise to a valid partition pair $(P_{*}^h,P_{*}^{'h})$ for each $h$, $1\leq h \leq L_j^2$. From item $(5)$, in the statement of Lemma~\ref{compresspartition2} it follows that for each $h$,  $(P_{*}^h,P_{*}^{'h}, B_{P_*}\cup B_{P'_*})$ forms a marked partition pair. Furthermore, for each $h$, this marked partition pair satisfies

\begin{enumerate}
\item For $r\notin B_{P^h_*}\cup B_{P_{*}^{'h}}$, $(i_{r+1}-i_{r})\wedge (i'_{r+1}-i'_r)>L_j^2$ and $\frac{1+2^{-(j+3/2)}}{R}<\frac{i'_{r+1}-i'_{r}}{i_{r+1}-i_{r}}< R(1+2^{-(j+3/2)})$ for $L_j$ sufficiently large.
\item $B_{P_{*}^h}\cap B_{P_{*}^{'h}} =\emptyset$.
\end{enumerate}

Hence it follows from Lemma \ref{marked} that for each $h$, the generalized mapping $\Upsilon_h([n],[n'],B,B')=(P_h,P'_h,\tau_h)$ induced by the marked partition pair $(P_{*}^h,P_{*}^{'h}, B_{P_*}\cup B_{P'_{*}})$ is an admissible mapping of class $G^j$. It follows easily from definitions that for all $h$, $1\leq h\leq L_j^2$, $1\leq i\leq k_x$, $1\leq r \leq k_y$, $\tau_h(l_i)=\tau_{1}(l_i)+h-1$ and $\tau_h^{-1}(l'_r)=\tau_1^{-1}(l'_r)-h+1$. This procedure is illustrated in Figure \ref{f:admissible}.
\end{proof}

\begin{figure*}[h]\label{f:admissible}
\begin{center}
\includegraphics[height=8cm,width=18cm]{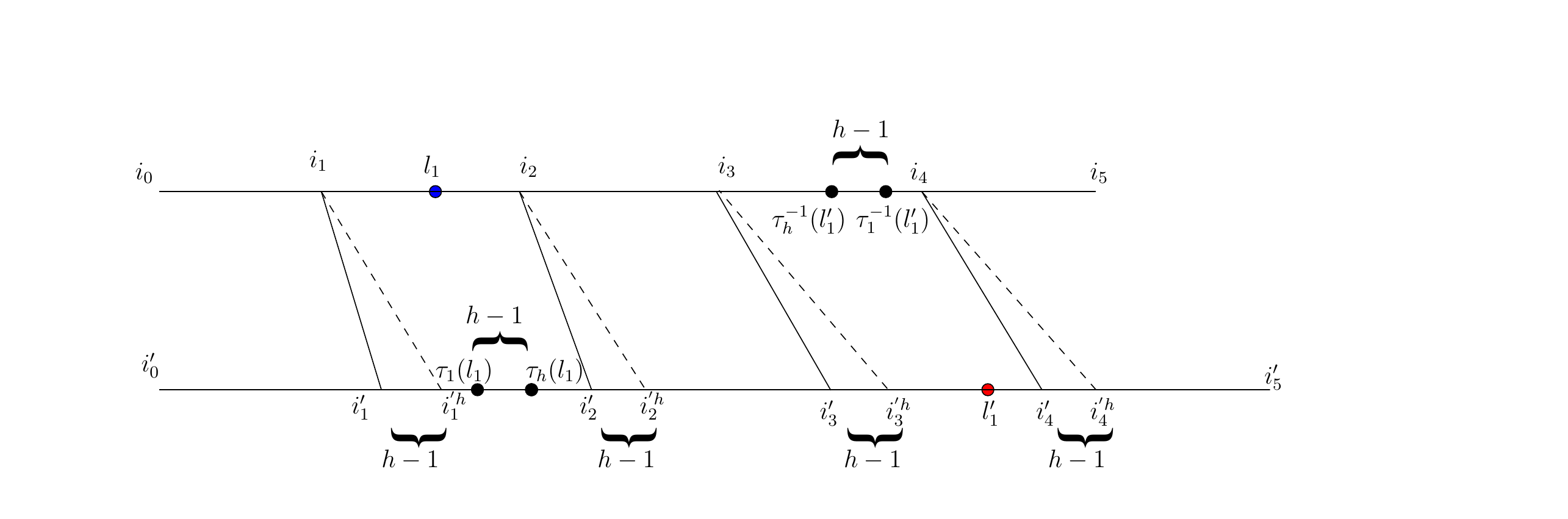}
\caption{Constructing  generalized mappings $(P_h,P'_h,\tau_h)$ from ($P_{*},P_{*})$ as described in the proof of Proposition \ref{compresspartition}.}
\end{center}
\end{figure*}

\begin{proposition}
\label{mapcase2}
For $j\geq 1$ let $n,n'>L_j^{\alpha-1}$ be such that
\begin{equation}
\label{lengthratiocpA}
\frac{1}{R}\leq \frac{n'}{n} \leq R.
\end{equation}
Let $B=\{l_1<l_2<\ldots <l_{k_x}\}\subseteq [n]$ be such that $l_1, (n-l_{k_x})>L_j^3$, $k_x \leq k_0$. Then there exist a family of admissible mappings $\Upsilon_h$ for $1\leq h \leq L_j^2$, $\Upsilon^j_h([n],[n'])=(P_h,P'_h,\tau_h)$ which are of $Class~H_1^j$ with respect to $B$ such that for all $h$, $1\leq h\leq L_j^2$, $1\leq i\leq k_x$ we have that $\tau_h(l_i)=\tau_{1}(l_i)+h-1$.
\end{proposition}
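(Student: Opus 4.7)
The construction will closely mirror that of Proposition~\ref{compresspartition}, but is strictly simpler because no constraint set appears on the $Y$-side and condition (ii) of class $H_1^j$ constrains only non-singleton blocks. These two features let me replace the random piecewise linear bijection of Lemma~\ref{compresspartition1} with a single deterministic $\psi$, and let me take $P_h=P_1$ for every $h$, shifting only the $Y$-partition to generate the family.

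I will first build $\psi:[0,n]\to[0,n']$ piecewise linearly with three pieces: slope $1$ on the boundary strips $[0,L_j^3+L_j^2]$ and $[n-L_j^3-L_j^2,n]$, mapping them onto the corresponding strips of $[0,n']$, and slope $\sigma=(n'-2L_j^3-2L_j^2)/(n-2L_j^3-2L_j^2)$ in the middle. Since $n,n'>L_j^{\alpha-1}$ and $\alpha=10$, we have $\sigma=n'/n+O(L_j^{-6})$, so for $L_0$ large every slope of $\psi$ lies in $(\tfrac{1-2^{-(j+5/4)}}{R},R(1+2^{-(j+5/4)}))$; the $L_j^2$-buffer in each boundary strip will absorb horizontal shifts up to $L_j^2-1$. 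Next I will partition the bad points into \emph{clusters} by declaring $l_i,l_{i+1}$ co-clustered whenever $l_{i+1}-l_i\le 4RL_j$; this yields at most $k_x\le k_0$ clusters with aggregate length at most $4Rk_0L_j$, negligible compared to $n$. To build $(P_1,P_1')$, I will make every integer spanned by a cluster a singleton of both partitions, place the $k$-th $Y$-cluster starting at $\lfloor\psi(l_{i_1^{(k)}})\rfloor$, and insert one non-singleton block between consecutive clusters and one at each boundary. Each non-singleton $X$-block then has length $\ge L_j^3$ (boundary) or $>4RL_j-1$ (between clusters), and the matching $Y$-block has length equal to the $X$-length times the relevant slope of $\psi$, up to $O(1)$ rounding. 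For $h\ge 2$, I will set $P_h=P_1$ and obtain $P_h'$ from $P_1'$ by rigidly shifting every cluster to the right by $h-1$, which only alters the lengths of the first and last non-singleton $Y$-blocks by $\pm(h-1)$.

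Verifying the class $H_1^j$ conditions will then be routine: (i) holds because each bad point is a singleton by construction; (iii) holds because the $L_j^2$-buffer keeps $\tau_h(l_i)\in(L_j^3,n'-L_j^3)$ for every $h\le L_j^2$; admissibility holds because singletons of $P_h$ correspond block-by-block to singletons of $P_h'$; and the identity $\tau_h(l_i)=\tau_1(l_i)+h-1$ is immediate from the rigid shift. The main obstacle will be condition (ii), specifically verifying that the ratio bound holds \emph{uniformly in $h$} after the first and last $Y$-block lengths have been perturbed by $\pm(h-1)$. Those blocks have baseline length $\ge L_j^3-L_j^2$ while the perturbation is at most $L_j^2-1$, giving a relative change of $O(L_j^{-1})$, negligible compared to $2^{-(j+5/4)}/R$ for $L_0$ large. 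The only remaining bookkeeping subtlety is to choose integer-valued $Y$-cluster positions so that the non-singleton $Y$-block lengths sum exactly to $n'$ minus the total cluster length; this is handled by at most $k_0+1$ rounding corrections of size $O(1)$, easily absorbed by the margin in the ratio window.
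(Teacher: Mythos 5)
Your approach departs from the paper's by replacing the randomized piecewise-linear map of Lemma~\ref{compresspartition1} with a single deterministic $\psi$, and by building the shift family by moving only the $Y$-partition. That simplification is plausible in spirit (no $B'$ constraint is present here), but the cluster construction as written does not produce a valid partition of $[n']$, and this is a genuine error rather than a bookkeeping subtlety.

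The failure is in the claim that ``the matching $Y$-block has length equal to the $X$-length times the relevant slope of $\psi$, up to $O(1)$ rounding.'' Because each $Y$-cluster is placed rigidly starting at $\lfloor\psi(l_{i_1^{(k)}})\rfloor$ and preserves its $X$-span, the inter-cluster $Y$-block between clusters $k$ and $k+1$ has length approximately $\sigma\,d_k-(1-\sigma)\,c_k$, where $d_k$ is the $X$-gap between the clusters and $c_k$ the span of cluster $k$; the error term is $O(c_k)$, not $O(1)$. With your clustering threshold $4RL_j$ the guarantee is only $d_k>4RL_j$, while $c_k$ can be as large as $4R(k_0-1)L_j$, so when $\sigma$ is near its minimum $1/R$ we get $\sigma d_k - (1-\sigma)c_k \approx 4L_j - 4R(k_0-1)L_j < 0$: consecutive $Y$-clusters overlap and the partition $P_1'$ is not well defined. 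No choice of clustering threshold $T$ fixes this, since $c_k$ scales like $k_0 T$ while $d_k$ is only guaranteed $>T$, and the requirement becomes $d_k \gtrsim R k_0 T$. The paper avoids exactly this by keeping the rigidly-mapped blocks of size $O(L_j^{17/8})$, separating them by blocks of size $\geq L_j^{9/4}$, and absorbing the cumulative $O(k_0 R_{j+1}^+ L_j^{17/8})$ drift into the final sub-block within each $[t_{r-1},t_r]$ (see the displayed inequality after the definition of $i_h^{'r}$ in the proof of Lemma~\ref{compresspartition2}); it then applies this machinery, with $B'=\emptyset$, to trimmed intervals $A\subset[n]$, $A'\subset[n']$ and pads the boundaries afterwards, which is what the paper's one-paragraph proof of Proposition~\ref{mapcase2} actually does.

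There is also a secondary gap in your verification of condition (iii). With $\psi$ equal to the identity on the right boundary strip $[n-L_j^3-L_j^2,n]\to[n'-L_j^3-L_j^2,n']$, a bad point at $l_{k_x}=n-L_j^3-1$ (permitted since the hypothesis gives only $n-l_{k_x}>L_j^3$) has $\tau_1(l_{k_x})=n'-L_j^3-1$, so $\tau_{L_j^2}(l_{k_x})=n'-L_j^3+L_j^2-2>n'-L_j^3$, violating (iii). Shifting always to the right forces the right buffer to lie on the $\tau_1$ side of $\psi(l_{k_x})$, but your $\psi$ places it on the wrong side. The paper's trick of first mapping into the strictly smaller $A'=\{L_j^3+1,\dots,n'-L_j^3\}$ and then padding, together with the $L_j^{9/4}$ interior buffer inherited from Lemma~\ref{compresspartition1}, takes care of this automatically.
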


\begin{proof}
This proof is a minor modification of the proof of Proposition \ref{compresspartition}. Clearly, as in the proof of Proposition \ref{compresspartition}, we can construct $L_j^2$ admissible mappings $\Upsilon^{*}_h(A,A')=(P^*_h,P^{'*}_{h})$ which  are of Class $G^j$ with respect to $(B,\emptyset)$ where $A=\{L_j^3/2+1,L_j^3/2+2,...,n-L_j^3/2\}$ and $A'=\{L_j^3+1,L_j^3+2,...,n'-L_j^3\}$.  Denote $P^*_h=\{L_j^3+1=i_0^h<i^h_1<\ldots <i^h_{z-1}<i^h_z=n-L_j^3\}$. Define the partition $P_h$ of $[n]$ as $P_h=\{0< i^h_0 <i^h_1<\ldots <i^h_{z-1}<i^h_z< n\}$, that is with segments of length $L_j^3/2$ added to each end of $P_h^*$.  Define $P'_h$ similarly by adding segments of length $L_j^3$ to each end of $P^{'*}_h$. It can be easily checked that for each $h$, $1\leq h \leq L_j^2$, $(P_h,P'_h,\tau_h)$  is an admissible mapping which is of $Class~H_1^j$ with respect to $B$.
\end{proof}

\begin{proposition}
\label{mapcase3}
Let For $j\geq 1$, $n,n'>L_j^{\alpha-1}$ be such that
\begin{equation}
\label{lengthratiocpB}
\frac{3}{2R}\leq \frac{n'}{n} \leq \frac{2R}{3}.
\end{equation}

Let $B=\{l_1<l_2<\ldots <l_{k_x}\}\subseteq [n]$ be such that $l_1, (n-l_{k_x})>L_j^3$, $k_x \leq \frac{n-2L_j^3}{10R_j^{+}}$. Then there exist an admissible mapping $\Upsilon([n],[n'])=(P,P',\tau)$ which is of $Class~H_2^j$ with respect to $B$.
\end{proposition}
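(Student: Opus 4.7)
The plan is to build the partitions $P$ and $P'$ directly so that the singletons of $P$ form a set $S\supseteq B$, every non-singleton block of $P$ has length exactly $R_j$, and each corresponding block of $P'$ has length in $[R_j^-,R_j^+]$. This reduces to three choices: the singleton positions in $[n]$, the singleton positions in $[n']$, and the subdivision of the non-singleton gaps on each side.

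To define $S$, I start with $B$ and, for each of the $k_x+1$ intervals between consecutive elements of $B\cup\{0,n+1\}$, add the minimum number of extra singletons needed so that all resulting sub-gap lengths are non-negative multiples of $R_j$. An interval of length $L$ requires at most $t=L\bmod R_j\leq R_j-1$ extra singletons, so $|S|\leq k_x+(k_x+1)(R_j-1)\leq (k_x+1)R_j$. Combined with the hypothesis $k_x\leq (n-2L_j^3)/(10R_j^+)$ and $R_j/R_j^+\leq 1/R$, this gives $|S|\leq n/(10R)+R_j\leq n/(9R)$ for $L_0$ large. For the first and last intervals, I place the extra singletons consecutively adjacent to $l_1$ and $l_{k_x}$ respectively, so that the first and last non-singleton gaps of $P$ each contain at least $(L_j^3-R_j)/R_j$ many $R_j$-blocks; writing $m_h$ for the $R_j$-block counts in the successive gaps of $P$, this means $m_0,m_{|S|}\geq (L_j^3-R_j)/R_j$.

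Next, since $\sum_h m_h=(n-|S|)/R_j$, I choose gap lengths $|g_h'|\in[m_hR_j^-,m_hR_j^+]$ in $P'$ summing to $n'-|S|$, subject to $|g_0'|,|g_{|S|}'|>L_j^3$. Using $R_j^+/R_j\geq R/2$, one checks $m_0R_j^+,m_{|S|}R_j^+\geq((L_j^3-R_j)/R_j)R_j^+\geq RL_j^3/2>L_j^3$ for $R\geq 4$, so the choice $|g_0'|=|g_{|S|}'|=L_j^3+1$ is admissible; it then remains to distribute the residual $n'-|S|-2(L_j^3+1)$ across the interior gaps with $|g_h'|\in[m_hR_j^-,m_hR_j^+]$. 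This is feasible provided
\[
\sum_{h=1}^{|S|-1}m_hR_j^- \,\leq\, n'-|S|-2(L_j^3+1) \,\leq\, \sum_{h=1}^{|S|-1}m_hR_j^+,
\]
which holds comfortably because the hypothesis $n'/n\in[3/(2R),2R/3]$ lies strictly inside $[R_j^-/R_j,R_j^+/R_j]=[(1-2^{-(j+1)})/R,R(1+2^{-(j+1)})]$, while $|S|/n=O(1/R)$ and $L_j^3/n=o(1)$ for $L_0$ large. Finally, within each gap of $P'$ I subdivide into $m_h$ consecutive blocks of length in $[R_j^-,R_j^+]$ summing to $|g_h'|$ (again a trivial integer-distribution step), and take $\tau$ to be the unique order-preserving bijection between the blocks of $P$ and $P'$.

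By construction, admissibility and conditions (i), (iii) of Class~$H_2^j$ are immediate, and condition (ii) follows from $|g_0'|,|g_{|S|}'|>L_j^3$. The main obstacle is the joint feasibility of the sum constraint $\sum_h|g_h'|=n'-|S|$ and the per-gap constraints $|g_h'|\in[m_hR_j^-,m_hR_j^+]$ together with the boundary condition $L_j^3<\tau(l_i)<n'-L_j^3$; the slack between the hypothesis interval $[3/(2R),2R/3]$ and the permitted range $[R_j^-/R_j,R_j^+/R_j]$, combined with the upper bound on $k_x$, is exactly what makes the construction go through.
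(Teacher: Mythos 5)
Your overall plan (force $P$ into singletons at $B$ plus exact-$R_j$ blocks elsewhere, then choose $P'$-block lengths in $[R_j^-,R_j^+]$) is the right idea, but the implementation has a real gap in the treatment of the first and last gaps. Because you place all extra singletons adjacent to $l_1$ and $l_{k_x}$, the first gap of $P$ has $m_0\approx (l_1-1)/R_j$ sub-blocks, and this can be of order $n/R_j$ if $l_1$ is far from $1$ (nothing in the hypotheses prevents, say, $l_1\approx n/2$). You then declare $|g_0'|=L_j^3+1$ "admissible" after checking only the \emph{upper} bound $|g_0'|\le m_0R_j^+$; but the per-gap constraint also requires $|g_0'|\ge m_0R_j^-$, which fails badly when $m_0$ is large since $m_0R_j^-\approx n(2-2^{-j})/(4R)\gg L_j^3$. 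The same imbalance breaks the interior feasibility you claim is "comfortable": with $k_x=1$, $l_1\approx n/2$, virtually all $R_j$-blocks sit in $g_0$ and $g_{|S|}$, the interior sum $\sum_{h=1}^{|S|-1}m_h$ is essentially zero, and so $\sum_{h=1}^{|S|-1}m_hR_j^+$ cannot absorb the strictly positive residual $n'-|S|-2(L_j^3+1)$. So the construction as written does not go through.

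The paper avoids exactly this by \emph{fixing} the first and last blocks of the coarse partition to have length exactly $L_j^3$ (and using $l_1,\,n-l_{k_x}>L_j^3$ to guarantee $B$ lies strictly inside the remaining $R_j$-regular region). With $i_1=L_j^3$ and $i_{z-1}=n-L_j^3$, every $l_i$ falls into one of the $k$ interior $R_j$-blocks; those that meet $B$ (at most $k_x\le (n-2L_j^3)/(10R_j^+)$ of them) are copied rigidly ($i'_{h+1}-i'_h=i_{h+1}-i_h$), which simultaneously secures $L_j^3<\tau(l_i)<n'-L_j^3$ because $i'_1=L_j^3$ and $i'_{z-1}=n'-L_j^3$. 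The remaining $\gtrsim \tfrac{8}{9}(k+1)$ interior $R_j$-blocks then carry all the stretching, and the explicit computation that the common stretched length $s$ satisfies $R_j^-\le s\le R_j^+-1$ uses both the bound on $k_x$ (so that rigid blocks don't eat too much of $n'$) and the hypothesis $3/(2R)\le n'/n\le 2R/3$. If you want to keep your formulation, you should (a) not push the extra singletons into the first/last intervals but rather cap $m_0$ and $m_{|S|}$ near $L_j^3/R_j$, and (b) either set $|g_0'|,|g_{|S|}'|$ consistently with the range $[m_0R_j^-,m_0R_j^+]$ and $[m_{|S|}R_j^-,m_{|S|}R_j^+]$, or follow the paper and treat those boundary blocks as rigid copies of fixed length $L_j^3$.
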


To prove this proposition we need the following lemma.

\begin{lemma}
\label{mapcase3lemma}
Assume the hypotheses of Proposition \ref{mapcase3}. Then there exists partitions $P$ and $P'$ of $[n]$ and $[n']$ of equal length ($=z$, say) given by $P_{*}=\{0=i_0<i_1=L_j^3<\ldots i_{z-1}=n-L_j^3<i_{z}=n\}$ and $P'_{*}=\{0=i'_0<i'_1=L_j^3<\ldots <i'_{z-1}=n'-L_j^3<i'_{z}=n'\}$ satisfying the following properties:

\begin{enumerate}
\item $(P_{*},P'_{*},B^*)$ is a marked partition pair for some $B^*\supseteq B_P\cup \{0,z-1\}$ where $B_P=\{r:[i_{r}+1,i_r]\cap B\neq \emptyset\}$.
\item For $r\notin B$, $(i_{r+1}-i_{r})=R_{j}$ and $R_j^{-}\leq i'_{h+1}-i'_{h}\leq R_{j}^{+}$.
\end{enumerate}
\end{lemma}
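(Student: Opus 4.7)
The plan is to take $P$ to be a uniform grid of length-$R_j$ middle blocks and then build $P'$ by matching the marked blocks exactly, using the flexibility $[R_j^-, R_j^+]$ in the unmarked blocks to absorb the discrepancy between $n$ and $n'$. Concretely, I would set $i_0 = 0$, $i_1 = L_j^3$, $i_{z-1} = n - L_j^3$, $i_z = n$, and fill in the middle by $i_r = L_j^3 + (r-1)R_j$ for $r = 1, 2, \ldots, z-2$. Writing $N := n - 2L_j^3 = gR_j + s$ with $0 \leq s < R_j$, if $s > 0$ I absorb the residue by extending one middle block to length $R_j + s$ and adding its index to $B^*$. Let $B_P = \{r : [i_r + 1, i_{r+1}] \cap B \neq \emptyset\}$, which has at most $k_x$ elements, and set $B^* = \{0, z-1\} \cup B_P$ (plus the extended index if $s > 0$). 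The hypotheses $l_1, n - l_{k_x} > L_j^3$ ensure no bad-block index collides with a boundary index.

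For $P'$, I would set $i'_0 = 0$, $i'_1 = L_j^3$, $i'_{z-1} = n' - L_j^3$, $i'_z = n'$, and for each $r \in B^* \setminus \{0, z-1\}$ enforce $i'_{r+1} - i'_r = i_{r+1} - i_r$, which directly satisfies the marked-pair identity. Let $b$ denote the total length of the marked middle blocks (the same on both sides), let $N' := n' - 2L_j^3$, and let $g$ be the number of unmarked middle blocks, so that $N - b = gR_j$. The unmarked $P'$-block lengths must be integers in $[R_j^-, R_j^+]$ summing to $N' - b$; I would assign each of them to be either $\lfloor (N'-b)/g \rfloor$ or $\lceil (N'-b)/g \rceil$ in the appropriate proportion. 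Conditions (1) and (2) of the lemma are then immediate from this construction.

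The main obstacle, and the only real content, is the quantitative feasibility check that $(N' - b)/g \in [R_j^-, R_j^+]$ with slack bigger than $1$ (so integer rounding goes through). Since $N - b = gR_j$, this is equivalent to $(N' - b)/(N - b) \in [R_j^-/R_j,\, R_j^+/R_j] = [(2 - 2^{-j})/(2R),\, R(2 + 2^{-j})/2]$. Using $b \leq (k_x + 1) R_j$, the hypothesis $k_x \leq (n - 2L_j^3)/(10 R_j^+)$, and the ratio bound $n'/n \in [3/(2R),\, 2R/3]$, one checks that $b \ll N$ for $L_0$ large (since $n > L_j^{\alpha - 1} \gg R_j, L_j^3$), so $(N'-b)/(N-b)$ differs from $N'/N$ by a vanishing amount. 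Since $[3/(2R), 2R/3]$ lies strictly inside $[(2-2^{-j})/(2R), R(2+2^{-j})/2]$ with a fixed gap, both inequalities hold with slack on the order of $N/R$, which dwarfs $g^{-1}$. This routine but careful arithmetic is the only obstacle; once done, the construction automatically satisfies all the listed conditions.
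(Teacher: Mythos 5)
Your construction matches the paper's almost exactly: a grid of $R_j$-blocks in $P_*$ with boundary blocks of length $L_j^3$, a marked residue block, marked bad-containing blocks copied length-for-length into $P'_*$, and the remaining length in $P'_*$ distributed among the unmarked blocks as near-equal integers. The paper handles the residue by adding one short block of length $r<R_j$ rather than extending a block to $R_j+s$, but since both versions mark that block this is cosmetic. Your feasibility check that $(N'-b)/g$ lands in $[R_j^-,R_j^+]$ with slack $>1$ is exactly the paper's Claim $R_j^-\leq s\leq R_j^+-1$.

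There is, however, an error in your quantitative sketch that would sink the argument if followed literally. You assert that ``$b\ll N$ for $L_0$ large,'' and hence that $(N'-b)/(N-b)$ differs from $N'/N$ by a vanishing amount. This is false: the hypothesis of Proposition~\ref{mapcase3} only gives $k_x\leq (n-2L_j^3)/(10R_j^+)$, so $b$ can be of order $N$, namely $b\lesssim N/(10R)$ --- a \emph{fixed} fraction of $N$, with the constant $1/(10R)$ independent of $L_0$. Consequently $(N'-b)/(N-b)$ can differ from $N'/N$ by $\Theta(1/R)$, which is of the same order as the gap between the hypothesis interval $[3/(2R),\,2R/3]$ and the target $[R_j^-/R_j,\,R_j^+/R_j]$ at its lower end ($R_j^-/R_j<1/R$). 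So the feasibility is not automatic and the constants must actually be tracked: the paper's Claim does this and finds $s\gtrsim 25R_j/(18R)$ versus $R_j^-<R_j/R$, a margin of $\approx 7/(18R)$ in the ratio. Your statement that the slack is ``on the order of $N/R$'' is also off; the slack in $(N'-b)/g$ is of order $R_j/R\sim 4^j$, which is indeed $>1$. The architecture of your proof is right and the conclusion is correct, but you cannot get there by dismissing the arithmetic; it has to be checked, and the reason it works is the strict (but $O(1/R)$-sized) containment of intervals, not any smallness of $b$ relative to $N$.
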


\begin{figure*}[h]\label{f:mapcase3}
\begin{center}
\includegraphics[height=8cm, width=16cm]{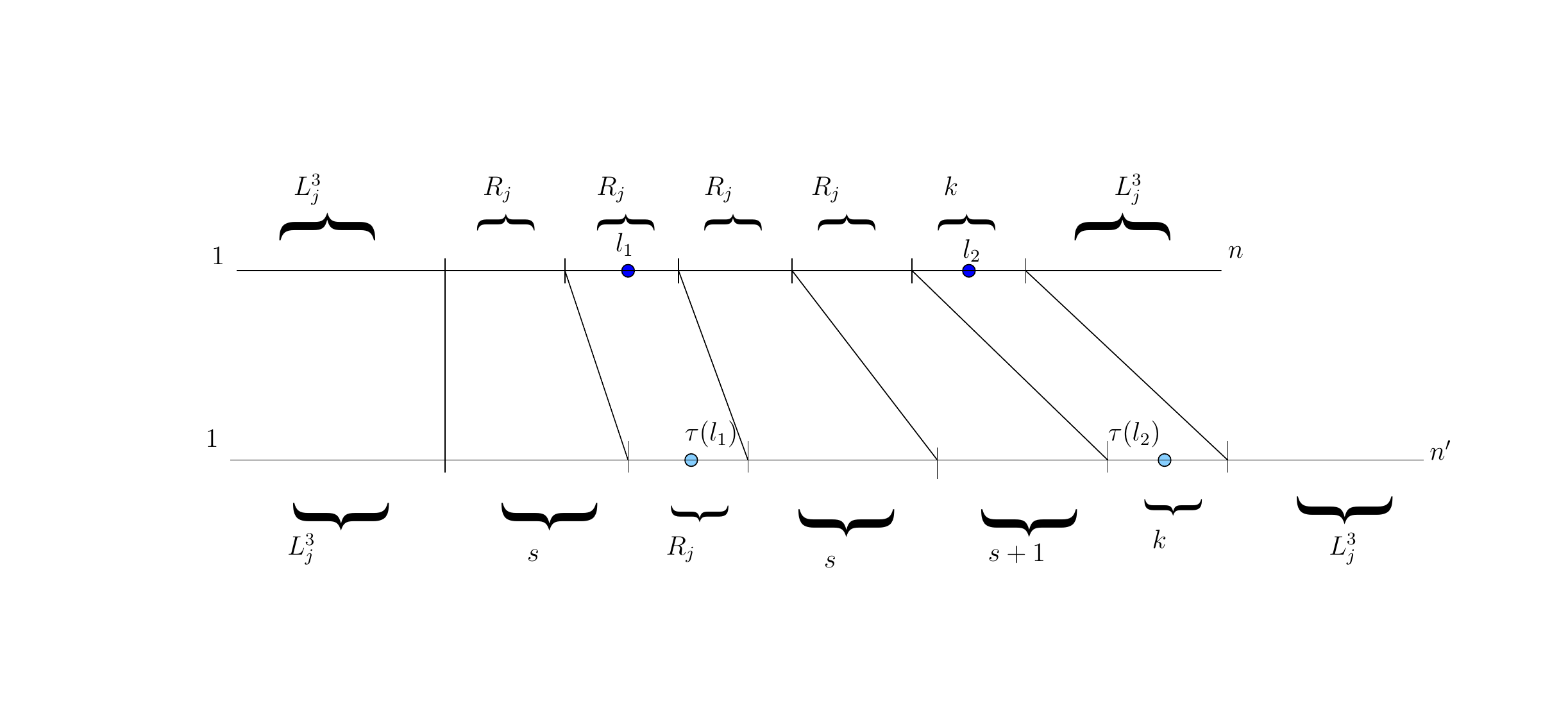}
\caption{Marked Partition pair of $[n]$ and $[n']$ as described in Lemma~\ref{mapcase3lemma} and the induced generalised mapping.}
\end{center}
\end{figure*}

\begin{proof}
Let us write $n=2L_j^3+kR_j+r$ where $0\leq r <R_j$ and $k\in \mathbb{N}$. Construct the partition $P_{*}=\{0=i_0<i_1=L_j^3<\ldots i_{z-1}=n-L_j^3<i_{z}=n\}$  where we set $i_h=L_j^{3}+(h-1)R_j$ for $h=2,3,\ldots , (k+1)$.

Notice that this implies $z=(k+2)$ or $(k+3)$ depending on whether $r=0$ or not. For the remainder of this proof we assume that $r>0$. In the case $r=0$, the same proof works with the obvious modifications.

Now define $B^*=B_P\cup \{0,z-1\}\cup \{k+1\}$.

Clearly
\begin{equation}\label{mapcase3eqn}
\sum_{h\in B_P\cup \{k+1\}} (i_{h+1}-i_h) \leq R_j\left(\frac{n-2L_j^3}{10R_{j}^{+}}+1 \right)\leq \frac{n-2L_j^3}{9R}
\end{equation}
for $L_0$ sufficiently large.

Also notice that for $L_0$ sufficiently large $\frac{3}{2R}\leq \frac{n-2L_j^3}{n'-2L_j^3}\leq \frac{2R}{3}$.

Now let $$s=\left\lfloor\dfrac{(n'-2L_j^3)-\sum_{h\in B_P\cup \{k+1\}}(i_{h+1}-i_h)}{k+1-|B_P\cup \{k+1\}|}\right \rfloor $$ and let

$$(n'-2L_j^3)-\sum_{h\in B_P\cup \{k+1\}}(i_{h+1}-i_h)= s(k+1-|B_P\cup \{k+1\}|) +r';~~~~0\leq r'< k+1-|B_P\cup \{k+1\}|.$$

\textbf{Claim:} $R_j^{-}\leq s\leq R_j^{+}-1.$\\

\textit{Proof of Claim.}
Clearly, $|B_P\cup \{k+1\}|\leq \frac{(n-2L_j^3)}{10R_j^{+}}-1\leq \frac{(n-2L_j^3)}{9R_j^{+}}\leq \frac{(k+1)R_j}{9R_j^{+}}$. Hence $k+1-|B_P\cup \{k+1\}|\geq (k+1)(1-\frac{R_j}{9R_{j}^{+}})\geq (k+1)\frac{8}{9}$ for $R>10$. It follows that
\begin{eqnarray*}
s &\leq & \frac{n'-2L_j^3}{(k+1)\frac{8}{9}}\\
&=& \dfrac{(n-2L_j^3)\frac{n'-2L_j^3}{n-2L_j^3}}{(k+1)\frac{8}{9}}\\
&\leq & \frac{18(k+1)RR_j}{24(k+1)}=\frac{3}{4}RR_{j}\leq R_{j}^{+}-1.
\end{eqnarray*}

To prove the other inequality let us observe using (\ref{mapcase3eqn}),
\begin{eqnarray*}
s &\geq & \frac{(n'-2L_j^3)-\frac{(n-2L_j^3)}{9R}}{(k+1)}-1\\
&\geq & \frac{(n-2L_j^3)\frac{3}{2R}-\frac{(n-2L_j^3)}{9R}}{(k+1)}-1\\
&\geq & \frac{25kR_j}{18(k+1)R}-1\geq R_j^{-},
\end{eqnarray*}
for $L_0$ sufficiently large.

This completes the proof of the claim. Coming back to the proof of the lemma
let us denote the set $\{1,2,\ldots , k+1\}\setminus (B_P\cup \{k+1\}=\{w_1<w_2<...<w_{d}\}$ where $d=k+1-|B_P\cup \{k+1\}|$. Now we define $P'_{*}=\{0=i'_0<i'_1=L_j^3<\ldots <i'_{z-1}=n'-L_j^3<i'_{z}=n'\}$. We define $i_h'$ inductively as follows
\begin{itemize}
\item We set $i'_1=L_j^3$.
\item For $h\in B_{P}\cup \{k+1\}$, define $i'_{h+1}=i'_h+(i_{h+1}-i_h)$.
\item If $h=w_t$, for some $t$ then define $i'_{h+1}=i'_h+(s+1)$ if $t\leq r'$, and $i'_{h+1}=i'_h+s$.
\end{itemize}
Now from the definition of $s$, it is clear that $i'_{k+2}=n'-L_j^3$, as asserted. It now clearly follows that $(P_{*},P'_{*})$ is a pair of partitions of $([n],[n'])$ as asserted in the statement of the Lemma. That $(P_{*},P'_{*},B^*)$ is a marked partition pair is clear. It follows from the claim just proved that $(P_{*},P'_{*})$ satisfies the second condition in the statement of the Lemma. This procedure for forming the marked partition pair $(P_{*},P'_{*})$ is illustrated in Figure \ref{f:mapcase3}.
\end{proof}

\begin{proof}(of Proposition (\ref{mapcase3}))
Construct the partitions $(P_{*},P'_{*})$ as given by Lemma \ref{mapcase3lemma}. Consider the generalized mapping $\Upsilon([n],[n'])=(P,P',\tau)$ induced by the marked partition pair $(P_{*},P,B^*)$.
It follows from the construction and fact that $B^*\supseteq \{0,z-1\}$ that $\Upsilon$ is an admissible mapping which of class $H_2^j$ with respect to $B$.
\end{proof}

\section{Tail estimate}\label{s:tailestimate}

The most important of our inductive hypothesis is the following recursive estimate.
\begin{theorem}\label{t:tail}
Assume that the inductive hypothesis holds up to level $j$. Let $X$ and $Y$ be random  $(j+1)$-level blocks according to $\mu^\mathbb{X}_{j+1}$ and $\mu^\mathbb{Y}_{j+1}$. Then
\begin{align*}
\mathbb{P}(S^\mathbb{X}_{j+1}(X)\leq p)\leq p^{m_{j+1}} L_{j+1}^{-\beta},\quad
\mathbb{P}(S^\mathbb{Y}_{j+1}(Y)\leq p)\leq p^{m_{j+1}} L_{j+1}^{-\beta}
\end{align*}
for $p\leq 1-L_{j+1}^{-1}$ and $m_{j+1}=m+2^{-(j+1)}$.
\end{theorem}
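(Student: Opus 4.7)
The plan is to decompose a random level-$(j+1)$ block $X=(X_1,\ldots,X_n)$ of sub-blocks $X_i$ at level $j$ according to its internal structure and then use the generalized mappings of Section~\ref{s:construction} to convert the inductive tail bound at level $j$ into an improved one at level $j+1$. Let $K_X=\#\{i:X_i\notin G_j^{\mathbb{X}}\}$, let $l_1<\cdots<l_{K_X}$ be the bad positions, and write $q_i=S_j^{\mathbb{X}}(X_{l_i})$. The central idea is that $S_{j+1}^{\mathbb{X}}(X)$ is governed from below by $\prod_i q_i$ together with $n$ and $K_X$; conversely $S_{j+1}^{\mathbb{X}}(X)$ can be small only when at least one of these three features is atypical, and each such atypicality is controlled by the inductive hypotheses.

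First I would derive a deterministic lower bound on $S_{j+1}^{\mathbb{X}}(X)$ when $X$ is ``well-behaved'': $K_X\leq k_0$, $n$ within the typical range $(2-2^{-(j+1)})L_{j+1}+O(L_{j+1}^{1/2})$, every bad sub-block is semi-bad, and every sub-sequence of $L_j^{3/2}$ consecutive sub-blocks is strong. Given a random level-$(j+1)$ $\mathbb{Y}$-block $Y=(Y_1,\ldots,Y_{n'})$ with $n'$ in the typical range and $K_Y\leq k_0$, Proposition~\ref{compresspartition} supplies $L_j^2$ admissible mappings $\Upsilon_h$ of class $G^j$ that differ only by a shift $\tau_h(l_i)=\tau_1(l_i)+h-1$. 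By Proposition~\ref{admissiblemap} each shift $h$ produces an embedding $X\hookrightarrow Y$ provided $X_{l_i}\hookrightarrow Y_{\tau_h(l_i)}$ for every bad $X$-position and the symmetric condition holds for bad $Y$-positions; the remaining sub-blocks are good-good pairings handled automatically by Lemma~\ref{compressrounding}.

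The crucial gain comes from the fact that as $h$ varies over $\{1,\ldots,L_j^2\}$, the target blocks $Y_{\tau_h(l_i)}$ sweep through $L_j^2$ disjoint positions in $Y$, so conditional on $X$ the $L_j^2$ trials are independent over the randomness of $Y$. Each trial succeeds at the $i$-th bad position with probability at least $q_i$ up to a $1-O(L_j^{-\delta})$ factor accounting for the rare event that the target $Y$-block is itself bad. Taking a union over $h$ yields a lower bound roughly of the form
\begin{equation*}
S_{j+1}^{\mathbb{X}}(X)\;\geq\;1-\left(1-c\prod_i q_i\right)^{L_j^2}- o(L_{j+1}^{-\beta}),
\end{equation*}
for an absolute $c>0$, where the error term absorbs atypical $Y$-configurations handled by the inductive tail bound for $\mathbb{Y}$-blocks together with (\ref{ygood}).

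The final step converts this into the target tail bound. Inverting the above, $\{S_{j+1}^{\mathbb{X}}(X)\leq p\}$ forces either $X$ to fail one of the well-behavedness conditions or $\prod_i q_i\lesssim L_j^{-2}\log(1/(1-p))$. The length deviation is controlled by (\ref{lengthx}) via the Chernoff bound from the exponential moment, and the deviation of $K_X$ is controlled by (\ref{xgood}). The main term comes from $\prod_i q_i$: applying the level-$j$ tail (\ref{tailx1}) to each factor and integrating over the simplex $\{\prod q_i\leq P\}$ produces a factor of order $P^{m_j}(\log(1/P))^{K_X}L_j^{-K_X\beta}$, while the combinatorial cost of placing $K_X$ bad positions among $n$ candidates is at most $\binom{n}{K_X}L_j^{-K_X\delta}$ by (\ref{xgood}). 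The proof outline's five-case decomposition corresponds to ranges of $p$ and distinct atypicality types; in each case the parameter constraints $\alpha>9$, $\beta>\alpha(\delta+1)$, $m>9\alpha\beta$ exactly permit the $L_j^2$ gain to upgrade the exponent from $m_j$ to $m_{j+1}=m+2^{-(j+1)}$ while preserving the $L_{j+1}^{-\beta}$ prefactor. The hard part will be precisely this bookkeeping in the regime $p$ close to $1-L_{j+1}^{-1}$, where no slack in the exponent is available and every error term from length, bad-block count, or unfavourable $Y$-configuration must be shown to be genuinely $o(p^{m_{j+1}}L_{j+1}^{-\beta})$ rather than merely $o(1)$.
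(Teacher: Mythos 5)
Your high-level picture is the right one: you decompose a level-$(j+1)$ block by its number $K_X$ of bad sub-blocks, their positions and embedding probabilities $q_i=S_j(X_{\ell_i})$, use the $L_j^2$ admissible mappings of Proposition~\ref{compresspartition} as near-independent embedding attempts, and integrate the resulting lower bound on $S_{j+1}(X)$ against the level-$j$ tail. This matches the paper's scaffolding. However, there are two genuine gaps.

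The first is the exponent conversion $m_j\to m_{j+1}$, which is the entire point of the recursion and which you leave implicit. Your ``integrate over the simplex $\{\prod q_i\leq P\}$'' step produces a factor of order $P^{m_j}(\log 1/P)^{K_X-1}L_j^{-K_X\beta}$, i.e.\ exponent $m_j$ with a $K_X$-dependent logarithmic correction, summed over the distribution of $K_X$. Converting this to a clean $P^{m_{j+1}}$ bound is precisely what Lemma~\ref{l:totalSizeBound} achieves, via stochastic domination (Lemma~\ref{l:badBlockStocDomination}) of $(K_X,-\log\prod q_i)$ by a compound binomial/Gamma pair and the explicit Gamma-density estimate~\eqref{e:MaxGammaDensity}. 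It is exactly here that $m_{j+1}=m_j-2^{-(j+1)}$ appears: the $2^{j+1}$ in the bound $(2^{j+1}m_j)^k$ burns the log correction into the $2^{-(j+1)}$ decrement of the exponent. Asserting that ``the parameter constraints exactly permit'' the upgrade without this computation leaves the heart of the proof untouched.

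The second gap is structural: a single family of class-$G^j$ mappings together with an amplification factor of $L_j^2$ cannot handle all configurations. The paper splits into five cases exactly because the mechanism has to change. When $K_X>k_0$ (Case 3) or the block is very long (Case 4) or the bad-block density is above $(10R_j^+)^{-1}$ (Case 5), one uses a single class-$H_2^j$ mapping with no amplification, and the $L_{j+1}^{-\beta}$ prefactor comes instead from the binomial decay $L_j^{-\delta k'/4}$ or the geometric decay $\exp(-\frac12 t'L_j^{-4})$ in Lemma~\ref{l:totalSizeBound}. Class-$G^j$ mappings also require balancing the bad $X$- and bad $Y$-blocks simultaneously, which is why Case 2 uses class $H_1^j$ and conditions on $Y$ having no bad blocks at all via the event $\ce$ — a measure-theoretic device that makes the target $Y$-positions genuinely uniform and gives the multiplicative lower bound $S_{j+1}(X)\geq\min\{\tfrac12,\tfrac1{10}L_j\prod q_i\}$ rather than your additive bound with an $o(L_{j+1}^{-\beta})$ error term, which becomes vacuous once $\prod q_i$ falls below that error. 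Finally, you flag the regime $p$ near $1-L_{j+1}^{-1}$ as the hard one; in the paper it is actually the easy Case 1, resolved by showing $\P(X\not\hookrightarrow Y)\leq 3L_{j+1}^{-3\beta}$ and a single application of Markov.
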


There is of course a symmetry between our $X$ and $Y$ bounds and for conciseness all our bounds will be stated in terms of $X$ and $S^\mathbb{X}_{j+1}$ but will similarly hold for $Y$ and $S^\mathbb{Y}_{j+1}$. For the rest of this section we shall drop the superscript $\mathbb{X}$ and denote $S_{j+1}^{\mathbb{X}}$ (resp. $S_j^{\mathbb{X}}$) simply by $S_{j+1}$ (resp. $S_j$).

The block $X$ is constructed from an i.i.d sequence of $j$-level blocks $X_1,X_2,\ldots$ conditioned on the event $X_i\in G_j^{\mathbb{X}}$ for $1\leq i \leq L_j^3$ as described in Section~\ref{s:prelim}.  The construction also involves a random variable $W_X\sim \mathrm{Geom}(L_j^{-4})$ and let $T_X$ denote the number of extra sub-blocks of $X$, that is the length of $X$ is $L_j^{\alpha-1}+2L_j^3+T_X$.  Let $K_X$ denote the number of bad sub-blocks of $X$ and denote their positions by $\ell_1,\ldots,\ell_{K_X}$.  We define $Y_1,\ldots,W_Y,T_Y$ and $K_Y$ similarly and denote the positions of the bad blocks by $\ell_1',\ldots,\ell_{K_Y}'$.
The proof of Theorem~\ref{t:tail} is divided into 5 cases depending on the number of bad sub-blocks, the total number of sub-blocks of $X$ and how ``bad'' the sub-blocks are.

Our inductive bounds allow the following stochastic domination description of $K_X$ and the $S_j(X_{\ell_i})$.

\begin{lemma}\label{l:badBlockStocDomination}
Let $\tk=\tk(t)$ be distributed as a $\mathrm{Bin}(L_{j}^{\alpha-1}+t,L_j^{-\delta})$ and let $\fs=\fs(t)=\sum_{i=1}^{\tk(t)}( 1 + U_i)$ where $U_i$ are i.i.d. rate $m_{j}$ exponentials.  Then,
\[
\left(K_X,-\log \prod_{i=1}^{K_X}S_j(X_{\ell_i})\right)\cdot I(T_X \leq t) \preceq (\tk,\fs)
\]
where $\preceq$ denotes stochastic domination.
\end{lemma}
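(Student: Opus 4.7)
The plan is to reduce to an independent single-index coupling argument and then sum. From Observation~\ref{o:blockRepresentation} we may generate the block $X$ from an i.i.d.\ sequence $X_1,X_2,\ldots$ with $X_i\sim\mu_{j,G}^{\mathbb{X}}$ for $i\le L_j^3$ and $X_i\sim\mu_j^{\mathbb{X}}$ for $i>L_j^3$, together with a stopping index $l$ satisfying $l\le L_j^3+L_j^{\alpha-1}+T_X$. Hence on $\{T_X\le t\}$ every bad sub-block of $X$ lies in the fixed window of indices $i\in\{L_j^3+1,\ldots,L_j^3+L_j^{\alpha-1}+t\}$, a deterministic window of size $L_j^{\alpha-1}+t$ on which the $X_i$ are i.i.d.\ with law $\mu_j^{\mathbb{X}}$. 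It therefore suffices to build, for each such $i$, a coupling of $(I(X_i\notin G_j^{\mathbb{X}}),\,-\log S_j(X_i)\cdot I(X_i\notin G_j^{\mathbb{X}}))$ with an independent pair $(B_i^*,(1+U_i)B_i^*)$, in which $B_i^*\sim\mathrm{Ber}(L_j^{-\delta})$ and $U_i\sim\mathrm{Exp}(m_j)$ are independent, and then to sum over $i$.

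By Strassen's theorem applied to the coordinatewise order on $\{0,1\}\times\mathbb{R}_{\ge 0}$, the per-index coupling exists once we verify that, for $X\sim\mu_j^{\mathbb{X}}$ and every $v\ge 0$,
\[
\mathbb{P}\bigl(X\notin G_j^{\mathbb{X}},\,-\log S_j(X)\ge v\bigr)\le\mathbb{P}(B^*=1,\,1+U\ge v).
\]
For $v\le 1$ the right-hand side equals $L_j^{-\delta}$, and the left-hand side is at most $\mathbb{P}(X\notin G_j^{\mathbb{X}})\le L_j^{-\delta}$ by the inductive bound~(\ref{xgood}). For $v>1$ one has $e^{-v}<1-L_j^{-1}$ for $L_0$ large, so the inductive tail bound~(\ref{tailx1}) at level $j$ gives $\mathbb{P}(S_j(X)\le e^{-v})\le e^{-m_jv}L_j^{-\beta}$, while the right-hand side equals $L_j^{-\delta}e^{m_j}e^{-m_jv}$. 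The inequality thus reduces to $L_j^{-\beta}\le L_j^{-\delta}e^{m_j}$, which is immediate from $\beta>\delta$.

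Running this coupling independently over $i\in\{L_j^3+1,\ldots,L_j^3+L_j^{\alpha-1}+t\}$ yields, on $\{T_X\le t\}$, the pointwise bounds
\[
K_X\le\sum_{i}I(X_i\notin G_j^{\mathbb{X}})\le\sum_{i}B_i^*\quad\text{and}\quad -\log\prod_{i=1}^{K_X}S_j(X_{\ell_i})\le\sum_{i}(1+U_i)B_i^*.
\]
The joint law of the right-hand sides is precisely $(\tk(t),\fs(t))$ once one enumerates the $\tk(t)$ indices with $B_i^*=1$, which gives the claimed stochastic domination. The only substantive step is the Strassen tail comparison in the preceding paragraph; the remaining arguments are routine bookkeeping about the block construction and the marginal i.i.d.\ structure, so no significant obstacle is anticipated.
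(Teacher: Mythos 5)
Your proposal is correct and takes essentially the same approach as the paper: a per-index tail comparison between $\bigl(I(X_i\notin G_j^{\mathbb{X}}),\,-I(X_i\notin G_j^{\mathbb{X}})\log S_j(X_i)\bigr)$ and $(V_i,V_i(1+U_i))$ verified via the inductive bounds \eqref{tailx1} and \eqref{xgood} together with $\beta>\delta$, followed by summing the i.i.d.\ dominating pairs over the window $\{L_j^3+1,\ldots,L_j^3+L_j^{\alpha-1}+t\}$. The paper states the per-index domination directly and only writes out the $x>1$ tail computation; your invocation of Strassen and the explicit treatment of the $v\le 1$ case are spelled-out versions of the same argument, not a different route.
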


\begin{proof}
If $V_i$ are i.i.d. Bernoulli with probability $L_j^{-\delta}$, by the inductive assumption and the fact that $\beta>\delta$ we have that for all $i$, $I(X_i\not\in G_j^X) \preceq V_i$ and hence
\[
\left(I(X_i\not\in G_j^X),-I(X_i\not\in G_j^X)\log S(X_i)\right) \preceq (V_i , V_i(1+U_i))
\]
since for $x>1$
\[
\P[-\log S(X_i) > x] \leq L_j^{-\beta} e^{-xm_{j}} < L_j^{-\delta} e^{-(x-1) m_{j}} =\P[ V_i(1+U_i)>x].
\]
Summing over $L_j^3 + 1\leq i \leq L_j^3 + L_j^{\alpha-1}+ t$ completes the result.
\end{proof}

Using Lemma~\ref{l:badBlockStocDomination} we can bound the probability of blocks having large length, number of bad sub-blocks or small  $\prod_{i=1}^{K_X}S_j(X_{\ell_i})$.  This is the key estimate of the paper.

\begin{lemma}\label{l:totalSizeBound}
For all $t',k',x\geq 0$ we have that
\[
\P\left[T_X \geq t', K_X \geq k', -\log \prod_{i=1}^{K_X}S_j(X_{\ell_i}) > x\right] \leq 2 L_j^{-\delta k' /4}\exp\left(-x m_{j+1} - \frac12 t' L_j^{-4} \right).
\]
\end{lemma}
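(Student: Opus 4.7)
Write $Z:=-\log\prod_{i=1}^{K_X}S_j(X_{\ell_i})$. The estimate will emerge from combining four ingredients: Markov's inequality on $Z$ with exponent $m_{j+1}$ yielding the factor $e^{-xm_{j+1}}$; the stochastic domination from Lemma~\ref{l:badBlockStocDomination} to replace the intractable pair $(K_X,Z)$ by the cleaner binomial/exponential-sum pair $(\tk(t),\fs(t))$; a Chernoff bound on the binomial $\tk(t)$ yielding the factor $L_j^{-\delta k'/4}$; and the sub-exponential tail of $T_X$, driven by $W_X\sim\mathrm{Geom}(L_j^{-4})$, yielding the factor $\exp(-t'L_j^{-4}/2)$.

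Concretely, applying $I(Z>x)\le e^{m_{j+1}(Z-x)}$ and layering over the discrete values of $T_X$ reduces matters to bounding
\[
e^{-m_{j+1}x}\sum_{t\ge t'}E\!\left[e^{m_{j+1}Z}\,I(T_X=t)\,I(K_X\ge k')\right].
\]
For each $t$ the event $\{T_X=t\}$ lies inside $\{T_X\le t\}$, so by Lemma~\ref{l:badBlockStocDomination} (using the natural monotone coupling in which $\tk(t),\fs(t)$ are partial sums of i.i.d.\ Bernoulli$(L_j^{-\delta})$ and rate-$m_j$ exponential pairs indexed by sub-block position) each summand is at most $\P[T_X=t]\cdot E[e^{m_{j+1}\fs(t)}I(\tk(t)\ge k')]$. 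Since $m_{j+1}=m_j-2^{-(j+1)}<m_j$, the MGF $c_j:=E[e^{m_{j+1}(1+U_i)}]=e^{m_{j+1}}m_j/(m_j-m_{j+1})=e^{m_{j+1}}\cdot 2^{j+1}m_j$ is finite, so conditioning on $\tk(t)=k$ gives $E[e^{m_{j+1}\fs(t)}\mid\tk(t)=k]=c_j^k$. Writing $N(t):=L_j^{\alpha-1}+t$, a standard binomial Chernoff estimate then bounds $E[e^{m_{j+1}\fs(t)}I(\tk(t)\ge k')]$ by $(c_jN(t)L_j^{-\delta})^{k'}\exp(c_jN(t)L_j^{-\delta})/k'!$, which, since $\delta>2\alpha$ and $c_j$ is a fixed constant in $j$, is comfortably below $L_j^{-\delta k'/2}$ for $t\le L_j^{\delta/2}$.

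For the tail of $T_X$: because $T_X\ge W_X$ and the additional wait $T_X-W_X$ for a run of $2L_j^3$ consecutive good sub-blocks has a super-geometric tail (per-step success probability $(1-L_j^{-\delta})^{2L_j^3}$, which is close to one), a standard argument gives $\P[T_X\ge t]\le 2e^{-tL_j^{-4}}$ for all relevant $t$. Splitting $\sum_{t\ge t'}$ at the cutoff $t=L_j^{\delta/2}$, the small-$t$ piece contributes $L_j^{-\delta k'/2}\cdot e^{-t'L_j^{-4}/2}$ while the large-$t$ piece is dominated by the geometric decay of $\P[T_X=t]$ against the polynomial-in-$t$ growth $(c_jN(t)L_j^{-\delta})^{k'}$; absorbing the factorial, the constant $c_j^{k'}$, and the mild growth into a weaker exponent yields the claimed bound $2L_j^{-\delta k'/4}\exp(-xm_{j+1}-t'L_j^{-4}/2)$. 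The main obstacle is that $T_X$, $\tk(t)$ and $\fs(t)$ are all measurable in the same block sequence, so literal independence in the layering step fails; the cleanest workaround is to note that conditioning on $T_X=t$ only forces the last $2L_j^3$ body sub-blocks to be good, which by a negative association/FKG argument can only reduce the law of $\tk(t)$ over the preceding positions stochastically, so the unconditional Chernoff estimate still applies. A secondary delicate point is the bookkeeping in choosing the $t$-cutoff, where the hypothesis $\delta>2\alpha$ is exactly what is needed to align the exponent $1/4$ on $L_j$ with the exponent $1/2$ on the geometric decay.
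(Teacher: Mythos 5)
Your MGF calculation and Chernoff bookkeeping are close to what the paper does (the paper integrates the $\Gamma(k,1/m_j)$ density directly rather than taking MGFs, but it arrives at exactly your constant $c_j=2^{j+1}m_j e^{m_{j+1}}$). The genuine gap is in the layering step, and specifically in your proposed ``workaround'' for the dependence between $T_X$ and $(K_X,Z)$. Conditioning on $\{T_X=t\}$ does \emph{not} merely force the last $2L_j^3$ body sub-blocks to be good: it also forces that there is \emph{no} earlier run of $2L_j^3$ consecutive good sub-blocks starting between position $L_j^3+L_j^{\alpha-1}+W_X+1$ and the onset of the accepted run. That ``no good run'' event is a decreasing event in the goodness indicators, so FKG pushes the law of $\tk(t)$ \emph{up}, not down. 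In words: a block is conditioned to be long partly \emph{because} bad sub-blocks interrupted good runs, and that is precisely what the factorization $\P[T_X=t]\cdot E\bigl[e^{m_{j+1}\fs(t)}I(\tk(t)\geq k')\bigr]$ throws away. The unconditional Chernoff estimate therefore does not apply as claimed.

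The paper avoids this by never conditioning on $T_X$ at all. It records the deterministic inequality $W_X \geq T_X - 2K_X L_j^3$ (each bad sub-block met during the wait can postpone the $2L_j^3$-run by at most $2L_j^3$). Then for each fixed $(t,k)$,
\[
\{T_X=t,\,K_X=k,\,Z>x\}\ \subseteq\ \{W_X\geq t-2kL_j^3\}\ \cap\ \{\tk(t)\geq k,\ \fs(t)>x\},
\]
and because $W_X$ is an independent geometric the two events on the right really are independent. Summing over $t\geq t'$ and $k\geq k'$ then gives the bound: when $k$ is large the geometric factor $\P[W_X\geq t-2kL_j^3]$ loses its bite, but this is paid for by the extra $L_j^{-\delta}$ factors coming from $\tk(t)=k$. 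Your argument fixes $K_X\geq k'$ without summing over the exact $k$, so it cannot absorb the loss of geometric decay when $K_X$ is much larger than $k'$; restoring the double sum over $(t,k)$ together with the inequality $W_X\geq T_X-2K_XL_j^3$ repairs the proof, and the rest of your computation then carries over with only cosmetic changes.
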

\begin{proof}
If $T_X = t$ and $K_X=k$ then $W_X \geq (t - 2k L_j^3)\vee 0$.  Hence when $K_X=0$
\begin{align}\label{e:kZero}
\P\left[T_X \geq t', K_X =0\right]\leq \P[W_X \geq t']= (1-L_j^{-4})^{t'}\leq \exp[- \frac23 t' L_j^{-4}].
\end{align}
provided $L_j$ is large enough and of course $-\log \prod_{i=1}^{K_X}S_j(X_{\ell_i})=0$.
By Lemma~\ref{l:badBlockStocDomination} we have that
\begin{align}\label{e:TSBdominationBound}
&\P\left[T_X \geq t', K_X \geq k', -\log \prod_{i=1}^{K_X}S_j(X_{\ell_i}) > x\right]\nonumber\\
&\qquad\leq \sum_{k=k'}^\infty \sum_{t=t'}^\infty \P[W_X \geq (t - 2k L_j^3)] \P[\tk(t) = k, \fs(t) > x]\nonumber\\
&\qquad = \sum_{k=k'}^\infty \sum_{t=t'}^\infty \exp[- \frac23 (t - 2k L_j^3) L_j^{-4}] \P[\tk(t) = k, \fs(t) > x].
\end{align}
Since $\tk$ is binomially distributed,
\begin{align}\label{e:tkBinomial}
\P[\tk(t) = k]= { L_j^{\alpha-1} + t \choose k} L_j^{-\delta k} \left(1-L_j^{-\delta}\right)^{L_j^{\alpha-1} + t - k}.
\end{align}
Conditional on $\tk$ we have that $\fs-\tk$ has distribution $\Gamma(k,1/m_{j})$ and so
\begin{align}\label{e:fsGamma}
\P[\fs>x \mid \tk(t) = k]= \int_{(x-k)\vee 0} \frac{m_j^{k-1}}{(k-1)!} y^{k-1} \exp(-y m_j) dy.
\end{align}
Observe that $\frac{m_j^{k-1}}{m_{j+1}(k-1)!} y^{k-1} \exp(-y 2^{-(j+1)})$ is proportional to the density of a $\Gamma(k,2^{j+1})$ which is maximized at $2^{j+1}(k-1)$.  Hence
\begin{align}\label{e:MaxGammaDensity}
\max_{y\geq 0} \frac{m_j^{k-1}}{m_{j+1}(k-1)!} y^{k-1} \exp(-y 2^{-(j+1)})
&\leq \frac{m_j^{k-1}}{m_{j+1}(k-1)!} (2^{j+1}(k-1))^{k-1} \exp(-(k-1)) \nonumber\\
&\leq (2^{j+1}m_j)^{k},
\end{align}
since by Stirling's approximation $\frac{(k-1))^{k-1}}{(k-1)! \exp(k-1)}<1$.  Since $m_{j+1} = m_j-2^{-(j+1)}$, substituting~\eqref{e:MaxGammaDensity} into~\eqref{e:fsGamma} we get that
\begin{align}\label{e:fsGamma2}
\P[\fs>x \mid \tk(t) = k]&\leq (2^{j+1}m_j)^{k} \int_{(x-k)\vee 0} m_{j+1}  \exp(-y m_{j+1}) dy\nonumber\\
&= (m_j 2^{j+1}e^{m_{j+1}})^{k} \exp(-x m_{j+1}).
\end{align}
Combining~\eqref{e:tkBinomial} and~\eqref{e:fsGamma2} we get that
\begin{align}\label{e:TKfsBoundA}
&\P[\tk(t) = k, \fs(t) > x]\nonumber\\
&\qquad\leq { L_j^{\alpha-1} + t \choose k} L_j^{-\delta k} \left(1-L_j^{-\delta}\right)^{L_j^{\alpha-1} + t - k} (m_j 2^{j+1}e^{m_{j+1}})^{k} \exp(-x m_{j+1})\nonumber\\
&\qquad\leq \frac{\left(1-L_j^{-\delta}\right)^{L_j^{\alpha-1} + t - k}}{\left(1-L_j^{-\delta/2}\right)^{L_j^{\alpha-1} + t - k}} (L_j^{-\delta/2} m_j 2^{j+1}e^{m_{j+1}})^{k} \exp(-x m_{j+1})
\end{align}
since
\[{ L_j^{\alpha-1} + t \choose k} L_j^{-\delta/2 k} \left(1-L_j^{-\delta/2}\right)^{L_j^{\alpha-1} + t - k}=\P[\mathrm{Bin}(L_j^{\alpha-1} + t,L_j^{-\delta/2 k})=k] < 1.
\]
Now for large enough $L_0$,
\begin{align}\label{e:TKfsBoundB}
\frac{\left(1-L_j^{-\delta}\right)^{L_j^{\alpha-1} + t - k}}{\left(1-L_j^{-\delta/2}\right)^{L_j^{\alpha-1} + t - k}} \leq \exp(2 (L_j^{\alpha-1} + t)L_j^{-\delta/2}) \leq 2 \exp(2 t L_j^{-\delta/2}),
\end{align}
since $\delta/2>\alpha$.  As $L_j=L_0^{\alpha^j}$, for large enough $L_0$ we have that $L_j^{-\delta/2} m_j 2^{j+1}e^{m_{j+1}} \leq \frac1{10} L_j^{-\delta/3}$ and so combining~\eqref{e:TKfsBoundA} and~\eqref{e:TKfsBoundB} we have that
\begin{align}\label{e:TKfsBoundC}
&\P[\tk(t) = k, \fs(t) > x] \leq  \frac{2}{10^k} \exp(2 t L_j^{-\delta/2})   L_j^{-\delta k/3} \exp(-x m_{j+1}).
\end{align}
Finally substituting this into~\eqref{e:TSBdominationBound} we get that
\begin{align}
&\P\left[T_X \geq t', K_X \geq k', -\log \prod_{i=1}^{K_X}S_j(X_{\ell_i}) > x\right]\nonumber\\
&\qquad = \sum_{k=k'}^\infty \sum_{t=t'}^\infty \frac{2}{10^k} \exp[- \frac23 (t - 2k L_j^3) L_j^{-4}+2 t L_j^{-\delta/2}] L_j^{-\delta k/3} \exp(-x m_{j+1})\nonumber\\
&\qquad =  L_j^{4} \exp[- \frac12 t' L_j^{-4}] L_j^{-\delta k'/3} \exp(-x m_{j+1}).
\end{align}
for large enough $L_0$ since $\delta/2>4$. If $k'\geq 1$ then since $\delta/3 - \delta/4 > 4$, we get that
\[
\P\left[T_X \geq t', K_X \geq k', -\log \prod_{i=1}^{K_X}S_j(X_{\ell_i}) > x\right] \leq L_j^{-\delta k'/4} \exp(-x m_{j+1}- \frac12 t' L_j^{-4})
\]
which together with~\eqref{e:kZero} completes the result.
\end{proof}

We now move to our five cases.  In each one we will use a different mapping (or mappings) to get good lower bounds on the probability that $X \hookrightarrow Y$ given $X$.

\subsection{Case 1}

The first case is the generic situation where the blocks are of typical length, have few bad sub-blocks whose embedding probabilities are not too small.  This case holds with high probability.
We define the event $\A{1}_{X,j+1}$ to be the set of $(j+1)$ level blocks such that
\[
\A{1}_{X,j+1} := \left\{X:T_X \leq \frac{R L^{\alpha-1}_j}{2}, K_X\leq k_0,  \prod_{i=1}^{K_X}S_j(X_{\ell_i}) > L_j^{-1/3} \right\}.
\]

\begin{lemma}\label{l:A1Size}
The probability that $X\in \A{1}_{X,j+1}$ is bounded by
\[
\P[X\not\in \A{1}_{X,j+1}] \leq L_{j+1}^{-3\beta}.
\]
\end{lemma}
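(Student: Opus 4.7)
The strategy is a straightforward union bound: the event $\{X \notin \A{1}_{X,j+1}\}$ decomposes into the three events $\{T_X > \frac{R L_j^{\alpha-1}}{2}\}$, $\{K_X > k_0\}$, and $\{\prod_{i=1}^{K_X} S_j(X_{\ell_i}) \leq L_j^{-1/3}\}$, and Lemma~\ref{l:totalSizeBound} is tailor-made to bound each one by choosing the other two parameters to be trivial. I would write $L_{j+1}^{-3\beta} = L_j^{-3\alpha\beta}$ and show each piece is at most, say, $\tfrac{1}{3} L_j^{-3\alpha\beta}$.

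For the length bound, apply Lemma~\ref{l:totalSizeBound} with $t' = \tfrac{R L_j^{\alpha-1}}{2}$, $k'=0$, $x=0$ to get a bound of order $\exp(-\tfrac{R}{4} L_j^{\alpha-5})$, which is stretched-exponentially small in $L_j$ (using $\alpha > 9 > 5$) and therefore crushes any polynomial bound. For the bad-block count, apply the lemma with $t'=0$, $k' = k_0+1$, $x=0$, giving $2 L_j^{-\delta(k_0+1)/4}$; this is $\leq \tfrac{1}{3} L_j^{-3\alpha\beta}$ provided $\delta(k_0+1) \geq 12\alpha\beta$, which holds comfortably under the parameter constraints (with $\delta=50$, $k_0 = 300000$, $\alpha=10$, $\beta=600$, one has $\delta k_0/4 = 3.75\times 10^6 \gg 3\alpha\beta = 18000$). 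For the embedding-probability product, note that when $K_X=0$ the product is the empty product $=1 > L_j^{-1/3}$, so this event implies $K_X \geq 1$; apply the lemma with $t'=0$, $k'=1$, $x = \tfrac{1}{3}\log L_j$, yielding $2 L_j^{-\delta/4} \exp(-\tfrac{m_{j+1}}{3} \log L_j) \leq 2 L_j^{-m/3}$, which is at most $\tfrac{1}{3}L_j^{-3\alpha\beta}$ whenever $m > 9\alpha\beta$ — precisely one of the hypotheses in \S\ref{s:parameters}.

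The whole argument is mechanical once Lemma~\ref{l:totalSizeBound} is in hand; there is no genuine obstacle. The only point requiring care is bookkeeping on the parameter constraints: one wants to see transparently that the constants $\alpha > 9$, $\delta > 2\alpha$, $m > 9\alpha\beta$, and $k_0 > 36\alpha\beta$ are each used in exactly one of the three subcases (the length bound uses $\alpha > 5$, the bad-block bound uses the $k_0$ constraint, and the product bound uses the $m$ constraint), which is why the parameters were chosen the way they were. Summing the three contributions yields the claimed $L_{j+1}^{-3\beta}$ bound for $L_0$ sufficiently large.
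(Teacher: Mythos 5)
Your proof is correct and follows essentially the same route as the paper: decompose the complement of $\A{1}_{X,j+1}$ into the three constituent events and bound each by specializing Lemma~\ref{l:totalSizeBound}, using $\alpha>5$ for the length tail, $k_0>36\alpha\beta$ for the bad-block count, and $m>9\alpha\beta$ for the embedding-probability product. The only cosmetic differences are that you take $k'=k_0+1$ rather than $k_0$ for the second event, and you take $k'=1$ for the third (observing $K_X\geq 1$ is forced), whereas the paper just uses $k'=0$ there; neither change affects the argument.
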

\begin{proof}

By Lemma~\ref{l:totalSizeBound}
\begin{equation}\label{e:A1SizeA}
\P\left[T_X > \frac{R L^{\alpha-1}_j}{2}\right] \leq 2\exp\left(- \frac{R L^{\alpha-5}_j}{4}\right) \leq \frac13 L_{j+1}^{-3\beta}.
\end{equation}
since $\alpha>5$ and $L_0$ is large.  Again by Lemma~\ref{l:totalSizeBound}
\begin{align}\label{e:A1SizeB}
\P[K_X > k_0] &\leq 2 L_j^{-\delta k_0 /4} =2L_{j+1}^{-\delta k_0 /(4\alpha)} \leq \frac13 L_{j+1}^{-3\beta},
\end{align}
since $k_0>36 \alpha\beta$.
Finally again by Lemma~\ref{l:totalSizeBound},
\begin{align}\label{e:A1SizeC}
&\P[\prod_{i=1}^{K_X}S_j(X_{\ell_i}) \leq  L_j^{-1/3}]\leq 2 L_{j}^{- m_{j+1}/3 } \leq \frac13 L_{j+1}^{-3\beta},
\end{align}
since $\frac13 m_{j+1}> \frac13 m > 3\alpha\beta$.  Combining~\eqref{e:A1SizeA}, \eqref{e:A1SizeB} and~\eqref{e:A1SizeC} completes the result.
\end{proof}

\begin{lemma}\label{l:A1Map}
We have that for all $X\in\A{1}_{X,j+1}$,
\begin{equation}\label{e:A1MapE1}
\P[X \hookrightarrow Y\mid Y\in \A{1}_{Y,j+1}, X] \geq \frac12,
\end{equation}
and that
\begin{equation}\label{e:A1MapE2}
\P[X \hookrightarrow Y\mid X \in\A{1}_{X,j+1} , Y\in \A{1}_{Y,j+1}] \geq 1-  L_{j+1}^{-3\beta}.
\end{equation}
\end{lemma}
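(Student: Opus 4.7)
The plan is to construct $L_j^2$ candidate embeddings via Proposition~\ref{compresspartition} and show that with high probability at least one yields a valid embedding. First I would verify that the hypotheses of Proposition~\ref{compresspartition} apply with $B = \{\ell_i\}_{i \leq K_X}$ and $B' = \{\ell'_j\}_{j \leq K_Y}$: since $X, Y \in \A{1}$, the block lengths satisfy $n, n' \in [L_j^{\alpha-1} + 2L_j^3,\, (1+R/2)L_j^{\alpha-1} + 2L_j^3]$, so the ratio $n'/n$ lies comfortably within $[(1-2^{-(j+7/4)})/R,\, R(1+2^{-(j+7/4)})]$ for large $R$; the boundary conditions on bad positions follow from Observation~\ref{o:blockStructure} (the first and last $L_j^3$ sub-blocks of each block are good); and $K_X, K_Y \leq k_0 \leq k_0 R_{j+1}^+$. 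This produces $L_j^2$ admissible class-$G^j$ mappings $\Upsilon_h$ with $\tau_h(\ell_i) = \tau_1(\ell_i) + h - 1$ and $\tau_h^{-1}(\ell'_j) = \tau_1^{-1}(\ell'_j) - h + 1$.

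By Proposition~\ref{admissiblemap}, mapping $\Upsilon_h$ gives a valid embedding $X \hookrightarrow Y$ provided (a) $X_{\ell_i} \hookrightarrow Y_{\tau_h(\ell_i)}$ for every $i \leq K_X$ and (b) $X_{\tau_h^{-1}(\ell'_j)} \hookrightarrow Y_{\ell'_j}$ for every $j \leq K_Y$. I would first reveal the positions and values of bad sub-blocks in $Y$, so that the remaining good sub-blocks of $Y$ are i.i.d.\ from $\mu_{j,G}^{\mathbb{Y}}$. Using the elementary bound
\[
\mathbb{P}(X_{\ell_i} \hookrightarrow W \mid X_{\ell_i},\, W \in G_j^{\mathbb{Y}}) \;\geq\; S_j^{\mathbb{X}}(X_{\ell_i}) - L_j^{-\delta},
\]
which follows from the inductive estimate $\mathbb{P}(W \in G_j^{\mathbb{Y}}) \geq 1 - L_j^{-\delta}$, together with the fact that the positions $\{\tau_h(\ell_i)\}_{i \leq K_X}$ are distinct in $Y$, the conditional probability of event (a) for a single $h$ is at least
\[
\prod_{i=1}^{K_X} \bigl(S_j^{\mathbb{X}}(X_{\ell_i}) - L_j^{-\delta}\bigr) \;\geq\; L_j^{-1/3}(1 - o(1)),
\]
invoking the defining property $\prod_i S_j^{\mathbb{X}}(X_{\ell_i}) > L_j^{-1/3}$ of $\A{1}$.

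Across distinct $h \in [L_j^2]$, the shifted images $\tau_h(\ell_i) = \tau_1(\ell_i) + h - 1$ are mutually disjoint (for fixed $i$), and the minimum spacing of $\tfrac{1}{2}L_j^{17/8}$ between bad-block-containing intervals built into the construction (item (5) of Lemma~\ref{compresspartition2}) ensures that the positions $\tau_h(\ell_i)$ are all mutually distinct across $h \leq L_j^2$ and $i \leq K_X$. Hence the events $A_h$ are independent conditional on the reveal of $Y$'s bad parts. For condition (b), I would perform a further reveal and argue that on a high-probability event over random $Y$, at least $L_j^2/2$ of the indices $h$ satisfy $B_h$, using the symmetric bound $\prod_j S_j^{\mathbb{Y}}(Y_{\ell'_j}) > L_j^{-1/3}$ from $Y \in \A{1}$ together with the fact that the $L_j^2$ candidate positions $\tau_h^{-1}(\ell'_j)$ provide many chances for at least half the shifts to succeed simultaneously.

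Combining via independence of the $A_h$'s over the $\geq L_j^2/2$ good indices,
\[
\mathbb{P}[\text{no mapping succeeds} \mid \text{reveal}] \;\leq\; \bigl(1 - L_j^{-1/3}(1-o(1))\bigr)^{L_j^2/2} \;\leq\; \exp\!\bigl(-L_j^{5/3}/4\bigr),
\]
which is far smaller than $L_{j+1}^{-3\beta}$ for large $L_0$, yielding both (\ref{e:A1MapE1}) and (\ref{e:A1MapE2}). The main obstacle will be the second-stage argument for event (b): the events $\{B_h\}_h$ are all deterministic functions of the same fixed $X$ and the revealed $Y$-bad sub-blocks, so they are highly correlated and may require splitting according to whether each $Y_{\ell'_j}$ is semi-bad or not and using the inductive tail estimate (\ref{taily1}) separately in each regime. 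A secondary concern is the sharpness of the $L_j^{17/8}$-spacing guarantee needed to keep the $\tau_h(\ell_i)$ genuinely disjoint in both $h$ and $i$, which must be tracked carefully through the construction of Proposition~\ref{compresspartition}.
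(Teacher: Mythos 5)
Your high-level strategy matches the paper's: apply Proposition~\ref{compresspartition} to generate $L_j^2$ class-$G^j$ admissible mappings, note the shift structure $\tau_h(\ell_i)=\tau_1(\ell_i)+h-1$, use Proposition~\ref{admissiblemap} to reduce to the bad-block events, and take a union over $h$ of the resulting independent trials. You also correctly identify the central difficulty — how to handle condition (b), the events $X_{\tau_h^{-1}(\ell'_i)}\hookrightarrow Y_{\ell'_i}$, which do not respond to changing $h$ in the same way as condition (a) does. However, your proposed resolution for (b) is not the right one, and this leaves a genuine gap.

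For inequality~\eqref{e:A1MapE2}, where one averages over both $X$ and $Y$, the clean move is to condition not just on $Y$'s bad sub-blocks but on the bad sub-blocks of \emph{both} $X$ and $Y$ (positions, number, and values), along with $T_X,T_Y$. Then conditional on this $\sigma$-algebra $\cf$, \emph{all} of the sub-blocks $Y_{\tau_h(\ell_i)}$ and $X_{\tau_h^{-1}(\ell'_i)}$ appearing in $\cd_h$ are i.i.d.\ $\mu_{j,G}$-distributed, and once one passes to a subcollection $\ch\subset[L_j^2]$ of size $L_j$ (spaced so that all the image positions across different $h$ and different $i$ are pairwise distinct — using only $|\ch|<\lfloor L_j^2/2k_0\rfloor$, not the $L_j^{17/8}$ spacing you appeal to), the events $\cd_h$ for $h\in\ch$ become genuinely conditionally independent. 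Both conditions (a) and (b) enter symmetrically, and $\P[\cd_h\mid\cf]\geq 2^{-2k_0}L_j^{-2/3}$, giving $1-(1-2^{-2k_0}L_j^{-2/3})^{L_j}\geq 1-L_{j+1}^{-3\beta}$. Your version conditions only on $Y$'s bad parts and then tries to deal with (b) by a separate second-stage reveal over $h$; but without also conditioning on $X$'s bad parts, the probability computation mixing (a) and (b) does not decouple cleanly.

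For inequality~\eqref{e:A1MapE1} the issue is sharper and your approach breaks: here $X$ is fixed, so $X_{\tau_h^{-1}(\ell'_i)}$ is a deterministic function of $h$ and the $\cd_h$ cannot be made independent over $h$ when $K_Y>0$, regardless of how one reveals information. Your idea of splitting by whether $Y_{\ell'_j}$ is semi-bad and using the tail estimate~(\ref{taily1}) does not yield the needed independence. The paper instead conditions on $K_Y=0$: with no bad $Y$ sub-blocks, condition (b) is vacuous, (a) alone decides $\cd_h$, the $\cd_h$ become conditionally independent over $h$, and one reaches $1-(1-2^{-k_0}L_j^{-1/3})^{L_j}$. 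Multiplying by $\P[K_Y=0\mid\cdots]\geq(1-L_j^{-\delta})^{L_j^{\alpha-1}+T_Y}$, which is close to $1$ because $\delta>\alpha$, gives the $\geq\frac12$ bound. This conditioning trick is the missing ingredient in your proposal, and without it the argument for~\eqref{e:A1MapE1} does not close.
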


\begin{proof}
We first prove equation~\eqref{e:A1MapE2} where we do not condition on $X$.
Suppose that $X \in\A{1}_{X,j+1} , Y\in \A{1}_{Y,j+1}$.  Let us condition on the block lengths $T_X,T_Y$, the number of bad sub-blocks, $K_X,K_Y$, their locations, $\ell_1,\ldots,\ell_{K_X}$ and $\ell_1',\ldots,\ell_{K_Y}'$ and the bad-sub-blocks themselves.  Denote this conditioning by
\begin{align*}
\mathcal{F}=\{X \in\A{1}_{X,j+1} , Y\in \A{1}_{Y,j+1},T_X,T_Y,K_X,K_Y,\ell_1,\ldots,\ell_{K_X}, \ell_1',\ldots,\ell_{K_Y}',\\
X_{\ell_1},\ldots,X_{\ell_{K_X}}, Y_{\ell_1'},\ldots,Y_{\ell_{K_Y}'}\}.
\end{align*}
By Proposition~\ref{compresspartition} we can find $L_j^2$ admissible generalized mappings $\Upsilon_h([L_j^{\alpha -1}+2L_j^3+T_X], L_j^{\alpha-1}+2L_j^3+T_Y)$ with associated $\tau_h$ for $1\leq h \leq L_j^2$ which are of class $G^j$ with respect to $(B=\{\ell_1<\ldots<\ell_{K_X}\}, B'=\{\ell'_1<\ldots<\ell'_{K_Y}\}$.  By construction the we have that $\tau_h(\ell_i)=\tau_1(\ell_i)+h-1$ and in particular each position $\ell_i$ is mapped to $L_j^2$ distinct sub-blocks by the map, none of which is equal to one of the $\ell_{i'}'$.  Similarly for the $\tau_h^{-1}$.  Hence we can construct a subset $\mathcal{H}\subset [L_j^2]$ with $|\mathcal{H}|= L_j<\lfloor L_j^2/ 2 k_0\rfloor$ so that for all $i_1\neq i_2$ and $h_1,h_2\in\mathcal{H}$ we have that $\tau_{h_1}(\ell_{i_1})\neq \tau_{h_2}(\ell_{i_2})$ and $\tau_{h_1}^{-1}(\ell_{i_1}')\neq \tau_{h_2}^{-1}(\ell_{i_2}')$, that is that all the positions bad blocks are mapped to are distinct.

By construction all the $Y_{\tau_{h}(\ell_{i})}$ are uniformly chosen good $j$-blocks conditional on $\cf$ and since $S(X_{\ell_{i}})\geq L_j^{-1/3}$ we have that
\begin{equation}\label{e:A1MapA}
\P[X_{\ell_{i}} \hookrightarrow Y_{\tau_{h}(\ell_{i})}\mid \cf] \geq S(X_{\ell_{i}}) - \P[Y_{\tau_{h}(\ell_{i})}\not\in G_j^{\mathbb{X}}] \geq \frac12 S(X_{\ell_{i}}).
\end{equation}
Let $\cd_h$ denote the event
\[
\cd_h=\left\{X_{\ell_{i}} \hookrightarrow Y_{\tau_{h}(\ell_{i})}\hbox{ for } 1\leq i \leq K_X,
X_{\tau_h^{-1}(\ell_{i}')} \hookrightarrow Y_{\ell_{i}'}\hbox{ for } 1\leq i \leq K_Y \right\}.
\]
By Proposition~\ref{admissiblemap} if one of the $\cd_h$ hold then $X\mapsto Y$.  Conditional on $\cf$ for $h\in\ch$ the $\cd_h$ are independent and by~\eqref{e:A1MapA},
\begin{equation}\label{e:A1MapB}
\P[\cd_h \mid \cf] \geq \prod_{i=1}^{K_X} \frac12 S_j(X_{\ell_{i}})\prod_{i=1}^{K_Y} \frac12 S_j(Y_{\ell_{i}'}) \geq 2^{-2k_0} L_j^{-2/3}.
\end{equation}
Hence
\begin{equation}\label{e:A1MapC}
\P[X  \hookrightarrow Y \mid \cf] \geq \P[\cup_{h\in\ch}\cd_h \mid \cf]  \geq 1-\left(1-2^{-2k_0} L^{-2/3}\right)^{L_j}\geq  1-  L_{j+1}^{-3\beta}.
\end{equation}
Now removing the conditioning we get equation~\eqref{e:A1MapE2}.  To prove equation~\eqref{e:A1MapE1} we proceed in the same way but note that since it involves conditioning on the good sub-blocks the events $X_{\tau_h'(\ell_{i}')} \hookrightarrow Y_{\ell_{i}'}$ are no longer conditionally independent. So we will condition on $Y$ having no bad blocks so
\begin{align*}
\mathcal{F}(X)=\{X \in\A{1}_{X,j+1} , Y\in \A{1}_{Y,j+1}, X, T_X,T_Y,K_X,K_Y = 0,\ell_1,\ldots,\ell_{K_X},
X_{\ell_1},\ldots,X_{\ell_{K_X}}\}.
\end{align*}
By the above argument then
\begin{equation}
\P[X  \hookrightarrow Y \mid \cf(X)] \geq \P[\cup_{h\in\ch}\cd_h \mid \cf(X)]  \geq 1-\left(1-2^{-k_0} L_j^{-1/3}\right)^{L_j}\geq  1-  L_{j+1}^{-3\beta}.
\end{equation}
Hence
\begin{align*}
\P[X \hookrightarrow Y\mid Y\in \A{1}_{Y,j+1}, X] &\geq \P[X  \hookrightarrow Y \mid \cf(X)]\cdot \P[K_X=0\mid \cf(X)]\\
&\geq \left( 1-  L_{j+1}^{-3\beta}\right)\cdot (1-L_j^{-\delta})^{L_j^{\alpha-1}+T_Y}\geq \frac12,
\end{align*}
since $L_j^\delta> 10 L_j(L_j^{\alpha-1}+T_Y)$ completing the lemma.
\end{proof}

\begin{lemma}\label{l:A1final}
When $\frac12\leq p \leq 1-L_{j+1}^{-1}$
\[
\P(S_{j+1}(X)\leq p)\leq p^{m_{j+1}} L_{j+1}^{-\beta}
\]
\end{lemma}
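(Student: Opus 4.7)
The plan is to combine Lemmas~\ref{l:A1Size} and~\ref{l:A1Map} to produce a first-moment bound on $1 - S_{j+1}(X)$ and then convert it to the desired tail estimate via Markov's inequality.

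To bound the mean, observe that $\mathbb{E}[1 - S_{j+1}(X)] = \mathbb{P}[X \not\hookrightarrow Y]$ for $X,Y$ independent with laws $\mu^{\mathbb{X}}_{j+1}$ and $\mu^{\mathbb{Y}}_{j+1}$. A union bound gives
\[
\mathbb{P}[X \not\hookrightarrow Y] \;\leq\; \mathbb{P}[X \notin \A{1}_{X,j+1}] \;+\; \mathbb{P}[Y \notin \A{1}_{Y,j+1}] \;+\; \mathbb{P}\bigl[X \not\hookrightarrow Y \,\bigm|\, X \in \A{1}_{X,j+1},\, Y \in \A{1}_{Y,j+1}\bigr].
\]
By Lemma~\ref{l:A1Size} each of the first two terms is at most $L_{j+1}^{-3\beta}$, and equation~\eqref{e:A1MapE2} gives the same bound for the third, so $\mathbb{E}[1 - S_{j+1}(X)] \leq 3 L_{j+1}^{-3\beta}$. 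Markov's inequality applied to the non-negative random variable $1 - S_{j+1}(X)$ then yields
\[
\mathbb{P}(S_{j+1}(X) \leq p) \;=\; \mathbb{P}\bigl(1 - S_{j+1}(X) \geq 1 - p\bigr) \;\leq\; \frac{3\, L_{j+1}^{-3\beta}}{1-p}.
\]

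It remains to verify that $\frac{3 L_{j+1}^{-3\beta}}{1-p} \leq p^{m_{j+1}} L_{j+1}^{-\beta}$, i.e.\ $3\, L_{j+1}^{-2\beta} \leq p^{m_{j+1}}(1-p)$, uniformly on $p \in [\tfrac12, 1 - L_{j+1}^{-1}]$. The function $g(p) = p^{m_{j+1}}(1-p)$ has derivative vanishing only at the interior maximum $p^\star = m_{j+1}/(m_{j+1}+1)$, so provided $L_{j+1} > m_{j+1}+1$ the point $p^\star$ lies inside the interval and the minimum of $g$ on $[\tfrac12, 1 - L_{j+1}^{-1}]$ is attained at one of the endpoints. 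At $p = \tfrac12$ the required inequality reduces to $L_{j+1}^{2\beta} \geq 6 \cdot 2^{m_{j+1}}$, and at $p = 1 - L_{j+1}^{-1}$ it reduces to $L_{j+1}^{2\beta-1} \geq 3\exp(m_{j+1}/L_{j+1})$; both hold comfortably for $L_0$ large enough, using $L_{j+1} \geq L_0^{\alpha^{j+1}}$ and the parameter choices $\beta = 600$, $m = 60000$ from~\eqref{e:parameters}. The main (and essentially only) obstacle in the whole argument is this endpoint verification: it pinpoints the tension between the modest crude mean bound of order $L_{j+1}^{-3\beta}$ and the sharp tail shape $p^{m_{j+1}} L_{j+1}^{-\beta}$ at $p = \tfrac12$, and it is exactly what forces $\beta$ to be chosen large relative to $m$ (the condition $\beta > \alpha(\delta+1)$ combined with $L_0$ large suffices to absorb the $2^{m_{j+1}}$ factor).
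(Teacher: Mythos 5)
Your argument is correct and follows the same route as the paper's: both bound $\E[1-S_{j+1}(X)] = \P[X\not\hookrightarrow Y]$ by $3L_{j+1}^{-3\beta}$ using Lemmas~\ref{l:A1Size}--\ref{l:A1Map} plus a union bound, and then apply Markov's inequality. The only cosmetic difference is that you apply Markov at each $p$ and then minimize $p^{m_{j+1}}(1-p)$ over the interval, whereas the paper applies Markov once at $p=1-L_{j+1}^{-1}$ (giving the bound $3L_{j+1}^{1-3\beta}$) and compares directly against $2^{-m_{j+1}}L_{j+1}^{-\beta}$, the right-hand side at $p=\tfrac12$; one small remark: your parenthetical about $\beta>\alpha(\delta+1)$ forcing $\beta$ large relative to $m$ is backwards (in fact $m>9\alpha\beta$, and it is simply the freedom to take $L_0$ large after fixing $\beta$ and $m$ that absorbs the $2^{m_{j+1}}$ factor), but this is commentary and does not affect the proof.
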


\begin{proof}
By Lemma~\ref{l:A1Size} and~\ref{l:A1Map} we have that
\begin{align*}
\P(\P[X \not\hookrightarrow Y\mid X]\geq L_{j+1}^{-1}) &\leq \P[X \not\hookrightarrow Y]L_{j+1}\\
&\leq \Big( \P[X \not\hookrightarrow Y\mid X \in\A{1}_{X,j+1} , Y\in \A{1}_{Y,j+1}]\\
& \qquad + \P[ X \not\in\A{1}_{X,j+1}] + \P[Y \not\in \A{1}_{Y,j+1}] \Big)L_{j+1}\\
&\leq 3L_{j+1}^{1-3\beta}\leq 2^{-m_{j+1}}L_{j+1}^{-\beta}
\end{align*}
where the first inequality is by Markov's inequality.  This implies the corollary.
\end{proof}

\subsection{Case 2}

The next case involves blocks which are not too long and do not contain too many bad sub-blocks but whose bad sub-blocks may have very small embedding probabilities.
We define the class of blocks $\A{2}_{X,j+1}$  as
\[
\A{2}_{X,j+1} := \left\{X:T_X \leq \frac{R L^{\alpha-1}_j}{2}, K_X\leq k_0,  \prod_{i=1}^{K_X}S_j(X_{\ell_i}) \leq L_j^{-1/3} \right\}.
\]

\begin{lemma}\label{l:A2Map}
For $X\in \A{2}_{X,j+1}$,
\[
S_{j+1}(X) \geq \min\left\{\frac12, \frac1{10} L_j \prod_{i=1}^{K_X}S_j(X_{\ell_i}) \right\}
\]
\end{lemma}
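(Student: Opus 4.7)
The plan is to adapt the argument of Lemma~\ref{l:A1Map} while keeping the tiny product $\prod_i S_j(X_{\ell_i})$ explicit throughout, since it is no longer uniformly bounded away from zero. Fix $X\in\A{2}_{X,j+1}$ together with its length $T_X$, its bad positions $\ell_1<\dots<\ell_{K_X}$, and the bad sub-blocks $X_{\ell_i}$. First I would condition on the high-probability event
\[
E=\{Y\in\A{1}_{Y,j+1},\ K_Y=0\},
\]
whose complement is bounded by Lemmas~\ref{l:A1Size} and~\ref{l:totalSizeBound}, so that $\mathbb{P}(E)\geq 9/10$ for $L_0$ large. Crucially, conditional on $E$ and $T_Y$, every sub-block of $Y$ is independently distributed as $\mu_{j,G}^{\mathbb{Y}}$.

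Since $T_X,T_Y\leq RL_j^{\alpha-1}/2$, the sub-block counts $n,n'$ of $X,Y$ satisfy $1/R\leq n'/n\leq R$, placing us inside the hypotheses of Proposition~\ref{compresspartition}. Applying it with $B=\{\ell_1,\dots,\ell_{K_X}\}$ and $B'=\emptyset$ yields $L_j^2$ admissible generalised mappings $\Upsilon_h=(P_h,P_h',\tau_h)$ of class $G^j$ with $\tau_h(\ell_i)=\tau_1(\ell_i)+h-1$. Since $K_X\leq k_0$, a greedy argument extracts $\mathcal{H}\subseteq[L_j^2]$ of size at least $2^{k_0+1}L_j$ (achievable for $L_0$ large) so that the positions $\tau_h(\ell_i)$ are pairwise distinct across $h\in\mathcal{H},\,i\in[K_X]$. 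Setting
\[
\cd_h=\{X_{\ell_i}\hookrightarrow Y_{\tau_h(\ell_i)}\text{ for all }i\in[K_X]\},
\]
Proposition~\ref{admissiblemap} then ensures that any single $\cd_h$ already forces $X\hookrightarrow Y$, the non-bad sub-blocks of $X$ being embedded via Lemma~\ref{compressrounding} into the surrounding all-good segments of $Y$.

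Because the images $Y_{\tau_h(\ell_i)}$ are independent across $(h,i)\in\mathcal{H}\times[K_X]$ and each is distributed as $\mu_{j,G}^{\mathbb{Y}}$, the family $\{\cd_h\}_{h\in\mathcal{H}}$ is conditionally independent given $(X,E,T_Y)$. A direct Bayes comparison between $\mu_j^{\mathbb{Y}}$ and $\mu_{j,G}^{\mathbb{Y}}$ then gives
\[
p_h=\mathbb{P}(\cd_h\mid X,E,T_Y)\ \geq\ \prod_{i=1}^{K_X}\bigl(S_j(X_{\ell_i})-L_j^{-\delta}\bigr)\ \geq\ 2^{-k_0}\prod_i S_j(X_{\ell_i}),
\]
under the assumption $\min_i S_j(X_{\ell_i})\geq 2L_j^{-\delta}$. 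The elementary bound $1-(1-x)^n\geq\min(9/10,nx/2)$ together with $|\mathcal{H}|\geq 2^{k_0+1}L_j$ yields
\[
\mathbb{P}(\cup_h\cd_h\mid X,E)\ \geq\ \min\!\Bigl(\tfrac{9}{10},\ L_j\prod_iS_j(X_{\ell_i})\Bigr),
\]
and multiplying through by $\mathbb{P}(E)\geq 9/10$ gives precisely the claimed bound $\min(\tfrac{1}{2},\tfrac{L_j}{10}\prod_i S_j(X_{\ell_i}))$.

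The main obstacle is the degenerate regime where some $S_j(X_{\ell_i})<2L_j^{-\delta}$, in which the factor $S_j(X_{\ell_i})-L_j^{-\delta}$ is no longer comparable to $S_j(X_{\ell_i})$ and the Bayes comparison above collapses. In that regime, however, $\prod_iS_j(X_{\ell_i})\leq L_j^{-\delta}$, so the claimed bound $L_j\prod_iS_j(X_{\ell_i})/10\leq L_j^{1-\delta}/10$ is weaker than any quantity of the order $L_{j+1}^{-\beta}$ that will be needed downstream, and such configurations are best treated in the complementary Cases~3--5 of the tail analysis (which deal with blocks having individually catastrophic sub-blocks or unusual structure). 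I would therefore carry out the construction above only on the generic regime $\min_iS_j(X_{\ell_i})\geq 2L_j^{-\delta}$ and hand the degenerate regime off to the other cases.
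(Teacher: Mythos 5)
Your overall plan follows the paper's, but there is a genuine gap in how you handle the regime where some $S_j(X_{\ell_i})$ is tiny, and the ``handoff'' you propose does not actually go through.

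The crux of your argument is conditioning on $E=\{Y\in\A{1}_{Y,j+1},K_Y=0\}$, which forces every $Y$-sub-block to be good. Conditional on $E$ the target sub-blocks $Y_{\tau_h(\ell_i)}$ are distributed as $\mu_{j,G}^{\mathbb{Y}}$, so the per-sub-block success probability becomes $\P[X_{\ell_i}\hookrightarrow Y\mid Y\in G_j^{\mathbb{Y}}]$, which you can only bound below by $S_j(X_{\ell_i})-L_j^{-\delta}$. When $S_j(X_{\ell_i})\leq L_j^{-\delta}$ this bound is zero or negative, and indeed it can genuinely be zero: nothing in the inductive hypotheses prevents a bad $X_{\ell_i}$ from embedding \emph{only} into bad $Y$-sub-blocks. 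The paper avoids this by conditioning instead on the event $\ce=\{W_Y\leq L_j^{\alpha-1},\,T_Y=W_Y\}$, under which the middle sub-blocks $Y_{L_j^3+1},\ldots,Y_{L_j^3+L_j^{\alpha-1}+T_Y}$ are still distributed as the \emph{unconditioned} $\mu_j^{\mathbb{Y}}$ (the block-boundary stopping rule never examined them). This yields $\P[X_{\ell_i}\hookrightarrow Y_{\tau_h(\ell_i)}\mid\ce]=S_j(X_{\ell_i})$ exactly, with no loss. The price is that the targets $Y_{\tau_h(\ell_i)}$ may fail to be good, which is precisely what forces the events $\cg_h$, $\cm$, $\cj$ and the class-$H_1^j$ mappings of Proposition~\ref{mapcase2} into the argument; your cleaner class-$G^j$ setup hides that complication but only by forfeiting the degenerate regime.

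Your proposed escape — ``hand the degenerate regime off to Cases 3--5'' — is not available. The classes $\A{3}$, $\A{4}$, $\A{5}$ are carved out by the structural parameters $K_X$ and $T_X$ (more than $k_0$ bad sub-blocks, or $T_X$ large, or bad-sub-block density above $(10R_j^+)^{-1}$), not by the magnitudes of the individual $S_j(X_{\ell_i})$. A block with $K_X=1$, $T_X$ small, and a single sub-block with $S_j(X_{\ell_1})\leq L_j^{-\delta}$ lies in $\A{2}_{X,j+1}$ and nowhere else, so it must be covered by this lemma. Moreover the lemma is used pointwise in Lemma~\ref{l:A2Bound}: the chain $\P[X\in\A{2},S_{j+1}(X)\leq p]\leq\P[\frac{1}{10}L_j\prod_iS_j(X_{\ell_i})\leq p]$ needs the lower bound on $S_{j+1}(X)$ to hold for \emph{all} of $\A{2}$, not just its generic part. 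Two smaller remarks: the constants in your elementary inequality $1-(1-x)^n\geq\min(9/10,nx/2)$ are off (it fails, e.g., near $nx\approx 1.8$), though this is easily repaired by replacing $nx/2$ with $nx/4$ as the paper does; and the paper uses Proposition~\ref{mapcase2} (class $H_1^j$) rather than Proposition~\ref{compresspartition} with $B'=\emptyset$, though that particular substitution is likely fine.
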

\begin{proof}
Suppose that $X \in\A{2}_{X,j+1}$.  Let $\ce$ denote the event
\[
\ce=\{W_Y\leq  L^{\alpha-1}_j, T_Y=W_Y \}.
\]
Then by definition of $W_Y$, $\P[W_Y\leq L^{\alpha-1}_j] \geq 1 - (1-L_j^{-4})^{L_j^{\alpha-1}}\geq 9/10$ while by the definition of the block boundaries the event $T_Y=W_Y$ is equivalent to their being no bad sub-blocks amongst $Y_{L_j^3+L_j^{\alpha-1}+W_Y+1},\ldots,Y_{L_j^3+L_j^{\alpha-1}+W_Y+2L_j^3}$, that is that we don't need to extend the block because of bad sub-blocks. Hence $\P[T_Y=W_Y] \geq (1-L_j^{-\delta})^{2L_j^3} \geq 9/10$.  Combining these we have that
\begin{equation}\label{e:A2MapCEbound}
\P[\ce]\geq 8/10.
\end{equation}
On the event $T_Y=W_Y$ we have that the blocks $Y_{L_j^3+1},\ldots,Y_{L_j^3+L_j^{\alpha-1}+T_Y}$ are uniform $j$-blocks since the block division did not evaluate whether they are good or bad.

Similarly to Lemma~\ref{l:A1Map}, by Proposition~\ref{mapcase2} we can find $L_j^2$  admissible generalized mappings $\Upsilon_h([L_j^{\alpha-1}+2L_j^3+T_X],[L_j^{\alpha -1}+2L_j^{3}+T_Y])$ for $1\leq h \leq L_j^2$ with associated $\tau_h$ which are of class $H_1^j$ with respect to $B=\{\ell_1<\ldots<\ell_{K_X}\}$.  For all $h$ and $i$, $L_j^3+1\leq \tau_h(\ell_i) \leq L_j^3+L_j^{\alpha-1}+T_Y$.  As in Lemma~\ref{l:A1Map} we can a subset $\mathcal{H}\subset [L_j^2]$ with $|\mathcal{H}|= L_j<\lfloor L_j^2/ 2 k_0\rfloor$ so that for all $i_1\neq i_2$ and $h_1,h_2\in\mathcal{H}$ we have that $\tau_{h_1}(\ell_{i_1})\neq \tau_{h_2}(\ell_{i_2})$, that is that all the positions bad blocks are mapped to are distinct.  We will estimate the probability that one of these maps work.

In trying out these $h$ different mappings there is a subtle conditioning issue since a map failing may imply that $Y_{\tau_{h}}$ is not good.  As such we condition on an  event $\cd_h \cup \cg_h $ which holds with high probability.
Let $\cd_h$ denote the event
\[
\cd_h=\left\{X_{\ell_{i}} \hookrightarrow Y_{\tau_{h}(\ell_{i})}\hbox{ for } 1\leq i \leq K_X\right\}.
\]
and let
\[
\cg_h=\left\{Y_{\tau_h(\ell_{i})} \in G_j^{\mathbb{Y}} \hbox{ for } 1\leq i \leq K_X\right\}.
\]
Then
\[
\P[\cd_h \cup \cg_h \mid X,\ce] \geq \P[ \cg_h \mid X,\ce] \geq (1-L_j^{-\delta})^{k_0}.
\]
and since they are conditionally independent given $X$ and $\ce$,
\begin{equation}\label{e:A2MapA}
\P[\cap_{h\in\ch}(\cd_h \cup \cg_h) \mid X,\ce] \geq  (1-L_j^{-\delta})^{k_0 L_j}\geq 9/10.
\end{equation}
Now
\[
\P[\cd_h\mid X,\ce,( \cd_h \cup \cg_h)] \geq \P[\cd_h\mid X,\ce] = \prod_{i=1}^{K_X} S_j(X_{\ell_i})
\]
and hence
\begin{align}\label{e:A2MapB}
\P[\cup_{h\in\ch} \cd_h \mid X,\ce, \cap_{h\in\ch}(\cd_h \cup \cg_h)] &\geq 1-\left(1- \prod_{i=1}^{K_X} S_j(X_{\ell_i})\right)^{L_j}\nonumber\\
&\geq \frac{9}{10}  \wedge \frac14 L_j \prod_{i=1}^{K_X} S_j(X_{\ell_i})
\end{align}
since $1-e^{-x}\leq (1-x/4)\vee 1/10$ for $x\geq 0$.
Furthermore, if
\[
\cm=\left\{\exists h_1\neq h_2 \in\ch: \cd_{h_1}\setminus \cg_{h_1}, \cd_{h_2}\setminus \cg_{h_2} \right\},
\]
then
\begin{align}\label{e:A2MapC}
\P[\cm \mid X,\ce, \cap_{h\in\ch}(\cd_h \cup \cg_h)]
&\leq {L_j \choose 2} \P[\cd_{h}\setminus \cg_{h}\mid X,\ce, \cap_{h\in\ch}(\cd_h \cup \cg_h)]^2\nonumber\\
&\leq {L_j \choose 2} \left(\prod_{i=1}^{K_X} S_j(X_{\ell_i}) \wedge L_j^{-\delta}  \right)^2\nonumber\\
&\leq L_j^{-(\delta-2)} \prod_{i=1}^{K_X} S_j(X_{\ell_i}).
\end{align}
Finally let $\cj$ denote the event
\[
\cj=\left\{Y_{k} \in G_j^{\mathbb{Y}} \hbox{ for all } k\in \{L_j^3+1,\ldots,L_j^3+L_j^{\alpha-1}+T_X\}\setminus \cup_{h\in\ch, 1\leq i \leq K_X}\{\tau_h(\ell_i)\}\right\}.
\]
Then
\begin{equation}\label{e:A2MapD}
\P[\cj \mid X,\ce] \geq \left(1- L_j^{-\delta}\right)^{2L_j^{\alpha-1}} \geq 9/10.
\end{equation}

If $\cj,\cup_{h\in\ch} \cd_h$ and $\cap_{h\in\ch}(\cd_h \cup \cg_h)$ all hold and $\cm$ does not hold then we can find at least one $h\in\ch$ such that $\cd_h$ holds and $\cg_{h'}$ holds for all $h'\in\ch\setminus\{h\}$ then by Proposition~\ref{admissiblemap2} we have that $X\hookrightarrow Y$. Hence by~\eqref{e:A2MapA}, \eqref{e:A2MapB}, \eqref{e:A2MapC}, and~\eqref{e:A2MapD} and the fact that $\cj$ is conditionally independent of the other events that
\begin{align*}
\P[X\hookrightarrow Y\mid X,\ce]
&\geq \P[\cup_{h\in\ch} \cd_h, \cap_{h\in\ch}(\cd_h \cup \cg_h), \cj, \ \neg \cm \mid X,\ce] \nonumber\\
&= \P[\cj \mid X,\ce]\P[\cup_{h\in\ch} \cd_h, \ \neg \cm \mid X,\ce,\cap_{h\in\ch}(\cd_h \cup \cg_h)]
\P[\cap_{h\in\ch}(\cd_h \cup \cg_h)\mid X,\ce]\\
&\geq \frac{81}{100}\left[ \frac{9}{10}  \wedge \frac14 L_j \prod_{i=1}^{K_X} S_j(X_{\ell_i}) - L_j^{-(\delta-2)} \prod_{i=1}^{K_X} S_j(X_{\ell_i}) \right]\\
&\geq \frac{7}{10} \wedge \frac15 L_j \prod_{i=1}^{K_X} S_j(X_{\ell_i}).
\end{align*}
Combining with~\eqref{e:A2MapCEbound} we have that
\begin{align*}
\P[X\hookrightarrow Y\mid X]
&\geq \frac{1}{2} \wedge \frac1{10} L_j \prod_{i=1}^{K_X} S_j(X_{\ell_i}),
\end{align*}
which completes the proof.
\end{proof}

\begin{lemma}\label{l:A2Bound}
When $0<p< \frac12$,
\[
\mathbb{P}(X\in \A{2}_{X,j+1}, S_{j+1}(X)\leq p)\leq \frac15 p^{m_{j+1}} L_{j+1}^{-\beta}
\]
\end{lemma}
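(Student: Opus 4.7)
The plan is to combine the deterministic lower bound on $S_{j+1}(X)$ from Lemma~\ref{l:A2Map} with the tail estimate of Lemma~\ref{l:totalSizeBound} to convert the event $\{S_{j+1}(X)\leq p\}$ into a tail event on $-\log\prod_{i=1}^{K_X} S_j(X_{\ell_i})$, which is exactly the kind of quantity Lemma~\ref{l:totalSizeBound} controls.

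Specifically, if $X\in\A{2}_{X,j+1}$ and $S_{j+1}(X)\leq p<\tfrac{1}{2}$, then Lemma~\ref{l:A2Map} forces the $\min$ to equal the second term, so
\[
\tfrac{1}{10} L_j \prod_{i=1}^{K_X} S_j(X_{\ell_i}) \leq p,
\qquad\text{i.e.,}\qquad
-\log \prod_{i=1}^{K_X} S_j(X_{\ell_i}) \geq \log\!\left(\frac{L_j}{10p}\right).
\]
Moreover the Case-2 defining inequality $\prod_{i=1}^{K_X} S_j(X_{\ell_i})\leq L_j^{-1/3}<1$ forces $K_X\geq 1$. Hence
\[
\{X\in\A{2}_{X,j+1},\ S_{j+1}(X)\leq p\}
\subseteq
\left\{K_X\geq 1,\ -\log \prod_{i=1}^{K_X} S_j(X_{\ell_i}) \geq \log\!\left(\tfrac{L_j}{10p}\right)\right\}.
\]

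Next I would apply Lemma~\ref{l:totalSizeBound} with $t'=0$, $k'=1$, and $x=\log(L_j/(10p))$ (which is positive for $p<\tfrac12$ and $L_0$ large), obtaining
\[
\mathbb{P}(X\in\A{2}_{X,j+1},\ S_{j+1}(X)\leq p)
\leq 2 L_j^{-\delta/4} \exp\!\bigl(-m_{j+1} \log(L_j/(10p))\bigr)
= 2\cdot 10^{m_{j+1}}\, L_j^{-(\delta/4+m_{j+1})} p^{m_{j+1}}.
\]

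Finally, since $L_{j+1}=L_j^{\alpha}$, comparing with the target bound $\tfrac{1}{5}p^{m_{j+1}} L_{j+1}^{-\beta}=\tfrac15 p^{m_{j+1}} L_j^{-\alpha\beta}$ reduces the claim to
\[
L_j^{\delta/4 + m_{j+1} - \alpha\beta} \geq 10^{m_{j+1}+1}.
\]
With the parameter choices in~\eqref{e:parameters} we have $m_{j+1}>m\gg \alpha\beta$, so the exponent on the left is a positive constant (bounded below uniformly in $j$), while the right side is a fixed constant. Therefore the inequality holds for all $L_0$ sufficiently large, completing the proof.

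The only real step is the choice of parameters in Lemma~\ref{l:totalSizeBound}; there is no genuine obstacle here since both lemmas we invoke have already absorbed the hard work. The one thing to keep an eye on is that $x=\log(L_j/(10p))$ is nonnegative across the whole range $p\in(0,\tfrac12)$, which is immediate for $L_0$ large.
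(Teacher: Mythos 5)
Your proof is correct and follows essentially the same route as the paper: apply Lemma~\ref{l:A2Map} to reduce to the tail event $\{\tfrac1{10}L_j\prod_i S_j(X_{\ell_i})\leq p\}$ and then invoke Lemma~\ref{l:totalSizeBound}. The only cosmetic difference is that you take $k'=1$ (using that $\prod_i S_j(X_{\ell_i})<1$ forces $K_X\geq 1$), picking up an extra harmless factor $L_j^{-\delta/4}$, whereas the paper simply uses $k'=0$; both yield the claim since $m_{j+1}>\alpha\beta$.
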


\begin{proof}
We have that
\begin{align}
\mathbb{P}(X\in \A{2}_{X,j+1}, S_{j+1}(X)\leq p) &\leq \P\left[\frac1{10} L_j \prod_{i=1}^{K_X}S_j(X_{\ell_i}) \leq p\right]\nonumber\\
&\leq  2\left(\frac{10 p}{L_j}\right)^{m_{j+1}} \leq \frac15 p^{m_{j+1}} L_{j+1}^{-\beta}
\end{align}
where the first inequality holds  by Lemma~\ref{l:A2Map}, the second by Lemma~\ref{l:totalSizeBound} and the third holds for large enough $L_0$  since $m_{j+1}>m>\alpha\beta$.
\end{proof}

\subsection{Case 3}

The third case allows for a greater number of bad sub-blocks.
The class of blocks $\A{3}_{X,j+1}$ defined as
\[
\A{3}_{X,j+1} := \left\{X:T_X \leq \frac{R L^{\alpha-1}_j}{2}, k_0\leq K_X\leq \frac{L_j^{\alpha-1}+T_X}{10 R_j^+} \right\}.
\]

\begin{lemma}\label{l:A3Map}
For $X\in \A{3}_{X,j+1}$,
\[
S_{j+1}(X) \geq \frac12 \prod_{i=1}^{K_X}S_j(X_{\ell_i})
\]
\end{lemma}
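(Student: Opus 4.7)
The plan is to use a single admissible class-$H_2^j$ mapping from Proposition~\ref{mapcase3}, in contrast to the many class-$H_1^j$ mappings used for Case~2. In Case~3 the number of bad sub-blocks $K_X$ can be as large as $(L_j^{\alpha-1}+T_X)/(10R_j^+)$, so the disjoint-target strategy of Case~2 is unavailable; however, no $L_j$-enhancement factor is claimed, and a plain multiplicative lower bound suffices.

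Fix $X \in \A{3}_{X,j+1}$, write $n = L_j^{\alpha-1}+2L_j^3+T_X$, and condition on the typical event
\[
\ce = \{W_Y \leq L_j^{\alpha-1}\} \cap \{T_Y = W_Y\}.
\]
Exactly as in Case~2, $\mathbb{P}[\ce] \geq 8/10$, and on $\ce$ every level-$j$ sub-block of $Y$ in the middle positions $L_j^3 < i' \leq L_j^3+L_j^{\alpha-1}+W_Y$ is an unconditioned i.i.d.\ $\mu_j^\mathbb{Y}$ draw, while the first and last $L_j^3$ sub-blocks are automatically good. On $\ce$, $n' = L_j^{\alpha-1}+2L_j^3+T_Y \in [L_j^{\alpha-1},3L_j^{\alpha-1}]$, and combined with $n \leq (R/2+2)L_j^{\alpha-1}+2L_j^3$ (inherited from $\A{3}_{X,j+1}$) this verifies the ratio hypothesis $3/(2R) \leq n'/n \leq 2R/3$ of Proposition~\ref{mapcase3} for our large $R$. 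The bad-block cardinality hypothesis of that proposition is exactly the defining condition of $\A{3}_{X,j+1}$.

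Proposition~\ref{mapcase3} then supplies an admissible class-$H_2^j$ mapping $\tau$ with respect to $B = \{\ell_1,\ldots,\ell_{K_X}\}$, and Proposition~\ref{admissiblemap2} reduces $X \hookrightarrow Y$ to the conjunction of (a) $X_{\ell_i} \hookrightarrow Y_{\tau(\ell_i)}$ for every $i$ and (b) every other $Y_{i'}$ is good. Since $\tau$ is a bijection of partition blocks, the targets $\tau(\ell_i)$ are distinct and lie in the unconditioned middle range, so conditional on $(X,\ce)$ the events (a) and (b) are independent; event~(a) has conditional probability $\prod_{i=1}^{K_X} S_j(X_{\ell_i})$, and event~(b) has conditional probability at least $(1-L_j^{-\delta})^{2L_j^{\alpha-1}} \geq 9/10$ since $\delta > \alpha-1$. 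Multiplying by $\mathbb{P}[\ce] \geq 8/10$ gives $S_{j+1}(X) \geq \tfrac12 \prod_{i=1}^{K_X} S_j(X_{\ell_i})$.

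The one subtlety to watch is the conditioning: the targets $Y_{\tau(\ell_i)}$ must genuinely have the unconditioned $\mu_j^\mathbb{Y}$ law, rather than a law biased by the block-termination rule. Including $\{T_Y = W_Y\}$ in $\ce$ is precisely what achieves this, since on that event the termination rule never inspected any interior sub-block. Beyond this, the argument is routine bookkeeping with the constants $R$, $\alpha$ and $\delta$.
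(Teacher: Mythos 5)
Your proof is correct and follows essentially the same route as the paper's: condition on $\ce=\{W_Y\leq L_j^{\alpha-1},\,T_Y=W_Y\}$, apply Proposition~\ref{mapcase3} to get one class-$H_2^j$ map, reduce via Proposition~\ref{admissiblemap2} to the conjunction of ``bad $X$-blocks embed in their targets'' and ``all other middle $Y$-blocks are good,'' use conditional independence, and multiply by $\P[\ce]\geq 8/10$. The only added content is that you verify the ratio and cardinality hypotheses of Proposition~\ref{mapcase3} explicitly, which the paper leaves implicit.
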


\begin{proof}
The proof is a simpler version of Lemma~\ref{l:A2Map} where this time we only need consider a single map $\Upsilon$.  Suppose that $X \in\A{3}_{X,j+1}$.  Again let $\ce$ denote the event
\[
\ce=\{W_Y\leq  L^{\alpha-1}_j, T_Y=W_Y \}.
\]
Similarly to~\eqref{e:A2MapCEbound} we have that,
\begin{equation}\label{e:A3MapCEbound}
\P[\ce]\geq 8/10.
\end{equation}
On the event $T_Y=W_Y$ we have that the blocks $Y_{L_j^3+1},\ldots,Y_{L_j^3+L_j^{\alpha-1}+T_Y}$ are uniform $j$-blocks since the block division did not evaluate whether they are good or bad.

By Proposition~\ref{mapcase3} we can find an admissible generalized mapping $\Upsilon([L_j^{\alpha-1}+2L_j^3+T_X],[L_j^{\alpha-1}+2L_j^3+T_X])$ with associated $\tau$ which are of class $H^j_2$ with  $B=\{\ell_1<\ldots<\ell_{K_X}\}$ so that for all $i$, $L_j^3+1\leq \tau_h(\ell_i) \leq L_j^3+L_j^{\alpha-1}+T_Y$.  We estimate the probability that this gives an embedding.

Let $\cd$ denote the event
\[
\cd=\left\{X_{\ell_{i}} \hookrightarrow Y_{\tau(\ell_{i})}\hbox{ for } 1\leq i \leq K_X\right\}.
\]
By definition,
\begin{align}\label{e:A3MapA}
\P[\cd\mid X,\ce]  = \prod_{i=1}^{K_X} S_j(X_{\ell_i})
\end{align}
Let $\cj$ denote the event
\[
\cj=\left\{Y_{k} \in G_j^{\mathbb{Y}} \hbox{ for all } k\in \{L_j^3+1,\ldots,L_j^3+L_j^{\alpha-1}+T_X\}\setminus \cup_{ 1\leq i \leq K_X}\{\tau(\ell_i)\}\right\}.
\]
Then for large enough $L_j$,
\begin{equation}\label{e:A3MapB}
\P[\cj \mid X,\ce] \geq \left(1- L_j^{-\delta}\right)^{2L_j^{\alpha-1}} \geq 9/10.
\end{equation}

If $\cd$ and $\cj$ hold then by Proposition~\ref{admissiblemap2} we have that $X\hookrightarrow Y$. Hence by~\eqref{e:A3MapA} and \eqref{e:A3MapB} and the fact that $\cd$ and $\cj$ are conditionally independent we have that,
\begin{align*}
\P[X\hookrightarrow Y\mid X,\ce]
&\geq \P[\cd, \cj \mid X,\ce] \nonumber\\
&= \P[\cd \mid X,\ce] \P[\cj \mid X,\ce] \nonumber\\
&\geq \frac{9}{10}  \prod_{i=1}^{K_X} S_j(X_{\ell_i}).
\end{align*}
Combining with~\eqref{e:A3MapCEbound} we have that
\begin{align*}
\P[X\hookrightarrow Y\mid X]
&\geq \frac{1}{2} \prod_{i=1}^{K_X} S_j(X_{\ell_i}),
\end{align*}
which completes the proof.
\end{proof}

\begin{lemma}\label{l:A3Bound}
When $0<p\leq \frac12$,
\[
\mathbb{P}(X\in \A{3}_{X,j+1}, S_{j+1}(X)\leq p)\leq \frac15 p^{m_{j+1}} L_{j+1}^{-\beta}
\]
\end{lemma}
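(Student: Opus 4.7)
(Plan for Lemma \ref{l:A3Bound})
The strategy mirrors that of Lemma~\ref{l:A2Bound}: first convert the lower bound on $S_{j+1}(X)$ coming from Lemma~\ref{l:A3Map} into a bound on $\prod_{i=1}^{K_X} S_j(X_{\ell_i})$, and then apply the master tail estimate of Lemma~\ref{l:totalSizeBound}.

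More precisely, by Lemma~\ref{l:A3Map}, if $X\in\A{3}_{X,j+1}$ and $S_{j+1}(X)\leq p$ then $\tfrac12\prod_{i=1}^{K_X}S_j(X_{\ell_i})\leq p$, i.e.\ $-\log\prod_{i=1}^{K_X}S_j(X_{\ell_i})\geq \log(1/(2p))$. Since membership in $\A{3}_{X,j+1}$ forces $K_X\geq k_0$, the event we want to control is contained in
\[
\Bigl\{K_X\geq k_0,\ -\log\prod_{i=1}^{K_X}S_j(X_{\ell_i})\geq \log(1/(2p))\Bigr\}.
\]
I would then apply Lemma~\ref{l:totalSizeBound} with $t'=0$, $k'=k_0$, and $x=\log(1/(2p))$ to obtain
\[
\mathbb{P}(X\in \A{3}_{X,j+1},\, S_{j+1}(X)\leq p)\leq 2L_j^{-\delta k_0/4}(2p)^{m_{j+1}}=2^{m_{j+1}+1}p^{m_{j+1}}L_{j+1}^{-\delta k_0/(4\alpha)},
\]
using $L_{j+1}=L_j^{\alpha}$.

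The remaining step is to check that this bound is at most $\tfrac15 p^{m_{j+1}}L_{j+1}^{-\beta}$. This amounts to verifying that $L_{j+1}^{\delta k_0/(4\alpha)-\beta}\geq 5\cdot 2^{m_{j+1}+1}$. The parameter inequalities $k_0>36\alpha\beta$ and $\delta>48$ give $\delta k_0/(4\alpha)-\beta$ much larger than $m$, so the right hand side is a fixed constant (bounded in $j$ since $m_{j+1}\leq m+1$) while the left hand side grows as a power of $L_{j+1}$. Taking $L_0$ large enough makes the inequality hold at every level, completing the proof.

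I do not expect a serious obstacle here: the only subtlety is that we must discard the upper bound $K_X\leq (L_j^{\alpha-1}+T_X)/(10 R_j^+)$ and the bound $T_X\leq RL_j^{\alpha-1}/2$ when invoking Lemma~\ref{l:totalSizeBound}, which is harmless because they only restrict the event. Everything else is arithmetic with the parameters fixed in~\eqref{e:parameters}.
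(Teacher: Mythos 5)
Your proposal is correct and takes essentially the same route as the paper: both pass from the bound $S_{j+1}(X)\geq\tfrac12\prod_i S_j(X_{\ell_i})$ of Lemma~\ref{l:A3Map} to the event $\{K_X\geq k_0,\ \prod_i S_j(X_{\ell_i})\leq 2p\}$ and then invoke Lemma~\ref{l:totalSizeBound} with $k'=k_0$ (and $t'=0$), obtaining $2(2p)^{m_{j+1}}L_j^{-\delta k_0/4}$, after which the conclusion is the arithmetic fact $\delta k_0/(4\alpha)>\beta$ for large $L_0$. The only cosmetic difference is that you spell out the final parameter check in slightly more detail than the paper.
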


\begin{proof}
We have that
\begin{align}
\mathbb{P}(X\in \A{3}_{X,j+1}, S_{j+1}(X)\leq p) &\leq \P\left[K_X>k_0, \frac12 \prod_{i=1}^{K_X}S_j(X_{\ell_i}) \leq p\right]\nonumber\\
&\leq  2\left(2p\right)^{m_{j+1}} L_j^{-\delta k_0/4}\leq \frac15 p^{m_{j+1}} L_{j+1}^{-\beta}
\end{align}
where the first inequality holds  by Lemma~\ref{l:A3Map}, the second by Lemma~\ref{l:totalSizeBound} and the third holds for large enough $L_0$  since $\delta k_0 >4\alpha\beta$.
\end{proof}

\subsection{Case 4}

Case 4 is the case of blocks of long length but not too many bad sub-blocks (at least with a density of them smaller than $(10 R_j^+)^{-1}$.
The class of blocks $\A{4}_{X,j+1}$ defined as
\[
\A{4}_{X,j+1} := \left\{X:T_X > \frac{R L^{\alpha-1}_j}{2}, K_X\leq \frac{L_j^{\alpha-1}+T_X}{10 R_j^+} \right\}.
\]

\begin{lemma}\label{l:A4Map}
For $X\in \A{4}_{X,j+1}$,
\[
S_{j+1}(X) \geq \prod_{i=1}^{K_X}S_j(X_{\ell_i}) \exp(-3T_X L_j^{-4} /R )
\]
\end{lemma}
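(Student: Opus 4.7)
My plan is to mirror the proof of Lemma~\ref{l:A3Map}: use a single class-$H_2^j$ mapping from Proposition~\ref{mapcase3}. The new complication in Case 4 is that the block $X$ is long ($T_X > RL_j^{\alpha-1}/2$), so I need $Y$ to be long enough for the length-ratio hypothesis $n'/n \in [3/(2R), 2R/3]$ of Proposition~\ref{mapcase3} to hold. Forcing $Y$ to have this length costs a factor coming from the geometric tail of $W_Y$, and the exponent $-3T_X L_j^{-4}/R$ in the statement is precisely tuned so that this geometric cost, together with smaller corrections, fits inside.

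\textbf{The conditioning event.} I set $n = L_j^{\alpha-1}+2L_j^3+T_X$ and define
\[
t^{\ast} = \max\bigl\{0,\,\lceil 3n/(2R)\rceil - L_j^{\alpha-1} - 2L_j^3\bigr\} \le 3T_X/(2R)+1,
\]
and then $\ce = \{W_Y \in [t^{\ast}, t^{\ast}+L_j^{\alpha-1}]\}\cap\{T_Y = W_Y\}$. On $\ce$, the length $n' = L_j^{\alpha-1}+2L_j^3+T_Y$ satisfies $3n/(2R)\le n' \le 2Rn/3$ (the upper bound using that $R$ is large), and crucially, because $T_Y = W_Y$, the middle $L_j^{\alpha-1}+W_Y$ sub-blocks of $Y$ are i.i.d.\ copies of $\mu_j^{\mathbb{Y}}$ (the good-block conditioning in the block construction only pins down the two length-$L_j^3$ end segments). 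From the geometric tail of $W_Y$, memorylessness (using $\alpha > 5$), and the factor $(1-L_j^{-\delta})^{2L_j^3}$ for $T_Y = W_Y$ given $W_Y$, I expect to obtain
\[
\P[\ce \mid X] \ge \tfrac12 \exp\bigl(-3T_X L_j^{-4}/(2R)-1\bigr).
\]

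\textbf{The mapping and the events $\cd,\cj$.} The hypotheses of Proposition~\ref{mapcase3} are in force on $\ce$: $K_X \le (n-2L_j^3)/(10R_j^+)$ is exactly the $\A{4}_{X,j+1}$ bound, and the block construction guarantees $\ell_1, n-\ell_{K_X} > L_j^3$. So on $\ce$ I obtain an admissible class-$H_2^j$ mapping $\Upsilon([n],[n']) = (P,P',\tau)$ with respect to $B = \{\ell_1,\dots,\ell_{K_X}\}$, with every image $\tau(\ell_i)$ lying in the uniform middle portion of $Y$. Setting
\[
\cd = \bigcap_{i=1}^{K_X}\{X_{\ell_i}\hookrightarrow Y_{\tau(\ell_i)}\}, \qquad \cj = \bigcap_{i' \in [n']\setminus\{\tau(\ell_i)\}}\{Y_{i'}\in G_j^{\mathbb{Y}}\},
\]
these events depend on disjoint $j$-sub-blocks of $Y$, so they are conditionally independent given $X$ and $\ce$. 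I then get $\P[\cd\mid X,\ce] = \prod_i S_j(X_{\ell_i})$ from the uniformity and $\P[\cj\mid X,\ce] \ge (1-L_j^{-\delta})^{L_j^{\alpha-1}+T_Y} \ge \exp(-O(T_X L_j^{-\delta}))$. When $\cd \cap \cj$ holds, Proposition~\ref{admissiblemap2} will yield $X\hookrightarrow Y$.

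\textbf{Main obstacle.} The only real delicacy is the arithmetic of constants. Multiplying the three bounds gives
\[
\P[X\hookrightarrow Y\mid X] \ge \prod_{i=1}^{K_X} S_j(X_{\ell_i}) \cdot \tfrac12\exp\bigl(-3T_X L_j^{-4}/(2R) - O(T_X L_j^{-\delta}) - 1\bigr),
\]
and I need to absorb the prefactor $\tfrac12$, the $O(T_X L_j^{-\delta})$, and the $-1$ into the gap between $3T_X L_j^{-4}/(2R)$ and $3T_X L_j^{-4}/R$. This is where $T_X > RL_j^{\alpha-1}/2$ and the parameter inequalities $\alpha > 5$, $\delta \gg 4$ will be used: they give $3T_X L_j^{-4}/(2R) \ge \tfrac34 L_j^{\alpha-5} \gg 1 + O(T_X L_j^{-\delta})$, so the slack of size $3T_X L_j^{-4}/(2R)$ between the two exponents comfortably accommodates all the lower-order terms and yields the claimed bound.
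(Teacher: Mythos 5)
Your proof is correct and takes essentially the same approach as the paper: both condition $Y$ on an event $\ce$ that pins $T_Y=W_Y$ and forces the length ratio $n'/n$ into the window of Proposition~\ref{mapcase3}, then apply a single class-$H_2^j$ mapping and decompose into the events $\cd$ and $\cj$ exactly as in Case~3, with the geometric cost of $\ce$ absorbed into the slack between $3T_X L_j^{-4}/(2R)$ and $3T_X L_j^{-4}/R$. The one cosmetic difference is that the paper pins $W_Y$ to the single value $\lfloor 2T_X/R\rfloor$, incurring an explicit prefactor $L_j^{-4}$ which it then absorbs using $T_X L_j^{-4}=\Omega(L_j^{\alpha-5})$, whereas you let $W_Y$ range over an interval of width $L_j^{\alpha-1}$ and thereby avoid the $L_j^{-4}$ loss outright; both choices land in the same place because the exponential terms dominate. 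One point worth being explicit about in a careful writeup: since your $\ce$ does not fix $T_Y$, the map $\tau$ and the index set $\{\tau(\ell_i)\}$ depend on $T_Y$, so the conditional independence of $\cd$ and $\cj$ and the bounds on them should be verified conditionally on each value of $T_Y$ in range and then averaged; this is routine and does not affect the conclusion.
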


\begin{proof}
The proof is  a modification of Lemma~\ref{l:A3Map} allowing the length of $Y$ to grow at a slower rate than $X$.  Suppose that $X \in\A{4}_{X,j+1}$ and let $\ce(X)$ denote the event
\[
\ce(X)=\{W_Y = \lfloor 2 T_X / R\rfloor, T_Y=W_Y \}.
\]
Then by definition $\P[W_Y = \lfloor 2 T_X / R\rfloor] =L_j^{-4} (1-L_j^{-4})^{\lfloor 2 T_X / R\rfloor}$.  Similarly to Lemma~\ref{l:A2Map}, $\P[T_Y=W_Y\mid W_Y] \geq (1-L_j^{-\delta})^{2L_j^3} \geq 9/10$.  Combining these we have that
\begin{equation}\label{e:A4MapCEbound}
\P[\ce(X)]\geq \frac9{10} L_j^{-4} (1-L_j^{-4})^{\lfloor 2 T_X / R\rfloor}.
\end{equation}

By Proposition~\ref{mapcase3} we can find an admissible generalized mapping $\Upsilon([L_j^{\alpha -1}+2L_j^3+T_X],[L_j^{\alpha-1}+2L_j^{3}+T_Y])$   with associated $\tau$ which is of class $H^j_2$ with respect to $B=\{\ell_1<\ldots<\ell_{K_X}\}$ so that for all $i$, $L_j^3+1\leq \tau_h(\ell_i) \leq L_j^3+L_j^{\alpha-1}+T_Y$.  We again estimate the probability that this gives an embedding.

Defining  $\cd$ and $\cj$ as in Lemma~\ref{l:A3Map} and we again have that~\eqref{e:A3MapA} holds.
Then for large enough $L_j$,
\begin{equation}\label{e:A4MapB}
\P[\cj \mid X,\ce(X)] \geq \left(1- L_j^{-\delta}\right)^{L_j^{\alpha-1} + \lfloor 2 T_X / R\rfloor + 2L_j^3} \geq \frac12\exp\left(- L_j^{-\delta}(L_j^{\alpha-1} + \lfloor 2 T_X / R\rfloor + 2L_j^3)\right).
\end{equation}

If $\cd$ and $\cj$ hold then by Proposition~\ref{admissiblemap2} we have that $X\hookrightarrow Y$. Hence by~\eqref{e:A3MapA} and \eqref{e:A4MapB} and the fact that $\cd$ and $\cj$ are conditionally independent we have that,
\begin{align*}
\P[X\hookrightarrow Y\mid X,\ce]
&\geq \P[\cd \mid X,\ce] \P[\cj \mid X,\ce] \nonumber\\
&\geq \frac12\exp\left(- L_j^{-\delta}(L_j^{\alpha-1} + \lfloor 2 T_X / R\rfloor + 2L_j^3)\right) \prod_{i=1}^{K_X} S(X_{\ell_i}).
\end{align*}
Combining with~\eqref{e:A3MapCEbound} we have that
\begin{align*}
\P[X\hookrightarrow Y\mid X]
&\geq  \exp(-3T_X L_j^{-4} /R ) \prod_{i=1}^{K_X} S(X_{\ell_i}),
\end{align*}
since $T_X L_j^{-4} = \Omega(L^{\alpha-5})$ which completes the proof.
\end{proof}

\begin{lemma}\label{l:A4Bound}
When $0<p\leq \frac12$,
\[
\mathbb{P}(X\in \A{4}_{X,j+1}, S_{j+1}(X)\leq p)\leq \frac15 p^{m_{j+1}} L_{j+1}^{-\beta}
\]
\end{lemma}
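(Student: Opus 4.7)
The plan is to follow the template of Lemma~\ref{l:A3Bound}, but now the extra factor $\exp(-3T_X L_j^{-4}/R)$ in the lower bound from Lemma~\ref{l:A4Map} requires a quantitative tail integration in $T_X$. Set $u=-\log p$, $X_*=-\log\prod_{i=1}^{K_X}S_j(X_{\ell_i})$, $c=3L_j^{-4}/R$, $\nu=L_j^{-4}/2$ and $t_*=RL_j^{\alpha-1}/2$. By Lemma~\ref{l:A4Map}, on $\{X\in\A{4}_{X,j+1},\ S_{j+1}(X)\leq p\}$ we have $T_X\geq t_*$ and $X_*+cT_X\geq u$; it therefore suffices to bound $\P[T_X\geq t_*,\ X_*+cT_X\geq u]$ using the joint tail bound of Lemma~\ref{l:totalSizeBound} with $k'=0$, namely $\P[T_X\geq t',\ X_*>x]\leq 2\exp(-m_{j+1}x-\nu t')$.

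I will dissect along $t_n=t_*+n$ for integer $n\geq 0$: on $\{t_n\leq T_X<t_{n+1}\}$ the remaining event requires $X_*\geq(u-ct_{n+1})_+$, whose probability Lemma~\ref{l:totalSizeBound} bounds by $2\exp(-m_{j+1}(u-ct_{n+1})_+-\nu t_n)$. For $n$ in the positive range the exponent simplifies to $-m_{j+1}u+m_{j+1}c-\nu' t_*-\nu' n$, where the parameter constraint $R>6(m+1)$ is exactly what ensures
\[
\nu':=\nu-m_{j+1}c=L_j^{-4}\Bigl(\tfrac12-\tfrac{3m_{j+1}}{R}\Bigr)>0.
\]
For the remaining $n$ the summand collapses to $2e^{-\nu t_n}$. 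Summing both geometric series produces a bound of the form
\[
\P\bigl[T_X\geq t_*,\ X_*+cT_X\geq u\bigr]\leq O(L_j^4)\,e^{-m_{j+1}u-\nu' t_*}+O(L_j^4)\,e^{-\nu\max(t_*,u/c)},
\]
the $O(L_j^4)$ prefactors coming from $(1-e^{-\nu'})^{-1}$ and $(1-e^{-\nu})^{-1}$ since both rates are of order $L_j^{-4}$.

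It remains to show each term is at most $\tfrac{1}{10}p^{m_{j+1}}L_{j+1}^{-\beta}$. Direct arithmetic using $R\geq 6(m+1)$ and $m_{j+1}\leq m+\tfrac12$ gives $\nu' t_*=L_j^{\alpha-5}(R/4-3m_{j+1}/2)\geq\tfrac34 L_j^{\alpha-5}$; since $\alpha>5$ and $L_j=L_0^{\alpha^j}$ is doubly exponential, $O(L_j^4)e^{-\nu' t_*}\leq\tfrac{1}{10}L_{j+1}^{-\beta}$ for $L_0$ large, handling the first summand. For the second summand I split on whether $u\leq 3L_j^{\alpha-5}/2$ (equivalently $u/c\leq t_*$): in the small-$u$ regime $\nu\max(t_*,u/c)=RL_j^{\alpha-5}/4$ dwarfs both $m_{j+1}u$ and $\beta\alpha^{j+1}\log L_0$; in the large-$u$ regime $\nu u/c=Ru/6\geq(m+1)u$ by $R>6(m+1)$, so the bound becomes $O(L_j^4)p^{m+1}=O(L_j^4)p^{m_{j+1}}p^{1-2^{-(j+1)}}$, and the factor $p^{1-2^{-(j+1)}}\leq e^{-3L_j^{\alpha-5}/4}$ absorbs both the $L_j^4$ prefactor and $L_{j+1}^{-\beta}$. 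Adding the two contributions yields the required $\tfrac{1}{5}p^{m_{j+1}}L_{j+1}^{-\beta}$. The main delicacy is precisely the arithmetic that $R>6(m+1)$ produces the positive gap $R/4-3m_{j+1}/2\geq 3/4$: this is what makes the geometric series in $n$ converge and what allows the doubly-exponential growth of $L_j^{\alpha-5}$ to swallow both the polynomial prefactor in $L_j$ and the target $L_{j+1}^{-\beta}$.
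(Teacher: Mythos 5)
Your argument is correct and matches the paper's proof in its essentials: both sum over $T_X$, invoke Lemma~\ref{l:totalSizeBound} with $k'=0$, and rely on the gap $\tfrac12-\tfrac{3m_{j+1}}{R}>0$ furnished by $R>6(m+1)$ to produce a convergent geometric series whose leading term is swallowed by the doubly-exponential $L_j^{\alpha-5}$. The paper avoids your $(u-ct_{n+1})_+$ case split by using the slightly wasteful but uniform bound $2\bigl(pe^{3tL_j^{-4}/R}\bigr)^{m_{j+1}}e^{-\frac12 tL_j^{-4}}$ even in the regime $pe^{3tL_j^{-4}/R}>1$, keeping a single geometric series; your version is tighter but the difference is cosmetic, not a different route.
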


\begin{proof}
We have that
\begin{align}
\mathbb{P}(X\in \A{4}_{X,j+1}, S_{j+1}(X)\leq p) &\leq \sum_{t=\frac{R L^{\alpha-1}_j}{2}+1}^\infty \P\left[T_x=t,  \prod_{i=1}^{K_X}S_j(X_{\ell_i}) \exp(-3 t L_j^{-4} /R ) \leq p\right]\nonumber\\
&\leq \sum_{t=\frac{R L^{\alpha-1}_j}{2}+1}^\infty 2\left(p \exp(3 t L_j^{-4} /R )\right)^{m_{j+1}} \exp\left(- \frac12 t L_j^{-4}\right)\nonumber\\
&\leq \frac15 p^{m_{j+1}} L_{j+1}^{-\beta}
\end{align}
where the first inequality holds  by Lemma~\ref{l:A4Map}, the second by Lemma~\ref{l:totalSizeBound} and the third holds for large enough $L_0$  since $3m_{j+1}/R<\frac12$ and so for large enough $L_0$,
\[
\sum_{t=R L^{\alpha-1}_j/2+1}^\infty \exp\left(- t L_j^{-4} \left(\frac12-\frac{3m_{j+1}}{R} \right)\right) < \frac1{10} L_{j+1}^{-\beta}.
\]
\end{proof}

\subsection{Case 5}

The final case involves blocks with a large density of sub-blocks.
The class of blocks $\A{5}_{X,j+1}$ defined as
\[
\A{5}_{X,j+1} := \left\{X: K_X > \frac{L_j^{\alpha-1}+T_X}{10 R_j^+} \right\}.
\]

\begin{lemma}\label{l:A5Map}
For $X\in \A{5}_{X,j+1}$,
\[
S_{j+1}(X) \geq \exp(-2 T_X L_j^{-4}) \prod_{i=1}^{K_X} S_j(X_{\ell_i})
\]
\end{lemma}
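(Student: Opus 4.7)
Since in Case 5 the bound $K_X > (L_j^{\alpha-1}+T_X)/(10R_j^+)$ violates the hypothesis of Proposition~\ref{mapcase3}, none of the admissible mapping classes $G^j,H_1^j,H_2^j$ is directly available. The plan is therefore to fall back on the trivial $1$-$1$ option (option~1 of Definition~\ref{mapstodefinition}), which is always valid but requires $|Y|=|X|$, i.e.\ $T_Y=T_X$. On such an event we set $X_i\hookrightarrow Y_i$ for every $i$; Observation~\ref{o:blockStructure} together with the inductive hypothesis~(\ref{goodvgood}) will make the first and last $L_j^3$ matchings automatic.

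I would first isolate the cleanest sub-event $\ce=\{W_Y=T_X,\,e_Y=0\}$, where $e_Y$ denotes the excess beyond $W_Y$ produced by waiting for $2L_j^3$ consecutive good sub-blocks. By independence $\P[\ce]\geq L_j^{-4}(1-L_j^{-4})^{T_X}(1-L_j^{-\delta})^{2L_j^3}$. Conditionally on $\ce$ the middle blocks $Y_{L_j^3+1},\dots,Y_{L_j^3+L_j^{\alpha-1}+T_X}$ remain i.i.d.\ $\mu_j^{\mathbb Y}$ and independent of $X$, so the middle $1$-$1$ matchings succeed independently with probabilities $S_j(X_i)$. Using (\ref{goodvgood}) together with (\ref{ygood}) gives $S_j(X_i)\geq 1-L_j^{-\delta}$ on good $X_i$, which isolates the bad-block product; combining with $\P[\ce]$ and $(1-L_j^{-4})^{T_X}\geq \exp(-2T_X L_j^{-4})$ produces
\[
S_{j+1}(X)\;\geq\;\tfrac12 L_j^{-4}\exp(-2T_X L_j^{-4})\prod_{i=1}^{K_X}S_j(X_{\ell_i}).
\]

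This falls short of the stated bound by a factor of $L_j^{-4}$. The remedy I would use is to sum over the disjoint events $\ce_w=\{W_Y=w,\,e_Y=0\}$ for $w$ ranging over $[T_X,T_X+M']$ with $M'\gg L_j^4$, since $\sum_{w\geq T_X}L_j^{-4}(1-L_j^{-4})^w=(1-L_j^{-4})^{T_X}\geq \exp(-2T_X L_j^{-4})$ absorbs the spurious $L_j^{-4}$. For each $w>T_X$ the corresponding map is $1$-$1$ on the bulk together with an expansion of a long good run of $X$ (via the inductive hypothesis~(\ref{compress})) to accommodate the $w-T_X$ extra $Y$ sub-blocks; the probability that the $Y$-blocks serving as images of the expanded run are all good contributes only a further $(1-L_j^{-\delta})^{O(w-T_X)}=1-o(1)$ factor.

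The hard part will be verifying that enough good runs of $X$ of length $\geq R_j$ exist to supply the expansion budget for every $w\leq T_X+M'$. The Case 5 lower bound on $K_X$ forces the average spacing between bad sub-blocks of $X$ to be $\leq 10 R_j^+$, which is still comfortably $\geq R_j$; each such run then contributes $\Theta(R_j)$ of expansion via~(\ref{compress}), and there are $\Theta(K_X)$ of them, so the total expansion budget is $\Theta(|X|)$, dwarfing $M'$. Outside a negligible event (controlled as in the other cases via Lemma~\ref{l:totalSizeBound}), every $w$ in the summation range admits a valid mapping, completing the proof.
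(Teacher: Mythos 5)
Your opening moves coincide exactly with the paper's proof: both condition on the event $\ce(X)=\{W_Y=T_X,\,T_Y=W_Y\}$, use the trivial identity partitions, and appeal to (\ref{goodvgood}), arriving at a bound with the prefactor $\tfrac12 L_j^{-4}(1-L_j^{-4})^{T_X}$. Your observation that this leaves a spurious $L_j^{-4}$ is sharp and genuine: in Case~4 this factor is absorbed because $T_X > \tfrac{R}{2}L_j^{\alpha-1}$ makes $\exp(-cT_X L_j^{-4})$ overwhelmingly small, but Case~5 carries no lower bound on $T_X$, so the paper's terse ``proceeding as in Lemma~\ref{l:A4Map}'' does not literally close the argument when $T_X$ is small.

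Your proposed remedy, however, does not work. The claim that ``the total expansion budget is $\Theta(|X|)$'' is false in Case~5: the lower bound $K_X>(L_j^{\alpha-1}+T_X)/(10R_j^+)$ gives an upper bound on the \emph{average} gap between bad sub-blocks, not a pointwise lower bound on any gap, and the bad sub-blocks may be arbitrarily clustered — indeed every middle sub-block may be bad. In that case the only guaranteed good runs of $X$ are the initial and terminal $L_j^3$, and applying~(\ref{compress}) chunk by chunk yields an expansion budget of only $O(R\,L_j^3)\ll L_j^4$. So the range $[T_X,T_X+M']$ with $M'\gg L_j^4$ cannot be covered by admissible mappings, and the probability sum still falls short by a power of $L_j$. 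Your final hedge ``outside a negligible event'' is not available either, since the lemma's conclusion must hold pointwise for every $X\in\A{5}_{X,j+1}$.

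The simpler resolution — which is what makes the paper's argument go through — is that the extra $L_j^{-4}$ factor is innocuous downstream. In Lemma~\ref{l:A5Bound} the controlling term is $L_j^{-\delta(L_j^{\alpha-1}+t)/(40R_j^+)}$, which is $L_j^{-\Omega(L_j^{\alpha-1})}$ and swallows any polynomial power of $L_j$; carrying the weaker bound $S_{j+1}(X)\geq \tfrac14 L_j^{-4}\exp(-2T_XL_j^{-4})\prod_i S_j(X_{\ell_i})$ through that computation just introduces an extra $(4L_j^4)^{m_{j+1}}$, which is easily absorbed. So rather than try to manufacture a better embedding probability, note that the stated inequality need only hold up to a $\mathrm{poly}(L_j)$ factor for the induction to close.
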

\begin{proof}
The proof follows by minor modifications of Lemma~\ref{l:A4Map}.  We take $\ce(X)$ to denote the event
\[
\ce(X)=\{W_Y = T_X, T_Y=W_Y \}.
\]
and get a bound of
\begin{equation}\label{e:A5MapCEbound}
\P[\ce(X)]\geq \frac9{10} L_j^{-4} (1-L_j^{-4})^{T_X}.
\end{equation}
We take as our mapping the complete partitions $\{0\leq 1\leq 2\leq\ldots\leq 2L_j^3+L_j^{\alpha-1}+T_X\}$ and $\{0\leq 1\leq 2\leq\ldots\leq 2L_j^3+L_j^{\alpha-1}+T_Y\}$ and so are simply mapping sub-blocks to sub-blocks.
The new bound for $\cj$ becomes
\begin{equation}\label{e:A5MapB}
\P[\cj \mid X,\ce(X)] \geq \left(1- L_j^{-\delta}\right)^{L_j^{\alpha-1} + T_X + 2L_j^3} \geq \frac12\exp\left(- L_j^{-\delta}(L_j^{\alpha-1} + T_X + 2L_j^3)\right).
\end{equation}
Proceeding as in Lemma~\ref{l:A4Map} the yields the result.
\end{proof}

\begin{lemma}\label{l:A5Bound}
When $0<p\leq \frac12$,
\[
\mathbb{P}(X\in \A{5}_{X,j+1}, S_{j+1}(X)\leq p)\leq \frac15 p^{m_{j+1}} L_{j+1}^{-\beta}
\]
\end{lemma}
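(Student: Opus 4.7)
The plan is to mirror the structure of Lemma~\ref{l:A4Bound}: apply Lemma~\ref{l:A5Map}, union-bound over $t=T_X$, and apply Lemma~\ref{l:totalSizeBound}. On $X\in \A{5}_{X,j+1}$ Lemma~\ref{l:A5Map} gives $\prod_{i=1}^{K_X} S_j(X_{\ell_i}) \le p\exp(2T_X L_j^{-4})$, while membership in $\A{5}_{X,j+1}$ forces $K_X \ge k'(T_X):=\lceil (L_j^{\alpha-1}+T_X)/(10R_j^+)\rceil$. Splitting on $T_X=t$ and applying Lemma~\ref{l:totalSizeBound} with $x_t := \max(\log(1/p)-2tL_j^{-4},\,0)$ (using that $(p\exp(2tL_j^{-4}))^{m_{j+1}} \ge \exp(-x_t m_{j+1})$ in both the cases $x_t>0$ and $x_t=0$) yields
\[
\mathbb{P}(X\in \A{5}_{X,j+1},\, S_{j+1}(X)\le p) \;\le\; \sum_{t\ge 0} 2 L_j^{-\delta k'(t)/4}\, \bigl(p\exp(2tL_j^{-4})\bigr)^{m_{j+1}}\, \exp\!\bigl(-\tfrac12 t L_j^{-4}\bigr).
\]

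The critical estimate will be the factorization $L_j^{-\delta k'(t)/4} \le L_j^{-\delta L_j^{\alpha-1}/(40R_j^+)}\cdot L_j^{-\delta t/(40R_j^+)}$, which follows immediately from $k'(t) \ge (L_j^{\alpha-1}+t)/(10R_j^+)$. Substituting and pulling the $t$-independent factor outside reduces the sum to a geometric series $\sum_{t\ge 0} \exp(-\eta t)$ with rate $\eta := \delta\log L_j/(40R_j^+) - (2m_{j+1}-\tfrac12)L_j^{-4}$. Since $R_j^+\le 5\cdot 4^j R^2$ and $\log L_j = \alpha^j \log L_0$ with $\alpha>4$, for $L_0$ large enough $\eta$ is bounded below by an absolute positive constant, so the geometric series contributes only an $O(1)$ factor. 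Consequently
\[
\mathbb{P}(X\in \A{5}_{X,j+1},\, S_{j+1}(X)\le p) \;\le\; C\, p^{m_{j+1}}\, L_j^{-\delta L_j^{\alpha-1}/(40R_j^+)},
\]
and the desired bound $\tfrac15 p^{m_{j+1}} L_{j+1}^{-\beta} = \tfrac15 p^{m_{j+1}} L_j^{-\alpha\beta}$ follows because $\delta L_j^{\alpha-1}/(40R_j^+)$ vastly exceeds $\alpha\beta$ for large $L_0$ (the fixed constant $\alpha\beta$ is dwarfed by the essentially unbounded ratio $L_j^{\alpha-1}/R_j^+$).

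This is the easiest of the five cases and there is no real obstacle: the density condition defining $\A{5}_{X,j+1}$ forces $K_X$ to grow like $L_j^{\alpha-1}/R_j^+$, so the resulting prefactor $L_j^{-\delta L_j^{\alpha-1}/(40R_j^+)}$ is astronomically small and trivially dominates the target $L_j^{-\alpha\beta}$. The only care to take is retaining the $L_j^{-\delta t/(40R_j^+)}$ piece of $L_j^{-\delta k'(t)/4}$ when summing over $t$, since this is precisely the factor that cancels the growth $\exp(2m_{j+1}tL_j^{-4})$ introduced by the $-2T_X L_j^{-4}$ correction in Lemma~\ref{l:A5Map}.
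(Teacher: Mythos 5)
Your proof is correct and follows essentially the same route as the paper: apply Lemma~\ref{l:A5Map}, union bound over $t=T_X$, apply Lemma~\ref{l:totalSizeBound}, observe the sum over $t$ is geometric with ratio $\exp(2m_{j+1}L_j^{-4})L_j^{-\delta/(40R_j^+)}<1$, and conclude using the astronomically small prefactor $L_j^{-\delta L_j^{\alpha-1}/(40R_j^+)}$. Your explicit treatment of the truncation $x_t=\max(\log(1/p)-2tL_j^{-4},0)$ makes a step rigorous that the paper glosses over, but this is a cosmetic refinement rather than a different argument.
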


\begin{proof}
First note that since $\alpha>9$, that $$L_j^{-\frac{\delta}{40 R_j^+}}=L_0^{-\frac{\delta \alpha^j}{40 R_j^+}}\to 0$$ as $j\to\infty$.  Hence for large enough $L_0$,
\begin{equation}\label{e:A5BoundA}
\sum_{t=0}^\infty \left(\exp(2 m_{j+1} L_j^{-4}) L_j^{-\frac{\delta}{40 R_j^+}} \right)^t < 2.
\end{equation}
We have that
\begin{align}
&\mathbb{P}(X\in \A{5}_{X,j+1}, S_{j+1}(X)\leq p) \nonumber\\
&\qquad\leq \sum_{t=0}^\infty \P\left[T_x=t, K_X> \frac{L_j^{\alpha-1}+t}{10 R_j^+}, \prod_{i=1}^{K_X}S_j(X_{\ell_i}) \exp(-2 t L_j^{-4}) \leq p\right]\nonumber\\
&\leq p^{m_{j+1}} \sum_{t=0}^\infty 2\left( \exp(2 m_{j+1} t L_j^{-4})\right) L_j^{-\frac{\delta(L_j^{\alpha-1}+t)}{40 R_j^+}}\nonumber\\
&\leq \frac15 p^{m_{j+1}} L_{j+1}^{-\beta}
\end{align}
where the first inequality holds be by Lemma~\ref{l:A5Map}, the second by Lemma~\ref{l:totalSizeBound} and the third follows by~\eqref{e:A5BoundA} and the fact that
\[
L_j^{-\frac{\delta L_j^{\alpha-1}}{40 R_j^+}} \leq \frac1{20} L_{j+1}^{-\beta},
\]
for large enough $L_0$.
\end{proof}

\subsection{Proof of Theorem~\ref{t:tail}}
We now put together the five cases to establish the tail bounds.

\begin{proof}[Proof of Theorem~\ref{t:tail}]
The case of $\frac12\leq p \leq 1-L_{j+1}^{-1}$ is established in Lemma~\ref{l:A1final}.  By Lemma~\ref{l:A1Map} we have that $S_j(X) \geq \frac12$ for all $X\in \A{1}_{X,j+1}$.  Hence we need only consider $0<p<\frac12$ and cases 2 to 5.  By Lemmas~\ref{l:A2Bound}, \ref{l:A3Bound}, \ref{l:A4Bound} and~\ref{l:A5Bound} then
\begin{align*}
\mathbb{P}(S_{j+1}(X)\leq p) \leq \sum_{l=2}^5 \mathbb{P}(X\in \A{l}_{X,j+1}, S_{j+1}(X)\leq p)\leq  p^{m_{j+1}} L_{j+1}^{-\beta}.
\end{align*}
The bound for $S_{j+1}^{\mathbb{Y}}$ follows similarly.
\end{proof}

\section{Length estimate}\label{s:length}

\begin{theorem}
\label{lengthestimate}
Given the inductive assumptions at level $j$, for $X$ be an $\mathbb{X}$ block at at level $(j+1)$ we have that
\begin{equation}
\label{length2a}
\mathbb{E}[\exp (L_{j}^{-6}(|X|-(2-2^{-(j+1)})L_{j+1}))] \leq 1.
\end{equation}
and hence for $x \geq 0$,
\begin{equation}
\label{length2}
\mathbb{P}(|X|> ((2-2^{-(j+1)})L_{j+1}+xL_{j}^6))\leq e^{-x}.
\end{equation}
\end{theorem}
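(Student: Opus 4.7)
Set $\lambda = L_j^{-6}$. By Observation~\ref{o:blockRepresentation} we may represent $X$ as $(X_1,\ldots,X_N)$ with $N = L_j^{\alpha-1}+2L_j^3+T_X$, where $X_i\sim\mu_{j,G}^{\mathbb{X}}$ for $i\leq L_j^3$ and $X_i\sim\mu_j^{\mathbb{X}}$ for $i> L_j^3$ (all independent of each other and of the geometric waiting variable that generates $T_X$). Since $\alpha>1$, for $L_0$ large we have $2\lambda = 2L_j^{-6}\le L_{j-1}^{-6}$; combining the inductive level-$j$ length bound with Jensen's inequality (applied to $x\mapsto x^{2\lambda L_{j-1}^6}$, concave since the exponent lies in $(0,1]$) yields
\[
\mathbb{E}\bigl[\exp(2\lambda|X_i|)\bigr] \le \exp\bigl(2\lambda(2-2^{-j})L_j\bigr)
\]
for $X_i\sim\mu_j^{\mathbb{X}}$, and the same bound with an extra factor $\exp(2L_j^{-\delta})$ for $X_i\sim\mu_{j,G}^{\mathbb{X}}$, using $\mathbb{P}(X_i\in G_j^{\mathbb{X}})\ge 1-L_j^{-\delta}$.

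The main difficulty is that $N$ is not independent of the $|X_i|$'s. We resolve this by decomposing according to $T_X$ and applying Cauchy-Schwarz:
\[
\mathbb{E}[\exp(\lambda|X|)] \;=\; \sum_{t\ge 0}\mathbb{E}\Bigl[\exp\Bigl(\lambda\!\sum_{i=1}^{N_t}|X_i|\Bigr)\mathbf{1}_{T_X=t}\Bigr] \;\le\; \sum_{t\ge 0}\sqrt{\mathbb{E}\Bigl[\exp\Bigl(2\lambda\!\sum_{i=1}^{N_t}|X_i|\Bigr)\Bigr]\cdot\mathbb{P}(T_X=t)},
\]
where $N_t:=L_j^{\alpha-1}+2L_j^3+t$. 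By independence of the $X_i$'s in this unconditional representation, the MGF inside the first square root factorizes and is at most $\exp(2\lambda(2-2^{-j})L_j N_t + O(L_j^{3-\delta}))$. The second factor is controlled by the tail estimate $\mathbb{P}(T_X\ge t)\le 2e^{-tL_j^{-4}/2}$, which follows from Lemma~\ref{l:totalSizeBound} on setting $k'=x=0$.

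Inserting these bounds and summing the resulting geometric series in $t$, which converges because $\lambda(2-2^{-j})L_j \leq 2L_j^{-5} \ll L_j^{-4}/4$ for $L_0$ large, gives
\[
\mathbb{E}[\exp(\lambda|X|)] \;\le\; \mathrm{poly}(L_j)\,\exp\bigl(\lambda(2-2^{-j})L_j(L_j^{\alpha-1}+2L_j^3)\bigr) \;\le\; \exp\bigl(\lambda(2-2^{-j})L_{j+1}+O(\log L_j)\bigr).
\]
The target exponent $\lambda(2-2^{-(j+1)})L_{j+1}$ exceeds the bound just obtained by $\lambda\cdot 2^{-(j+1)}L_{j+1} = 2^{-(j+1)}L_j^{\alpha-6}$, and since $\alpha>6$ and $L_j$ grows doubly exponentially in $j$, this slack comfortably absorbs the $O(\log L_j)$ error, establishing~(\ref{length2a}). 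The tail bound~(\ref{length2}) then follows immediately from Markov's inequality. The only real obstacle is the Cauchy-Schwarz decoupling of $T_X$ from the sub-block lengths; this is affordable precisely because $\lambda=L_j^{-6}$ is chosen so small that doubling the MGF parameter costs only negligible room in the final exponent.
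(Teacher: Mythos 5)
Your proposal is correct and takes essentially the same route as the paper: Jensen's inequality to downgrade the level-$j$ length MGF from parameter $L_{j-1}^{-6}$ to $2L_j^{-6}$, Cauchy--Schwarz (the paper calls it H\"older) to decouple $T_X$ from the sub-block lengths, Lemma~\ref{l:totalSizeBound} for the geometric tail of $T_X$, and the $\alpha>6$ slack $2^{-(j+1)}L_j^{\alpha-6}$ to absorb the error. The only cosmetic difference is that the paper first factors out the first $L_j^3$ (good, $T_X$-independent) sub-blocks at parameter $L_j^{-6}$ before applying Cauchy--Schwarz to the remainder, whereas you apply Cauchy--Schwarz to the whole sum at once, which costs a harmless extra $(1+2L_j^{-\delta})^{L_j^3/2}$.
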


\begin{proof}
By the inductive hypothesis we have for a random $j$-level block $X_i$ that
\begin{equation}
\label{lengthalternateinduction}
\mathbb{E}[\exp (L_{j-1}^{-6}(|X|-(2-2^{-j})L_j))] \leq 1.
\end{equation}
It follows that
\begin{equation}
\label{lengthalternate2}
\mathbb{E}[\exp (L_{j-1}^{-6}(|X|-(2-2^{-j})L_j))|X\in G_j^{\mathbb{X}}] \leq \P[X\in G_j^{\mathbb{X}}]^{-1}\leq \frac1{1 - L_j^{-\delta}} \leq 1+2 L_j^{-\delta},
\end{equation}
since $L_0$ is large enough.
Since $0 \leq 2 L_j^{-6} \leq L_{j-1}^{-6}$ Jensen's inequality and equation~\eqref{lengthalternateinduction}  imply that
\begin{equation}
\label{lengthalternate3}
\mathbb{E}[\exp ( 2 L_{j}^{-6}(|X|-(2-2^{-j})L_j))] \leq 1
\end{equation}
and similarly
\begin{equation}
\label{lengthalternate4}
\mathbb{E}[\exp ( 2 L_{j}^{-6}(|X|-(2-2^{-j})L_j))|X\in G_j^{\mathbb{X}}] \leq 1+2 L_j^{-\delta}
\end{equation}
Let $\tilde{X}=(X_1,X_2,\ldots)$ be a sequence of independent $\mathbb{X}$-blocks at level $j$ with the distribution specified by $X_i\sim  \mu_{j,G}^{\mathbb{X}}$ for $i=1,\ldots ,L_j^3$ and $X_i\sim \mu_j^{\mathbb{X}}$ for $j>L_j^3$. Let $X=(X_1,X_2,\ldots , X_{L_j^{\alpha -1}+2L_j^3+T_X})$ be the $(j+1)$ level $\mathbb{X}$-block obtained from $\tilde{X}$. Then since $T_X$ is independent of the first $L_j^3$ sub-blocks we have
\begin{eqnarray*}
\mathbb{E}[\exp(L_j^{-6}|X|)] &=& \mathbb{E}[\sum_{t=1}^{\infty} \exp (L_j^{-6}\sum_{i=1}^{2L_j^3+L_j^{\alpha -1}+t}|X_i|)I[T_X=t]]\\
&=& \mathbb{E}\bigg[ \exp \Big(L_j^{-6}\sum_{i=1}^{L_j^3} |X_i|\Big)\bigg]  \\
&&\quad \cdot  \sum_{t=1}^{\infty}  \mathbb{P}[T_X=t]^{\frac12}  \mathbb{E}\bigg[\exp \Big(2 L_j^{-6}\sum_{i=L_j^3+1}^{2L_j^3+L_j^{\alpha -1}+t}|X_i|\Big)\bigg]^{\frac{1}{2}},
\end{eqnarray*}
using H\"older's Inequality. Now using (\ref{lengthalternate3}), (\ref{lengthalternate4}) and Lemma~\ref{l:totalSizeBound} it follows from the above equation that
\begin{align*}
\mathbb{E}[\exp(L_j^{-6}|X|)] &\leq  \left(1+2 L_j^{-\delta}\right)^{L_j^3} \sum_{t=1}^{\infty}  \exp \left (L_j^{-5}(2-2^{-j}) (L_j^{\alpha -1}+L_j^3+t)-\frac14 tL_j^{-4} \right)\\
&\leq 2 \exp \left((2-2^{-j})(L_j^{\alpha -6}+L_j^{-2})+L_j^{-5}(2-2^{-j})\right)\\
&\leq  \exp \left((2-2^{-(j+1)})L_j^{\alpha -6}\right),
\end{align*}
since $\alpha>6$.  It follows that
\begin{equation}
\label{lengthalternateinduction2}
\mathbb{E}[\exp (L_{j}^{-6}(|X|-(2-2^{-(j+1)})L_{j+1}))] \leq 1
\end{equation}
while equation~\eqref{length2} follows by Markov's inequality which completes the proof of the theorem.

\end{proof}

\section{Estimates for good blocks}\label{s:good}

\subsection{Most Blocks are good}

\begin{theorem}
\label{goodprobability}
Let $X$ be a $\mathbb{X}$-block at level $(j+1)$. Then $\mathbb{P}(X\in G_{j+1}^{\mathbb{X}})\geq 1-L_{j+1}^{-\delta}$. Similarly for $\mathbb{Y}$-block $Y$ at level $(j+1)$, $\mathbb{P}(Y\in G_{j+1}^{\mathbb{Y}})\geq 1-L_{j+1}^{-\delta}$.
\end{theorem}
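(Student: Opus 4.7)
The definition of $G_{j+1}^{\mathbb{X}}$ comprises four clauses, so by a union bound it suffices to bound the failure probability of each by $\frac14 L_{j+1}^{-\delta}$. Three of the four bounds are relatively direct consequences of the inductive estimates; the substantive work is clause~(iii), the strong-sequence property, where a potentially enormous union bound over $SB_j^{\mathbb{Y}}$ must be controlled by Chernoff concentration on windows.

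Clauses~(i) and~(iv) follow immediately from Lemma~\ref{l:totalSizeBound}: taking $(k', t') = (0, L_j^5 - 2L_j^3)$ bounds $\P[n > L_j^{\alpha-1} + L_j^5]$ by $2\exp(-L_j/2)$, and $(k', t') = (k_0+1, 0)$ bounds $\P[K_X > k_0]$ by $2 L_j^{-\delta(k_0+1)/4}$; since $k_0 > 36\alpha\beta$, both are vastly smaller than $L_{j+1}^{-\delta}$. For clause~(ii), the probability that a given sub-block $X_i$ is bad but not semi-bad is at most the sum of $\P[S_j^{\mathbb{X}}(X_i) < 1 - (20 k_0 R_{j+1}^+)^{-1}] \leq L_j^{-\beta}$ from~\eqref{tailx1}, $\P[|X_i| > 10 L_j] \leq \exp(-\Omega(L_j^{1-6/\alpha}))$ from~\eqref{lengthx}, and $\P[X_i \text{ uses a character } C_k \text{ with } k > L_j^m] \leq \mathbb{E}|X_i| \cdot L_j^{-m} = O(L_j^{1-m})$ from~\eqref{mux}. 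A union bound over the $O(L_j^{\alpha})$ sub-blocks of $X$ (working on the event that~(iv) holds) contributes only a polynomial factor in $L_j$, and the total is $\ll L_{j+1}^{-\delta}$ since $\beta > \alpha(\delta + 1)$.

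The main obstacle is clause~(iii). Fix a window $W$ of $L_j^{3/2}$ consecutive sub-blocks of $X$, together with a semi-bad $Y \in SB_j^{\mathbb{Y}}$. By the definition of semi-bad, $\P[X_i \hookrightarrow Y \mid Y] \geq 1 - (20 k_0 R_{j+1}^+)^{-1}$ for every $i$: for interior sub-blocks this is immediate, and for the at most $2L_j^3$ terminal sub-blocks conditioned to be good the bound is perturbed by only $O(L_j^{-\delta})$, which is absorbed into the factor-of-two slack between $(20 k_0 R_{j+1}^+)^{-1}$ and the threshold $(10 k_0 R_{j+1}^+)^{-1}$. Because the $X_i$'s within a single window are conditionally independent given $Y$ (the block-boundary stopping rule can be decoupled from the interior sub-blocks), a standard Chernoff bound yields
\[
\P\bigl[\#\{i \in W : X_i \not\hookrightarrow Y\} > L_j^{3/2}/(10 k_0 R_{j+1}^+) \bigm| Y\bigr] \leq \exp\bigl(-c\, L_j^{3/2}/(k_0 R_{j+1}^+)\bigr)
\]
for some absolute constant $c > 0$.

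The hardest remaining step is the union bound over $Y$, since $SB_j^{\mathbb{Y}}$ is not a polynomial family. However, every semi-bad $Y$ has level-$0$ length at most $10 L_j$ and uses only characters in $\{C_1, \ldots, C_{L_j^m}\}$, and whether $X_i \hookrightarrow Y$ depends on $Y$ only through its underlying character sequence; thus the number of equivalence classes to union-bound over is at most $L_j^{10 m L_j}$, contributing at most $10 m L_j \log L_j$ to the exponent. Since $R_{j+1}^+ = O(4^j)$ while $L_j = L_0^{\alpha^j}$ grows doubly exponentially in $j$, the quantity $c L_j^{3/2}/(k_0 R_{j+1}^+)$ dominates $10 m L_j \log L_j + \alpha\delta \log L_j + O(\log L_j)$ with enormous slack once $L_0$ is large; multiplying the Chernoff bound by $|SB_j^{\mathbb{Y}}|$ and by the $O(L_j^{\alpha})$ window starting positions still leaves a bound much smaller than $\frac14 L_{j+1}^{-\delta}$. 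Combining the four clauses by union bound completes the proof.
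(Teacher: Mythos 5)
Your proposal is correct and follows essentially the same route as the paper: decompose the definition of $G_{j+1}^{\mathbb{X}}$ into its four clauses, control clauses (i), (ii), (iv) via Lemma~\ref{l:totalSizeBound} and the level-$j$ tail, length, and character estimates, and control clause (iii) by a Chernoff bound over each window of $L_j^{3/2}$ sub-blocks together with a union bound over the at most $L_j^{10mL_j}$ semi-bad blocks. Your step "the block-boundary stopping rule can be decoupled" is exactly what the paper makes precise by stating the strong-sequence event $A_1$ for the underlying i.i.d. sequence of sub-blocks (with the first $L_j^3$ conditioned good, handled via Lemma~\ref{strong}(b)) and intersecting with the length bound $A_4$, so this is a presentation difference rather than a gap.
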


Before proving the theorem we need the following lemma to show that a sequence of $L_j^{3/2}$ independent level $j$ subblocks  is with high probability strong.

\begin{lemma}
\label{strong}
$X=(X_1,\ldots X_{L_{j}^{3/2}})$ be a sequence of $L_j^{3/2}$ independent subblocks at level $j$. Then
\begin{enumerate}
\item[(a)]
 $$\mathbb{P}(X~\text{is ``strong"})\geq 1- e^{-\frac{L_j^{5/4}}{2}}.$$
\item[(b)]
Let, for $i=1,2,\ldots L_j^{3/2}$,  $\mathcal{E}_i=\{X^{[1,i]}~\text{is ``good"}~\}$. Then for each $i$,
$$\mathbb{P}(X~\text{is ``strong"}|\mathcal{E}_i)\geq 1- e^{-\frac{L_j^{5/4}}{2}}.$$
\end{enumerate}
\end{lemma}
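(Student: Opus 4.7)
The plan is to prove both parts by a Chernoff bound on the number of failed embeddings, combined with a union bound over the (small) set of all semi-bad blocks.

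For part (a), I would fix an arbitrary $X \in SB_j^{\mathbb{X}}$. By the semi-bad hypothesis, $S_j^{\mathbb{X}}(X) \geq 1 - (20 k_0 R_{j+1}^{+})^{-1}$, so the indicators $\{I(X \not\hookrightarrow X_l)\}_{l=1}^{L_j^{3/2}}$ are independent Bernoullis with parameter at most $p := (20 k_0 R_{j+1}^{+})^{-1}$. Since $2p L_j^{3/2} = L_j^{3/2}/(10 k_0 R_{j+1}^{+})$ is exactly the threshold in the definition of strong, the multiplicative Chernoff bound gives
\[
\mathbb{P}\bigl(\#\{l : X \not\hookrightarrow X_l\} > L_j^{3/2}/(10 k_0 R_{j+1}^{+})\bigr) \leq \exp\bigl(-L_j^{3/2}/(60 k_0 R_{j+1}^{+})\bigr).
\]

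Next I would union bound over the set of semi-bad blocks. By definition a semi-bad block has at most $10 L_j$ characters, each drawn from $\{C_1,\ldots,C_{L_j^m}\}$, and its embedding behaviour is determined by its character sequence together with finitely many internal partition choices at the lower levels, altogether giving at most $\exp(O(m L_j \log L_j))$ distinguishable semi-bad blocks. The union bound then yields a total failure probability of at most $\exp\bigl(O(m L_j \log L_j) - L_j^{3/2}/(60 k_0 R_{j+1}^{+})\bigr)$. Since $R_{j+1}^{+} = O(4^j)$ is only singly exponential in $j$ while $L_j = L_0^{\alpha^j}$ is doubly exponential, for $L_0$ large enough this upper bound is at most $e^{-L_j^{5/4}/2}$, establishing part (a).

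For part (b), conditioning on $\mathcal{E}_i$ preserves the independence of the $X_l$'s and only changes the law of the first $i$ blocks to $\mu_{j,G}^{\mathbb{X}}$. For each fixed semi-bad $X$ and each $l \leq i$, the inductive estimate~\eqref{xgood} gives $\mathbb{P}(X_l \in G_j^{\mathbb{X}}) \geq 1 - L_j^{-\delta}$, so
\[
\mathbb{P}(X \not\hookrightarrow X_l \mid X_l \in G_j^{\mathbb{X}}) \leq \frac{(20 k_0 R_{j+1}^{+})^{-1}}{1 - L_j^{-\delta}} \leq \frac{1}{19 k_0 R_{j+1}^{+}}
\]
for $L_0$ large. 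Hence the per-index failure probability is inflated only negligibly by the conditioning, and the identical Chernoff-plus-union-bound argument delivers the same exponential bound. The main obstacle is the quantitative check that the Chernoff exponent decisively dominates the union-bound exponent; this reduces to verifying $L_j^{3/2}/R_{j+1}^{+} \gg m L_j \log L_j \vee L_j^{5/4}$, which holds because $L_j$ grows doubly exponentially in $j$ while $R_{j+1}^{+}$ grows only singly exponentially.
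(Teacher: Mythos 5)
Your proposal is correct and follows essentially the same route as the paper's: fix a single semi-bad block of the opposite type, note that the indicators of failed embeddings are independent Bernoullis with parameter at most $(20k_0R_{j+1}^+)^{-1}$ (slightly inflated by conditioning on $\mathcal{E}_i$ in part (b), exactly as you note), apply Chernoff to bound the probability of exceeding the strongness threshold $L_j^{3/2}/(10k_0R_{j+1}^+)$, and then union-bound over the at most $L_j^{10mL_j}$ semi-bad blocks. The paper's Chernoff estimate is stated more crudely (with $(R_{j+1}^+)^2$ in the denominator and the explicit parameter constraint $L_j^{1/4}>1800k_0^2(R_{j+1}^+)^2$), but this is a cosmetic difference: both arguments hinge on the observation that the Chernoff exponent $L_j^{3/2}/\mathrm{poly}(k_0,R_{j+1}^+)$ overwhelms both the union-bound entropy $O(mL_j\log L_j)$ and the target exponent $L_j^{5/4}/2$, because $L_j$ is doubly exponential in $j$ while $R_{j+1}^+$ is only singly exponential.
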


\begin{proof}
We only prove part (b). Part (a) is similar.
Let $Y$ be a fixed semi-bad block at level $j$. Each of the events $\{X_k\hookrightarrow Y\}$ are independent, (they are independent conditional on $\mathcal{E}_i$) as well. Now, for $k>i$
\begin{equation}
\mathbb{P}(X_k\hookrightarrow Y|\mathcal{E}_i)\geq 1-1/20k_0 R_{j+1}^{+}
\end{equation}
and for $k\leq i$
\begin{equation}
\mathbb{P}(X_k\hookrightarrow Y|\mathcal{E}_i)\geq (1-1/20k_0 R_{j+1}^{+}-L_j^{-\delta}).
\end{equation}
We have $L_j^{\delta}>60 k_0 R_{j+1}^{+}$.
It then follows that, conditional on $\mathcal{E}_i$,
$$\#\{k:X_k\hookrightarrow Y\}\succeq V$$
where $V$ has a $Bin(L_j^{3/2}, (1-1/15k_0 R_{j+1}^{+}))$ distribution.
Using Chernoff's bound, we get
\begin{align*}
&\mathbb{P}(\#\{k:X_k\hookrightarrow Y\} \geq  L_j^{3/2}(1-1/10k_0 R_{j+1}^{+})|\mathcal{E}_i)\\
& \qquad \geq  \mathbb{P}(V\geq L_j^{3/2}(1-1/10k_0 R_{j+1}^{+})) \geq  1- e^{-\frac{L_j^{3/2}}{1800k_0^2(R_{j+1}^{+})^2}} \geq 1-e^{-L_j^{5/4}}
\end{align*}
since $L_j^{1/4}> 1800 k_0^2 (R_{j+1}^{+})^2$ for $L_0$ sufficiently large.
Since the length of a semi-bad block at level $j$ can be at most $10L_j$, and semi-bad blocks can contain only the first $L_j^m$ many characters, there can be at most $L_j^{10mL_j}$ many semi-bad blocks at level $j$.

Hence, using a union bound we get, for each $i$,

$$\mathbb{P}(X~\text{is ``strong"}|\mathcal{E}_i)\geq 1- e^{10mL_j\log L_j}e^{-L_j^{5/4}}\geq 1- e^{-\frac{L_j^{5/4}}{2}},$$
for large enough $L_0$, completing the proof of the lemma.
\end{proof}

\begin{proof}(of Theorem \ref{goodprobability})

To avoid repetition, we only prove the theorem for $\mathbb{X}$-blocks.

Recall the notation of Observation~\ref{o:blockRepresentation} with $(X_1,X_2,X_3,.....)$  a sequence of independent $\mathbb{X}$-blocks at level $j$ with the first $L_j^3$ conditioned to be good and  $X\sim \mu_{j+1}^{\mathbb{X}}$ be the $(j+1)$-th level block constructed from them. Let $W_X$ be the $Geom(L_j^{-4})$ variable associated with $X$ and $T_X$ be the number of excess blocks. Let us define the following events.

$$A_1=\{(X_{i},X_{i+1},\ldots X_{i+L_j^{3/2}}) \text{is a strong sequence for}~1\leq i\leq 2L_j^{\alpha-1}\}.$$

$$A_2=\{\#\{1\leq i\leq L_j^{\alpha -1}+2L_j^3 + T_X: X_i \notin G_j^{\mathbb{X}}\}\leq k_0 \}.$$

$$A_3=\{\#\{1\leq i\leq 2L_j^{\alpha -1}: X_i \notin G_j^{\mathbb{X}}\cup SB_j^{\mathbb{X}}\}=0\}.$$

$$A_4=\{T_X\leq L_j^5-2L_j^3\}.$$

From the definition of good blocks that, $A_1\cap A_2\cap A_3\cap A_4 \subseteq \{X\in G_{j+1}^{\mathbb{X}}\}$ and hence,
\begin{equation}
\label{goodprobability1}
\mathbb{P}(X\in G_{j+1}^{\mathbb{X}})\geq \mathbb{P}(A_1\cap A_2\cap A_3\cap A_4).
\end{equation}
Now, to complete the proof we need suitable estimates for each of the quantities $\mathbb{P}(A_i)$, $i=1,2,3,4$, each of which we now compute.

$\bullet$
Let $\tilde{X_i}=(X_{i+1},X_{i+2},\ldots ,X_{i+L_j^{3/2}})$. From Lemma \ref{strong}, it follows that for each $i$, $$\mathbb{P}(\tilde{X_i}~\text{``is strong''}~)\geq 1-e^{-\frac{L_j^{5/4}}{2}}.$$ It follows that
\begin{equation}
\label{strongest}
\mathbb{P}[A_1^c]\leq 2L_j^{\alpha-1} e^{-\frac{L_j^{5/4}}{2}},
\end{equation}

$\bullet$ By Lemma~\ref{l:totalSizeBound} we have that
\begin{equation}\label{goodprobability2}
\P[A_2] \geq 1-L_j^{-\delta k_0/4}.
\end{equation}

$\bullet$ From the definition of semi-bad blocks, we know that for $i>L_j^3$,
\begin{align*}
\mathbb{P}(X_i \notin G_j^{\mathbb{X}}\cup SB_j^{\mathbb{X}})&\leq \mathbb{P}(S_j^{\mathbb{X}}(X_i)\leq 1-\frac{1}{20k_0 R_{j+1}^{+}})+ \mathbb{P}(|X_i|>10L_j)\\
&\qquad+\mathbb{P}(C_k\in X_i ~\text{for some}~k>L_j^m).
\end{align*}
The Strong Law of Large number implies that $$\mathbb{P}(C_k\in X_i ~\text{for some}~k>L_j^m)\leq \mu^{\mathbb{X}}(\{C_{L_j^m+1}, C_{L_j^{m}+2},\ldots\})\mathbb{E}(|X_i|).$$ Using (\ref{mux}) and Lemma \ref{lengthestimate}, it follows that  $\mathbb{P}(C_k\in X_i ~\text{for some}~k>L_j^m)\leq 3L_j^{1-m} \leq L_j^{-\beta}$ for $L_0$ large enough and since $m> 2+\beta$.

Now using the recursive tail estimates and~\ref{lengthestimate}, we see that
$$\mathbb{P}(X_i \notin G_j^{\mathbb{X}}\cup SB_j^{\mathbb{X}})\leq (1-\frac{1}{20k_0 R_{j+1}^{+}})^{m}L_j^{-\beta}+ \mathbb{P}(|X_i|>10L_j)+L_j^{-\beta}\leq 3L_j^{-\beta}$$
for $L_0$ sufficiently large.
Hence it follows that
\begin{equation}
\label{goodprobability3}
\mathbb{P}[A_3^c]\leq  6L_j^{\alpha-\beta-1} \leq 1/10 L_{j+1}^{-\delta}
\end{equation}
for sufficiently large $L_0$ since $\beta> \alpha \delta +\alpha -1$.

$\bullet$By Lemma~\ref{l:totalSizeBound} we have that
\begin{equation}\label{goodprobability4}
\P[A_4] \geq 1-2\exp(-\frac14 L_j) \geq 1-\frac{1}{10}L_{j+1}^{-\delta}.
\end{equation}

Now from (\ref{goodprobability1}), (\ref{strongest}), (\ref{goodprobability2}), (\ref{goodprobability3}), (\ref{goodprobability4})
it follows that,
\[
\mathbb{P}(X\in G_{j+1}^{\mathbb{X}})\geq 1-\sum_{i=1}^4 \mathbb{P}[A_i^c]\geq  1-L_{j+1}^{-\delta},
\]
completing the proof of the theorem.
\end{proof}

\subsection{Mappings of good segments:}

%

\begin{theorem}
\label{compresslemma}
Let $\tilde{X}=(\tilde{X}_1,\tilde{X}_2,\ldots)$ be a sequence of $\mathbb{X}$-blocks at level $(j+1)$ and $\tilde{Y}=(Y_1,Y_2,\ldots)$ be a sequence of $\mathbb{Y}$-blocks at level $j$. Further we suppose that $\tilde{X}^{[1,R_{j+1}]}$ and $\tilde{Y}^{[1,R_{j+1}^{+}]}$ are ``good segments". Then for every $t$ with $R_{j+1}^{-}\leq t \leq R_{j+1}^{+}$,
\begin{equation}
\label{compress2}
\tilde{X}^{[1,R_{j+1}]}\hookrightarrow \tilde{Y}^{[1,t]}.
\end{equation}
\end{theorem}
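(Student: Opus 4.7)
The plan is to reduce everything to the level-$j$ sub-block scale and then apply Proposition~\ref{compresspartition} together with the strong-sequence property of good blocks (reading the statement with the evident typo corrected so that $\tilde Y$ is also at level $(j+1)$). First I would unpack $\tilde X^{[1,R_{j+1}]}=(X_1,\dots,X_n)$ and $\tilde Y^{[1,t]}=(Y_1,\dots,Y_{n'})$ as sequences of level-$j$ sub-blocks. Since every good $(j+1)$-block has between $L_j^{\alpha-1}+2L_j^3$ and $L_j^{\alpha-1}+L_j^5$ sub-blocks, and since $R_{j+1}^-/R_{j+1}=(1-2^{-(j+2)})/R$, $R_{j+1}^+/R_{j+1}=R(1+2^{-(j+2)})$, a short calculation will show that $n'/n$ lies in the slightly wider window $[(1-2^{-(j+7/4)})/R,\ R(1+2^{-(j+7/4)})]$ required by Proposition~\ref{compresspartition}, once $L_0$ is large enough that the $1\pm L_j^{6-\alpha}$ slack coming from variable block length is absorbed (using $\alpha>9$).

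Next I would identify the sets $B=\{\ell_1<\dots<\ell_{K_X}\}\subseteq[n]$ and $B'=\{\ell'_1<\dots<\ell'_{K_Y}\}\subseteq[n']$ of bad level-$j$ positions. By the definition of a good $(j+1)$-block each $X_{\ell_i}$ and $Y_{\ell'_r}$ is semi-bad, while $K_X\le k_0 R_{j+1}\le k_0R_{j+1}^+$ and $K_Y\le k_0R_{j+1}^+$. The $L_j^3$ good-sub-block buffers at both ends of every good $(j+1)$-block (Observation~\ref{o:blockStructure}) give the required separation $\ell_1,n-\ell_{K_X},\ell'_1,n'-\ell'_{K_Y}>L_j^3$. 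I would then apply Proposition~\ref{compresspartition} to obtain the family of $L_j^2$ admissible generalized mappings $\Upsilon_h=(P_h,P'_h,\tau_h)$ of class $G^j$ with respect to $(B,B')$, with $\tau_h(\ell_i)=\tau_1(\ell_i)+h-1$ and $\tau_h^{-1}(\ell'_r)=\tau_1^{-1}(\ell'_r)-h+1$.

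The final step is to exhibit a single shift $h^\ast\in[L_j^2]$ such that $X_{\ell_i}\hookrightarrow Y_{\tau_{h^\ast}(\ell_i)}$ for all $i$ and $X_{\tau_{h^\ast}^{-1}(\ell'_r)}\hookrightarrow Y_{\ell'_r}$ for all $r$; once such an $h^\ast$ is produced, Proposition~\ref{admissiblemap} closes the argument. Fix one bad index $\ell_i$. Its $L_j^2$ candidate partners $\{Y_{\tau_h(\ell_i)}\}_{h}$ form a consecutive window of $\mathbb Y$-sub-blocks of length $L_j^2$. I would cover this window by at most $L_j^{1/2}+2$ sub-windows of length $L_j^{3/2}$, each contained in a single good $(j+1)$-block (possible because each good block has $\gg L_j^{3/2}$ sub-blocks). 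On each such sub-window the strong-sequence hypothesis bounds the number of positions failing to accept the semi-bad block $X_{\ell_i}$ by $L_j^{3/2}/(10k_0R_{j+1}^+)$. Summing, and doing the symmetric count for the $\ell'_r$ via the strong-sequence property of $\tilde X^{[1,R_{j+1}]}$, the number of values of $h$ spoiled by some bad position is at most $(K_X+K_Y)\cdot L_j^2/(8k_0R_{j+1}^+)\le L_j^2/4$, so a valid $h^\ast$ remains.

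The main obstacle I foresee is bookkeeping at this last step: the strong-sequence hypothesis is phrased only for windows of $L_j^{3/2}$ sitting \emph{inside one} good $(j+1)$-block, whereas the $L_j^2$-window of candidate targets for a given $\ell_i$ may straddle several $(j+1)$-block boundaries. The tiling described above handles this at the cost of an $O(L_j^{1/2})$ overhead that is safely absorbed. One should also note the minor ``first block'' subtlety from Observation~\ref{o:blockStructure}; in the context where the lemma is actually used (namely, a good segment embedded inside a larger sequence, as in the applications driving the induction), the required $L_j^3$ good sub-blocks at the left boundary are supplied by that embedding, so the $L_j^3$-buffer is genuine on both sides.
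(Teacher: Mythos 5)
Your proposal is correct and follows essentially the same route as the paper: decompose into level-$j$ sub-blocks, verify the ratio condition for Proposition~\ref{compresspartition}, produce the $L_j^2$ class-$G^j$ admissible mappings, and then use the strong-sequence property to find a shift $h^\ast$ that handles all semi-bad sub-blocks before invoking Proposition~\ref{admissiblemap}. The only stylistic difference is that the paper isolates the last step as Lemma~\ref{compresssemibad} and phrases the counting as a probabilistic-method argument with $H$ uniform on $[L_j^2]$, whereas you count spoiled shifts directly — these are the same estimate.
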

\begin{proof}
Let us fix $t$ with $R_{j+1}^{-}\leq t \leq R_{j+1}^{+}$.
Let $\tilde{X}^{1,R_{j+1}}=(X_1,X_2,\ldots,X_n)$ be the decomposition of $\tilde{X}^{[1,R_{j+1}]}$ into level $j$ blocks. Similarly let $\tilde{Y}^{[1,t]}=(Y_1,Y_2,\ldots,Y_{n'})$ denote the decomposition of $\tilde{Y}^{[1,t]}$, into level $j$ blocks.

Before proceeding with the proof, we make the following useful observations.
Since both $\tilde{X}^{[1,R_{j+1}]}$ and $\tilde{Y}^{[1,t]}$ are good segments, it follows that $R_{j+1}L_j^{\alpha-1}\leq n \leq R_{j+1}(L_j^{\alpha-1}+L_j^{5})$, and $tLj^{\alpha-1}\leq n'\leq t(L_j^{\alpha-1}+L_j^{5})$. Hence for $L_0$ large enough, we have

\begin{equation}
\label{lengthratio1}
\frac{1-2^{-(j+7/4)}}{R}\leq \frac{n'}{n} \leq R(1+2^{-(j+7/4)}).
\end{equation}

Let $B_X=\{1\leq i \leq n: X_i\notin G_j^{\mathbb{X}}\}=\{l_1<l_2<\ldots <l_{K_X}\}$ denote the positions of ``bad" $\mathbb{X}$-blocks. Similarly let $B_Y=\{1\leq i \leq n': Y_i\notin G_j^{\mathbb{Y}}\}=\{l'_1<l'_2<\ldots <l'_{K_Y}\}$ denote the positions of ``bad" $\mathbb{Y}$-blocks. Notice that $K_X,K_Y \leq k_0R_{j+1}^{+}$. Using Proposition \ref{compresspartition} we can find
a family of admissible generalised mappings of $Class~G^j$, $\Upsilon_h$ for $0\leq h \leq L_j^2$, $\Upsilon_h([n],[n'],B,B')=(P_h,P'_h,\tau_h)$ such that for all $h$, $1\leq h\leq L_j^2$, $1\leq i\leq K_X$, $1\leq r \leq K_X$, $\tau_h(l_i)=\tau_{1}(l_i)+h-1$ and $\tau_h^{-1}(l'_r)=\tau_1^{-1}(l'_r)-h+1$.
At this point we need the following Lemma.

\begin{lemma}
\label{compresssemibad}
Let $\Upsilon_h=(P_h,P'_h,\tau_h), 1\leq h\leq L_j^2$ be the set of generalised mappings as described above. Then there exists $1\leq h_0 \leq L_j^2$, such that $X_{l_i}\hookrightarrow Y_{\tau_{h_0}(l_i)}$ for all $1\leq i\leq K_X$ and $X_{\tau_{h_0}^{-1}(l'_i)}\hookrightarrow Y_{l'_i}$ for all $1\leq i \leq K_Y$.
\end{lemma}

\begin{proof}
Once again we appeal to the probabilistic method. First observe, for any fixed $i$, $\{\tau_h(l_i):h=1,2,\ldots L_j^2\}$ is a set of $L_j^2$ consecutive integers. Notice that the $j$-th level sub-blocks corresponding to these indices need not belong to the same $(j+1)$-th level block. However, also notice that they can belong to at most 2 consecutive $(j+1)$-level blocks (both of which are good). Suppose the number of sub-blocks belonging to the two different blocks are $a$ and $b$, where $a+b=L_j^2$. Now, by the strong sequence assumption, these $L_j^2$ blocks the must contain at least $\lfloor \frac{a}{L_j^{3/2}}\rfloor +\lfloor \frac{b}{L_j^{3/2}}\rfloor \geq \lfloor L_j^{1/2}\rfloor -2$ many disjoint strong sequences of length $L_j^{3/2}$. By definition of strong sequences then, there exist, among these $L_j^2$ sub-blocks, at least $(L_j^{1/2} -3)L_j^{3/2}(1-\frac{1}{10k_0R_{j+1}^{+}})$ many to which $X_{l_i}$ can be successfully mapped, i.e.,

\begin{equation}
\label{probmethodcompression1}
\#\{h: X_{l_i}\hookrightarrow Y_{\tau_h(l_i)}\}\geq (L_j^{1/2} -3)L_j^{3/2}(1-\frac{1}{10k_0R_{j+1}^{+}}).
\end{equation}

Now, choosing $H$ uniformly at random from $\{1,2,\ldots ,L_j^2\}$, it follows from (\ref{probmethodcompression1}) that

\begin{equation}
\label{probmethodcompression2}
\mathbb{P}(X_{l_i}\hookrightarrow Y_{\tau_H(l_i)})\geq (1-3/L_j^{1/2})(1-\frac{1}{10k_0R_{j+1}^{+}})\geq 1-\frac{1}{10k_0R_{j+1}^{+}}-3/L_j^{1/2}.
\end{equation}
Similar arguments show that for all $i\in \{1,2,\ldots ,k_Y\}$,
\begin{equation}
\label{probmethodcompression3}
\mathbb{P}(X_{\tau_H^{-1}(l'_i)}\hookrightarrow Y_{l'_i})\geq 1-\frac{1}{10k_0R_{j+1}^{+}}-3/L_j^{1/2}.
\end{equation}
A union bound then gives,
\begin{equation}
\mathbb{P}(X_{l_i}\hookrightarrow Y_{\tau_H(l_i)}: 1\leq i \leq K_X,~X_{\tau_H^{-1}(l'_i)}\hookrightarrow Y_{(l'_i)}: 1\leq i \leq K_Y)>1- 2k_0R_{j+1}^{+}(\frac{1}{10k_0R_{j+1}^{+}}+3/L_j^{1/2}),
\end{equation}
and the right hand side is always positive for $L_0$ sufficiently large. The lemma immediately follows from this.
\end{proof}

The proof of the theorem can now be completed using Proposition \ref{admissiblemap}.
\end{proof}

\subsection{Good Blocks Map to Good Blocks}

\begin{theorem}
\label{goodongood}
Let $X\in G_{j+1}^{\mathbb{X}}, Y\in G_{j+1}^{\mathbb{Y}}$, then $X\hookrightarrow Y$.
\end{theorem}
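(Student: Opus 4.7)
\medskip
The plan is to mimic verbatim the argument used for Theorem \ref{compresslemma}, which is the direct analog for a sequence of $(j+1)$-blocks; here we need only the much simpler special case of a single block pair. All the ingredients we need, namely Proposition \ref{compresspartition}, the strong sequence property of good blocks, and Proposition \ref{admissiblemap}, carry over without modification.

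First I would decompose $X = (X_1, \ldots, X_n)$ and $Y = (Y_1, \ldots, Y_{n'})$ into their level $j$ sub-blocks. Condition (iv) in the definition of a good $(j+1)$-block, together with the fact that our block construction forces every $(j+1)$-block to contain at least $L_j^{\alpha-1} + 2L_j^3$ sub-blocks, gives $n, n' \in [L_j^{\alpha-1} + 2L_j^3,\, L_j^{\alpha-1} + L_j^5]$. Since $\alpha > 5$, the ratio $n'/n$ lies in $1 \pm O(L_j^{-4})$, which comfortably satisfies the length hypothesis of Proposition \ref{compresspartition} for $L_0$ large. Let $B_X = \{l_1 < \cdots < l_{K_X}\}$ and $B_Y = \{l'_1 < \cdots < l'_{K_Y}\}$ be the positions of bad sub-blocks. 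By conditions (i) and (ii) of the good block definition, $K_X, K_Y \leq k_0$ and every bad sub-block is semi-bad, while the construction of the blocks ensures $l_1, l'_1, n - l_{K_X}, n' - l'_{K_Y} > L_j^3$. Proposition \ref{compresspartition} then yields a family $\{\Upsilon_h\}_{h=1}^{L_j^2}$ of admissible generalized mappings of class $G^j$ with $\tau_h(l_i) = \tau_1(l_i) + h - 1$ and $\tau_h^{-1}(l'_r) = \tau_1^{-1}(l'_r) - h + 1$.

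Next I would run the probabilistic argument from Lemma \ref{compresssemibad}: for each bad index $l_i$, the set $\{\tau_h(l_i) : 1 \leq h \leq L_j^2\}$ consists of $L_j^2$ consecutive positions in $Y$, all of which lie within the single $(j+1)$-block $Y$ (and in its interior, away from the boundary $L_j^3$-stretches, by the class $G^{j}$ condition). This window contains at least $\lfloor L_j^{1/2}\rfloor$ disjoint subsequences of $L_j^{3/2}$ consecutive sub-blocks, each of which is strong by condition (iii). Since $X_{l_i} \in SB_j^{\mathbb{X}}$, each such strong subsequence fails to embed $X_{l_i}$ into at most a $\frac{1}{10 k_0 R_{j+1}^{+}}$ fraction of its positions. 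Hence for $H$ uniform on $\{1, \ldots, L_j^2\}$,
\[
\mathbb{P}(X_{l_i} \hookrightarrow Y_{\tau_H(l_i)}) \;\geq\; 1 - \tfrac{1}{10 k_0 R_{j+1}^{+}} - O(L_j^{-1/2}),
\]
and the symmetric bound holds for each $l'_r$. Union bounding over the at most $K_X + K_Y \leq 2 k_0$ bad positions gives a total failure probability bounded by $\frac{1}{5 R_{j+1}^{+}} + O(k_0 L_j^{-1/2})$, which is strictly less than $1$ for $L_0$ large. Thus some $h_0$ maps all bad sub-blocks successfully on both sides, and Proposition \ref{admissiblemap} delivers $X \hookrightarrow Y$.

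Since the proof is essentially just a restatement of the Theorem \ref{compresslemma} argument for a single-block pair, I do not expect a real obstacle. The only fiddly point is checking that the window $\{\tau_h(l_i)\}_{h=1}^{L_j^2}$ stays within the interior of $Y$ where the strong-sequence hypothesis applies; this follows from the class-$G^j$ condition forcing $\tau_h(l_i) \in (L_j^3, n' - L_j^3)$, and from the fact that the window has length only $L_j^2 \ll L_j^3$.
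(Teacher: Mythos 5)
Your proof is correct and it is essentially the argument the paper has in mind: the paper simply remarks that Theorem~\ref{goodongood} follows from a simplified version of the proof of Theorem~\ref{compresslemma} and omits the details, and your write-up supplies exactly that simplification (single block pair, $K_X,K_Y\le k_0$ instead of $k_0R_{j+1}^+$, and a window $\{\tau_h(l_i)\}$ that stays inside a single $(j+1)$-block rather than possibly straddling two). One tiny nit: the interiority bound $\tau_h(l_i)\in(L_j^3,n'-L_j^3)$ is not part of the Class $G^j$ conditions as stated (that requirement appears in the Class $H_1^j$ definition); it actually follows from the construction in Lemma~\ref{compresspartition1}/\ref{compresspartition2}, but it is also unnecessary here since condition (iii) of the good block definition makes every run of $L_j^{3/2}$ consecutive sub-blocks of $Y$ strong, regardless of where in $Y$ they sit, and the window trivially lies in $[n']$.
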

The theorem follows from a simplified version of the proof of  Theorem~\ref{compresslemma} so we omit it.\\

We have now completed all the parts of the inductive step.  Together these establish Theorem~\ref{induction}.

\section{Explicit Constructions}\label{s:deterministic}

Our proof  provides an implicit construction of a deterministic sequence $(X_1,\ldots)$ which embeds into $\mathbb{Y}$ with positive probability.  We will describe a deterministic  algorithm, based on our proof, which for any $n$ will return the first $n$ co-ordinates of the sequence in finite time. Though it is not strictly necessary,  we will restrict discussion to the case of finite alphabets.  It can easily be seen from our construction and proof that any good $j$-level block can be extended into a $(j+1)$-level good block and so the algorithm proceeds by extending one good block to one of the next level.  As such one only needs to show that we can identify all the good blocks at level $j$ in a finite amount of time.

We will also recover all semi-bad blocks.
By our construction all good and semi-bad blocks are of bounded length so there is only a finite space to examine.  To determine if a block is good at level $j+1$ one needs to count how many of its sub-blocks at level $j$ are good and verify that the others are semi-bad.  It also requires that it has ``strong subsequences''.  This can be computed if we have a complete list of semi-bad $j$-level blocks.

Determining if $X$, a $(j+1)$-level block, is semi-bad requires calculating its length and its embedding probability.  For this we need to run over all possible $(j+1)$-level blocks, calculate their probability and then test if $X$ maps into them.  By the definition of an $R$-embedding we need only consider those of length at most $O(R |X|)$ so this can be done in finite time.

With this listing of all good blocks one can then construct in an arbitrary manner a sequence in which the first block is good at all levels which will have a positive probability of an $R$-embedding into a random sequence.  From this construction, and the reduction in \S~\ref{s:applicaiton}, we can construct a deterministic sequences which has an $M$-Lipshitz embedding into a random binary sequence in the sense of Theorem~\ref{t:embed} with positive probability.  Similarly, this approach  gives a binary sequence with a positive density of ones which is compatible sequence with a random $\mbox{Ber}(q)$ sequence in the sense of Theorem~\ref{t:compatible} for small enough $q>0$ with positive probability.

\subsection*{Acknowledgements}

We would like to thank Vladas Sidoravicius for describing his work on these problems and Peter G\'{a}cs for informing us of his new results.  We would also like to thank Geoffrey Grimmett and Ron Peled for introducing us to the Lipschitz embedding and rough isometry problems and to Peter Winkler for very useful discussions.

\bibliography{embed}
\bibliographystyle{plain}

\end{document}